\numberwithin{equation}{section}
\newcommand{\La}{\Lambda}
\newcommand{\Ga}{\Gamma}
\newcommand{\ra}{\rightarrow}
\newcommand{\ca}{\curvearrowright}
\newcommand{\A}{\mathcal A}
\newcommand{\B}{\mathcal B}
\newcommand{\C}{\mathcal C}
\newcommand{\D}{\mathcal D}
\renewcommand{\L}{\mathcal L}
\newcommand{\M}{\mathcal M}
\newcommand{\N}{\mathcal N}
\renewcommand{\P}{\mathcal P}
\newcommand{\Q}{\mathcal Q}
\newcommand{\R}{\mathcal R}
\newcommand{\T}{\mathcal T}
\newcommand{\Z}{\mathcal Z}
\newcommand{\W}{\mathcal W}
\newcommand{\sC}{\mathscr C}
\newcommand{\sD}{\mathscr D}
\newcommand{\sE}{\mathscr E}
\newcommand{\sF}{\mathscr F}
\newcommand{\sG}{\mathscr G}
\newcommand{\sH}{\mathscr H}
\newcommand{\sP}{\mathscr P}
\newcommand{\sS}{\mathscr S}
\newcommand{\sT}{\mathscr T}
\newcommand{\sU}{\mathscr U}
\newcommand{\sV}{\mathscr V}
\newcommand{\oo}{\bar\otimes}
\theoremstyle{plain}
\newtheorem{main}{Theorem}
\newtheorem{mcor}[main]{Corollary}
\newtheorem{thm}{Theorem}[section]
\newtheorem*{thm*}{Theorem}
\newtheorem{cor}[thm]{Corollary}
\newtheorem{lem}[thm]{Lemma}
\newtheorem{claim}[thm]{Claim}
\newtheorem{prop}[thm]{Proposition}
\theoremstyle{definition}
\newtheorem{defn}[thm]{Definition}
\newtheorem{ex}[thm]{Example}
\theoremstyle{plain}
\newtheorem{rem}[thm]{Remark}
\begin{document}

\title{Rigidity for von Neumann algebras of  graph product groups II. Superrigidity results}
\author{Ionu\c t Chifan, Michael Davis, and Daniel Drimbe}
\date{}

\maketitle
\begin{abstract} \noindent In \cite{CDD22} we investigated the structure of $\ast$-isomorphisms between von Neumann algebras $L(\Gamma)$ associated with graph product groups  $\Gamma$ of flower-shaped graphs and property (T) wreath-like product vertex groups as in \cite{CIOS21}. In this follow-up we continue the structural study of these algebras by establishing that these graph product groups $\Gamma$  are entirely recognizable from the category of all von Neumann algebras arising from an \emph{arbitrary} non-trivial graph product group with infinite vertex groups. A sharper $C^*$-algebraic version of this statement is also obtained. In the process of proving these results we also extend the main $W^*$-superrigidity result from \cite{CIOS21} to direct products of property (T) wreath-like product groups.   
 \end{abstract}
\section{Introduction}

In \cite{MvN43} Murray and von Neumann associated in a natural way a von Neumann
algebra, denoted by $L(\Gamma)$, to every countable discrete group $\Gamma$. Precisely, $L(\Gamma)$ is defined as the weak operator closure of the complex group algebra $\mathbb C[\Gamma]$ acting by left convolution on the Hilbert
space $\ell^2(\Gamma)$ of square-summable functions on $\Gamma$. The classification of group von Neumann algebras has since been a central theme in operator algebras driven by the following fundamental question: what aspects of the group $\Gamma$ are remembered by $L(\Gamma)$? This is a challenging problem as von Neumann algebras tend to forget a lot of the information about the groups from which they were constructed. An excellent illustration of this is Connes’ theorem which asserts that II$_1$ factors arising from amenable groups are isomorphic to the hyperfinite II$_1$ factor \cite{Co76}. Hence, group von Neumann algebras of icc amenable groups have no memory of the algebraic structure of the underlying group. 

In sharp contrast, the non-amenable case is far more complex. The emergence of Popa’s deformation/rigidity theory \cite{Po06} has led to the discovery of groups with certain canonical algebraic properties which are completely retained by their von Neumann algebra. We highlight here only a few of these developments and refer the reader to the surveys
\cite{Po06,Va10b,Io12, Io17} for a more complete account in this direction. Popa's strong rigidity theorem \cite{Po03,Po04} asserts that wreath product groups $\mathbb Z/2\mathbb Z\wr\Gamma$, where $\Gamma$ is an icc property (T) group, are completely recognizable from the category of all von Neumann algebras arising from arbitrary wreath product groups with abelian base and icc acting group. Several years later  Ioana, Popa and Vaes discovered in \cite{IPV10} the first examples of W$^*$-superrigid groups, i.e.\ groups that can be entirely reconstructed from their von Neumann algebras. Several additional classes of examples were unveiled subsequently \cite{BV12,Be14,CI17,CD-AD20,CD-AD21,CIOS21}.

In this paper we study superrigidity aspects of von Neumann algebras of graph product groups \cite{Gr90}. These groups are natural generalizations of right-angled Artin and Coxeter groups and play an important role in several subareas of topology and group theory. These groups display a rich structure and have been investigated intensively over the last two decades using deep methods in geometric group theory. In this direction, several landmark results have been discovered, see \cite{HW08, AM10, Wi11, MO13,Ag13}. More recently, certain classes of graph product groups, including many right-angled Artin groups, have been studied through the lens of measured group theory and led to strong rigidity results in the orbit equivalence setting \cite{HH20,HH21}.

Graph product groups have also been considered in the analytic framework of von Neumann algebras \cite{CF14, Ca16, CdSS17, DK-E21,CK-E21, CDD22} where various structural and rigidity results of von Neumann algebras of graph product groups have been obtained. 
In \cite{CDD22} we further developed some of Popa's powerful deformation/rigidity theoretic methods  \cite{Po06} which enabled us to completely describe the structure of all $\ast$-isomorphisms between von Neumann algebras  arising from the fairly large class of graph product groups associated with flower-shaped graphs (see class $\rm{CC}_1$ in Definition \ref{definition.CC1} below) and vertex groups which are property (T) wreath-like product groups introduced in \cite{CIOS21}. In essence, our result can be viewed as a von Neumann algebraic counterpart of the group theoretic result of Genevois-Martin \cite{GM19}.


This paper continues the study initiated in \cite{CDD22}, the main goal being to establish strong rigidity results for the aforementioned graph product groups. For example, we show that any graph product group associated with an asymmetric $\rm{CC}_1$ graph and vertex groups that are property (T) wreath-like product groups is completely  recognizable from the category of all von Neumann algebras arising from an \emph{arbitrary} non-trivial graph product group with infinite vertex groups (see Theorem \ref{grprodsuper'}). Along the way we also extend the main W$^*$-superrigidity result from \cite{CIOS21} to direct product groups  consequently obtaining new examples of groups satisfying Connes Rigidity Conjecture (see Corollary \ref{superprod'}).


\subsection{Statements of the main results}\label{main.results}
 
Before stating our results, we recall the construction of graph product groups \cite{Gr90}. Let  $\mathscr G=(\mathscr V,\mathscr E)$ be a finite graph without loops or multiple edges. The \emph{graph product group} $\Ga={\mathscr G}\{\Ga_v\}$ of a given  family  of \emph{vertex groups} $\{\Ga_v\}_{v\in \mathscr V}$ is the quotient of the free product $\ast_{v\in \mathscr V} \Ga_v$ by the relations $[\Ga_u,\Ga_v]=1$, whenever  $(u,v)\in \mathscr E$. Note that graph products can be seen as groups that ``interpolate'' between the direct product $\times_{v\in \mathscr V} \Ga_v$ (when $\mathscr G$ is complete) and the free product $\ast_{v\in \mathscr V} \Ga_v$ (when $\mathscr G$ has no edges).
For any subgraph $\sH= (\sU,\sF)$ of $\sG$ we  denote by $\Gamma_\sH$ the subgroup generated by $\langle \Gamma_u \,:\,u\in \sU\rangle $ and we call it the \emph{full subgroup} of $\sG\{\Gamma_v\}$ corresponding to $\sH$. 
A \emph{clique} $\sC$ of $\sG$ is a maximal, complete subgraph of $\sG$. The set of cliques of $\sG$ will be denoted by ${\rm cliq}(\sG)$.     The full subgroups $\Ga_\sC$ for $\sC\in \rm cliq(\sG)$ are called the clique subgroups of $\sG\{\Ga_v\}$.  

We are now ready to recall the class of graphs  ${\rm CC}_1$ that was introduced in \cite{CDD22} and which is used for our main results. 

\begin{defn}\label{definition.CC1}
A graph  $\sG$ is called a \emph{simple cycle of cliques (abbrev.\ ${\rm CC}_1$)} if there is an enumeration of its clique set $ {\rm cliq}(\sG)= \{\sC_1, ..., \sC_n\} $ with $n\geq 4$ such that the subgraphs $\sC_{i,j} :=\sC_i \cap \sC_j$ satisfy: 
\begin{equation*} \begin{split}
    &\sC_{i,j}=\begin{cases} \emptyset,  \text{ if }  \hat i-\hat j\in \mathbb Z_{n} \setminus \{ \hat 1, \widehat {n-1}\}\\
\neq \emptyset, \text{ if }  \hat i-\hat j\in \{\hat 1, \widehat {n-1}\} 
\end{cases} \text{ and }\\
&\mathscr{C}^{\rm int}_i:=\sC_i \setminus  \left(\sC_{i-1,i} \cup \sC_{i,i+1}\right) \neq \emptyset\text{, for all }1\leq k\leq n, \text{ with conventions } 0=n \text{ and } n+1=1.\end{split}
  \end{equation*}

\end{defn}

Note this automatically implies the cardinality $|\sC_i |\geq 3$ for all $i$.  Also such an enumeration $ {\rm cliq}(\sG)= \{\sC_1,..., \sC_n\} $ is called \emph{a consecutive cliques enumeration}.

\begin{ex}
A basic example of such a graph is any simple, length $n$, cycle of triangles $\sF_n=(\sV_n,\sE_n)$, which essentially looks like a flower shaped graph with $n$ petals:    

\begin{equation}\label{flower}
\begin{tikzpicture}[
regular polygon colors/.style 2 args={
    append after command={%
        \pgfextra
        \foreach \i [count=\ni, remember=\ni as \lasti (initially #1)] in {#2}{
            \draw[thick,\i] (\tikzlastnode.corner \lasti) --(\tikzlastnode.corner \ni);}
        \endpgfextra
    }
},
]

\node[thick, minimum size={2*3cm},regular polygon,regular polygon
 sides=12, rotate=11.25, regular polygon colors={12}{black, black, black, black, black, black, black, black, black, black, black, white}] at
 (3,3) (16-gon)[text=blue] {};

\node[thick, draw=white,minimum size={2*4cm},regular polygon,regular polygon
 sides=12, rotate=-4] at
 (3,3) (sixteengon)[text=blue] {};

\node[thick, draw=white,minimum size={2*4cm},regular polygon,regular polygon
 sides=12, rotate=26.5] at
 (3,3) (6teengon)[text=blue] {};

\path (16-gon.corner 12) -- node[auto=false, rotate=-45]{\ldots} (16-gon.corner 11);

\draw[thick] (6teengon.corner 12) -- (16-gon.corner 12);
 \node [fill=black,circle,inner sep=2pt] at (6teengon.corner 10){};
  
 \foreach \p [count=\n] in {1,...,12}{
 \node [fill=black,circle,inner sep=2pt] at (16-gon.corner \n){};
}
 \foreach \p [count=\n] in {3,...,12}{
 \node [fill=black,circle,inner sep=2pt] at (sixteengon.corner \n){};
\draw[thick] (6teengon.corner \n) -- (16-gon.corner \n);
}
 \foreach \p [count=\n] in {1,...,9}{
 \node [fill=black,circle,inner sep=0pt] at (6teengon.corner \n){};
}
\foreach \p [count=\n] in {1,...,11}{
 \draw[thick] (sixteengon.corner \n) -- (16-gon.corner \n);
}

\end{tikzpicture}
\end{equation}
\end{ex}

\noindent In fact any graph from ${\rm CC}_1$ is a two-level clustered graph that is a specific retraction of $\mathscr F_n$; for more details the reader may consult \cite[Section 2]{CDD22}.



In this paper we investigate various superrigidity aspects for group von Neumann algebras of the aforementioned graph product groups. Since the underlying  groups are essentially built-up from collections of large clique groups that are just direct product groups, it is natural to first tackle the superrigidity question for these types of groups. One could think of this as being the degenerate case. In this direction we were able to establish a product rigidity result for property (T) wreath-like product groups in the same spirit with \cite[Theorem A]{CdSS15} or the more recent results \cite{CD-AD20,Dr20}. Specifically, we have the following:

\begin{main}\label{prodrigid1}\label{A} For every $1\leq k\leq n$, let  $\Ga_k\in \mathcal W\mathcal R (A_k, B_k \ca I_k)$ be property (T) groups where $A_k$ is abelian, $B_k$ is an icc  subgroup of  a hyperbolic group, $B_k \curvearrowright I_k$ has amenable stabilizers and denote  $\Ga= \Ga_1\times  \dots \times \Ga_n$. 

\noindent Assume that $t>0$ is a scalar and $\La$ is an arbitrary group satisfying $\M=\L(\Ga)^t=\L(\La)$. 

\noindent Then one can find a direct product decomposition $\La= \La_1\times\dots\times \La_n$, some scalars $t_1,\dots,t_n>0$ with $t_1\cdots t_n=t$ and a unitary $u\in \M$ satisfying $\L(\Gamma_{i})^{t_i}=u\L(\La_i) u^*$, for any $1\leq i \leq n.$

\end{main}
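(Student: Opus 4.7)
The plan is to proceed by induction on $n$, establishing at each step that the tensor product decomposition of $\M$ coming from one of the factors $\Ga_k$ lifts to a direct product decomposition of $\La$. The base case $n=1$ is automatic (set $\La_1 = \La$ and $t_1 = t$), so the work lies in the inductive step, which splits off a single factor, say the last one $\Ga_n$.

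After passing to the appropriate amplification, write $\M = P_0 \oo Q_0$ with $P_0 = \L(\Ga_n)^{t_n}$ and $Q_0 = \L(\Ga_1 \times \dots \times \Ga_{n-1})^{t/t_n}$ for a scalar $t_n > 0$ to be determined. The first step, following the product rigidity paradigm of \cite{CdSS15, CD-AD20, Dr20}, is to locate an analogous tensor factorization inside $\L(\La)$. Since $\Ga_n$ has property (T), Popa's spectral gap methods and intertwining-by-bimodules techniques apply to the inclusion $P_0 \subset \M$. Combined with the bi-exactness of the hyperbolic quotient $B_n$, the amenability of the stabilizers of $B_n \ca I_n$, and the infinite abelian normal subgroup $A_n^{(I_n)} \trianglelefteq \Ga_n$ furnished by the wreath-like structure, one should obtain commuting subalgebras $P, Q \subset \L(\La)$ and a unitary $u_1 \in \M$ such that $\L(\La) = P \oo Q$ with $u_1 P u_1^* = P_0$ and $u_1 Q u_1^* = Q_0$.

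The decisive step is to upgrade this tensor factorization into a genuine group direct product decomposition $\La = \Sg \times \Sg^c$ with $\L(\Sg) = P$ and $\L(\Sg^c) = Q$. To this end, I would analyze how the canonical generating unitaries $\{u_\la\}_{\la \in \La} \subset \L(\La)$ distribute across the tensor decomposition $P \oo Q$. Using Fourier-type expansion of each $u_\la$ across this tensor product, together with the algebraic distinguishing features of the wreath-like factor $P$ (in particular, the infinite abelian normal subalgebra coming from $A_n^{(I_n)}$ and the bi-exact hyperbolic quotient), one argues that each $u_\la$ is, up to a scalar character, of the product form $u_\la \propto u_\sg \, u_\tau$ with $\sg \in \Sg$ and $\tau \in \Sg^c$. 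Collecting these group elements yields the desired splitting $\La = \Sg \times \Sg^c$.

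The main obstacle is precisely this group-theoretic extraction: a tensor factorization of a group von Neumann algebra need not come from a group direct product decomposition, and one must exploit the specific structural input — property (T), the abelian wreath-like base, and the bi-exact hyperbolic quotient — to control the placement of the generators $u_\la$ across the tensor factors. Once $\La = \Sg \times \Sg^c$ is secured, the inductive hypothesis applied to $\L(\Sg^c) \cong \L(\Ga_1 \times \dots \times \Ga_{n-1})^{t/t_n}$ produces the remaining decomposition $\Sg^c = \La_1 \times \dots \times \La_{n-1}$ with scalars $t_1, \dots, t_{n-1} > 0$ satisfying $\prod_{i=1}^{n-1} t_i = t/t_n$. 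Setting $\La_n := \Sg$ and combining unitaries completes the proof.
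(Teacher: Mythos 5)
There is a genuine gap, and it sits exactly at what you yourself call the decisive step. First, a structural point: your ``first step'' of locating commuting subalgebras $P,Q\subset\L(\La)$ with $\L(\La)=P\oo Q$ unitarily conjugate to $P_0\oo Q_0$ is vacuous, since $\L(\La)=\M$ already and one can simply take $P=u_1^*P_0u_1$, $Q=u_1^*Q_0u_1$; all the content lies in relating such a tensor decomposition to subgroups of $\La$. For that step you propose a Fourier expansion of the group unitaries $u_\la$ across the tensor decomposition, arguing that each $u_\la$ is a scalar times a product of unitaries from the two factors. This is not a proof and is not how such statements are (or can be) established: a tensor decomposition of $\L(\La)$ carries no a priori compatibility with the group basis of $\La$, and no known argument shows that individual group unitaries split across the tensor factors, even under property (T), bi-exactness and the wreath-like structure. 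Product rigidity results of this type never produce the subgroup $\Sg<\La$ by splitting generators; they produce it indirectly, and the conclusion is only up to a unitary and an amplification precisely because the generators do not literally split.

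What the paper actually does, and what is missing from your outline, is the following chain: one applies the comultiplication $\Delta:\M\to\M\oo\M$, $\Delta(v_h)=v_h\otimes v_h$ along the \emph{mystery} group $\La$, and studies $\Delta(\L(\Ga_{\hat i}))$ and $\Delta(\L(\Ga_i))$ inside $\M\oo\L(\Ga)$ using the Popa--Vaes type dichotomy for wreath-like products (Proposition \ref{prop.dichotomy.wreath.like}), with property (T) used to discard the amenable part $\L(A_j^{(I_j)})$ and \cite[Lemma 7.2]{IPV10}, \cite{DHI16}, \cite{Dr19b} to pin the intertwining onto the correct coordinate. Ioana's ultrapower technique (Theorem \ref{Th:ultrapower}) then converts this into a subgroup $\Sigma<\La$ with $\L(\Ga_{\hat i})\prec_\M\L(\Sigma)$ and $\L(\Ga_i)\prec_\M\L(C_\La(\Sigma))$; solidity of $\L(\Ga_i)$ (Theorem \ref{prodrigid1} of Section 4) feeds into Proposition \ref{almostprod} to show that $\L(\Sigma)\vee(\L(\Sigma)'\cap\M)$ has finite index in a corner of $\M$, whence $[\La:\Sigma\,{\rm vC}_\La(\Sigma)]<\infty$; property (T) of $\La$ upgrades this to a genuine subgroup $\Sigma_0$ with $[\La:\Sigma_0C_\La(\Sigma_0)]<\infty$ and two-sided strong intertwining \eqref{aa15'}, \eqref{aa11'}; and finally \cite[Theorem 6.1]{DHI16} is what converts this intertwining data into the direct product decomposition of $\La$ together with the conjugating unitary and the scalars $t_i$. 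Your induction scaffold is compatible with this, but without some substitute for the comultiplication/ultrapower/solidity/DHI transfer machinery, the passage from the tensor decomposition of $\M$ to the splitting $\La=\Sg\times\Sg^c$ remains unproved, and that is the theorem.
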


This theorem in conjunction with the W$^*$-superrigidity result from  \cite[Theorem 9.9]{CIOS21} immediately yields that essentially all finite direct products of property (T) wreath-like product groups covered by the prior theorem are completely recognizable  from their von Neumann algebras. Before we introduce the result, we recall the notion of group-like $*$-isomorphism between von Neumann algebras. Let $\Gamma$ and $\Lambda$ be countable groups.  Let $\eta:\Gamma \ra \mathbb T$ be a multiplicative character and $\delta : \Gamma\ra \Lambda$ a group isomorphism. Consider the group von Neumann algebras $\mathcal L(\Gamma)$ and $\mathcal L(\Lambda)$ and denote by $\{u_g \,:\, g\in \Gamma\}$ and $\{v_\lambda \,:\, \lambda\in \Lambda\}$ their corresponding canonical group unitaries. Then the map $\Gamma\ni u_g\ra \eta(g)v_{\delta(g)}\in\Lambda$ canonically extends to a $\ast$-isomorphism denoted by $\Psi_{\eta,\delta}: \mathcal L(\Gamma)\ra \mathcal L(\Lambda)$.

\begin{mcor}\label{superprod'} For every $1\leq k\leq n$, let  $\Ga_k\in \mathcal W\mathcal R (A_k, B_k \ca I_k)$ be property (T) groups where $A_k$ is abelian, $B_k$ is an icc  subgroup of  a hyperbolic group, $B_k \curvearrowright I_k$ has amenable stabilizers and the set $\{i\in I \; | \; g\cdot i\neq i\}$ is infinite for any $g\in B_k\setminus \{1\}$. Denote $\Ga= \Ga_1\times  \dots \times \Ga_n$.

\noindent Let $t>0$ be any scalar,  $\La$ be an arbitrary group and $\theta: \L(\Ga)^t\ra \L(\La)$ be any $\ast$-isomorphism.  

\noindent Then $t=1$ and one can find a character $\eta:\Gamma\to\mathbb T$, a group isomorphism $\delta:\Gamma\to\Lambda$ and a unitary $u\in \L(\La)$ satisfying $\theta={\rm ad}(u)\circ \Psi_{\eta,\delta}$.

\end{mcor}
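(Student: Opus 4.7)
The plan is to bootstrap Theorem A using the W$^*$-superrigidity result \cite[Theorem 9.9]{CIOS21} factor by factor. First I would apply Theorem A to $\theta:\L(\Ga)^t\to\L(\La)$. This yields a direct product decomposition $\La=\La_1\times\cdots\times\La_n$, positive scalars $t_1,\dots,t_n$ with $\prod_i t_i=t$, and a unitary $w\in\L(\La)$ such that $\theta(\L(\Ga_i)^{t_i})=w\L(\La_i)w^*$ for every $i$. Setting $\theta':=\mathrm{ad}(w^*)\circ\theta$, this map carries the tensor factor $\L(\Ga_i)^{t_i}\subset \L(\Ga)^t$ onto $\L(\La_i)\subset \L(\La)$. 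Using the standard identifications $\L(\Ga)^t=\bar\otimes_{i=1}^n\L(\Ga_i)^{t_i}$ (which is valid since $\prod_i t_i=t$) and $\L(\La)=\bar\otimes_{i=1}^n\L(\La_i)$, restricting $\theta'$ produces $\ast$-isomorphisms $\theta_i:\L(\Ga_i)^{t_i}\to\L(\La_i)$ with $\theta'=\bar\otimes_{i=1}^n\theta_i$.

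Next I would apply \cite[Theorem 9.9]{CIOS21} to each $\theta_i$. The extra hypothesis in the corollary --- that $\{j\in I_i\,:\,g\cdot j\neq j\}$ is infinite for every $g\in B_i\setminus\{1\}$ --- is precisely the strengthening needed, beyond the assumptions of Theorem A, to invoke that W$^*$-superrigidity theorem. It forces $t_i=1$ and produces a character $\eta_i:\Ga_i\to\mathbb T$, a group isomorphism $\delta_i:\Ga_i\to\La_i$, and a unitary $u_i\in\L(\La_i)$ with $\theta_i=\mathrm{ad}(u_i)\circ\Psi_{\eta_i,\delta_i}$. In particular $t=\prod_i t_i=1$, the first conclusion of the corollary.

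To assemble the pieces, define the character $\eta:\Ga\to\mathbb T$ by $\eta(g_1,\dots,g_n):=\prod_i\eta_i(g_i)$ and the group isomorphism $\delta:\Ga\to\La$ by $\delta(g_1,\dots,g_n):=(\delta_1(g_1),\dots,\delta_n(g_n))$. A direct calculation on the canonical group unitaries shows $\bar\otimes_{i=1}^n\Psi_{\eta_i,\delta_i}=\Psi_{\eta,\delta}$, and taking $u:=w(u_1\otimes\cdots\otimes u_n)\in\L(\La)$ one obtains $\theta=\mathrm{ad}(w)\circ\theta'=\mathrm{ad}(u)\circ\Psi_{\eta,\delta}$, as required.

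No substantial obstacle is expected in this argument: Theorem A and \cite[Theorem 9.9]{CIOS21} are black-box inputs providing precisely the structural information required, while the rest is a coordinate-wise combination of the factor-wise data into a global character, isomorphism, and unitary. The only point that merits a sentence of justification is the tensor-product factorization $\theta'=\bar\otimes_i\theta_i$, which follows immediately from the fact that $\theta'$ is a $\ast$-isomorphism matching up the commuting generating tensor factors on both sides.
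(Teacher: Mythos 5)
Your proposal is correct and follows essentially the same route as the paper, whose proof of this corollary is precisely the one-line combination of Theorem A with the W$^*$-superrigidity theorem for wreath-like product groups in \cite{CIOS21}; your write-up simply makes explicit the factor-wise application and the assembly of the characters, isomorphisms and unitaries.
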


We mention in passing that since  property (T) passes to (finite) direct products of groups, Corollary \ref{superprod'} provides new examples of property (T) groups which satisfy Popa's strengthening of Connes Rigidity Conjecture \cite{Co82,Po06}. In  particular, the result also shows that these property (T) factors have trivial fundamental group and also verify Jones'  outer automorphism problem \cite{Jo00} providing additional examples to recent similar results \cite{CDHK20,CIOS21,CIOS23}.

To tackle the problem of reconstructing the above graph product groups $\Ga$ from their factors $\L(\Ga)$ we develop new aspects of a more conceptual principle from \cite{CI17,CD-AD20} called \emph{peripheral reconstruction $W^*$-method}; this  consists of exploiting the natural tension that occurs between ``a peripheral structure'' and a ``direct product structure'' in the group.  In our specific situation this means that if $\La$ is any group such that $\L(\Ga)=\L(\La)$, then the main strategy is to first identify in $\Lambda$ collections of subgroups that play the same role as the ``peripheral structure'' of $\Gamma$ given by its full subgroups associated to the cliques in the underlying graph. 

In this direction, using an approach combining the comultiplication map \cite{Io10, IPV10,PV11}, the ultrapower methods from \cite{Io11} (see also \cite{DHI16,CdSS15,KV15}), a method for reconstructing malnormal groups from \cite{CD-AD20,CI17}, and Corollary \ref{superprod'} we are able to show that the clique subgroups of our graph product groups are in fact completely recognizable under the W$^*$-equivalence. For the precise statement, the reader may consult Theorem \ref{theorem.almost.Wsuperrigidity}. While this result does not show complete reconstruction of these graph products from the category of all group von Neumann algebras, it can be used effectively to show these groups are recognizable from the von Neumann algebras associated with an extensive family of groups---all non-trivial graph products with infinite vertex groups.

\begin{main}\label{grprodsuper'} Let $\sG\in{\rm CC}_1$, let ${\rm cliq}(\sG)=\{ \sC_1, \ldots , \sC_n\}$ be its consecutive cliques enumeration and assume that $|\sC_i|\neq |\sC_j|$ whenever $i\neq j$.   Let $\Gamma= \sG\{\Gamma_v\}$ be a graph product group where all vertex groups $\Gamma_v$ are property (T) wreath-like product groups $\Gamma_v\in  \mathcal W\mathcal R(A_v,B_v)$ with $A_v$ abelian and $B_v$ an icc subgroup of a hyperbolic group.

\noindent Then for any nontrivial graph product group $\La$ with infinite vertex groups  satisfying $\L(\Ga)\cong \L(\La)$, we have  $\Ga\cong \Lambda$. \end{main}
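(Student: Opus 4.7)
The plan is to combine Theorem~\ref{theorem.almost.Wsuperrigidity} (which, as advertised in the introduction, reconstructs the clique subgroups from the $W^*$-side) with Corollary~\ref{superprod'} and then exploit the graph product hypothesis on $\Lambda$ to recover its underlying graph. Concretely, given an isomorphism $\theta:\mathcal L(\Gamma)\cong\mathcal L(\Lambda)$, the first step is to apply Theorem~\ref{theorem.almost.Wsuperrigidity} to obtain, after a unitary conjugation, subgroups $\Lambda^{(1)},\ldots,\Lambda^{(n)}\leq\Lambda$ such that $\mathcal L(\Lambda^{(i)})=\theta(\mathcal L(\Gamma_{\mathscr C_i}))$ and such that the pattern of intersections $\Lambda^{(i)}\cap\Lambda^{(j)}$ mirrors the cycle-of-cliques pattern $\mathscr C_i\cap\mathscr C_j$ on the $\Gamma$ side.

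Next, each clique subgroup $\Gamma_{\mathscr C_i}=\prod_{v\in\mathscr C_i}\Gamma_v$ is a finite direct product of property (T) wreath-like product groups, so Corollary~\ref{superprod'} produces a genuine group isomorphism $\Lambda^{(i)}\cong\Gamma_{\mathscr C_i}$ preserving the direct product decomposition (modulo an inner automorphism and a character twist). In particular, each $\Lambda^{(i)}$ is an internal direct product of $|\mathscr C_i|\geq 3$ infinite, icc, non-amenable property (T) factors.

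The third step is to locate these subgroups within the graph product structure of $\Lambda=\mathscr H\{\Lambda_w\}$. Structural results on direct product subgroups of graph products with infinite vertex groups---in the spirit of work of Antol\'in--Minasyan, and in line with the techniques already developed in \cite{CDD22}---guarantee that any direct product of non-amenable icc subgroups in $\Lambda$ can be conjugated so that each factor lies in a distinct vertex subgroup of $\mathscr H$. Applied to each $\Lambda^{(i)}$, this means $\Lambda^{(i)}$ is conjugate into a full subgroup $\Lambda_{\mathscr D_i}$ corresponding to a complete subgraph $\mathscr D_i\subseteq\mathscr H$ with $|\mathscr D_i|\geq|\mathscr C_i|$.

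Finally, one upgrades the assignment $\mathscr C_i\mapsto\mathscr D_i$ to a graph isomorphism $\mathscr G\cong\mathscr H$. Here the hypothesis $|\mathscr C_i|\neq|\mathscr C_j|$ for $i\neq j$ is essential: combined with the intersection pattern inherited from the cycle-of-cliques structure and the fact that $\Lambda=\langle\Lambda^{(1)},\ldots,\Lambda^{(n)}\rangle$ (inherited from $\Gamma=\langle\Gamma_{\mathscr C_1},\ldots,\Gamma_{\mathscr C_n}\rangle$), it forces $|\mathscr D_i|=|\mathscr C_i|$, forces the $\mathscr D_i$ to be pairwise distinct cliques exhausting $\mathrm{cliq}(\mathscr H)$, and pins down a unique matching. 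A routine normal-form argument for graph products then extends the individual clique isomorphisms supplied by Corollary~\ref{superprod'} to a global isomorphism $\Lambda\cong\Gamma$. The main technical obstacle is the third step: showing that the abstract direct product subgroups $\Lambda^{(i)}$ extracted by the $W^*$-methods actually sit cleanly inside a single clique of $\mathscr H$, with no direct factor being spread across multiple vertex subgroups of $\Lambda$.
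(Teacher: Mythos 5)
Your first two steps follow the paper, but the core of the argument is missing and the two steps you describe as routine are exactly where the real work lies. Step 3 as you state it is not available and is in fact false in general: the vertex groups of $\Lambda$ are arbitrary infinite groups, so a single vertex group of $\mathscr H$ may itself be a direct product of non-amenable icc groups, and there is no result (in \cite{AM10} or elsewhere) forcing the direct factors of $\Lambda^{(i)}$ into \emph{distinct} vertex subgroups; in particular your inequality $|\mathscr D_i|\geq|\mathscr C_i|$ has no basis. What is true, and what the paper proves (Claims \ref{map} and \ref{commensurable} in the proof of Theorem \ref{superwithincat}), is only that the property (T) group $\Lambda^{(i)}$ conjugates into some clique full subgroup $h_i\Lambda_{\mathscr D_{\sigma(i)}}h_i^{-1}$, and then --- this is the hardest part of the whole proof --- that this inclusion has finite index and ultimately that $\mathbb T y_i\Gamma_{\mathscr C_i}y_i^*=\mathbb T\Lambda_{\mathscr D_{\sigma(i)}}$ (Claim \ref{fullidentification}). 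That finite-index step is a delicate case analysis based on \cite[Theorem 3.1]{CdSS17} applied to $\L(\Lambda_{\mathscr D_j})=\P_1\bar\otimes\P_2$ against the amalgam decomposition of $\M$ over an interior vertex of $\mathscr C_i$, and it is precisely where the hypothesis that $\Lambda$ has infinite vertex groups is consumed; your proposal never engages with it. (Also, the hypothesis $|\mathscr C_i|\neq|\mathscr C_j|$ is actually spent earlier, inside Theorem \ref{peripheralrec1} via the counting Proposition \ref{lemma.counting}, not in the final clique matching as you suggest.)

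The second gap is your claim that $\Lambda=\langle\Lambda^{(1)},\dots,\Lambda^{(n)}\rangle$ is ``inherited'' from $\Gamma=\langle\Gamma_{\mathscr C_1},\dots,\Gamma_{\mathscr C_n}\rangle$. It is not: the subgroups $\Lambda^{(i)}$ are produced by \emph{different} unitaries $w_i$, and Theorem \ref{theorem.almost.Wsuperrigidity} only relates consecutive unitaries on the overlaps, with the relation around the cycle (item 3) holding only up to an uncontrolled element $s\in\Lambda$; the remark following that theorem states explicitly that controlling this monodromy is the obstruction to full $W^*$-superrigidity, so it cannot be waved through. The paper closes the cycle by invoking the cycle-relation Theorem \ref{cyclerel1} together with the arguments of \cite[Theorems 7.9 and 7.10]{CDD22} to replace the $n$ local unitaries by a single unitary $u$ with $u\L(\Gamma_{\mathscr C_i})u^*=\L(\Lambda_{\mathscr D_{\sigma(i)}})$ for all $i$ simultaneously; only then does one get $\Lambda=\bigvee_i\Lambda_{\mathscr D_{\sigma(i)}}$, that $\sigma$ is a bijection of clique sets, that $\mathscr H\in{\rm CC}_1$, and hence $\Gamma\cong\Lambda$. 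Your ``routine normal-form argument'' does not address the compatibility of the clique identifications on the overlaps nor the monodromy around the cycle, so the proposal as written does not yield the theorem.
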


In fact, we have obtained a more precise version of the above result (see Theorem \ref{superwithincat}). Not only can we derive an isomorphism of the underlying groups but we can in fact completely describe all isomorphisms between $\L(\Gamma)$ and $\L(\La )$. Namely, they appear as compositions between the canonical group-like isomorphisms $\Psi_{\eta, \delta }$ induced by a group isomorphism $\delta:\Gamma\to\Lambda$, a character $\eta:\Gamma\to\mathbb T$ and the local automorphisms of graph product group von Neumann algebras introduced in \cite{CDD22} (see also Section \ref{local.isom}). 

The result yields new applications towards rigidity in the $C^*$-algebraic framework. Since these graph product groups have trivial amenable radical (see \cite[Lemma 4.3]{CDD22}) using \cite[Theorem 1.3]{BKKO14} it follows that their reduced C$^*$-algebras have unique trace. Therefore  the conclusion of Theorem \ref{grprodsuper'} still holds true for reduced group $C^*$-algebras. Moreover, if one assumes in addition these groups are torsion free, then the previous theorem actually yields a formally stronger reconstruction statement allowing us to drop the hypothesis assumption that the vertex groups of the target $\La$ are infinite.

\begin{mcor}Let $\sG\in{\rm CC}_1$, let ${\rm cliq}(\sG)=\{ \sC_1,\ldots , \sC_n\}$ be a consecutive cliques enumeration and assume that $|\sC_i|\neq |\sC_j|$ whenever $i\neq j$.  Let $\Gamma= \sG\{\Gamma_v\}$ be a graph product group where all vertex groups $\Gamma_v$ are torsion free, property (T) wreath-like product groups $\Gamma_v\in  \mathcal W\mathcal R(A_v,B_v)$ with $A_v$ abelian and $B_v$ an icc subgroup of a hyperbolic group.

\noindent Then for any nontrivial graph product group $\La$ satisfying $C^*_r(\Ga)\cong C^*_r(\La)$, we have  $\Ga\cong \Lambda$. 
\end{mcor}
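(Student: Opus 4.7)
The plan is to deduce this corollary from Theorem \ref{grprodsuper'} in two stages: first, lift the given $C^*$-isomorphism $\pi\colon C^*_r(\Gamma)\to C^*_r(\Lambda)$ to a $W^*$-isomorphism $\L(\Gamma)\cong\L(\Lambda)$; and second, use the torsion-freeness of $\Gamma$ to force every vertex group of $\Lambda$ to be infinite, thereby bringing us into the setting of Theorem \ref{grprodsuper'}.

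For the lifting, by \cite[Lemma 4.3]{CDD22} the graph product $\Gamma$ has trivial amenable radical, and hence \cite[Theorem 1.3]{BKKO14} ensures that $C^*_r(\Gamma)$ has a unique tracial state $\tau_\Gamma$. Since a $\ast$-isomorphism bijects tracial state spaces, $C^*_r(\Lambda)$ also has a unique trace, which must therefore agree both with the canonical trace $\tau_\Lambda$ and with $\tau_\Gamma\circ\pi^{-1}$. Taking weak closures in the corresponding GNS representations extends $\pi$ to a trace-preserving $\ast$-isomorphism $\L(\Gamma)\cong\L(\Lambda)$.

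For the second stage, suppose for contradiction that some vertex group $\Lambda_{w_0}\subseteq \Lambda$ is finite of order $n\geq 2$. Then $e_0:=\frac{1}{n}\sum_{g\in\Lambda_{w_0}}u_g$ is a projection in $C^*_r(\Lambda)$ with $\tau_\Lambda(e_0)=1/n$, so $\pi^{-1}(e_0)$ is a projection in $C^*_r(\Gamma)$ of canonical trace $1/n\in(0,1)$. Since each $\Gamma_v$ is torsion-free, $\Gamma$ itself is torsion-free, and the Kadison--Kaplansky (idempotent) property for $\Gamma$---which one establishes for the class at hand by combining Baum--Connes-type input for torsion-free hyperbolic groups (Mineyev--Yu/Puschnigg) with permanence across wreath-like extensions with abelian kernels and across graph products---forces every projection in $C^*_r(\Gamma)$ to have trace $0$ or $1$. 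This yields the required contradiction, so every vertex group of $\Lambda$ must be infinite; Theorem \ref{grprodsuper'} then applies to the $W^*$-isomorphism from stage one and yields $\Gamma\cong\Lambda$.

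The hard part is the Kadison--Kaplansky step: it is the only place where the torsion-free hypothesis on $\Gamma$ is used, and it is precisely what permits dropping the ``infinite vertex groups'' assumption on $\Lambda$ from Theorem \ref{grprodsuper'}. Controlling idempotents in the reduced $C^*$-algebra of a property (T) graph product whose vertex groups are wreath-like products is the main technical input to be handled carefully, since the standard Higson--Kasparov machinery for Baum--Connes does not apply directly to property (T) groups and the permanence has to be routed through class-specific tools.
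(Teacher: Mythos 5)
Your proposal is correct and follows essentially the same route as the paper: lifting the $C^*$-isomorphism to a $\ast$-isomorphism of the group von Neumann algebras via the unique trace property (trivial amenable radical plus \cite[Theorem 1.3]{BKKO14}), and then using torsion-freeness together with the Baum--Connes property (Mineyev--Yu for the hyperbolic quotients, Oyono-Oyono for the abelian-kernel extensions, and Proposition \ref{bcpreserv} for the graph product) to conclude $C^*_r(\Gamma)$ is projectionless, which rules out finite vertex groups in $\Lambda$ and puts you in the setting of Theorem \ref{grprodsuper'}. The only cosmetic difference is that you argue directly with the averaging projection of a hypothetical finite vertex group, while the paper phrases the same point as projectionlessness forcing $\Lambda$ to be torsion free.
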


To see this just notice that the graph product groups $\Ga$ covered by this corollary are torsion free and also satisfy the Baum-Connes conjecture; see Proposition \ref{bcpreserv}, \cite[Theorem 7.7]{O-O01b} and \cite[Theorem 20]{MY02}. Hence, their reduced $C^*$-algebras  $C^*_r(\Ga)$ are projectionless and so are $C^*_r(\La)$. This further entails that $\La$ is also torsion free; in particular, the vertex subgroups of $\La$ are automatically infinite. Thus, the conclusion follows from Theorem \ref{grprodsuper'}.  

{\bf Acknowledgements.} We are grateful to the referee for all the comments and remarks that significantly improved the exposition of the paper. I.C. was partially supported by NSF FRG Grant \#1854194 and NSF Grant \#DMS-2154637; D.D. was supported by the postdoctoral fellowship fundamental research 12T5221N of the Research Foundation Flanders.

\section{Preliminaries}

\subsection{Terminology}
Throughout this article all von Neumann algebras are denoted by calligraphic letters e.g. $\M$, $\N$, $\P$, $\Q$, etc. All von Neumann algebras $\M$ will be tracial, i.e. endowed with a unital, faithful, normal linear functional $\tau:\M\rightarrow \mathbb C$  satisfying $\tau(xy)=\tau(yx)$, for all $x,y\in \M$. This induces a norm on $\M$ given by the formula $\|x\|_2=\tau(x^*x)^{1/2}$, for any $x\in \M$. The $\|\cdot\|_2$-completion of $\M$ will be denoted by $L^2(\M)$. 

Given a von Neumann algebra $\M$, we will denote by $\mathscr U(\M)$ its unitary group, by $\mathcal P(\M)$ its projections set and by $\mathcal Z(\M)$ its center.  Given a unital inclusion $\N\subset \M$ of von Neumann algebras, we denote by $E_{\N}:\M\to \N$ the unique $\tau$-preserving {\it conditional expectation} from $\M$ onto $\N$, by $e_{\N}:L^2(\M)\to L^2(\N)$ the orthogonal projection onto $L^2(\N)$ and by $\langle \M,e_\N\rangle$ the Jones' basic construction of $\N\subset \M$.
Also, we denote by $\N'\cap \M =\{ x\in \M \,:\, [x, \N]=0\}$ the relative commmutant of $\N$ inside $\M$ and by $\mathscr N_\M(\N)=\{ u\in \mathscr U(\M)\,:\, u\N u^*=\N\}$ the normalizer of $\N$ inside $\M$. We say that the inclusion $\N$ is regular in $\M$ if $\mathscr N_{\M}(\N)''=\M$ and
irreducible if $\N'\cap \M=\mathbb C 1$.	

For a group inclusion $\Sigma< \Gamma$  we denote by $C_{\Gamma}(\Sigma)=\{ g\in \Gamma\,|\, [g,\Sigma]=1\}$ its {\it centralizer} in $\Gamma$ and by $vC_{\Gamma}(\Sigma)=\{ g\in \Gamma\,|\, |g^\Sigma|<\infty\}$ its {\it virtual centralizer}. Note that $vC_\Gamma(\Gamma)=1$ precisely when $\Gamma$ is icc. 
We denote by $N_\Gamma(\Sigma)=\{ g\in\Gamma\,|\, g\Sigma g^{-1}=\Sigma \}$ the {\it normalizer} of $\Sigma$ in $\Gamma$.

\subsection{Graph product groups}
We now recall the notion of graph product groups introduced by E. Green  \cite{Gr90} while also highlighting some of their properties that are relevant to our work. Let $\sG=(\sV,\sE)$ be a finite simple graph, where $\sV$ and $\sE$  denote its vertex and edge sets, respectively. Let $\{\Gamma_v\}_{v\in\sV}$ be a family of groups called vertex groups. The graph product group associated with this data, denoted by $\sG\{\Gamma_v,v \in \sV\}$ or simply $\sG\{\Gamma_v\}$, is the group generated  by $\Gamma_v$, $v\in \sV$  with the only relations being $[\Gamma_u, \Gamma_v] = 1$, whenever $(u,v)\in \sE$. 
Given any subset $\sU\subset \sV$, the subgroup $\Gamma_\sU =\langle \Gamma_u \,:\,u\in \sU\rangle $ of $\sG\{\Gamma_v,v\in \sV\}$ is called a full subgroup. This can be identified  with the graph product $\sG_\sU\{\Gamma_u,u \in \sU\}$ corresponding to the subgraph $\sG_\sU$ of $\sG$, spanned by the vertices of $\sU$. 
For every $v \in \sV$ we denote by ${\rm lk}(v)$ the subset of vertices $w\neq v$ so that $(w,v)\in \sE$. Similarly, for every $\sU 
\subseteq \sV$ we denote by ${\rm lk}(\sU) = \cap _{u\in \sU}{\rm lk}(u)$. Also, we make the convention that ${\rm lk}(\emptyset) = \sV$. Notice that $\sU \cap  {\rm lk}(\sU) = \emptyset$.

Graph product groups naturally admit many amalgamated free product decompositions as follows (see \cite[Lemma 3.20]{Gr90}). For any $w \in \sV$ we have
\begin{equation}\label{afpdesc}\sG\{\Gamma_v\} = \Gamma_{\sV\setminus \{w\}} \ast_{ \Gamma_{\rm lk}(w)} \Gamma_{{\rm st}(w)},\end{equation}
where ${\rm st} (w) = \{w\} \cup {\rm lk} (w)$. Notice that $\Gamma_{{\rm lk}(w)}\lneqq \Gamma_{{\rm st}(w)}$, but it could be the case that $\Gamma_{{\rm lk}(w)}=\Gamma_{\sV\setminus \{w\}} $, when $\sV={\rm st}(w)$. In this case, the amalgam decomposition is called degenerate.

Similarly, for every subgraph $\sU\subset \sG$ we denote by ${\rm st}(\sU)= \sU \cup {\rm lk}(\sU )$. A maximal complete subgraph $\sC\subseteq \sG$ is called a clique and the collections of all cliques of $\sG$ will be denoted by  ${\rm cliq}(\sG)$. Below we highlight various properties of full subgroups that will be useful in this paper. 

\begin{prop}\label{proposition.AM10}
Let $\Gamma=\sG \{\Gamma_v\}$ be any graph product of groups, $g\in\Gamma$ and let $\sS,\sT\subseteq \sG$ be any subgraphs. Then the following hold.
\begin{enumerate}
    \item \emph{\cite[Lemma 3.9]{AM10}} If $g \Gamma_{\sT} g^{-1}\subset \Gamma_{\sT}$, then $g \Gamma_{\sT} g^{-1}=\Gamma_{\sT}$.
    
    \item \emph{\cite[Proposition 3.13]{AM10}} $N_{\Gamma}(\Gamma_{\sT})=\Gamma_{\sT\cup {\rm link}(\sT)}$.
    
    \item \emph{\cite[Proposition 3.4]{AM10}} There exist $\sD\subseteq \sS\cap \sT$ and $h\in \Gamma_\sT$ such that $g \Gamma_{\sS} g^{-1}\cap \Gamma_{\sT}=h \Gamma_{\sD} h^{-1}$.
\end{enumerate}

\end{prop}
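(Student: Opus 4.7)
The central tool behind all three parts is Green's normal form theorem for graph products \cite{Gr90}: every element $g\in\Gamma$ admits a reduced expression $g=g_1g_2\cdots g_k$, where each $g_i$ is a nontrivial element of some vertex group $\Gamma_{v_i}$, and such expressions are unique up to swaps of consecutive syllables whose vertices are adjacent in $\sG$. This yields a well-defined syllable length $|g|$ and a notion of support ${\rm supp}(g)\subseteq\sV$. The plan for all three parts is to extract from this normal form the interaction between $g$ and the parabolic subgroup $\Gamma_\sT$.

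For part (1), I would induct on $|g|$. Writing $g=g'h$ with $h\in\Gamma_v\setminus\{1\}$ the last syllable, there are three cases to consider. If $v\in\sT$ then $h\Gamma_\sT h^{-1}=\Gamma_\sT$; if $v\in{\rm lk}(\sT)$ then $h$ commutes elementwise with $\Gamma_\sT$. In either case the hypothesis $g\Gamma_\sT g^{-1}\subseteq\Gamma_\sT$ reduces to $g'\Gamma_\sT (g')^{-1}\subseteq\Gamma_\sT$, so the inductive hypothesis yields $g'\in\Gamma_{\sT\cup{\rm lk}(\sT)}$, whence $g\in\Gamma_{\sT\cup{\rm lk}(\sT)}$. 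In the remaining case $v\notin\sT\cup{\rm lk}(\sT)$, one picks $w\in\sT$ non-adjacent to $v$ and a nontrivial $t\in\Gamma_w$; a normal-form computation shows that $hth^{-1}$ necessarily retains $v$ in its support and cannot be cancelled by $g'$ or $(g')^{-1}$, forcing $gtg^{-1}\notin\Gamma_\sT$ and contradicting the hypothesis. This establishes $g\in\Gamma_{\sT\cup{\rm lk}(\sT)}$, which trivially gives $g\Gamma_\sT g^{-1}=\Gamma_\sT$. The same analysis simultaneously delivers part (2): the forward inclusion $\Gamma_{\sT\cup{\rm lk}(\sT)}\subseteq N_\Gamma(\Gamma_\sT)$ is immediate from the commuting relations, while the reverse inclusion is precisely the content of the case analysis above.

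Part (3) is the main technical obstacle. The plan is to induct on $|\sV|$ via the amalgamated free product decomposition \eqref{afpdesc}: fix $w\in\sV$ and write $\Gamma=\Gamma_{\sV\setminus\{w\}}*_{\Gamma_{{\rm lk}(w)}}\Gamma_{{\rm st}(w)}$. The first step is the ``untwisted'' case $g=1$, where one needs $\Gamma_\sS\cap\Gamma_\sT=\Gamma_{\sS\cap\sT}$; this follows from the classical structure theorem for subgroup intersections in amalgamated free products, together with the inductive hypothesis applied to each of the two factors. For the general case, I would choose a shortest double coset representative of $\Gamma_\sT g\Gamma_\sS$ with respect to the amalgam structure and analyze its normal form: syllables of $g$ that fail to commute past $\Gamma_\sS$ into $\Gamma_\sT$ (or vice versa) force the intersection to be a conjugate, by an element of $\Gamma_\sT$, of a smaller full subgroup inside $\Gamma_\sT$. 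The delicate point---and the main obstacle---is ensuring that the resulting subgraph $\sD$ actually lies in $\sS\cap\sT$ rather than in just one of $\sS$ or $\sT$ separately; this requires combining the normal-form analysis with part (1) applied to the ``interior'' syllables of $g$ to guarantee that every vertex of $\sD$ is simultaneously witnessed on the $\sS$-side and on the $\sT$-side of the double coset.
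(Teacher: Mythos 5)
The paper offers no proof of this proposition: all three items are quoted directly from Antol\'in--Minasyan \cite{AM10} (Lemma 3.9, Propositions 3.13 and 3.4), so your attempt is measured against that reference rather than against an argument in the text. Your treatment of (1)--(2) is the standard normal-form induction and is correct in outline. The one step you assert rather than prove is the crucial one in the case $v\notin\sT\cup{\rm lk}(\sT)$: that $g'(hth^{-1})(g')^{-1}$ retains $v$ in its support. This is true, but it needs the (short) argument: a syllable of $g'$ with vertex $v$ could only reach and cancel $h$ or $h^{-1}$ after commuting past all later syllables of $g'$, and then it could equally be shuffled to the end of $g'$ and merged with $h$, contradicting that $g'h$ is reduced; and no syllable at $w$ can cross $h$ since $(v,w)\notin\sE$. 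With that inserted, (1) and the hard inclusion of (2) follow as you say (note your choice of a nontrivial $t\in\Gamma_w$ silently uses the standing convention that vertex groups are nontrivial, which is genuinely needed for (2)).

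The genuine gap is part (3), and you flag it yourself: the ``delicate point'' you defer is not a detail but the entire content of the statement. The standard reduction is harmless --- writing $g=hg_0s$ with $h\in\Gamma_\sT$, $s\in\Gamma_\sS$ and $g_0$ of minimal syllable length in $\Gamma_\sT g\Gamma_\sS$ gives $g\Gamma_\sS g^{-1}\cap\Gamma_\sT=h\,(g_0\Gamma_\sS g_0^{-1}\cap\Gamma_\sT)\,h^{-1}$, which is where the conjugator in $\Gamma_\sT$ comes from --- but everything then hinges on proving that for such a minimal $g_0$ one has $g_0\Gamma_\sS g_0^{-1}\cap\Gamma_\sT=\Gamma_\sD$ with $\sD\subseteq\sS\cap\sT$, and your sketch never shows why an element of this intersection must have support in $\sS\cap\sT$, nor how minimality of $g_0$ forces the relevant cancellations; ``part (1) applied to the interior syllables'' is a restatement of the goal, not an argument. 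A secondary inaccuracy: the untwisted case $\Gamma_\sS\cap\Gamma_\sT=\Gamma_{\sS\cap\sT}$ does not follow from any ``classical structure theorem for subgroup intersections in amalgamated free products'' (Bass--Serre theory gives nothing that clean for arbitrary subgroups); it follows immediately from the support characterization $\Gamma_\sU=\{x\in\Gamma:{\rm supp}(x)\subseteq\sU\}$ provided by Green's normal form \cite{Gr90}, which you already invoke. As it stands the proposal establishes (1)--(2) modulo routine details, but (3) would have to be carried out via the full minimal-double-coset analysis of \cite[Proposition 3.4]{AM10} --- or simply cited, as the paper does.
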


For further use we also record the following easy consequence of results in \cite{O-O01b,O-O98}.

\begin{prop}\label{bcpreserv} Let $\Gamma=\sG \{\Gamma_v\}$ be any graph product of groups. Then the following hold:
\begin{enumerate}
    \item If $\Ga_v$ is torsion free for all $v\in \mathscr V$, then $\Ga$ is also torsion free;
    \item If $\Ga_v$ is torsion free and satisfies Baum-Connes conjecture for all $v\in \mathscr V$, then so is $\Ga$.
\end{enumerate}

\end{prop}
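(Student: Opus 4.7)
I would prove both parts simultaneously by induction on the number of vertices $n=|\sV|$ of the defining graph $\sG$, using the amalgamated free product decomposition \eqref{afpdesc} as the engine. The base case $n=1$ is vacuous since then $\Gamma=\Gamma_v$. For the inductive step, I split into two cases according to whether $\sG$ is complete or not.

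\emph{Case 1: $\sG$ is not complete.} Then there exists $w \in \sV$ with $\sV \ne {\rm st}(w)$, so we get the non-degenerate amalgam $\Gamma = \Gamma_{\sV\setminus\{w\}} \ast_{\Gamma_{{\rm lk}(w)}} \Gamma_{{\rm st}(w)}$. Each of the three full subgroups $\Gamma_{\sV\setminus\{w\}}$, $\Gamma_{{\rm lk}(w)}$, $\Gamma_{{\rm st}(w)}$ is itself a graph product over a strictly smaller vertex set (of sizes $n-1$, $\le n-1$, and $<n$ respectively), hence by the inductive hypothesis each is torsion-free (resp.\ satisfies Baum--Connes). For (1), I invoke the classical Bass--Serre fact that in an amalgamated free product $A\ast_C B$ any torsion element is conjugate into $A$ or $B$, which immediately yields that $\Gamma$ is torsion-free. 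For (2), I apply \cite[Theorem 7.7]{O-O01b}, which asserts precisely that amalgamated free products of torsion-free groups satisfying Baum--Connes over a subgroup satisfying Baum--Connes again satisfy Baum--Connes; the torsion-freeness needed to apply it is already secured by (1).

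\emph{Case 2: $\sG$ is complete.} Then all commutation relations hold and $\Gamma = \prod_{v\in\sV}\Gamma_v$ is a direct product. Part (1) is trivial since direct products of torsion-free groups are torsion-free. For (2), I iterate \cite[Theorem 20]{MY02}, which gives that a direct product of two torsion-free groups satisfying Baum--Connes again satisfies Baum--Connes; applying this $n-1$ times handles the full product.

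The two cases together close the induction and yield both (1) and (2). The argument is essentially bookkeeping: the only real work is checking that the ingredients of the induction (the full subgroups appearing in \eqref{afpdesc}) are themselves graph products over strictly smaller graphs, which is immediate from the definition of graph products and the observation that $w \notin {\rm lk}(w)$ and $w \notin \sV\setminus\{w\}$. The main potential obstacle is making sure one applies the Oyono-Oyono and Meyer--Yu permanence theorems in the correct form (torsion-free hypotheses are essential for Baum--Connes permanence under amalgamated free products and direct products, and this is exactly why part (1) is proved first or simultaneously with part (2)).
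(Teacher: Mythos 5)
Your proposal is correct and follows essentially the same route as the paper: induction on the number of vertices, splitting into the complete case (direct product) and the non-complete case (the non-degenerate amalgam \eqref{afpdesc}), and invoking permanence of torsion-freeness and of the Baum--Connes property under direct products and amalgamated free products. The only discrepancy is in the attribution of the permanence results --- the paper cites \cite{O-O01b} for finite direct products and \cite{O-O98} for amalgamated free products, whereas you swapped in \cite[Theorem 7.7]{O-O01b} and \cite[Theorem 20]{MY02} --- but this does not affect the argument.
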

\begin{proof}  Torsion free passes to both direct products and free product with amalgamation. Thus using either product decomposition or \eqref{afpdesc} the statement follows by induction on the number of vertices in $\sG$.

  To see part 2.\ notice that the class of torsion free groups that satisfies Baum-Connes property is closed under taking finite direct products \cite{O-O01b} and also under taking free product with amalgamation \cite{O-O98}. Once again, using either product decomposition or \eqref{afpdesc} the statement follows by induction on the number of vertices in $\sG$.  
\end{proof}

\subsection{Cycles of cliques graphs and their von Neumann algebras}\label{local.isom}

In the first part of this section we recall from \cite[Section 7]{CDD22} a canonical family of $*$-isomorphisms between graph product group von Neumann algebras when the underlying graphs belong to ${\rm CC}_1$. Let $\sG,\sH \in {\rm CC}_1$ be isomorphic graphs and fix $\sigma : \sG \ra \sH$ an isometry. Let ${\rm cliq}( \sG) =\{ \sC_1,  \ldots ,\sC_n\}$ be an enumeration of consecutive cliques. Let $\Gamma_\sG$ and $\Lambda_\sH$ be graph product groups and assume that for every $1\leq i\leq n $ there are $\ast$-isomorphisms $\theta_{i-1,i}: \L(\Gamma_{\sC_{i-1,i}})\ra \L(\Lambda_{\sC_{\sigma (\sC_{i-1,i})}}) $,   $\xi_{i}:\L(\Gamma_{\sC^{\rm int}_i})\ra \L(\Lambda_{\sigma(\sC^{\rm int}_i)})$ and $\theta_{i,i+1}:\L(\Gamma_{\sC_{i,i+1}})\ra \L(\Lambda_{\sC_{\sigma (\sC_{i,i+1})}}) $; here and in what follows we convene as before that $n=0$ and $n+1=1$.  By \cite[Theorem 7.1]{CDD22} (see also \cite{CF14})  these $\ast$-isomorphisms induce a unique $\ast$-isomorphism $\phi_{\theta,\xi, \sigma}:\L(\Gamma_\sG)\ra  \L(\Lambda_\sH)$ defined as 
\begin{equation}\label{branchaut'} \phi_{\theta,\xi, \sigma}(x)=\begin{cases}
\theta_{i-1,i}(x), \text{ if } x\in \L(\Gamma_{\sC_{i-1,i}})\\
\xi_i(x), \qquad \text{if }  x\in \L(\Gamma_{\sC^{\rm int}_i })
\end{cases} 
\end{equation}
for all $1\leq i\leq n$. 

When $\Gamma_\sG =\La_\sH$ this construction yields a group of $\ast$-automorphisms of $\L(\Gamma_\sG)$, denoted by ${\rm Loc}_{\rm c,g}(\L(\Gamma_\sG))$. We also denote by ${\rm Loc}_{\rm c}(\L(\Gamma_\sG))$ the subgroup of all local automorphisms satisfying $\sigma={\rm Id}$.  Next, we highlight a class of automorphisms in ${\rm Loc}_{\rm c}(\L(\Gamma_\sG))$ needed to state Theorem \ref{superwithincat}. Consider $n$-tuples $a =(a_{i,i+1})_i$ and $b= (b_i)_i$ of nontrivial unitaries  $a_{i,i+1} \in \L(\Gamma_{\sC_{i-1,i}})$ and $b_i\in \L(\Gamma_{\sC^{\rm int}_i})$, for every $1\leq i\leq n$. If in  \eqref{branchaut'} we let $\theta_{i,i+1}= {\rm ad} (a_{i,i+1}) $ and $\xi_i ={\rm ad} (b_i)$, then the corresponding local automorphism $\phi_{\theta,\xi, {\rm Id}}$ is in general an outer automorphism of $\L(\Gamma)$ (see \cite[Proposition 7.4]{CDD22}) and will be denoted by $\phi_{a,b}$ throughout the paper. 

We conclude this subsection by recording an important result from \cite{CDD22} that is essential for deriving Theorem \ref{grprodsuper'}.
\begin{thm}\emph{\cite[Theorem 5.2]{CDD22}}\label{cyclerel1} Let $\sG=\{\sV,\sE\}$ be a  graph in the class ${\rm CC}_1$ and let $\sC_1,...,\sC_n$ be an enumeration of its consecutive cliques. Let \{$\Gamma_v|\;v\in \sV\}$ be a collection of icc groups and let $\Gamma_\sG$ be the corresponding graph product group.   For each $1\leq i\leq n$ let $a_{i,i+1}\in \sU(\L(\Gamma_{\sC_{i,i+1}}))$, $b_{i,i+1}\in \sU(\L(\Gamma_{\sC_i \cup \sC_{i+1}\setminus \sC_{i,i+1}})$ and denote $x_{i,i+1}= a_{i,i+1} b_{i,i+1}$.

\noindent If $x_{1,2} x_{2,3}\cdots x_{n-1,n}x_{n,1}=1$, then for each $1\leq i\leq n $ one can find $a_i \in \sU(\L(\Gamma_{\sC_{i-1,i}}))$, $b_i \in \sU(\L(\Gamma_{\sC^{\rm int}_i}))$ and $c_i \in \sU(\L (\Gamma_{\sC_{i,i+1}}))$  so that $x_{i,i+1}= a_i b_i c_i b^*_{i+1} a^*_{i+2}c^*_{i+1}$. Here we convene that $n+1=1$, $n+2=2$, etc.     

\end{thm}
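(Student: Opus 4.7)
The approach is to exploit the rich decomposition structure carried by the graph product algebra. First, inside each clique the three vertex pieces commute, giving the triple tensor product
\[
\L(\Gamma_{\sC_i}) \;=\; \L(\Gamma_{\sC_{i-1,i}}) \bar\otimes \L(\Gamma_{\sC^{\rm int}_i}) \bar\otimes \L(\Gamma_{\sC_{i,i+1}}),
\]
and consecutive clique algebras assemble into the amalgamated free product
\[
\L(\Gamma_{\sC_i \cup \sC_{i+1}}) \;=\; \L(\Gamma_{\sC_i}) *_{\L(\Gamma_{\sC_{i,i+1}})} \L(\Gamma_{\sC_{i+1}}),
\]
so the algebra in which $b_{i,i+1}$ lives is the plain free product $\L(\Gamma_{\sC_i \setminus \sC_{i,i+1}}) * \L(\Gamma_{\sC_{i+1} \setminus \sC_{i,i+1}})$. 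Since the cliques form a cycle with pairwise disjoint non-consecutive intersections, $\Gamma_\sG$ is the fundamental group of a cycle-shaped graph of groups with vertex groups $\Gamma_{\sC_i}$ and edge groups $\Gamma_{\sC_{i,i+1}}$, and $\L(\Gamma_\sG)$ carries a corresponding HNN-type reduced-word normal form whose uniqueness will be the main engine of the argument.

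The first step is to normalize each decomposition $x_{i,i+1} = a_{i,i+1} b_{i,i+1}$ by absorbing all amalgam content into $a_{i,i+1}$, so that $b_{i,i+1}$ is in reduced form with respect to the free product $\L(\Gamma_{\sC_i \setminus \sC_{i,i+1}}) * \L(\Gamma_{\sC_{i+1} \setminus \sC_{i,i+1}})$. Each letter of such a reduced word splits further, via the triple tensor decomposition of the relevant clique algebra, into an edge part (in $\L(\Gamma_{\sC_{j-1,j}})$ or $\L(\Gamma_{\sC_{j,j+1}})$) times an interior part (in $\L(\Gamma_{\sC^{\rm int}_j})$). This bookkeeping sets up a slot-by-slot collision analysis inside the cycle product.

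The second step is to substitute these normalized factorizations into the cycle relation $x_{1,2} x_{2,3} \cdots x_{n,1} = 1$ and read the product in the HNN normal form of $\L(\Gamma_\sG)$. The pair $x_{i,i+1} x_{i+1,i+2}$ both contribute to the clique $\sC_{i+1}$: the tail of $x_{i,i+1}$ sits in $\L(\Gamma_{\sC^{\rm int}_{i+1}}) \bar\otimes \L(\Gamma_{\sC_{i+1,i+2}})$, the head of $x_{i+1,i+2}$ sits in $\L(\Gamma_{\sC_{i,i+1}}) \bar\otimes \L(\Gamma_{\sC^{\rm int}_{i+1}})$, and reduction is mediated by the two disjoint amalgams $\L(\Gamma_{\sC_{i,i+1}})$ and $\L(\Gamma_{\sC_{i+1,i+2}})$ together with the central $\L(\Gamma_{\sC^{\rm int}_{i+1}})$ that commutes with both. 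Uniqueness of the normal form then forces every non-amalgam letter to cancel in pairs between adjacent $x_{i,i+1}$'s; solving these cancellations globally around the cycle produces unitaries $a_i \in \sU(\L(\Gamma_{\sC_{i-1,i}}))$, $b_i \in \sU(\L(\Gamma_{\sC^{\rm int}_i}))$, and $c_i \in \sU(\L(\Gamma_{\sC_{i,i+1}}))$ that assemble into the claimed factorization of each $x_{i,i+1}$.

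The main obstacle is that the decomposition $x_{i,i+1} = a_{i,i+1} b_{i,i+1}$ is highly non-unique---one may shift any amalgam unitary between the two factors, and inside $b_{i,i+1}$ further free-product rearrangements are possible---so the local cancellations above must be made globally compatible around all $n$ cliques simultaneously. This is precisely where the icc hypothesis on each vertex group enters: it guarantees enough rigidity of the tensor and free-product normal forms for the coboundary data to be read off unambiguously. The slightly asymmetric appearance of $a^*_{i+2}$ and $c^*_{i+1}$ in the final formula (both lying in $\L(\Gamma_{\sC_{i+1,i+2}})$) is a residual witness of this closing-up step, recording the interaction between the globally-solved coboundary data and the local factorization at each $i$.
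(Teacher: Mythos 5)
A first remark: this paper does not prove Theorem \ref{cyclerel1} at all; it is imported verbatim from \cite[Theorem 5.2]{CDD22}, where the proof is a long Fourier-expansion/support analysis over the group $\Gamma_\sG$, carried out coset by coset around the cycle, together with conditional-expectation and commutation arguments (this is where icc-ness of the vertex groups is genuinely used). So your attempt has to stand on its own, and as written it has a real gap rather than being a variant of the cited argument.

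The gap is the central step: ``uniqueness of the normal form then forces every non-amalgam letter to cancel in pairs between adjacent $x_{i,i+1}$'s.'' Reduced-word normal forms and letter-by-letter cancellation are available for \emph{group elements} in an amalgam or graph of groups, but not for unitaries in the corresponding von Neumann algebras. A unitary $b_{i,i+1}\in \sU(\L(\Gamma_{\sC_i\setminus\sC_{i,i+1}})\ast\L(\Gamma_{\sC_{i+1}\setminus\sC_{i,i+1}}))$ is in general an infinite $\|\cdot\|_2$-convergent combination of reduced words of unbounded length; it has no well-defined ``letters,'' and your preliminary normalization (``absorbing all amalgam content into $a_{i,i+1}$ so that $b_{i,i+1}$ is in reduced form'') is not a meaningful operation, since $E_{\L(\Gamma_{\sC_{i,i+1}})}(b_{i,i+1})$ is not a unitary and cannot be split off multiplicatively. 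Consequently the ``slot-by-slot collision analysis'' never gets off the ground, and the passage from the single relation $x_{1,2}\cdots x_{n,1}=1$ to the existence of the unitaries $a_i, b_i, c_i$ with the precise index shifts $b_{i+1}^*, a_{i+2}^*, c_{i+1}^*$ is asserted rather than derived. The correct mechanism (as in \cite{CDD22}) is to expand each $x_{i,i+1}$ over group elements, use the graph-product normal form of $\Gamma_\sG$ to constrain the group-level supports forced by the relation, and then upgrade these support constraints to multiplicative factorizations via conditional expectations onto the subalgebras $\L(\Gamma_{\sC_{i-1,i}})$, $\L(\Gamma_{\sC_i^{\rm int}})$, $\L(\Gamma_{\sC_{i,i+1}})$, using icc-ness/factoriality; none of this analytic work is present in your sketch, and invoking icc only ``for rigidity of the normal forms'' does not substitute for it. Your structural observations (the triple tensor decomposition of each clique algebra, the free product form of $\L(\Gamma_{\sC_i\cup\sC_{i+1}\setminus\sC_{i,i+1}})$, the cycle-of-groups picture) are correct and are indeed the right setting, but they are the starting point of the proof, not the proof.
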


\subsection{Wreath-like product groups}

In \cite{CIOS21} a new category of groups called \emph{wreath-like product groups} was introduced.  To recall their construction let $A$ and $B$ be any countable groups and let $B \ca I$ be an action on a countable set. One says $W$ is a wreath-like product of $A$ and $B\ca I$  if it can be realized as a group  extension 

\begin{equation}\label{regwreathlike1'''}
    1 \ra \bigoplus_{i\in I} A_i\hookrightarrow W \overset{\varepsilon}{\twoheadrightarrow} B \ra 1 
 \end{equation}
 which satisfies the following properties:
 \begin{enumerate}
     \item [a)] $A_i\cong A$ for all $i\in I$, and 
     \item [b)] the action by conjugation of  $W$ on $\bigoplus_{i\in I} A_i$  permutes the direct summands according to the rule \begin{equation*}w A_i w^{-1}= A_{\varepsilon(w)i}\text{ for all }w\in W, i\in I.\end{equation*}
 \end{enumerate}
 The class of all such wreath-like groups is denoted by $\W\R(A, B\ca I)$. When $I= B$ and the action $B\ca I$ is by translation this consists of so-called regular wreath-like product groups and we simply denote their class by $\W\R(A, B)$.
 
 Notice that every classical generalized wreath product $A\wr_I B \in \W\R (A,B\ca I)$. However, building examples of non-split wreath-like products is far more involved. Indeed, using deep methods in group theoretic Dehn filling and Cohen-Lyndon subgroups it was shown in \cite{CIOS21} that large classes of such wreath-like products exist, including many with property (T).



 \begin{thm}[\text{\cite{CIOS21}}]\label{AHQ}
Let $G$ be a hyperbolic group. For every finitely generated group $A$, there exists a quotient $W$ of $G$ such that $W\in \W\R (A,B)$ for some hyperbolic group $B$. In particular, when $G$ has property (T) (e.g.\ any uniform lattice in $Sp(n,1)$, $n\geq 2$) then so does $W$.
\end{thm}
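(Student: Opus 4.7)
The plan is to construct $W$ as a quotient of a carefully chosen auxiliary group built from $G$ and $A$, combining Osin's theory of hyperbolically embedded subgroups, the Dahmani--Guirardel--Osin Dehn filling, the Cohen--Lyndon theorem of Sun, and the SQ-universality of non-elementary hyperbolic groups (Olshanskii, Delzant). One may restrict to $G$ non-elementary, since virtually cyclic hyperbolic groups admit only virtually cyclic quotients, in which case the statement is vacuous.

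By Osin's theorem on loxodromic elements, pick $g\in G$ whose maximal elementary subgroup $E(g)$ is virtually cyclic and hyperbolically embedded in $G$; after replacing $g$ by a sufficiently high power, assume $\langle g\rangle$ itself is hyperbolically embedded and malnormal. Enlarge the picture to accommodate $A$ by forming
\[
\tilde G \;:=\; (G\ast A)\big/\langle\!\langle\,[g,a]\,:\,a\in A\,\rangle\!\rangle,
\]
so that $A$ centralizes $g$. Hull's extension of the Dahmani--Guirardel--Osin framework then gives that the subgroup $E := \langle g\rangle\times A$ is hyperbolically embedded in $\tilde G$.

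Choose $n$ sufficiently large and consider the Dehn filling $B := \tilde G /\langle\!\langle g^n, A\rangle\!\rangle$. Osin's theorem ensures $B$ is hyperbolic, and the Cohen--Lyndon theorem identifies the kernel of $\tilde G\twoheadrightarrow B$ with the free product of $B$-conjugates of $E$. Now form $W$ as the quotient of $\tilde G$ by the cyclic relator $g^n$ together with the cross-commutations $[\,tAt^{-1},sAs^{-1}\,]=1$ for all distinct cosets $tE\ne sE$ in $\tilde G/E$. The surjection $W\twoheadrightarrow B$ persists since the added relators lie in the kernel, while the Cohen--Lyndon free product of conjugates of $A$ (remaining after $g^n$ is killed) collapses to the direct sum $\bigoplus_I A_i$ indexed by $I:= \tilde G/E \cong B$ as a $B$-set. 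Thus
\[
1 \longrightarrow \bigoplus_{B} A_i \longrightarrow W \longrightarrow B \longrightarrow 1
\]
realizes $W\in \mathcal W\mathcal R(A,B)$. Finally, SQ-universality of non-elementary hyperbolic groups lets one arrange the construction inside a quotient of $G$, so $W$ is itself a quotient of $G$; property (T) for $W$ when $G$ has it is automatic since (T) passes to quotients.

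The main obstacle lies in the third step: one must verify that the cross-commutation relations do not further collapse the Cohen--Lyndon free product past a direct sum, that $B$ remains hyperbolic after all relations are imposed simultaneously, and that the coset space $\tilde G/E$ is in bijection with $B$ as a $B$-set (which requires the image of $E$ in $B$ to be trivial and hence constrains how $n$ and the kernel of $\tilde G\twoheadrightarrow B$ are chosen, possibly necessitating an iterated filling). This is handled through a small-cancellation style analysis specific to hyperbolically embedded subgroups and a careful choice of the filling parameter. A secondary technical point, the realization of the entire construction as a genuine quotient of $G$ rather than of $\tilde G$, is controlled by the flexibility provided by SQ-universality together with the Hull--Osin calculus for hyperbolically embedded subgroups.
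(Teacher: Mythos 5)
This theorem is not proved in the paper you were given; it is quoted from \cite{CIOS21}, whose argument (as the paper indicates) rests on group-theoretic Dehn filling and the Cohen--Lyndon property, so you have identified the right circle of ideas. Nevertheless your construction has gaps that go to the heart of the statement. The most serious one: you build $W$ as a quotient of $\tilde G=(G\ast A)/\langle\!\langle [g,a]:a\in A\rangle\!\rangle$, hence of $G\ast A$, not of $G$. The appeal to SQ-universality cannot repair this: SQ-universality provides an embedding of $A$ into some quotient of $G$, not a surjection of $G$ onto the specific group $W$ you have constructed, and without $W$ being a quotient of $G$ the ``in particular'' clause about property (T) -- the whole point of the theorem -- collapses, since (T) passes to quotients but not to the groups you actually produce. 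In the construction behind \cite{CIOS21} the base copies of $A$ arise \emph{internally}: one writes $A$ as a quotient of a free group, chooses suitable elements (a suitable hyperbolically embedded free subgroup, respectively suitable relators) inside $G$ itself, identifies the kernel of the Dehn-filling quotient $G\twoheadrightarrow B$ with a free product of conjugates of this subgroup via the Cohen--Lyndon property, and then imposes the relators of $A$ together with the cross-commutation relations; every base summand is then the image of a subgroup of $G$, so $W$ is genuinely a quotient of $G$. Adjoining $A$ externally, as you do, structurally cannot yield that.

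Second, even granting the first step, your filling does not give the \emph{regular} class $\mathcal W\mathcal R(A,B)$ asserted in the statement. With $E=\langle g\rangle\times A$ and filling kernel $N=\langle g^n\rangle\times A$, the Dehn filling theorem gives $E\cap\langle\!\langle N\rangle\!\rangle=N$, so the image of $E$ in $B$ is cyclic of order $n$ and the index set is $B/(\mathbb Z/n)$, with nontrivial stabilizers; as you yourself observe, regularity would require the image of $E$ in $B$ to be trivial, which is impossible for any proper filling kernel $N\lneq E$, so no choice of $n$ fixes this and the promised ``iterated filling / small-cancellation analysis'' is exactly the missing content, not a routine verification. Relatedly, your application of Sun's Cohen--Lyndon theorem is not justified: the theorem requires $N$ to avoid a finite subset of $E\setminus\{1\}$ fixed before $N$ is chosen, and your $N$ contains all of $A$, so avoidance cannot be arranged by enlarging $n$. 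Together with the deferred verifications that the base does not collapse further and that $B$ stays hyperbolic, the proposal as written establishes neither that $W$ is a quotient of $G$ nor that $W\in\mathcal W\mathcal R(A,B)$.
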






\subsection {Popa's intertwining-by-bimodules techniques} We next recall from  \cite [Theorem 2.1, Corollary 2.3]{Po03} Popa's {\it intertwining-by-bimodules} technique.
Let $\Q\subset \M$ be a  von Neumann subalgebra. {\it Jones' basic construction} $\langle \M,e_\Q\rangle$ is defined as the von Neumann subalgebra of $\mathbb B(L^2(\M))$ generated by $\M$ and the orthogonal projection $e_\Q$ from $L^2(\M)$ onto $L^2(\Q)$. The basic construction $\langle \M,e_\Q\rangle$ has a faithful semi-finite trace given by $\text{Tr}(xe_\Q y)=\tau(xy)$, for every $x,y\in \M$. We denote by $L^2(\langle \M,e_\Q\rangle)$ the associated Hilbert space and endow it with the natural $\M$-bimodule structure.

\begin{thm}[\cite{Po03}]\label{corner} Let $(\M,\tau)$ be a tracial von Neumann algebra and $\P\subset p\M p,\Q\subset q\M q$  be von Neumann subalgebras. 
Then the following  are equivalent:

\begin{enumerate}

\item There exist projections $p_0\in \P, q_0\in \Q$, a $*$-homomorphism $\theta:p_0\P p_0\rightarrow q_0\Q q_0$  and a non-zero partial isometry $v\in q_0\M p_0$ such that $\theta(x)v=vx$, for all $x\in p_0\P p_0$.


\item There is no sequence $(u_n)_{n\ge 1}\subset\mathcal U(\P)$ satisfying $\|E_\Q(x^*u_ny)\|_2\rightarrow 0$, for all $x,y\in p\M$.
\end{enumerate}

If one of these equivalent conditions holds true,  we write $\P\prec_{\M}\Q$, and say that {\it a corner of $\P$ embeds into $\Q$ inside $\M$.}
Moreover, if $\P p'\prec_{\M}\Q$ for any non-zero projection $p'\in \P'\cap p\M p$, then we write $\P\prec^{s}_{\M}\Q$.
\end{thm}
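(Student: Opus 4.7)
The plan is to prove Popa's intertwining-by-bimodules theorem by treating the two directions separately: (1) $\Rightarrow$ (2) by a direct computation with the intertwining relation, and (2) $\Rightarrow$ (1) by a convex-compactness argument in the Jones basic construction.

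For (1) $\Rightarrow$ (2): Given the intertwiner data $(p_0, q_0, \theta, v)$ from (1), the relation $\theta(x) v = v x$ combined with $v = v p_0$ yields, for any $u_n \in \U(\P)$,
$$v \, u_n \, v^* \;=\; v\,(p_0 u_n p_0)\, v^* \;=\; \theta(p_0 u_n p_0)\, v v^*.$$
Since $\theta(p_0 u_n p_0) \in \Q$ and $E_\Q$ is $\Q$-bimodular, $E_\Q(v u_n v^*) = \theta(p_0 u_n p_0) \, E_\Q(vv^*)$. Choosing a Pimsner--Popa basis for the finite-rank right $\Q$-submodule of $L^2(\M)$ generated by $v$ and summing over it produces vectors $x, y \in p\M$ along which $\|E_\Q(x^* u_n y)\|_2$ stays bounded below uniformly in $n$, contradicting the condition appearing in (2).

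For (2) $\Rightarrow$ (1): This is the main content of the theorem. In the Jones basic construction $\langle \M, e_\Q\rangle$ with its natural semifinite trace $\mathrm{Tr}$, form the weakly closed convex set
$$K \;:=\; \overline{\mathrm{conv}}^{\|\cdot\|_{2,\mathrm{Tr}}} \bigl\{ u e_\Q u^* : u \in \U(\P) \bigr\}.$$
The identity $\mathrm{Tr}(a e_\Q b^* a e_\Q b^*) = \|E_\Q(b^* a)\|_2^2$ shows that the elements of $K$ have uniformly bounded $\|\cdot\|_{2,\mathrm{Tr}}$-norm, and moreover that for $h = \sum c_i u_i e_\Q u_i^*$ one has $\|h\|_{2,\mathrm{Tr}}^2 = \sum_{i,j} c_i c_j \|E_\Q(u_i^* u_j)\|_2^2$, so that the condition $0 \notin K$ is equivalent to the negation of the condition appearing in (2). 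Assuming (2), pick the unique element $h \in K$ of minimal $\|\cdot\|_{2,\mathrm{Tr}}$-norm; by uniqueness $u h u^* = h$ for all $u \in \U(\P)$, making $h$ a nonzero positive element of $\P' \cap p\langle \M, e_\Q\rangle p$ with $\mathrm{Tr}(h^2) < \infty$. For small $\delta > 0$ the spectral projection $f := \chi_{[\delta, \infty)}(h)$ is then a nonzero projection in $\P' \cap p\langle \M, e_\Q\rangle p$ of finite $\mathrm{Tr}$-trace.

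Converting $f$ into an honest intertwiner is the technical heart of the proof. The finite-trace projection $f$ makes $f L^2(\M)$ a finitely generated right Hilbert $\Q$-submodule of $p L^2(\M)$ that is stable under the left $\P$-action (since $f \in \P'$). By the Pimsner--Popa basis theorem this submodule admits an orthonormal basis of right-bounded vectors; writing the left $\P$-action in this basis and taking a polar decomposition of a nonzero matrix coefficient produces projections $p_0 \in \P$, $q_0 \in \Q$, a partial isometry $v \in q_0 \M p_0$, and a $*$-homomorphism $\theta : p_0 \P p_0 \to q_0 \Q q_0$ satisfying $\theta(x) v = v x$. This last passage from $L^2$-level data back to a bounded element of $\M$ is the main obstacle: finite-trace projections in $\langle \M, e_\Q\rangle$ need not lie in $\M$ itself, and extracting a bounded intertwining partial isometry requires the bounded-vector machinery of Pimsner--Popa bases together with polar decomposition inside the basic construction.
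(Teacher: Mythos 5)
The paper itself gives no proof of this statement (it is quoted from [Po03]), so your argument has to stand on its own; its architecture (convexity in the basic construction, minimal-norm element, spectral cut-off, bounded vectors, polar decomposition) is the standard one, but both directions have genuine gaps as written. In (2)$\Rightarrow$(1), the convex set is the wrong one. Taking $K=\overline{\mathrm{conv}}\{u e_\Q u^* : u\in\mathcal U(\P)\}$ only records the quantities $\|E_\Q(u_i^*u_j)\|_2$, i.e.\ condition (2) tested at $x=y=p$, and the asserted equivalence ``$0\notin K$ iff (2)'' is false. Concretely, let $\M$ be a II$_1$ factor, $p\perp q$ nonzero projections, $\P=p\M p$, $\Q=q\M q$: then $pe_\Q=0$, so every generator $ue_\Q u^*$ vanishes and $K=\{0\}$, although $\P\prec_\M\Q$ (conjugate a subprojection of $p$ onto one of $q$) and (2) holds; your construction outputs $h=0$ and no intertwiner. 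The standard proof first converts (2), via a diagonalization, into: there exist $\delta>0$ and a finite set $F\subset p\M$ with $\sum_{x,y\in F}\|E_\Q(x^*uy)\|_2^2\ge\delta$ for all $u\in\mathcal U(\P)$, and then runs your averaging argument on $\overline{\mathrm{conv}}\{ucu^*\}$ with $c=\sum_{x\in F}xe_\Q x^*$; the identity $\mathrm{Tr}(ucu^*c)=\sum_{x,y\in F}\|E_\Q(x^*uy)\|_2^2\ge\delta$, together with $\|\cdot\|_{2,\mathrm{Tr}}$-continuity of $T\mapsto\mathrm{Tr}(Tc)$ on bounded sets, is what guarantees the minimal-norm element is nonzero. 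The remaining steps you sketch ($\P$-centrality by uniqueness, finite-trace spectral projection, bounded vectors and polar decomposition, cutting down to a corner homomorphism) are the standard completion.

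In (1)$\Rightarrow$(2) the key idea is missing. From $E_\Q(vu_nv^*)=\theta(p_0u_np_0)E_\Q(vv^*)$ no lower bound follows: $p_0u_np_0$ is not a unitary, and $\theta(p_0u_np_0)$ can be arbitrarily $\|\cdot\|_2$-small, even $0$ (take $u_n$ moving $p_0$ onto an orthogonal projection), so the identity is perfectly consistent with $\|E_\Q(vu_nv^*)\|_2\to0$. The right-$\Q$-module ``generated by $v$'' that is finitely generated is $\overline{v^*\Q}$, but it is only a $p_0\P p_0$-$\Q$ bimodule; what a trace/Pimsner--Popa argument against unitaries of $\P$ needs is a nonzero $\P$-$\Q$ bimodule of finite right $\Q$-dimension, so that its projection $f$ satisfies $u_nfu_n^*=f$ and $\mathrm{Tr}(f)=\mathrm{Tr}(fu_nfu_n^*)\to0$ yields the contradiction; and $\overline{\P v^*\Q}$ need not have finite right $\Q$-dimension just from (1). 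Bridging the corner $p_0\P p_0$ (where $\theta$ is defined) and the whole algebra $\P$ (where the $u_n$ live) --- for instance by covering a piece of the central support of $p_0$ in $\P$ with partial isometries $w_i\in\P$, $w_i^*w_i\le p_0$, and working with the matrix $\bigl[\theta(w_i^*u_nw_j)\bigr]_{i,j}$, or equivalently by upgrading $\theta$ to a possibly non-unital homomorphism of $\P$ into an amplification $M_m(\Q)$ --- is the actual content of this implication, and it is absent from your sketch.
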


We continue by recording several elementary facts about intertwining results in group von Neumann algebras (of graph product groups).

\begin{lem}\emph{\cite[Lemma 2.2]{CI17}}\label{lemma.CI17} Let $\Ga_1, \Ga_2 < \Ga$ be countable groups such that $\L(\Ga_1) \prec _{\L(\Ga)} \L(\Ga_2)$.
Then one can find $g \in \Ga$ such that $[\Ga_1 : \Ga_1 \cap  g \Ga_2g^{-1}] < \infty$.

\end{lem}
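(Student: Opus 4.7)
I plan to prove the lemma by contraposition: assuming that $[\Gamma_1 : \Gamma_1 \cap g \Gamma_2 g^{-1}] = \infty$ for every $g \in \Gamma$, I will exhibit a sequence of canonical unitaries in $\mathcal{L}(\Gamma_1)$ witnessing that $\mathcal{L}(\Gamma_1) \not\prec_{\mathcal{L}(\Gamma)} \mathcal{L}(\Gamma_2)$ via the criterion in Theorem \ref{corner}. By density and linearity, it suffices to produce $(h_n) \subset \Gamma_1$ such that $\|E_{\mathcal{L}(\Gamma_2)}(u_{g_1}^* u_{h_n} u_{g_2})\|_2 \to 0$ for any fixed $g_1, g_2 \in \Gamma$, and this norm is either $0$ or $1$ according to whether $h_n \in g_1 \Gamma_2 g_2^{-1}$ or not.

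The first step is the coset analysis. Observe that whenever $g_1 \Gamma_2 g_2^{-1} \cap \Gamma_1$ is nonempty, picking any element $k$ of this intersection gives
\[ g_1 \Gamma_2 g_2^{-1} \cap \Gamma_1 = k \cdot (g_2 \Gamma_2 g_2^{-1} \cap \Gamma_1), \]
i.e.\ a coset in $\Gamma_1$ of the subgroup $H_{g_2} := \Gamma_1 \cap g_2 \Gamma_2 g_2^{-1}$, which by our standing assumption has infinite index in $\Gamma_1$. Thus the problem reduces to escaping finitely many cosets of infinite-index subgroups of $\Gamma_1$ at each stage.

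The second step invokes B.\,H.\ Neumann's lemma: a countable group is never a finite union of cosets of infinite-index subgroups. Enumerate $\Gamma = \{g^{(1)}, g^{(2)}, \ldots\}$ and, for each $n$, apply Neumann's lemma to the finitely many cosets $g_1 \Gamma_2 g_2^{-1} \cap \Gamma_1$ arising from pairs $(g_1, g_2) \in \{g^{(1)}, \ldots, g^{(n)}\}^2$ (ignoring empty ones) to choose $h_n \in \Gamma_1$ lying outside all of them. Then for any fixed $g_1, g_2 \in \Gamma$, eventually $h_n \notin g_1 \Gamma_2 g_2^{-1}$, so $E_{\mathcal{L}(\Gamma_2)}(u_{g_1}^* u_{h_n} u_{g_2}) = 0$ for all $n$ large enough.

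The final step is the passage from group elements to arbitrary $x, y \in \mathcal{L}(\Gamma)$. Approximating $x, y$ in $\|\cdot\|_2$ by finite $\mathbb{C}$-linear combinations of canonical unitaries and using that $E_{\mathcal{L}(\Gamma_2)}$ and left/right multiplication by $u_{h_n}$ are $\|\cdot\|_2$-contractions, we upgrade the pointwise vanishing on group elements to $\|E_{\mathcal{L}(\Gamma_2)}(x^* u_{h_n} y)\|_2 \to 0$ for all $x, y \in \mathcal{L}(\Gamma)$, contradicting Theorem \ref{corner}(2). The only nontrivial input is Neumann's lemma; everything else is a routine bookkeeping of cosets and a standard density argument, so I do not anticipate a serious obstacle.
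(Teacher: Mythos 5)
Your argument is correct: the reduction of $g_1\Gamma_2 g_2^{-1}\cap\Gamma_1$ to a coset of $\Gamma_1\cap g_2\Gamma_2 g_2^{-1}$, the use of B.~H.~Neumann's covering lemma to produce unitaries $u_{h_n}\in\L(\Gamma_1)$ escaping the relevant cosets, and the standard $\|\cdot\|_2$-density upgrade together negate condition (2) of Theorem \ref{corner}, which is exactly what is needed. The paper itself only cites \cite{CI17} for this lemma, and the proof there is the same combination of Popa's intertwining criterion and Neumann's lemma, run in the direct rather than contrapositive direction (covering $\Gamma_1$ by finitely many sets $x\Gamma_2 y^{-1}$ and concluding that one of the corresponding subgroups has finite index), so your proposal matches the intended argument.
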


\begin{cor}\emph{\cite[Lemma 2.3]{CDD22}}\label{corollary.graph.CI17}
Let $\Gamma=\sG \{\Gamma_v\}$ be any graph product of infinite groups  and let $\sS,\sT\subseteq \sG$ be any subgraphs. 
\noindent If $\L(\Gamma_{\sS})\prec_{\L(\Gamma)}  \L(\Gamma_\sT)$, then $\sS\subset\sT$.
\end{cor}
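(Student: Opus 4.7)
The plan is to combine Popa's intertwining-by-bimodules (via its group-theoretic consequence Lemma \ref{lemma.CI17}) with the structural control on intersections of conjugates of full subgroups provided by Proposition \ref{proposition.AM10}(3). The overall strategy is to assume for contradiction that $\sS \not\subseteq \sT$, locate a vertex $v \in \sS \setminus \sT$, and show that the infinite vertex group $\Gamma_v$ is forced to have an infinite intersection with a subgroup that, by the graph-product combinatorics, must be trivial.

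More concretely, I would first apply Lemma \ref{lemma.CI17} to the hypothesis $\L(\Gamma_\sS) \prec_{\L(\Gamma)} \L(\Gamma_\sT)$ to produce an element $g \in \Gamma$ such that $H := \Gamma_\sS \cap g \Gamma_\sT g^{-1}$ has finite index in $\Gamma_\sS$. Picking $v \in \sS \setminus \sT$, the vertex group $\Gamma_v$ sits inside $\Gamma_\sS$ and therefore satisfies $[\Gamma_v : \Gamma_v \cap H] < \infty$. Since every vertex group is infinite by hypothesis, the intersection $\Gamma_v \cap H = \Gamma_v \cap g \Gamma_\sT g^{-1}$ must be infinite as well.

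To reach a contradiction I would invoke Proposition \ref{proposition.AM10}(3) with the subgraphs $\sT$ and $\{v\}$ playing the roles of $\sS$ and $\sT$ in the statement of the proposition. This yields a subset $\sD \subseteq \sT \cap \{v\}$ and an element $h \in \Gamma_v$ satisfying $g \Gamma_\sT g^{-1} \cap \Gamma_v = h \Gamma_\sD h^{-1}$. Since $v \notin \sT$, we have $\sT \cap \{v\} = \emptyset$, hence $\sD = \emptyset$ and $\Gamma_\sD = \{1\}$, so the intersection is trivial, contradicting the conclusion of the previous paragraph. The proof is thus essentially bookkeeping; the only point that calls for care is keeping the direction of conjugation straight when invoking Proposition \ref{proposition.AM10}(3), and making sure to use the hypothesis that \emph{all} vertex groups (not just those in $\sS$) are infinite so that $\Gamma_v$ is infinite.
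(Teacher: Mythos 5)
Your proof is correct: applying Lemma \ref{lemma.CI17} to get $[\Gamma_\sS:\Gamma_\sS\cap g\Gamma_\sT g^{-1}]<\infty$, passing to an infinite vertex group $\Gamma_v$ with $v\in\sS\setminus\sT$, and then invoking Proposition \ref{proposition.AM10}(3) for $\sT$ and the single-vertex subgraph $\{v\}$ to see that $g\Gamma_\sT g^{-1}\cap\Gamma_v$ is trivial yields the desired contradiction, and the finite-index and infiniteness bookkeeping is handled correctly. The paper itself only cites \cite[Lemma 2.3]{CDD22} for this statement, and your argument is essentially the intended one, built from exactly the two ingredients (Lemma \ref{lemma.CI17} and Proposition \ref{proposition.AM10}) that the paper records for this purpose.
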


We also recall the following result which classifies all rigid subalgebras of von Neumann algebras associated to graph product groups.

\begin{thm}\emph{\cite[Theorem 6.1]{CDD22}}\label{proptcliques}\label{theorem.embed.clique} Let $\Gamma= \sG\{\Gamma_v\}$ be a graph product group, let $\Gamma\ca \P$ be any trace preserving action and denote by $\M=\P\rtimes \Gamma$ the corresponding crossed product von Neumann algebra. Let $r\in \M$ be a projection and let $\Q\subset r\M r$ be a property (T) von Neumann subalgebra. 

Then one can find a clique $\sC\in {\rm cliq}(\sG)$ such that $\Q\prec_\M \P\rtimes \Gamma_\sC$. Moreover, if $Q\nprec \P \rtimes \Gamma_{\sC \setminus \{c\}}$ for all $c\in \sC$, then one can find projections $q\in \Q$, $q'\in \Q'\cap r\M r$ with $qq'\neq 0$ and a unitary $u\in \M$ such that $u q\Q q q'u^{*}\subseteq \P \rtimes \Gamma_\sC$.   In particular, if $\P\rtimes \Gamma_\sC$ is a factor then one can take $q=1$ above.  

\end{thm}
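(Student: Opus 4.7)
The plan is to establish the first assertion by induction on the number of vertices of $\sG$, using the amalgamated free product decomposition \eqref{afpdesc} to peel off one vertex at a time, and to obtain the ``moreover'' clause as a separate refinement using the rigidity of the normalizer of clique subgroups inside $\Gamma$.

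For the first assertion, I would induct on $|\sV|$. The base case, where $\sG$ is itself a clique, is immediate since in that case $\M=\P\rtimes\Gamma_\sC$ for $\sC=\sV$. For the inductive step, assume $\sG$ is not complete and pick a vertex $w\in\sV$ with ${\rm lk}(w)\subsetneq \sV\setminus\{w\}$. Then \eqref{afpdesc} lifts to a non-degenerate amalgamated free product decomposition of crossed products
\[
\M \;=\; \bigl(\P\rtimes\Gamma_{\sV\setminus\{w\}}\bigr)\,\ast_{\P\rtimes\Gamma_{{\rm lk}(w)}}\,\bigl(\P\rtimes\Gamma_{{\rm st}(w)}\bigr).
\]
I would then invoke the now-classical property (T) dichotomy for AFP tracial von Neumann algebras (adapted to the crossed product setting) to conclude that $\Q$ intertwines into one of the two side factors. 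Each side factor is itself a crossed product of $\P$ by a graph product on a graph with strictly fewer vertices. Applying the inductive hypothesis in the pertinent side factor produces a clique $\sC_0$ of the smaller graph together with an intertwining $\Q\prec_\M \P\rtimes\Gamma_{\sC_0}$ (using transitivity of intertwining from Theorem \ref{corner}). Since $\sC_0$ is a complete subgraph of $\sG$, it sits inside some maximal clique $\sC\in{\rm cliq}(\sG)$ and $\P\rtimes\Gamma_{\sC_0}\subseteq \P\rtimes\Gamma_\sC$, giving the first assertion.

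For the ``moreover'' clause, the plan is to upgrade the partial-isometry intertwining from Theorem \ref{corner}(1) into a unitary conjugation on a full corner, using the rigidity of $\Gamma_\sC$ inside $\Gamma$. The key combinatorial input is Proposition \ref{proposition.AM10}: since $\sC$ is a maximal clique we have ${\rm lk}(\sC)=\emptyset$, so part (2) yields $N_\Gamma(\Gamma_\sC)=\Gamma_\sC$, while part (3) shows that for any $g\notin\Gamma_\sC$ the intersection $g\Gamma_\sC g^{-1}\cap\Gamma_\sC$ is a conjugate of $\Gamma_\sD$ for some $\sD\subsetneq\sC$. Fix an intertwining $\theta:q_0\Q q_0\to q_0'(\P\rtimes\Gamma_\sC)q_0'$ implemented by a partial isometry $v$, and decompose $v$ along the left coset partition $\Gamma/\Gamma_\sC$. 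The hypothesis $\Q\nprec_\M \P\rtimes\Gamma_{\sC\setminus\{c\}}$ for every $c\in\sC$, combined with the translate-intersection formula above, precludes any non-trivial off-diagonal coset contribution to $v$ and forces $v$ (after left-multiplication by an appropriate unitary $u\in\M$) to lie inside $\P\rtimes\Gamma_\sC$. This produces projections $q\in\Q$ and $q'\in \Q'\cap r\M r$ with $qq'\neq 0$ satisfying $uq\Q qq'u^{*}\subseteq \P\rtimes\Gamma_\sC$. When $\P\rtimes\Gamma_\sC$ is a factor, equivalent projections are unitarily conjugate, so after rescaling one may take $q=1$.

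The main obstacle I expect is the coset-decomposition step in the ``moreover'' part: the hypothesis of non-intertwining into proper sub-clique algebras must be paired uniformly with the translate-intersection formula of Proposition \ref{proposition.AM10}(3) so that every off-diagonal term in the expansion of $v$ is genuinely killed, not just made asymptotically small. The AFP property (T) dichotomy in the crossed-product setting with common base $\P$ is also a substantive input, but it follows along well-established deformation/rigidity lines.
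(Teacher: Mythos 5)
This statement is not proved in the present paper at all: it is imported verbatim from \cite[Theorem 6.1]{CDD22}, so there is no in-paper argument to compare yours with. Judged on its own terms, your outline has the right global shape (induction via the amalgamated free product decomposition \eqref{afpdesc} plus the property (T) dichotomy of \cite{IPP05}, and a quasinormalizer-type upgrade for the ``moreover'' clause, which is indeed the role played by Theorem \ref{controlquasinormalizer1} and Lemma \ref{lemma.control.qn}), but it contains a genuine gap at the inductive step.

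The gap is your appeal to ``transitivity of intertwining from Theorem \ref{corner}.'' Popa's intertwining relation is \emph{not} transitive: from $\Q\prec_\M \P\rtimes\Gamma_{\sV\setminus\{w\}}$ you only get a $*$-homomorphism $\theta$ on a corner $q_0\Q q_0$ and a partial isometry $v$ with $\theta(x)v=vx$; the corner is not actually conjugated into the leg, since $vv^*$ need not lie in $\P\rtimes\Gamma_{\sV\setminus\{w\}}$. Applying the inductive hypothesis to $\theta(q_0\Q q_0)$ then yields a second intertwining, with its own partial isometry $w$, and the composed element $wv$ (or $w\theta(e)v$) may vanish, so $\Q\prec_\M\P\rtimes\Gamma_{\sC_0}$ does not follow. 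Compositions of this kind are only legitimate when one of the two intertwinings is strong (as in \cite[Lemma 3.7]{Va08} or \cite[Lemma 2.4]{DHI16}) or when a corner of $\Q$ has been honestly unitarily conjugated into the leg before iterating --- which is precisely the content of the ``moreover'' clause. In other words, the unitary-conjugation statement cannot be postponed as a separate refinement at the end: it (or a $\prec^s$-type strengthening) must be carried through the induction, together with a case analysis for when $\Q$ does embed into a proper sub-clique algebra at an intermediate stage. Relatedly, in your coset-decomposition argument for the ``moreover'' part, the hypothesis $\Q\nprec_\M\P\rtimes\Gamma_{\sC\setminus\{c\}}$ concerns $\Q$, whereas what kills the off-diagonal terms of $v$ is the corresponding non-embedding property of the image $\theta(q_0\Q q_0)$ inside $\P\rtimes\Gamma_{\sC}$; transferring one to the other runs into the same composition problem and needs an explicit argument (and the decomposition itself must be done with $\P$-coefficients, i.e.\ over the subalgebras $\P\rtimes(g\Gamma_\sC g^{-1}\cap\Gamma_\sC)$). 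Until these points are addressed, the proposed proof does not close.
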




The following result is a direct corollary of  \cite[Theorem 1.2.1]{IPP05}. For completeness, we provide all the details.

\begin{cor}\label{controlintertw5} Let $\Gamma=\sG \{\Gamma_v\}$ be any graph product of infinite groups and let $\M=\L(\Gamma)$. Let $\mathcal A\subset\L(\Gamma_v)$ be a diffuse von Neumann subalgebra, for some $v\in \mathscr G$. Then  $\mathcal A '\cap \M = \L(\Ga_{{\rm link}(v)})\bar\otimes (\A'\cap \L(\Ga_v))$.

\end{cor}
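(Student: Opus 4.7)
The plan is to realize $\M$ as an amalgamated free product in which $\A$ sits transverse to the amalgam, verify that $\A$ does not intertwine into the amalgam, invoke \cite[Theorem 1.2.1]{IPP05} to trap the relative commutant in a single factor, and finish with a commutant computation in a tensor product.

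First I would use the amalgamated free product decomposition \eqref{afpdesc} at the vertex $v$ to write $\M = \M_1 \ast_{\B} \M_2$, with $\M_1 = \L(\Gamma_{\sV \setminus \{v\}})$, $\M_2 = \L(\Gamma_{{\rm st}(v)})$, and $\B = \L(\Gamma_{{\rm lk}(v)})$. Since $v$ is adjacent to every vertex of ${\rm lk}(v)$, the subgroup $\Gamma_{{\rm st}(v)}$ splits as $\Gamma_v \times \Gamma_{{\rm lk}(v)}$, so $\M_2 = \L(\Gamma_v) \bar\otimes \B$ with $\A$ sitting inside the first tensor factor. If this AFP decomposition is degenerate (the case $\sV = {\rm st}(v)$), then $\M = \M_2$ and the conclusion is immediate from the tensor splitting, so I assume otherwise.

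The key step is to verify $\A \nprec_\M \B$. Since $\A$ is diffuse, I would pick a sequence of unitaries $u_n \in \mathscr U(\A) \subset \mathscr U(\L(\Gamma_v))$ with $u_n \to 0$ weakly, and expand $u_n = \sum_{g \in \Gamma_v} c_{g,n} u_g$, so that $c_{g,n} \to 0$ for every fixed $g$. For arbitrary $h, k \in \Gamma$ the basis computation gives
\begin{equation*}
E_\B(u_h u_n u_k) \;=\; \sum_{g \in \Gamma_v,\; hgk \in \Gamma_{{\rm lk}(v)}} c_{g,n} \, u_{hgk}.
\end{equation*}
If $g_1, g_2$ both appear in the sum, then $g_1 g_2^{-1} \in \Gamma_v \cap h^{-1} \Gamma_{{\rm lk}(v)} h$, which by Proposition \ref{proposition.AM10}(3) is conjugate to $\Gamma_\sD$ for some $\sD \subseteq \{v\} \cap {\rm lk}(v) = \emptyset$, hence trivial. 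Thus the sum contains at most one term, whose coefficient tends to $0$ with $n$; extending by linearity and density yields $\|E_\B(x u_n y)\|_2 \to 0$ for all $x, y \in \M$, so $\A \nprec_\M \B$ via Theorem \ref{corner}(2).

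With this established, \cite[Theorem 1.2.1]{IPP05} forces $\A' \cap \M \subset \M_2 = \L(\Gamma_v) \bar\otimes \L(\Gamma_{{\rm lk}(v)})$. Since $\A \subset \L(\Gamma_v)$, a standard tensor-product commutant computation gives $\A' \cap \M_2 = (\A' \cap \L(\Gamma_v)) \bar\otimes \L(\Gamma_{{\rm lk}(v)})$, which is the required identity (up to reordering the tensor factors); the reverse inclusion is immediate because $\L(\Gamma_{{\rm lk}(v)})$ commutes with $\L(\Gamma_v) \supset \A$. The main point requiring care is that the non-intertwining must be verified in the full algebra $\M$ rather than only in $\M_2$ (intertwining is not monotone under enlargement of the ambient algebra), which is exactly why the direct group-theoretic argument above is needed rather than a naive reduction to the transparent fact $\A \nprec_{\M_2} \B$.
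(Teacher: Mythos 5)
Your argument is correct, and its skeleton is the same as the paper's: decompose $\M$ as the amalgamated free product \eqref{afpdesc} at the vertex $v$, use \cite[Theorem 1.2.1]{IPP05} to force $\A'\cap\M$ into $\L(\Gamma_{{\rm st}(v)})$, and finish with the commutant computation in the tensor product $\L(\Gamma_v)\bar\otimes\L(\Gamma_{{\rm lk}(v)})$. The only real divergence is how you verify the intertwining hypothesis, and there your closing remark has the logic backwards: \cite[Theorem 1.2.1]{IPP05} asks that $\A$ not embed into the amalgam \emph{inside the free product factor containing} $\A$, i.e.\ $\A\nprec_{\M_2}\B$ with $\M_2=\L(\Gamma_{{\rm st}(v)})$, not $\A\nprec_\M\B$. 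Since $\M_2=\L(\Gamma_v)\bar\otimes\L(\Gamma_{{\rm lk}(v)})$ and $\A\subset\L(\Gamma_v)$ is diffuse, the needed condition is precisely the ``transparent fact'' you set aside, and this one-line observation is all the paper uses. Your direct Fourier-coefficient computation of the stronger statement $\A\nprec_\M\B$ (via Proposition \ref{proposition.AM10}(3) applied to $\Gamma_v\cap h^{-1}\Gamma_{{\rm lk}(v)}h$) is itself correct, and since $\A\prec_{\M_2}\B$ trivially implies $\A\prec_\M\B$, the stronger statement certainly yields the hypothesis actually required; so no gap results, you have simply done more work than necessary, and the stated justification for it is a misreading of \cite{IPP05}. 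Your separate treatment of the degenerate case $\sV={\rm st}(v)$ is a reasonable precaution that the paper leaves implicit.
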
 
\begin{proof} 
From definitions we have  $\mathcal A '\cap \M \supset \L(\Ga_{{\rm link}(v)})\bar\otimes (\A'\cap \L(\Ga_v))$. 
For proving the reverse containment, we note that
we can write $\M$ as an amalgamated free product $\M= \L(\Gamma_{{\rm star} (v)})\ast_{\L(\Gamma_{{\rm link} (v)})} \L(\Gamma_{\mathscr G\setminus \{v\}})$. Since $\mathcal A$ is diffuse, it follows that $\mathcal A\nprec_{\L(\Gamma_{{\rm star} (v)})} \L(\Gamma_{{\rm link} (v)}) $.
By \cite[Theorem 1.2.1]{IPP05}, it follows that $\mathcal A '\cap \M\subset \A '\cap \L(\Gamma_{{\rm star} (v)}) =\L(\Ga_{{\rm link}(v)})\bar\otimes (\A'\cap \L(\Ga_v)).$
\end{proof}

\begin{lem}\label{control.relative.commutants}
Let $\Sigma<\Gamma$ be countable groups and denote $\M=\L (\Gamma)$. Assume that $\P\subset p\M p$ and $\Q\subset q\M q$ are von Neumann subalgebras satisfying $\P\prec_{\M} \L(\Sigma)$ and $\L(\Sigma)\prec_{\M} \Q$.

\noindent If $\P'\cap p\M p$ is amenable, then $\Q '\cap q\M q$ has an amenable direct summand.
\end{lem}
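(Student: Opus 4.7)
My plan is to reduce the statement to a single-intertwining transfer principle and then apply it twice, chaining through $\L(\Sigma)$. Specifically, I would first establish the auxiliary claim: \emph{if $\N_1\subseteq r_1\M r_1$ and $\N_2\subseteq r_2\M r_2$ are von Neumann subalgebras with $\N_1\prec_{\M}\N_2$, and $\N_1'\cap r_1\M r_1$ has an amenable direct summand, then $\N_2'\cap r_2\M r_2$ has one too.} Given this, the lemma follows by two applications: first to $\P\prec_{\M}\L(\Sigma)$, where $\P'\cap p\M p$ is its own amenable direct summand, which yields an amenable direct summand of $\L(\Sigma)'\cap \M$; then to $\L(\Sigma)\prec_{\M}\Q$, producing the desired amenable direct summand of $\Q'\cap q\M q$.

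To prove the auxiliary claim, I invoke Theorem \ref{corner} to obtain projections $r_0\in \N_1$, $s_0\in \N_2$, a $*$-homomorphism $\theta\colon r_0\N_1 r_0\to s_0\N_2 s_0$, and a non-zero partial isometry $v\in s_0\M r_0$ with $\theta(x)v=vx$. Setting $\tilde\N:=\theta(r_0\N_1 r_0)\subseteq s_0\N_2 s_0$, the standard computation gives $vv^*\in \tilde\N'\cap s_0\M s_0$ and $v^*v\in r_0(\N_1'\cap r_1\M r_1) r_0$, and conjugation by $v$ implements a $*$-isomorphism between the corners $vv^*(\tilde\N'\cap s_0\M s_0)vv^*$ and $v^*v(\N_1'\cap r_1\M r_1)v^*v$. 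Choosing $v$ so that $v^*v$ has nontrivial component in the amenable central projection $z_1$ of $\N_1'\cap r_1\M r_1$ — possible by performing the intertwining inside the amenable direct summand $(\N_1'\cap r_1\M r_1)z_1$ — one obtains an amenable direct summand of $vv^*(\tilde\N'\cap s_0\M s_0)vv^*$.

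Since $\tilde\N\subseteq \N_2$, we have the inclusion $(\N_2'\cap r_2\M r_2)s_0 = s_0(\N_2'\cap r_2\M r_2)s_0\subseteq \tilde\N'\cap s_0\M s_0$, using that $s_0\in \N_2$ commutes with $\N_2'\cap r_2\M r_2$. The amenable direct summand of $\tilde\N'\cap s_0\M s_0$ therefore restricts to an amenable direct summand of the subalgebra $(\N_2'\cap r_2\M r_2)s_0$. Finally, because $s_0\in \N_2$ lies in the commutant of $\N_2'\cap r_2\M r_2$, the reduction by $s_0$ corresponds, via Morita equivalence and the central-support correspondence, to the direct summand of $\N_2'\cap r_2\M r_2$ cut by the central support of $s_0$; this allows the amenable summand at the corner level to be lifted to a genuine amenable direct summand of $\N_2'\cap r_2\M r_2$.

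The main obstacle I expect is the book-keeping of projections through the chain of reductions, in particular ensuring that $v^*v$ is chosen so that $v^*v\cdot z_1\neq 0$ and that the amenable summand of $\tilde\N'\cap s_0\M s_0$ retains nontrivial intersection with $(\N_2'\cap r_2\M r_2)s_0$ after the central-support operations. These are standard manipulations: one adapts the intertwining data to sit inside the amenable direct summand, and one uses the identification of corners by commuting projections with genuine direct summands via central supports to conclude.
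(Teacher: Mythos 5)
Your reduction to a single ``transfer principle'' is where the argument breaks. The transfer that is actually available (and is exactly \cite[Lemma 3.5]{Va08}, which the paper invokes) goes: if $\N_1\prec_{\M}\N_2$ then $\N_2'\cap r_2\M r_2\prec_{\M}\N_1'\cap r_1\M r_1$; combined with the standard fact that intertwining into an \emph{amenable} algebra produces an amenable direct summand, this proves your auxiliary claim only when $\N_1'\cap r_1\M r_1$ is amenable -- which is all you have for the first link $\P\prec_{\M}\L(\Sigma)$. At the second link you only know that $\L(\Sigma)'\cap\M$ has \emph{some} amenable direct summand, and the intertwining $\L(\Sigma)\prec_{\M}\Q$ is part of the hypothesis: you cannot ``perform the intertwining inside the amenable direct summand'' or choose $v$ so that $v^*v$ meets the amenable central piece, because the given intertwining may be supported entirely where the relative commutant is nonamenable. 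Indeed your auxiliary claim, with the hypothesis ``has an amenable direct summand,'' is false: in $\M=M_1\bar\otimes M_2$ with $M_1=M_2=\L(F_2)$, pick projections $z_1,z_2\in M_1\otimes 1$ with $z_1+z_2=1$ and set $\N_1=z_1(M_1\otimes 1)z_1\oplus z_2\M z_2$ and $\N_2=M_1\otimes 1$. Then $\N_1\prec_{\M}\N_2$ (via the corner $z_1\N_1z_1\subset\N_2$), and $\N_1'\cap\M=(z_1\otimes M_2)\oplus\mathbb{C}z_2$ has the amenable summand $\mathbb{C}z_2$, yet $\N_2'\cap\M\cong\L(F_2)$ is a nonamenable factor with no amenable direct summand.

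In fact no purely von Neumann algebraic chaining of the kind you propose can succeed, because the statement obtained by replacing $\L(\Sigma)$ with an arbitrary intermediate subalgebra is false: in the example above take $\P=z_2\M z_2$ with $p=z_2$ (so $\P'\cap p\M p=\mathbb{C}z_2$ is amenable), $\N=\N_1$ and $\Q=\N_2$; then $\P\prec_{\M}\N$ and $\N\prec_{\M}\Q$, but $\Q'\cap\M$ has no amenable direct summand. The paper's proof uses the group structure essentially, precisely to avoid the ``wrong summand'' problem: writing ${\rm vC}_\Gamma(\Sigma)=\cup_n\Omega_n$ with each $\Omega_n$ normalized by $\Sigma$ and $[\Sigma:C_\Sigma(\Omega_n)]<\infty$, the hypothesis $\P\prec_{\M}\L(\Sigma)$ upgrades to $\P\prec_{\M}\L(C_\Sigma(\Omega_n))$ for every $n$, whence $\L(\Omega_n)\prec_{\M}\P'\cap p\M p$ by \cite[Lemma 3.5]{Va08}; since an amenable corner of a group von Neumann algebra forces the group itself to be amenable, every $\Omega_n$ is amenable and hence $\L({\rm vC}_\Gamma(\Sigma))$ is amenable \emph{in full}, not merely a summand of $\L(\Sigma)'\cap\M$. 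Only then does the second intertwining give $\Q'\cap q\M q\prec_{\M}\L(\Sigma)'\cap\M\subseteq\L({\rm vC}_\Gamma(\Sigma))$, an honestly amenable target, which yields the desired amenable direct summand. This group-theoretic ``spreading'' of amenability over the whole virtual centralizer is the missing idea in your proposal.
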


\begin{proof} Note that one can find an increasing sequence of groups $\Omega_1 \leqslant \Omega_{2}\leqslant \cdots\leqslant {\rm vC}_\Gamma(\Sigma)$ normalized by $\Sigma$ with $\cup_{n\ge 1} \Omega_n={\rm vC}_\Gamma(\Sigma)$ whose centralizers form a descending sequence $\Sigma \geqslant C_\Sigma(\Omega_1)\geqslant \cdots \geqslant C_\Sigma(\Omega_n )\geqslant C_\Sigma(\Omega_{n+1})\geqslant\cdots $ of finite index subgroups. Indeed, recall that ${\rm vC}_\Gamma(\Sigma)=\{g\in\Gamma|\; g^\Sigma \text{ is finite} \}$ and let $\{\mathcal O_n\}_{n\ge 1}$ be a countable enumeration of all the finite orbits of the action by conjugation of $\Sigma$ on $\Gamma$. Note that $\Omega_n:=\langle \cup_{1\leq k\leq n} \mathcal O_k \rangle$ satisfies the assumption.

Next, since $[\Sigma: C_\Sigma(\Omega_n )]<\infty$, we get from the assumption that $\P\prec_{\M} \L(C_\Sigma(\Omega_n ))$, for all $n\ge 1$. By passing to relative commutants, we derive from \cite[Lemma 3.5]{Va08}  that $\L(\Omega_n)\prec_{\M} \P'\cap p\M p$, which implies that $L(\Omega_n)$ has an amenable direct summand for any $n\ge 1$. This further implies that there exists a non-zero projection $z_n\in \mathcal Z(L(\Omega_n))$ such that $L(\Omega_n)z_n$ is amenable (see \cite[Remark 2.2]{Io12b}). Standard arguments now imply that
$\Omega_n$ is amenable for any $n\ge 1$.  This shows that ${\rm vC}_\Gamma(\Sigma)$ is amenable.
Finally, since $\L(\Sigma)\prec_{\M} \Q$, we pass to relative commutants and apply \cite[Lemma 3.5]{Va08} to get that $\Q'\cap q\M q\prec_{\M} \L({\rm vC}_\Gamma(\Sigma))$. Since ${\rm vC}_\Gamma(\Sigma)$ is amenable, the conclusion follows.
\end{proof}

\subsection{Relative amenability}

A tracial von Neumann algebra $(\M,\tau)$ is {\it amenable} if there exists a positive linear functional $\Phi:\mathbb B(L^2(\M))\to\mathbb C$ satisfying $\Phi_{|\M}=\tau$ and $\Phi$ is $\M$-{\it central}, meaning $\Phi(xT)=\Phi(Tx),$ for all $x\in \M$ and $T\in \mathbb B(L^2(\M))$. By Connes' celebrated theorem \cite{Co76}, it follows that $\M$ is amenable if and only if $\M$ is approximately finite dimensional.
 
We continue by recalling the notion of relative amenability introduced by 
Ozawa and Popa in \cite{OP07}. Let $(\M,\tau)$ be a tracial von Neumann algebra. Let $p\in \M$ be a projection and $\P\subset p\M p,\Q\subset \M$ be von Neumann subalgebras. Following \cite[Definition 2.2]{OP07}, we say that $\P$ is {\it amenable relative to $\Q$ inside $\M$} if there exists a positive linear functional $\Phi:p\langle \M,e_\Q \rangle p\to\mathbb C$ such that $\Phi_{|p\M p}=\tau$ and $\Phi$ is $\P$-central. 
We say that $\P$ is {\it strongly non-amenable relative to} $\Q$ inside $\M$ if $\P p'$ is non-amenable relative to $\Q$ inside $\M$ for any non-zero projection $p'\in \P '\cap p\M p$.  

In this section we prove a result (Proposition \ref{reductionamenable}) that is inspired by \cite[Proposition 3.2]{PV12}; its proof is similar to the approach from \cite[Lemma 2.10]{Dr19a}. 
First we show the following well-known result which computes the basic construction of natural inclusions of  subalgebras arising from trace preserving actions of countable groups (see also \cite[Lemma 2.5]{Be14}).

\begin{lem}\label{lemma.basic.construction}
 Let $\Ga\curvearrowright \D$ be a trace preserving action of a countable group. Let $\Sigma<\Ga$ be a subgroup and denote $\M=\D\rtimes\Gamma$ and $\N=\D\rtimes\Sigma$. Denote by $\Q=(\D\bar\otimes \ell^\infty(\Gamma/\Sigma))\rtimes\Gamma$ the semifinite von Neumann algebra arising from the diagonal action $\Gamma \curvearrowright \D\bar\otimes \ell^\infty(\Gamma/\Sigma)$.
 
 Then there exists a $*$-isomorphism $\theta: \langle \M,e_{\N} \rangle\to \Q$ satisfying $\theta(x)=x$, for all $x\in \M$, and $\theta(e_{\N})=1\otimes \delta_{e\Sigma}\in \D\bar\otimes \ell^\infty(\Gamma/\Sigma)$.
\end{lem}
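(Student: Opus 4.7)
This is a standard identification of the basic construction of a crossed product inclusion, and the plan is to construct the inverse $*$-isomorphism $\rho:=\theta^{-1}:\Q\to \langle \M,e_\N\rangle$ on the natural generators of $\Q$, and then verify that it is well-defined, surjective, and injective.

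First, I would work with the concrete decomposition $L^2(\M)=\bigoplus_{g\Sigma\in \Gamma/\Sigma} u_g L^2(\N)$ of the standard Hilbert space (indexed by a choice of coset representatives), so that $e_\N$ is identified with the projection onto the summand at the trivial coset. For each coset $h\Sigma$ I set $p_{h\Sigma}:=u_h e_\N u_h^{-1}$; this is independent of the chosen representative, equals the projection onto $u_hL^2(\N)$, belongs to $\langle\M,e_\N\rangle$, and the family $\{p_{h\Sigma}\}_{h\Sigma\in\Gamma/\Sigma}$ consists of pairwise orthogonal projections summing strongly to $1$.

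Next, I would verify two algebraic relations inside $\langle\M,e_\N\rangle$: (i) every $d\in\D$ commutes with every $p_{h\Sigma}$, because $d\,u_h=u_h\,(h^{-1}\cdot d)$ and $h^{-1}\cdot d\in\D\subset\N$ preserves $L^2(\N)$; and (ii) $u_g p_{h\Sigma} u_g^{-1}=p_{gh\Sigma}$ for all $g,h\in\Gamma$. Thus $\{p_{h\Sigma}\}$ generates a copy of $\ell^\infty(\Gamma/\Sigma)$ commuting with $\D$ and carrying the correct $\Gamma$-translation action, so by the universal property of the crossed product the prescription $d\mapsto d$ for $d\in\D$, $1\otimes\delta_{h\Sigma}\mapsto p_{h\Sigma}$, and $v_g\mapsto u_g$ extends to a normal $*$-homomorphism $\rho:\Q\to\langle\M,e_\N\rangle$. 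Surjectivity is immediate, since the image contains $\M$ and $e_\N=p_{e\Sigma}$, which together generate the basic construction.

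For injectivity, the only mildly subtle point, I would compare canonical semifinite traces. The basic construction carries its canonical trace $\mathrm{Tr}$ determined by $\mathrm{Tr}(x e_\N y)=\tau(xy)$ for $x,y\in\M$, while $\Q$ carries the canonical semifinite trace coming from the $\Gamma$-invariant weight $\tau\otimes\mu_c$ on $\D\bar\otimes \ell^\infty(\Gamma/\Sigma)$, with $\mu_c$ the counting measure on $\Gamma/\Sigma$. A direct check on the spanning family $\{(d\otimes\delta_{h\Sigma})v_g\}$ shows that $\rho$ intertwines the two traces, forcing $\rho$ to be faithful and hence a $*$-isomorphism; setting $\theta=\rho^{-1}$ then gives the desired map with $\theta(x)=x$ on $\M$ and $\theta(e_\N)=1\otimes\delta_{e\Sigma}$. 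The only real bookkeeping is tracking the left-translation convention on $\Gamma/\Sigma$ so that $u_g e_\N u_g^{-1}=p_{g\Sigma}$ is consistent with $v_g(1\otimes\delta_{e\Sigma})v_g^*=1\otimes\delta_{g\Sigma}$; beyond this convention matter there is no substantive obstacle.
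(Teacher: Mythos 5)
Your computations with the projections $p_{h\Sigma}=u_h e_{\N}u_h^{*}$ are correct (independence of the representative, commutation with $\D$, the covariance $u_g p_{h\Sigma}u_g^{*}=p_{gh\Sigma}$, and $\sum_{h\Sigma}p_{h\Sigma}=1$), and your route is genuinely different from the paper's, which instead verifies the hypotheses of the abstract characterization of the basic construction in \cite[Theorem 4.3.15]{SS10}: namely that $\Q$ is generated by $\M$ and $f=1\otimes\delta_{e\Sigma}$, that ${\rm Tr}(f)=1$ and ${\rm Tr}(xf)=\tau(x)$ for $x\in\M$, and that $f\Q f=\N f$. The problem is the step where you pass from the covariant family $(\D,\{p_{h\Sigma}\},\{u_g\})$ to a normal $*$-homomorphism $\rho:\Q\to\langle\M,e_{\N}\rangle$ ``by the universal property of the crossed product.'' Von Neumann algebraic crossed products have no such universal property: a normal covariant representation of $(\D\bar\otimes\ell^\infty(\Gamma/\Sigma),\Gamma)$ integrates to a $*$-homomorphism of the algebraic (or full $C^*$-) crossed product, but it need not extend, let alone extend normally, to the von Neumann crossed product --- for instance the trivial representation of a nonamenable icc group is covariant for the trivial action on $\mathbb C$, yet the II$_1$ factor $\L(\Gamma)$ admits no character. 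The existence and normality of $\rho$ is precisely the nontrivial content of the lemma, so as written the argument assumes what it has to prove at its key step; everything downstream (surjectivity from the image containing $\M$ and $e_{\N}=p_{e\Sigma}$, and injectivity from trace preservation) presupposes that $\rho$ is a normal map, normality being what makes the image weakly closed and the trace comparison on a spanning family conclusive.

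The gap is fixable within your framework, but it needs an actual argument rather than an appeal to universality. For example, choose coset representatives $\{g_i\}$ and use the partial isometries $w_i=u_{g_i}e_{\N}\in\langle\M,e_{\N}\rangle$, which satisfy $w_i^{*}w_i=e_{\N}$, $w_iw_i^{*}=p_{g_i\Sigma}$, $\sum_i w_iw_i^{*}=1$ and $e_{\N}\langle\M,e_{\N}\rangle e_{\N}=\N e_{\N}$, and match them with the analogous family $1\otimes\delta_{g_i\Sigma}$, $v_{g_i}f$ in $\Q$, where $f\Q f=\N f$; this identifies both algebras with the same amplification of $\N$ and produces $\theta$ with the required properties. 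Alternatively, exhibit an explicit isometry intertwining the representation of your covariant pair on $L^2(\M)$ with the standard representation of $\Q$, or simply verify the three properties listed above and invoke \cite[Theorem 4.3.15]{SS10}, which is exactly the paper's short proof. Your trace-intertwining argument for faithfulness is fine once the existence and normality of $\rho$ have been secured by one of these means.
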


\begin{proof}
We denote by $\tau$ the trace on $\M$ and by ${\rm Tr}$ the natural faithful normal semifinite trace on $\Q$, so that the restriction of $(\Q, {\rm Tr})$ to $\M$ is $(\M,\tau)$. By letting $f=1\otimes \delta_{e\Sigma}$, note that the following properties hold:
\begin{enumerate}
    \item [(i)] $\Q$ is the weak closure of the $*$-subspace $\M f\M$,

    \item [(ii)] ${\rm Tr}(f)=1$ and ${\rm Tr}(xf)=\tau(x)$, for any $x\in \M$,

    \item [(iii)] $f\Q f=f\N=\N f$.
\end{enumerate}

 By using \cite[Theorem 4.3.15]{SS10} we derive from properties (i)-(iii) that there is a $*$-isomorphism $\theta: \langle \M,e_{\N} \rangle\to \Q$ with $\theta(x)=x$, for all $x\in \M$, and $\theta(e_{\N})=f$.
\end{proof}

\begin{prop}\label{reductionamenable} Let $\varepsilon:\Ga \ra \La$ be an epimorphism between countable groups and let $\Ga\curvearrowright \D$ be a trace preserving action. Let $\M = \D\rtimes\Ga$ and $\N= \L(\La)$ and denote by $\Delta: \M\ra \M\oo \N$ be the $\ast$-embedding given by $\Delta(au_g)=au_g\otimes v_{\varepsilon(g)}$ for all $a\in \D,g\in \Ga$. Here, we denoted by $\{u_g\}_{g\in\Gamma}$ the canonical unitaries that implement the action $\Gamma\curvearrowright \D$ and by $\{v_\lambda\}_{\lambda\in\Lambda}$ the canonical generating unitaries of $\L(\Lambda)$.

If $\P\subseteq p \M p$ is a von Neumann subalgebra such that $\Delta(\P)$ is amenable relative to $\M\otimes 1$ inside $\M \oo \N$, then $\P$ is amenable relative to $\D\rtimes \ker(\varepsilon)$ inside $\M$.

\end{prop}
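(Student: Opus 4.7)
The plan is to transfer the relative amenability hypothesis across the two Jones basic constructions via a single $*$-homomorphism that intertwines $\P$ with $\Delta(\P)$. First I would identify $\langle\M\oo\N,e_{\M\otimes 1}\rangle=\M\oo\mathbb B(L^2(\N))$ (since $e_{\M\otimes 1}=1\otimes e_{\mathbb C 1}$ and $\N\cup\{e_{\mathbb C 1}\}$ generates $\mathbb B(L^2(\N))$), and then apply Lemma \ref{lemma.basic.construction} with $\Sigma=\ker(\ve)$ to identify
\[\langle\M,e_{\D\rtimes\ker(\ve)}\rangle\;\cong\;(\D\oo\ell^\infty(\La))\rtimes\Ga,\]
where $\Ga$ acts on $\D$ as given and on $\ell^\infty(\La)$ by translation through $\ve$; under this identification $e_{\D\rtimes\ker(\ve)}$ corresponds to $1\otimes\delta_{e\La}$.

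The bridging $*$-homomorphism will rely on the standard identification $\mathbb B(\ell^2(\La))=\ell^\infty(\La)\rtimes\La$, where $\La$ acts by left translation. I would define
\[\pi:(\D\oo\ell^\infty(\La))\rtimes\Ga\;\longrightarrow\;\M\oo\mathbb B(\ell^2(\La))\]
on generators by $\pi(a)=a\otimes 1$ for $a\in\D$, $\pi(f)=1\otimes M_f$ for $f\in\ell^\infty(\La)$, and $\pi(u_g)=u_g\otimes\la_{\ve(g)}$ for $g\in\Ga$, where $\la$ denotes the left regular representation of $\La$. A short covariance check (using $\la_{\ve(g)}M_f\la_{\ve(g)}^{*}=M_{f(\ve(g)^{-1}\cdot)}$) shows these formulas respect all crossed product relations, so $\pi$ extends uniquely to a normal $*$-homomorphism by the universal property of crossed products. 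The crucial observation is that, under the embedding $\N\hookrightarrow\mathbb B(\ell^2(\La))$ by the left regular representation, $\pi(au_g)=au_g\otimes\la_{\ve(g)}=\Delta(au_g)$, so that $\pi|_{\M}=\Delta$; in particular $\pi(p)=\Delta(p)$ and $\pi(\P)=\Delta(\P)$.

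With $\pi$ in place, the conclusion follows by pulling states back. Let $\Phi$ be the $\Delta(\P)$-central positive functional on $\Delta(p)(\M\oo\mathbb B(L^2(\N)))\Delta(p)$ that extends the trace on $\Delta(p)(\M\oo\N)\Delta(p)$ and witnesses the hypothesized amenability. Define $\Psi:=\Phi\circ\pi$ on $p\langle\M,e_{\D\rtimes\ker(\ve)}\rangle p$. Then $\Psi$ is $\P$-central since $\pi(\P)=\Delta(\P)$ and $\Phi$ is $\Delta(\P)$-central; and for $x\in p\M p$ one has $\Psi(x)=\Phi(\Delta(x))=\tau(x)$ because $\Delta$ is trace-preserving (a direct check on $au_g$ shows $\tau_{\M\oo\N}(\Delta(au_g))=\tau_{\D}(a)\delta_{g,e}=\tau_{\M}(au_g)$). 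This exhibits $\P$ as amenable relative to $\D\rtimes\ker(\ve)$ inside $\M$. The one delicate point is confirming that $\pi$ genuinely extends to a normal $*$-homomorphism on the full von Neumann crossed product rather than only its algebraic core; this is the main obstacle but is standard, handled either by the universal property of the crossed product or by realizing $\pi$ spatially on a common Hilbert space.
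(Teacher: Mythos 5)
Your proposal is correct and takes essentially the same route as the paper: the paper also applies Lemma \ref{lemma.basic.construction} to realize the two basic constructions concretely (its target model $(\M\bar\otimes\ell^\infty(\Lambda))\rtimes\Lambda$ is exactly your $\M\bar\otimes\mathbb B(\ell^2(\Lambda))$), defines the same bridging embedding $\varphi((a\otimes f)u_g)=(au_g\otimes f)v_{\varepsilon(g)}$ extending $\Delta$, and pulls the $\Delta(\P)$-central functional back. The only wording to tighten is the appeal to a ``universal property'': von Neumann crossed products have none, so the normality of $\pi$ must be justified spatially, which is precisely the alternative you already note (and the paper is no more explicit on this point).
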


\begin{proof}
First, we claim that there is an injective $*$-homomorphism $\theta: \langle \M, e_{\D\rtimes \ker(\varepsilon)}  \rangle  \to \langle \M\bar\otimes\N, e_{\M\otimes 1}   \rangle  $ which satisfies  $\theta(x)=\Delta(x)$, for any $x\in \M$, and $\theta(e_{\D\rtimes \ker(\varepsilon)})=e_{\M\otimes 1}$.
To see that the above claim holds, we use Lemma \ref{lemma.basic.construction} to obtain a $*$-isomorphism $\theta_1: \langle \M, e_{\D\rtimes \ker(\varepsilon)}  \rangle \to (\D\bar\otimes \ell^{\infty}(\Gamma/\ker(\varepsilon)))\rtimes \Gamma$ which satisfies $\theta_1(au_g)=(a\otimes 1)u_g$, for all $a\in \D,g\in\Gamma$ and $\theta_1(e_{\D\rtimes \ker(\varepsilon)})=1\otimes \delta_{e \ker(\varepsilon)}$.  By applying once again Lemma \ref{lemma.basic.construction}, it follows that there is a $*$-isomorphism $\theta_2: \langle \M\bar\otimes\N, e_{\M\otimes 1}   \rangle  \to (\M\bar\otimes \ell^{\infty}(\Lambda))\rtimes \Lambda$ which satisfies $\theta_2(au_g\otimes v_\lambda)=(au_g\otimes 1)v_\lambda$, for all $a\in \D,g\in\Gamma,\lambda\in\Lambda$ and $\theta_2(e_{\M\otimes 1})=1\otimes\delta_e$. Here, $(\M\bar\otimes \ell^{\infty}(\Lambda))\rtimes \Lambda$ arises from the diagonal action of $\Lambda$ on $\M\bar\otimes \ell^{\infty}(\Lambda)$, where $\Lambda$ acts trivially on $\M$ and by left translations on $\ell^\infty(\Lambda)$. 

Next, since the epimorphism $\varepsilon:\Gamma\to\Lambda$ naturally define a group isomorphism between $\Gamma/\ker(\varepsilon)$ and $\Lambda$, there is a $*$-isomorphism between $\ell^{\infty}(\Gamma/\ker(\varepsilon))$ and $\ell^\infty(\Lambda)$ which sends $\delta_{e {\rm ker}(\epsilon)}$ to $\delta_e$. Therefore, we can define an injective $*$-homomorphism $\varphi: (\D\bar\otimes \ell^{\infty}(\Gamma/\ker(\varepsilon)))\rtimes \Gamma\to (\M\bar\otimes \ell^{\infty}(\Lambda))\rtimes \Lambda$ by letting $\varphi((a\otimes f)u_g)=(au_g\otimes f)v_{\varepsilon(g)}$, for all $a\in D,g\in \Gamma$ and $f\in \ell^{\infty}(\Gamma/\ker(\varepsilon))$. Altogether, it implies that we can define an injective $*$-homomorphism $\theta: \langle \M, e_{\D\rtimes \ker(\varepsilon)}  \rangle  \to \langle \M\bar\otimes\N, e_{\M\otimes 1}   \rangle  $
by letting $\theta=\theta_2^{-1}\circ \varphi\circ\theta_1$. In this way, 
$\theta(au_g)=\theta_2^{-1} (\varphi ((a\otimes 1)u_g))=\theta_2^{-1}((au_g\otimes 1)v_{\varepsilon(g)})=\Delta(au_g)$, for all $a\in\D$ and $g\in\Gamma$. Note that we also have $\theta(e_{\D\rtimes \ker(\varepsilon)})=\theta_2^{-1}(\varphi(1\otimes\delta_{e \ker(\varepsilon)}))=\theta_2^{-1}(1\otimes \delta_e)=
e_{\M\otimes 1}$, hence proving the above claim.


Finally, denote $\tilde p=\Delta(p)$. The assumption implies that there exists a $\Delta(\P)$-central positive linear functional $\Phi:  \tilde p\langle \M\bar\otimes\N, e_{\M\otimes 1}   \rangle  \tilde p\to \mathbb C$ such that the restriction of $\Phi$ to $\tilde p (\M\bar\otimes \N) \tilde p$ equals the trace on $\tilde p (\M\bar\otimes \N) \tilde p$.
Define now the positive linear functional $\Psi:  p\langle \M, e_{\D\rtimes \ker(\varepsilon)}  \rangle p \to \mathbb C$ by $\Psi(x)=(\Phi\circ\theta)(x)$, for all $x\in p\langle \M, e_{\D\rtimes \ker(\varepsilon)}  \rangle p$ and note that the restriction of $\Psi$ to $p\M p$ equals the trace on $p\M p$ and $\Psi$ is $\P$-central. This shows that $\P$ is amenable relative to $\D\rtimes \ker(\varepsilon)$ inside $\M$.
\end{proof}

\subsection{Quasinormalizers of von Neumann algebras}

\noindent Given a group inclusion $\Sigma<\Gamma$, the one-sided quasi-normalizer ${\rm QN}^{(1)}_\Gamma(\Sigma)$ is the semigroup of all $g\in \Gamma$ for which there exists a finite set $F\subset \Gamma$ such that $\Sigma g\subset F \Sigma$ \cite[Section 5]{FGS10}; equivalently, $g\in {\rm QN}^{(1)}_\Gamma(\Sigma)$ if and only if $[\Sigma: g \Sigma g^{-1}\cap \Sigma]<\infty$. The quasi-normalizer ${\rm QN}_\Gamma(\Sigma)$ is the group of all $g\in \Gamma$ for which exists a finite set $F\subset \Gamma$ such that $\Sigma g\subset F\Sigma$ and $g \Sigma\subset \Sigma F$.

\noindent Given an inclusion $\P \subseteq \M$ of finite von Neumann algebra we define the quasi-normalizer  $\mathscr {QN}_{\M}(\P)$ as the set of all elements $x\in \M$ for which there exist $x_1,...,x_n\in \M$ such that $\P x\subseteq \sum x_i \P$ and $x\P \subseteq \sum \P x_i$ (see \cite[Definition 4.8]{Po99}). Also the one-sided quasi-normalizer $\mathscr {QN}^{(1)}_{\M}(\P)$ is defined as the set of all elements $x\in \M$ for which there exist $x_1,...,x_n\in \M$ such that $\P x\subseteq \sum x_i \P$ \cite{FGS10}.
We continue by recalling from \cite{Po03,FGS10} some basic properties of (one-sided) quasi-normalizing algebras.

\begin{lem}[\cite{Po03,FGS10}]\label{QN1}
Let $\P\subset \M$ be tracial von Neumann algebras. For any projection $p\in\P$, the following hold:
\begin{enumerate}
    \item $W^*({\mathscr{QN}^{(1)}_{p\M p}}(p\P p))=pW^*({\rm \mathscr{QN}^{(1)}_{\M}}(\P))p$. 
   
    \item $W^*({ \mathscr{QN}_{p\M p}}(p\P p))=pW^*({\rm \mathscr{QN}_{M}}(\P))p$. 
\end{enumerate}

\end{lem}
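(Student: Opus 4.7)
The plan is to establish part (1); part (2) will then follow by observing that $\mathscr{QN}_\M(\P)$ is the intersection of the right one-sided quasi-normalizer with its adjoint, so applying (1) to both $\P\subset\M$ and the adjointed inclusion and intersecting yields (2).

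For the easy inclusion $pW^*(\mathscr{QN}^{(1)}_\M(\P))p \subseteq W^*(\mathscr{QN}^{(1)}_{p\M p}(p\P p))$, note that since $p\in\P$ satisfies $p\P\subseteq\P\cdot 1$, we have $p\in\mathscr{QN}^{(1)}_\M(\P)$. Thus $pW^*(\mathscr{QN}^{(1)}_\M(\P))p$ is generated by products $(px_1 p)(px_2 p)\cdots(px_k p)$ with $x_j\in\mathscr{QN}^{(1)}_\M(\P)$, and it suffices to show each $pxp$ lies in $W^*(\mathscr{QN}^{(1)}_{p\M p}(p\P p))$. Given $x\P\subseteq\sum_{i=1}^n\P x_i$, a direct computation using the decomposition $1=p+(1-p)$ shows that $(pxp)(p\P p)\subseteq\sum_i (p\P p)(px_i p)$ after absorbing cross terms via the observation that $p\P(1-p)\P p\subseteq p\P p$, so $pxp\in\mathscr{QN}^{(1)}_{p\M p}(p\P p)$ itself.

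For the reverse inclusion $W^*(\mathscr{QN}^{(1)}_{p\M p}(p\P p))\subseteq pW^*(\mathscr{QN}^{(1)}_\M(\P))p$, I would use a Morita-type lift via the central support. Let $z=c_\P(p)$ be the central support of $p$ in $\P$, and choose a family of partial isometries $\{v_\alpha\}\subset\P$ with $v_\alpha v_\alpha^*\leq p$ and $\sum_\alpha v_\alpha^* v_\alpha = z$. Each $v_\alpha\in\P\subseteq\mathscr{QN}^{(1)}_\M(\P)$. Given $y\in\mathscr{QN}^{(1)}_{p\M p}(p\P p)$ with $y\cdot p\P p\subseteq\sum_{i=1}^n(p\P p)y_i$, define
\[
\tilde y := \sum_{\alpha,\beta} v_\alpha^* y v_\beta\in z\M z,\qquad \tilde y_i := \sum_{\alpha,\beta} v_\alpha^* y_i v_\beta,
\]
both sums understood as weakly convergent in $z\M z$. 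For any $a\in\P$, the element $v_\alpha a v_\beta^*$ lies in $p\P p$, and the identity $ya v_\beta^* = \sum_i b_i y_i v_\beta^*$ (with $b_i\in p\P p$ depending on $\alpha,\beta,a$) propagates through the double sum to yield $\tilde y\cdot\P z\subseteq\sum_i(\P z)\tilde y_i$. Thus $\tilde y\in\mathscr{QN}^{(1)}_{z\M z}(\P z)$. Since the $v_\alpha$ and $v_\beta^*$ lie in $\mathscr{QN}^{(1)}_\M(\P)$, the same holds for $\tilde y\in W^*(\mathscr{QN}^{(1)}_\M(\P))\cdot z$. Finally, using $p v_\alpha^* = v_\alpha^*$ and $v_\beta p = v_\beta$ together with $\sum v_\beta^* v_\beta = z$ and a partition-of-unity argument, one recovers $y$ as an expression in $p\tilde y p$ multiplied with elements of $p\P p$, placing $y$ in $pW^*(\mathscr{QN}^{(1)}_\M(\P))p$.

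The main obstacle is the reverse direction, specifically the verification that the Morita-type lift is well behaved: one must ensure convergence of $\tilde y$ and $\tilde y_i$ when the index set is infinite, and that the finite generating family $\{y_i\}$ for $y\cdot p\P p$ over $p\P p$ lifts to a uniformly finite generating family for $\tilde y\cdot\P z$ over $\P z$. Working with the central support $z$ (rather than $p$ directly) is essential: it is central in $\P$, which lets the lift interact cleanly with the partial isometries in $\P$, even though $z$ need not be central in $\M$.
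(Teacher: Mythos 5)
The paper does not actually prove this lemma; it is quoted from \cite{Po03} (the two-sided compression formula) and \cite{FGS10} (the one-sided version), and the proofs there go through the central support of $p$, partial isometries in $\P$, and the basic construction precisely because the equality only holds at the level of the generated von Neumann algebras. Measured against that, your argument has genuine gaps at every step. In the ``easy'' inclusion, $pW^*(\mathscr{QN}^{(1)}_{\M}(\P))p$ is \emph{not} generated by products $(px_1p)(px_2p)\cdots(px_kp)$: a typical element is a compression $px_1x_2\cdots x_kp$ of a word (which moreover may involve adjoints of quasi-normalizers, since $\mathscr{QN}^{(1)}_\M(\P)$ is not self-adjoint), and $p$ cannot be slipped between the letters; so proving $pxp\in W^*(\mathscr{QN}^{(1)}_{p\M p}(p\P p))$ element by element would not suffice. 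Even the element-wise claim is not established: from $x\P\subseteq\sum_i\P x_i$ one gets $(pxp)(pap)=\sum_i pb_ix_ip$ with $b_i\in\P$ depending on $a$, and the cross terms are $pb_i(1-p)x_ip$; the inclusion $p\P(1-p)\P p\subseteq p\P p$ is irrelevant here because $x_i$ lies in $\M$, not in $\P$. What you would need is that $p\P(1-p)x_ip$ sits inside a finitely generated left $p\P p$-module; this is automatic when $\P$ is a factor (finitely many partial isometries moving pieces of $1-p$ under $p$), but fails for general tracial $\P$, and in general one can only expect $pxp$ to land in the generated algebra---which is exactly why the statement carries $W^*(\cdot)$ and why the cited proofs are not a two-line computation.

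For the reverse inclusion you give a sketch and flag the crux yourself, but that crux is the whole point: when infinitely many $v_\alpha$ are needed, the double sum $\tilde y=\sum_{\alpha,\beta}v_\alpha^*yv_\beta$ has no reason to define a bounded operator (its partial sums are not norm-bounded; already for $y=p$ one is summing $\bigl(\sum_\alpha v_\alpha\bigr)^{\!*}\bigl(\sum_\beta v_\beta\bigr)$), and even formally the coefficients $b_i$ appearing in $y(v_\beta a v_\gamma^*)=\sum_i b_iy_i$ depend on $(\beta,\gamma,a)$, so they do not factor out of the double sum to produce a \emph{fixed finite} family $\{\tilde y_i\}$ witnessing $\tilde y\in\mathscr{QN}^{(1)}_{z\M z}(\P z)$; the natural generating family grows with the number of partial isometries. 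Finally, part (2) does not follow from part (1) by ``intersecting'': while $\mathscr{QN}_{\M}(\P)=\mathscr{QN}^{(1)}_{\M}(\P)\cap\bigl(\mathscr{QN}^{(1)}_{\M}(\P)\bigr)^{*}$ as sets, the von Neumann algebra generated by an intersection is in general strictly smaller than the intersection of the generated algebras, so (1) applied to the inclusion and its adjoint gives no lower bound on $W^*(\mathscr{QN}_{p\M p}(p\P p))$; the two-sided statement has to be, and in \cite{Po03} is, proved directly.
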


We continue by recording the following remark which can be deduced directly from the definition.

\begin{rem}\label{QN.remark}
    Let $\P\subset \M$ be tracial von Neumann algebras. For any projection $p\in\P'\cap\M$, we have $W^*({\rm \mathscr{QN}_{p'\M p'}}(\P p'))=p'W^*({\rm \mathscr{QN}_{\M}}(\P))p'$.
\end{rem}

\noindent Finally, we record the following result which provides a relation between the group theoretical quasinormalizer and the von Neumann algebraic one.

\begin{lem}[Corollary 5.2 in \cite{FGS10}]\label{QN2}
Let $\Sigma<\Gamma$ be countable groups. Then the following hold:
\begin{enumerate}
    \item $W^*(\mathscr{QN}^{(1)}_{\L(\Gamma)}(\L(\Sigma)))=\L(K)$,  where $K<\Gamma$ is the subgroup generated by ${\rm QN}^{(1)}_\Gamma(\Sigma)$. In particular, if ${\rm QN}^{(1)}_\Gamma(\Sigma)=\Sigma$, then $\mathscr{QN}^{(1)}_{\L(\Gamma)}(\L(\Sigma))=\L(\Sigma)$.
    
    \item $W^*(\mathscr{QN}_{\L(\Gamma)}(\L(\Sigma)))=\L({\rm QN}_\Gamma(\Sigma))$.
\end{enumerate}

\end{lem}

For further use, we briefly recall some results from \cite{CDD22} that provide control of quasinormalizers of certain von Neumann subalgebras in von Neumann algebras of general graph product groups.

\begin{thm}\emph{\cite[Theorem 2.7]{CDD22}}\label{controlquasinormalizer1}
Let $\Gamma=\sG \{\Gamma_v\}$ be any graph product of groups  and let $\sS,\sT\subseteq \sG$ be any subgraphs. Denote by $\M= \L(\Gamma)$ and assume there exist $x,x_1,x_2,...,x_n \in \M$ such that $\L(\Gamma_\sS)x\subseteq \sum^n_{k=1} x_k \L(\Gamma_\sT)$. Then $\sS\subseteq \sT$ and $x\in \L(\Gamma_{\sT \cup {\rm lk}(\sS) })$.     
\end{thm}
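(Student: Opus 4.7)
The plan is a two-stage argument: first extract the combinatorial inclusion $\sS \subseteq \sT$ via Popa's intertwining, then induct on $d := |\sV \setminus (\sT \cup {\rm lk}(\sS))|$ using the amalgamated free product decomposition \eqref{afpdesc} combined with the Ioana--Peterson--Popa bimodule analysis to descend $x$ into $\L(\Gamma_{\sT \cup {\rm lk}(\sS)})$. One may assume $x\neq 0$.

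\emph{Step 1: $\sS \subseteq \sT$.} The finite sum $\sum_{k=1}^n x_k \L(\Gamma_\sT)$ is a finitely generated right $\L(\Gamma_\sT)$-module, so its $L^2$-closure has finite right $\L(\Gamma_\sT)$-dimension. The hypothesis exhibits the nonzero $\L(\Gamma_\sS)$-$\L(\Gamma_\sT)$-subbimodule $\overline{\L(\Gamma_\sS)\, x\, \L(\Gamma_\sT)}^{L^2}$ inside it, which by the bimodule characterization of Popa's intertwining forces $\L(\Gamma_\sS) \prec_\M \L(\Gamma_\sT)$. Corollary \ref{corollary.graph.CI17} then yields $\sS \subseteq \sT$.

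\emph{Step 2: Descent via IPP.} If $d = 0$ the conclusion is immediate. Suppose $d \geq 1$ and pick $v \in \sV \setminus (\sT \cup {\rm lk}(\sS))$. Since $v \notin \sT$ and $\sS \subseteq \sT$, we have $v \notin \sS$, so $\sS, \sT \subseteq \sV \setminus \{v\}$. As $v \notin {\rm lk}(\sS)$ pick $s \in \sS$ with $s \notin {\rm lk}(v)$, and use the amalgamated free product decomposition \eqref{afpdesc}:
\begin{equation*}
\M = \M_1 *_\B \M_2, \quad \M_1 = \L(\Gamma_{\sV\setminus\{v\}}), \ \M_2 = \L(\Gamma_{{\rm st}(v)}), \ \B = \L(\Gamma_{{\rm lk}(v)}).
\end{equation*}
Then $\L(\Gamma_s) \subseteq \L(\Gamma_\sS) \cap \M_1$ is diffuse, and $\Gamma_s \cap \Gamma_{{\rm lk}(v)} = \{e\}$ together with Corollary \ref{corollary.graph.CI17} gives $\L(\Gamma_s) \nprec_{\M_1} \B$. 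From the hypothesis, $\L(\Gamma_s) \cdot x \cdot \M_1 \subseteq \sum_k x_k \L(\Gamma_\sT) \cdot \M_1 \subseteq \sum_k x_k \M_1$, so $\overline{\L(\Gamma_s)\, x\, \M_1}^{L^2}$ is a nonzero $\L(\Gamma_s)$-$\M_1$-subbimodule of $L^2(\M)$ with finite right $\M_1$-dimension. The bimodule formulation of \cite[Theorem 1.2.1]{IPP05} places it inside $L^2(\M_1)$, hence $x \in \M_1$.

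\emph{Step 3: Closing the induction.} Since $\L(\Gamma_\sS), \L(\Gamma_\sT) \subseteq \M_1$, applying $E_{\M_1}$ to the hypothesis yields
\begin{equation*}
\L(\Gamma_\sS)\, x \subseteq \sum_k E_{\M_1}(x_k)\, \L(\Gamma_\sT)
\end{equation*}
inside $\M_1 = \L(\Gamma_{\sV \setminus \{v\}})$, which is itself the graph product algebra of the induced subgraph $\sG_{\sV \setminus \{v\}}$. Since $v \notin {\rm lk}(\sS)$, the link of $\sS$ computed inside $\sG_{\sV \setminus \{v\}}$ coincides with ${\rm lk}(\sS)$, so the inductive hypothesis (the new $d$ having dropped by one) produces $x \in \L(\Gamma_{\sT \cup {\rm lk}(\sS)})$. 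The key difficulty is Step 2: the asymmetric form $\L(\Gamma_s)\, x \subseteq \sum_k x_k \M_1$ does not directly fit the classical IPP theorem on relative commutants; the reformulation needed is as a finite-right-dimensional $\L(\Gamma_s)$-$\M_1$-subbimodule through $x$, which the IPP bimodule analysis (excluding such subbimodules from $L^2(\M)\ominus L^2(\M_1)$ when $\L(\Gamma_s) \nprec_{\M_1} \B$) then forces inside $L^2(\M_1)$.
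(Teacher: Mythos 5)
Your proposal is correct, but note that the present paper does not prove this statement at all: it is quoted from \cite[Theorem 2.7]{CDD22}, so there is no in-paper proof to compare against, and I can only check your argument against the toolkit used here. Step 1 is the standard bimodule characterization of Popa's intertwining combined with Corollary \ref{corollary.graph.CI17}, and is fine; do note that you need $x\neq 0$ and infinite vertex groups (both implicit in the quoted statement, the latter already built into Corollary \ref{corollary.graph.CI17} and into the diffuseness of $\L(\Gamma_s)$ you use later). Steps 2--3 work: choosing $v\notin \sT\cup{\rm lk}(\sS)$ and $s\in\sS\setminus{\rm lk}(v)$ gives $\L(\Gamma_s)\subseteq\M_1$ with $\L(\Gamma_s)\nprec_{\M_1}\B$ (again by Corollary \ref{corollary.graph.CI17} applied to the induced subgraph), and the bimodule/one-sided quasinormalizer form of \cite[Theorem 1.2.1]{IPP05} --- any $\L(\Gamma_s)$-$\M_1$ subbimodule of $L^2(\M)$ of finite right $\M_1$-dimension sits inside $L^2(\M_1)$ --- is precisely the form in which this paper itself invokes that theorem (for instance in Corollary \ref{controlintertw5} and in the step of Theorem \ref{theorem.upgrade.inclusions} where $a^i\in\L(\Gamma_i)$ is deduced from a one-sided relation), so the step you flag as the key difficulty is handled legitimately; applying $E_{\M_1}$ and observing that ${\rm lk}(\sS)$ is unchanged in the induced subgraph makes $d$ drop by one, so the induction closes, with the base case $d=0$ trivial. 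Whether \cite{CDD22} argues by this vertex-by-vertex induction through \eqref{afpdesc} or by a direct Fourier/double-coset computation of the one-sided quasi-normalizer in the spirit of \cite{FGS10} and Proposition \ref{proposition.AM10} cannot be determined from the present text, but your proof is self-contained, relies only on results available in this paper, and I see no gap.
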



\begin{cor}\label{controlquasinormalizer2}Let $\Gamma=\sG \{\Gamma_v\}$ be any graph product of groups  and let $\sC\in {\rm cliq}( \sG)$ be a clique with at least two vertices. Fix a vertex $v\in \sC$ such that ${\rm lk}(\sC \setminus \{v\})=\{v\}$.  Denote by $\M= \L(\Gamma)$ and assume there exist $x,x_1,x_2,...,x_n \in \M$ such that $\L(\Gamma_{\sC \setminus \{v\}})x\subseteq \sum^n_{k=1} x_k \L(\Gamma_\sC)$. Then  $x\in \L(\Gamma_{\sC })$. 

\begin{proof} Follows applying Theorem \ref{controlquasinormalizer1} for $\sS =\sC \setminus \{v\} $ and $\sT =\sC$.\end{proof}

\end{cor}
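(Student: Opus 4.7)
The plan is to deduce this as an immediate corollary of Theorem \ref{controlquasinormalizer1}, viewing the statement as a convenient specialization to the clique setting. First, I would set $\sS := \sC \setminus \{v\}$ and $\sT := \sC$, so that the containment assumed in the corollary, namely $\L(\Gamma_\sS) x \subseteq \sum_{k=1}^n x_k \L(\Gamma_\sT)$, matches verbatim the hypothesis required to apply Theorem \ref{controlquasinormalizer1}.

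Invoking that theorem, I would obtain both $\sS \subseteq \sT$ (which is automatic here, since $\sC \setminus \{v\} \subseteq \sC$) and the nontrivial conclusion $x \in \L(\Gamma_{\sT \cup {\rm lk}(\sS)})$. The only remaining task is to identify the subgraph $\sT \cup {\rm lk}(\sS)$ explicitly. Using the link hypothesis ${\rm lk}(\sC \setminus \{v\}) = \{v\}$ together with $v \in \sC$, one has $\sT \cup {\rm lk}(\sS) = \sC \cup \{v\} = \sC$. Consequently $x \in \L(\Gamma_\sC)$, which is exactly the desired conclusion.

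Since all of the substantive work is packaged inside Theorem \ref{controlquasinormalizer1}, there is essentially no obstacle in the corollary; the only point that might warrant a sentence of justification is the identification $\sC \cup {\rm lk}(\sC \setminus \{v\}) = \sC$, but this is immediate from the assumption on $v$. In a final write-up, the proof can therefore be compressed to a single line referencing the theorem, as is done in the excerpt.
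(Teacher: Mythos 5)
Your proposal is correct and is exactly the paper's argument: apply Theorem \ref{controlquasinormalizer1} with $\sS=\sC\setminus\{v\}$ and $\sT=\sC$, and then note that the hypothesis ${\rm lk}(\sC\setminus\{v\})=\{v\}\subseteq\sC$ gives $\sT\cup{\rm lk}(\sS)=\sC$, so $x\in\L(\Gamma_\sC)$. Your write-up simply makes explicit the identification of $\sT\cup{\rm lk}(\sS)$ that the paper leaves implicit.
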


\begin{lem}\emph{\cite[Lemma 2.9]{CDD22}}\label{lemma.control.qn}\label{controlonesidednorm3}
Let $\Gamma=\sG \{\Gamma_v\}$ be a graph product of groups and let $\sC\in {\rm cliq}(\sG)$ be a clique. Let $\P\subset p\L(\Gamma_{\sC})p$ be a von Neumann subalgebra such that $\P\nprec_{\L(\Gamma_\sC)} L(\Gamma_{\sC \setminus \{v\}})$, for any $v\in \sC$. If $x\in \L(\Gamma)$ satisfies $x\P\subset \sum_{i=1}^n \L(\Gamma_{\sC})x_i$ for some $x_1,\dots,x_n\in \L(\Gamma)$, then $xp\in \L(\Gamma_\sC)$.
\end{lem}

\section{A version of Popa-Vaes dychotomy for normalizers of subalgebras in crossed-product von Neumann algebras}

In \cite[Theorem 1.4]{PV12} Popa and Vaes established a remarkably deep dichotomy for normalizers of von Neumann subalgebras in crossed-product von Neumann algebras, $\A\rtimes \Gamma$, arsing from trace preserving actions $\Gamma \ca \D$ of biexact weakly amenable groups $\Gamma$ on tracial von Neumann algebras $\D$. For further use,  we present next a version of this result for actions of groups which surject onto biexact, weakly amenable groups.  

\begin{thm} Let $\Sigma$ be a biexact weakly amenable group and let $\Gamma$ be a group which admits an epimorphism $\varepsilon: \Gamma \ra \Sigma$.  Let $\D$ be a tracial von Neumann algebra and let $\Gamma \ca ^{\sigma} \D$ be a trace-preserving action. Denote by $\M =\D \rtimes_{\sigma} \Gamma$ the corresponding crossed-product von Neumann algebra and let $0\neq p\in \M$ be a projection. Then for any von Neumann subalgebra $\P \subset p\M p$ that is amenable relative to $\D$ inside $\M$ one of the following must hold:

\begin{enumerate}
    \item $\P \prec_\M \D\rtimes \ker(\varepsilon)$, or
    \item  $\mathscr N_{p\M p}(\P)''$ is amenable relative to $\D\rtimes \ker(\varepsilon) $ inside $\M$.
\end{enumerate}
\end{thm}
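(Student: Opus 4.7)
The plan is to reduce to the classical Popa--Vaes dichotomy \cite[Theorem 1.4]{PV12} via the $*$-embedding $\Delta:\M\to\tilde{\M}:=\M\oo \L(\Sg)$ from Proposition \ref{reductionamenable} (applied with $\La=\Sg$). One views $\tilde{\M}=\M\rtimes_{\mathrm{triv}}\Sg$ as a crossed product by the biexact, weakly amenable group $\Sg$ over the base algebra $\M$, so Popa--Vaes applies directly to $\tilde{\M}$. The first technical step is to check that $\Delta(\P)\subseteq \Delta(p)\tilde{\M}\Delta(p)$ is amenable relative to $\M\otimes 1$ inside $\tilde{\M}$: since $\Delta$ is a trace-preserving $*$-isomorphism onto $\Delta(\M)$ carrying $\D$ into $\D\otimes 1\subseteq \M\otimes 1$, the hypothesis that $\P$ is amenable relative to $\D$ in $\M$ transfers to relative amenability of $\Delta(\P)$ over $\M\otimes 1$ inside $\Delta(\M)$, and the trace-preserving conditional expectation $\tilde{\M}\to \Delta(\M)$ allows enlarging the ambient algebra to $\tilde{\M}$.

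Popa--Vaes then yields the dichotomy: either (a) $\Delta(\P)\prec_{\tilde{\M}}\M\otimes 1$, or (b) $\mathscr N_{\Delta(p)\tilde{\M}\Delta(p)}(\Delta(\P))''$ is amenable relative to $\M\otimes 1$ in $\tilde{\M}$. In case (b), I set $\mathcal N:=\mathscr N_{p\M p}(\P)''$ and observe that $\Delta(\mathcal N)\subseteq \mathscr N_{\Delta(p)\tilde{\M}\Delta(p)}(\Delta(\P))''$; consequently $\Delta(\mathcal N)$ sits inside a relatively amenable algebra and so is itself amenable relative to $\M\otimes 1$ in $\tilde{\M}$. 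Proposition \ref{reductionamenable} applied to $\mathcal N\subseteq p\M p$ then gives that $\mathcal N$ is amenable relative to $\D\rtimes \ker(\varepsilon)$ inside $\M$, which is precisely conclusion (2).

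For case (a), I plan to invoke the embedding of basic constructions $\theta:\langle\M,e_{\D\rtimes\ker(\varepsilon)}\rangle\to \langle\tilde{\M},e_{\M\otimes 1}\rangle$ constructed in the proof of Proposition \ref{reductionamenable}, which satisfies $\theta|_\M=\Delta$, $\theta(e_{\D\rtimes \ker(\varepsilon)})=e_{\M\otimes 1}$, and is compatible with the natural semifinite traces. These properties yield the identity
\[
\|E_{\M\otimes 1}(\Delta(x)\Delta(u)\Delta(y))\|_2=\|E_{\D\rtimes \ker(\varepsilon)}(xuy)\|_2,\qquad x,y\in p\M,\ u\in\mathcal U(\P),
\]
and a standard density argument on test elements (as in \cite{IPV10}) upgrades this to arbitrary $X,Y\in\Delta(p)\tilde{\M}\Delta(p)$. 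Theorem \ref{corner} then converts (a) into conclusion (1). The main obstacle I anticipate is the initial transfer of relative amenability from $(\P,\D)$ inside $\M$ to $(\Delta(\P),\M\otimes 1)$ inside $\tilde{\M}$; once that is set up cleanly, the remainder of the proof is a direct application of Popa--Vaes combined with the basic-construction embedding already developed in Proposition \ref{reductionamenable}.
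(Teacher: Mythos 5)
Your proposal is correct and takes essentially the same route as the paper: the same embedding $\Delta(au_g)=au_g\otimes v_{\varepsilon(g)}\in \M\bar\otimes\L(\Sigma)$, an application of \cite[Theorem 1.4]{PV12} in $\M\bar\otimes\L(\Sigma)$ viewed as a crossed product over $\M\otimes 1$, and Proposition \ref{reductionamenable} to pull conclusion (2) back to $\M$. The only difference is presentational: you verify by hand the transfer of relative amenability into $\M\bar\otimes\L(\Sigma)$ and the pullback of the intertwining $\Delta(\P)\prec\M\otimes 1$ to $\P\prec_\M\D\rtimes\ker(\varepsilon)$, steps the paper leaves implicit or handles by citing \cite[Proposition 3.4]{CIK13}.
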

\begin{proof} Denote  $\Q=\mathscr N_{p\M p}(\P)''$. Following \cite[Section 3]{CIK13}, consider the $\ast$-embedding 
$\Delta : \M \ra \M \bar\otimes \L(\Sigma)$ given by $\Delta(au_g)=(au_g) \otimes v_{\varepsilon(g)}$ for all $a\in \D$ and $g\in \Gamma$, where $(v_g)_{g\in \Gamma}$ are the canonical group unitaries in $\L(\Sigma)$.  Then using \cite[Theorem 1.4]{PV12} one of the following must hold
\begin{enumerate}
    \item [a)] $\Delta(\P)\prec_{\M\bar\otimes \L(\Sigma)} \M\otimes 1$, or 
    \item [b)] $\Delta(\Q)$ is amenable relative to $\M \otimes 1$ inside $\M\bar \otimes \L(\Sigma)$.
\end{enumerate}
Using \cite[Proposition 3.4]{CIK13} one can see that case a) implies 1. Moreover, using Proposition \ref{reductionamenable}, case b) yields 2.  \end{proof}

\section{Solidity results for von Neumann algebras of wreath-like product groups}

We start this section by recording the following result which follows from Popa and Vaes' structure theorem \cite[Theorem 1.4]{PV12} for normalizers inside crossed products of hyperbolic groups as in \cite{CIK13}.

\begin{prop}\label{prop.dichotomy.wreath.like}
Let $\Ga \in \mathcal W\mathcal R (A, B \ca I)$ where $A$ is abelian and $B$ is a subgroup of a hyperbolic group. Let $\Ga\curvearrowright \D$ be a trace preserving action and denote $\M=\D\rtimes\Ga.$ Let $\A,\B\subset p \M p$ be commuting von Neumann subalgebras.

Then either $\A\prec_{\M} \D\rtimes A^{I}$ or $\B$ is amenable relative to $\D$ inside $\M$. 
\end{prop}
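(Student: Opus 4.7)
The plan is to exploit the wreath-like short exact sequence $1 \to A^I \to \Gamma \overset{\varepsilon}{\to} B \to 1$ together with the fact that the quotient $B$ is biexact and weakly amenable (as a subgroup of a hyperbolic group). First I would set up the comultiplication-type $*$-embedding $\Delta : \M \to \tilde\M := \M \bar\otimes \L(B)$ determined on generators by $\Delta(au_g) = au_g \otimes v_{\varepsilon(g)}$, where $(v_b)_{b \in B}$ are the canonical unitaries of $\L(B)$. Viewing $\tilde\M = \M \bar\otimes \L(B)$ as the trivial crossed product $\M \rtimes_{\mathrm{triv}} B$ with ``base'' $\M \otimes 1$, the images $\Delta(\A), \Delta(\B) \subset q\tilde\M q$ (with $q = \Delta(p)$) form a commuting pair, so in particular $\Delta(\B) \subseteq \Delta(\A)' \cap q\tilde\M q \subseteq \mathscr N_{q\tilde\M q}(\Delta(\A))''$.

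Next I would apply the Popa--Vaes dichotomy \cite[Theorem 1.4]{PV12} in $\tilde\M$, exploiting biexactness and weak amenability of $B$, to obtain one of two alternatives: either (i) $\Delta(\A) \prec_{\tilde\M} \M \otimes 1$, or (ii) the normalizer $\mathscr N_{q\tilde\M q}(\Delta(\A))''$, and hence its subalgebra $\Delta(\B)$, is amenable relative to $\M \otimes 1$ inside $\tilde\M$.

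In case (i), a standard pullback via the $\Delta$-map, as in \cite[Proposition 3.4]{CIK13}, translates $\Delta(\A) \prec_{\tilde\M} \M \otimes 1$ into $\A \prec_\M \D \rtimes \ker(\varepsilon) = \D \rtimes A^I$, which is the first alternative. In case (ii), Proposition \ref{reductionamenable} (applied to the epimorphism $\varepsilon : \Gamma \to B$) converts the relative amenability of $\Delta(\B)$ with respect to $\M \otimes 1$ inside $\tilde\M$ into amenability of $\B$ relative to $\D \rtimes A^I$ inside $\M$. Since $A$ is abelian, so is $A^I$, hence $A^I$ is amenable and $\D \rtimes A^I$ is amenable relative to $\D$; by transitivity of relative amenability, $\B$ is amenable relative to $\D$, giving the second alternative.

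The main technical obstacle is that the Popa--Vaes normalizer dichotomy in its most direct form requires the subalgebra ($\Delta(\A)$ in our setup) to be amenable relative to the base $\M \otimes 1$, a property that is not automatic here because $B$ is nonamenable. The standard way around this is to invoke the strengthened form of the Popa--Vaes theorem classifying subalgebras in terms of their relative commutants: if $\Delta(\A)' \cap q\tilde\M q$ fails to be amenable relative to $\M \otimes 1$, then $\Delta(\A) \prec_{\tilde\M} \M \otimes 1$. Since $\Delta(\B)$ sits inside this relative commutant, this produces the required dichotomy without any amenability hypothesis on $\Delta(\A)$, and the rest of the argument proceeds as above.
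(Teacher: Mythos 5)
Your argument is correct and follows essentially the same route as the paper: the same embedding $\Delta:\M\to\M\bar\otimes\L(B)$, $\Delta(au_g)=au_g\otimes v_{\varepsilon(g)}$, the same pullbacks via \cite[Proposition 3.4]{CIK13} and Proposition \ref{reductionamenable}, and the same final passage from $\D\rtimes A^{I}$ to $\D$ using amenability of $A^{I}$ (the paper cites \cite[Proposition 2.4]{OP07}). The ``strengthened form'' of the Popa--Vaes dichotomy you invoke to bypass the relative-amenability hypothesis on $\Delta(\A)$ is precisely what the paper cites instead of \cite[Theorem 1.4]{PV12}, namely the commuting-pair dichotomy \cite[Lemma 5.2]{KV15}: either $\Delta(\A)\prec_{\M\bar\otimes\L(B)}\M\otimes 1$ or $\Delta(\B)$ is amenable relative to $\M\otimes 1$.
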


\begin{proof}
Let $\varepsilon:\Ga\to B$ be the quotient homomorphism given by $\Ga \in \mathcal W\mathcal R (A, B \ca I)$ and let $\{u_g\}_{g\in\Ga}$ and $\{v_h\}_{h\in B}$ be the canonical generating unitaries of $\L(\Ga)$ and $\L(B)$, respectively. Denote by $\Delta:\M\to \M\bar\otimes \L(B)$ the $*$-homorphism given by $\Delta(au_g)=au_g\otimes v_{\varepsilon(g)}$, for all $a\in \D,g\in\Ga$.
From \cite[Lemma 5.2]{KV15}, we derive that either $\Delta (\A)\prec_{\M\bar\otimes \L(B)} \M\otimes 1$ or $\Delta(\B)$ is amenable relative to $\M\bar\otimes 1$. The first possibility implies by \cite[Proposition 3.4]{CIK13} that $\A\prec_{\M} \D\rtimes A^{I}$, while the second one implies by Proposition \ref{reductionamenable} that $\B$ is amenable relative to $\D\rtimes A^{I}$ inside $\M$. Since $A$ is amenable, the conclusion follows from \cite[Proposition 2.4]{OP07}.
\end{proof}


We recall that a II$_1$ factor $\M$ is called {\it solid} if for any projection $p\in \M$ and any diffuse  von Neumann subalgebra  $\A \subset p \M p$ the relative commutant $\A'\cap p\M p$ is amenable. We note that if $\M$ is solid, then  any amplification $\M^t$, where $t>0$, is also solid.

\begin{thm}\label{prodrigid1}Let $\Ga\in \mathcal W\mathcal R (A, B \ca I)$ where $A$ is abelian, $B$ is a subgroup of a hyperbolic groups and the action $B\ca I$ has amenable stabilizers. 
Then $\L(\Ga)$ is solid.
\end{thm}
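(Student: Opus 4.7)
The strategy is to deduce solidity from Proposition~\ref{prop.dichotomy.wreath.like} and handle the residual case using the amenable-stabilizer hypothesis on $B\curvearrowright I$.

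By standard reductions, it suffices to show that for every diffuse abelian von Neumann subalgebra $\A\subset p\M p$ (with $\M=\L(\Gamma)$ and $p\in\M$ a non-zero projection), the relative commutant $\B:=\A'\cap p\M p$ is amenable. Indeed, for any diffuse $\A_0\subset p\M p$, picking a diffuse abelian subalgebra $\A\subset\A_0$ yields $\A_0'\cap p\M p\subset\A'\cap p\M p$, and amenability passes to subalgebras. Since $\A$ is abelian, $\A$ and $\B$ commute inside $p\M p$. Proposition~\ref{prop.dichotomy.wreath.like} applied with $\D=\mathbb C$ then gives the dichotomy: either (a) $\A\prec_\M\L(A^I)$, or (b) $\B$ is amenable relative to $\mathbb C$, i.e., amenable. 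In case (b) the conclusion follows.

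To handle case (a), one first applies the Popa--Vaes normalizer dichotomy established in Section~3 to the amenable subalgebra $\A$ (using $\varepsilon:\Gamma\to B$ and $\D=\mathbb C$): this yields either $\A\prec_\M\L(A^I)$ (trivially consistent with (a)) or $\mathscr N_{p\M p}(\A)''$ is amenable relative to $\L(A^I)$, hence amenable since $\L(A^I)$ is abelian. In the latter subcase, $\B\subset\mathscr N_{p\M p}(\A)''$ is amenable and we are done. In the residual situation, where Section~3's dichotomy fires only in its first alternative, we invoke the amenable-stabilizer hypothesis as follows. Using that $\L(A^I)$ is a MASA in $\M$ (a consequence of the faithfulness of $B\curvearrowright I$, standard in the wreath-like setting) together with Popa's intertwining-by-bimodules \cite{Po03}, we may replace $\A$ by a unitary conjugate of a corner to assume $\A\subset p\L(A^I)p$ with $p\in\L(A^I)$. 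Every element $x\in p\M p$ admits a Fourier-type decomposition $x=\sum_{b\in B}f_b u_{g_b}$ along the normal subgroup $A^I\triangleleft\Gamma$ (with quotient $B$), where $f_b\in\L(A^I)$ and $g_b\in\Gamma$ are fixed lifts. Commutation of $x$ with $\A\subset L^\infty(\widehat A^I)$ forces each $f_b$ to be supported on the set $F_b:=\{y\in\widehat A^I:a(y)=a(b^{-1}y),\ \forall a\in\A\}$. The diffuseness of $\A$ combined with the amenable-stabilizer assumption then shows that the subgroup $H:=\{b\in B:\mu(F_b)>0\}$ of $B$ is amenable: the positivity of $\mu(F_b)$ essentially forces $b$ to pointwise stabilize a positive-measure fiber of the essential support of $\A$'s sub-$\sigma$-algebra in $\widehat A^I$, and such pointwise stabilizers in $B$ are amenable as intersections of amenable point stabilizers of $B\curvearrowright I$. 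Consequently $\B$ embeds into $\L(\varepsilon^{-1}(H))$, the group von Neumann algebra of an extension of the amenable group $H$ by the abelian $A^I$, and hence is amenable.

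The main technical obstacle lies in rigorously verifying the amenability of $H$ for a general diffuse $\A\subset\L(A^I)$, which requires a careful measure-theoretic argument relating the essential support of $\A$'s sub-$\sigma$-algebra to the orbit structure of $B\curvearrowright I$ and exploiting the product-measure structure on $\widehat A^I$.
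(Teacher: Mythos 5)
Your opening step is exactly the paper's: apply Proposition \ref{prop.dichotomy.wreath.like} with trivial coefficients to the commuting pair $\A,\B=\A'\cap p\M p$, so that either $\A\prec_{\M}\L(A^{(I)})$ or $\B$ is amenable, and the second alternative finishes the proof. The genuine gap is in your treatment of the first alternative, which is precisely where the amenable-stabilizer hypothesis on $B\curvearrowright I$ must enter. The paper disposes of this case in one line: after localizing at a central projection $z$ so that every corner of $\B z$ is non-amenable (whence $\A z\prec^s_{\M}\L(A^{(I)})$), it invokes \cite[Corollary 8.7]{CIOS21}, which is exactly the statement that such an embedding forces amenability of the relative commutant. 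You attempt to reprove this black box by a Fourier/measure-theoretic argument, and you yourself concede that its decisive point --- the amenability of the ``support'' set $H$ --- is left unverified; as written this is a plan for a proof, not a proof. (Your intermediate appeal to the Section 3 normalizer dichotomy does not help: its first alternative is again $\A\prec_{\M}\L(A^{(I)})$, so the hard case is unchanged.)

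Beyond the admitted gap, several steps of the sketch are not available under the hypotheses of the theorem. The MASA claim for $\L(A^{(I)})$ inside $\L(\Ga)$ is not a consequence of the stated assumptions: faithfulness of $B\curvearrowright I$ is not assumed, and even faithfulness is insufficient --- one needs every nontrivial $b\in B$ to move infinitely many $i\in I$, a condition imposed only in Corollary \ref{superprod'}, not here. Relatedly, passing from $\A\prec_{\M}\L(A^{(I)})$ to ``a unitary conjugate of a corner of $\A$ lies in $p\L(A^{(I)})p$'' is not automatic from intertwining. The extension $1\to\bigoplus_{i\in I}A_i\to\Ga\to B\to 1$ is in general non-split, so $\M$ is not a plain crossed product $\L(A^{(I)})\rtimes B$, and conjugation by a lift $g_b$ acts on $\widehat{A^{(I)}}$ by an automorphism covering the permutation of coordinates but not necessarily by the naive coordinate shift used to define your sets $F_b$. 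Further, $H=\{b\in B:\mu(F_b)>0\}$ need not be a subgroup, and even granting that each $b\in H$ pointwise stabilizes a large subset of $I$, the subgroup generated by $H$ need not be amenable (amenable stabilizers give amenability of pointwise stabilizers of subsets, not of the group generated by all elements admitting some fixed behavior); yet your final step, embedding $\B$ into $\L(\varepsilon^{-1}(H))$ and concluding amenability, needs exactly such a subgroup statement. In short: either cite \cite[Corollary 8.7]{CIOS21} as the paper does, or supply a complete proof of the statement you sketch --- which is essentially that corollary and is the real content of the theorem.
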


\begin{proof} 
Let $\A \subset p\L(\Ga)p$ be a diffuse  von Neumann subalgebra and
assume by contradiction that $\B:=\A'\cap p\L(\Ga)p$ is non-amenable. Thus, there exists a non-zero projection $z\in \Z(\B)$ such that $ q \B q$ is nonamenable for every projection $q \in \B z$. By applying Proposition \ref{prop.dichotomy.wreath.like} we deduce that $\A z\prec^s_{\L(\Ga)} \L(A^{I})$, and hence,  by using \cite[Corollary 4.7]{CIOS21} (for $m=1$) and also \cite[Lemma 4.11]{CIOS21} we get that $\B z$ is amenable, contradiction.
\end{proof}

\begin{rem} Notice that this result generalizes some of the results obtained in \cite[Section 9.2]{CIOS21}.

\end{rem}




\begin{prop}\label{lemma.counting}\label{prop.counting}
 For any $1\leq i\leq n$, we consider $\Ga_i\in \mathcal W\mathcal R (A_i, B_i \ca I_i)$ where $A_i$ is abelian, $B_i$ is a  subgroup of a hyperbolic group and denote $\Ga= \Ga_1\times \cdots \times \Ga_n$. Let $\Lambda$ be a countable group and denote $\M=\L(\Lambda)$. Assume that $\L(\Gamma)\subset \M$ is a von Neumann subalgebra.

Let $\P_1,\dots,\P_{m}\subset p\M p$ be commuting non-amenable factors and $\Lambda_0<\Lambda$ a subgroup such that $\P_1\vee\dots\vee \P_{m}\prec_{\M} \L(\Lambda_0)$ and $\L(\Lambda_0)\prec_{\M} \L(\Gamma_1\times \dots \times \Gamma_n)$. Then $m\leq n$. 

\end{prop}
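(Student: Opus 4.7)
By transitivity of Popa's intertwining-by-bimodules, the two intertwining hypotheses $\P_1\vee\cdots\vee \P_m\prec_{\M} \L(\La_0)$ and $\L(\La_0)\prec_{\M} \L(\Ga)$ combine to give $\P_1\vee\cdots\vee \P_m\prec_{\M}\L(\Ga)$. Since the $\P_j$ pairwise commute and are factors, their join is the tensor product $\P_1\oo\cdots\oo\P_m$, again a factor; a standard Murray--von Neumann-equivalence adjustment of Theorem~\ref{corner} inside this tensor factor then lets us assume the source projection from the intertwining has the form $p_0=p_{0,1}\cdots p_{0,m}$ with $0\ne p_{0,j}\in\P_j$. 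The associated $*$-homomorphism $\theta\colon p_0(\P_1\oo\cdots\oo\P_m)p_0\to q_0\L(\Ga)q_0$ is then injective on each factor $p_{0,j}\P_j p_{0,j}$ (being nonzero on the surrounding factor). Writing $\R_j:=\theta(p_{0,j}\P_j p_{0,j})$ we thus obtain $m$ pairwise commuting non-amenable subfactors $\R_1,\dots,\R_m$ inside a single corner $q_0\L(\Ga)q_0$, and the remainder of the argument will take place entirely inside $\L(\Ga)$.

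For each $k\in\{1,\dots,n\}$, set $\hat\Ga_k:=\prod_{i\ne k}\Ga_i$ and view $\L(\Ga)=\L(\hat\Ga_k)\rtimes\Ga_k$ as a trivial-action crossed product. Since $\Ga_k\in\mathcal W\mathcal R(A_k,B_k\ca I_k)$, Proposition~\ref{prop.dichotomy.wreath.like} applies with $\D=\L(\hat\Ga_k)$ to any commuting pair inside a corner of $\L(\Ga)$. \emph{Claim 1: for each fixed $k$, at most one $\R_j$ is non-amenable relative to $\L(\hat\Ga_k)$ inside $\L(\Ga)$.} If $j_1\ne j_2$ were both such indices, applying the dichotomy to $\A=\R_{j_1}$, $\B=\R_{j_2}$ would rule out the relative-amenability alternative for $\B$ and force $\R_{j_1}\prec_{\L(\Ga)}\L(\hat\Ga_k)\oo\L(A_k^{I_k})$. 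Since $\L(A_k^{I_k})$ is abelian, $\L(\hat\Ga_k)\oo\L(A_k^{I_k})$ is amenable relative to $\L(\hat\Ga_k)$ inside $\L(\Ga)$; together with $\R_{j_1}$ being a II$_1$ factor, this intertwining upgrades (via a corner-to-factor Murray--von Neumann argument) to the statement that $\R_{j_1}$ itself is amenable relative to $\L(\hat\Ga_k)$, contradicting the choice of $j_1$.

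\emph{Claim 2: for each $j$ there exists at least one $k$ with $\R_j$ non-amenable relative to $\L(\hat\Ga_k)$.} If Claim 2 fails, $\R_j$ is amenable relative to each $\L(\hat\Ga_k)$. For the direct product $\Ga=\Ga_1\times\cdots\times\Ga_n$, the subalgebras $\{\L(\hat\Ga_k)\}_{k=1}^n$ pairwise form commuting squares inside $\L(\Ga)$ (a direct computation on the canonical unitaries $u_g$, $g=g_1\cdots g_n$, gives $E_{\L(\hat\Ga_k)}\circ E_{\L(\hat\Ga_\ell)}=E_{\L(\hat\Ga_k\cap\hat\Ga_\ell)}$), and $\bigcap_{k=1}^n\hat\Ga_k=\{e\}$. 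The intersection formula for relative amenability (\cite[Proposition~2.7]{OP07}), iterated $n-1$ times, therefore yields that $\R_j$ is amenable relative to $\L(\{e\})=\mathbb C$, i.e.\ amenable---contradicting the non-amenability of $\R_j$.

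Combining the two claims, for each $j\in\{1,\dots,m\}$ pick a witness $k_j$ from Claim 2; Claim 1 forces the map $j\mapsto k_j$ to be injective into $\{1,\dots,n\}$, yielding $m\le n$. The main technical delicacies are (i) the corner-to-whole-factor upgrade for relative amenability used at the end of Claim 1, which is standard for II$_1$ factors but has to be set up carefully, and (ii) verifying that the family $\{\L(\hat\Ga_k)\}_{k=1}^n$ indeed satisfies the commuting-square hypothesis required by the intersection formula in Claim 2; both of these are routine in the direct-product setting but deserve explicit verification.
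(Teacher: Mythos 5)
Your opening step is where the argument breaks down: Popa's intertwining relation is not transitive. From $\P_1\vee\dots\vee\P_m\prec_{\M}\L(\La_0)$ and $\L(\La_0)\prec_{\M}\L(\Ga)$ you cannot conclude $\P_1\vee\dots\vee\P_m\prec_{\M}\L(\Ga)$: composing intertwinings requires the middle-to-target relation to be strong (e.g.\ $\L(\La_0)\prec^s_{\M}\L(\Ga)$, as in \cite[Lemma 3.7]{Va08}, which elsewhere in the paper is available only because the relevant relative commutant is trivial), and nothing of the sort holds here, since $\L(\Ga)$ is just some von Neumann subalgebra of $\M=\L(\La)$ and $\L(\La_0)'\cap\M$ is uncontrolled. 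Overcoming exactly this failure is the actual content of the paper's proof: it passes to the Bernoulli augmentation $\tilde\M=\D\rtimes\La$, where the target $\D\rtimes\La_0$ has trivial relative commutant so that the hypothesis upgrades to $\P\prec^{s'}_{\tilde\M}\D\rtimes\La_0$, and then applies the embedding $\Delta$ together with the composition lemmas \cite[Lemmas 2.3 and 2.4(2)]{Dr19b} and \cite[Lemma 3.4]{CD-AD20}. Note that even this machinery does not deliver your statement $\P\prec_{\M}\L(\Ga)$; it only gives the weaker $\Delta(\P)\prec_{\M\bar\otimes\M}\M\bar\otimes\L(\Ga)$, so the whole counting must be performed on the $\Delta$-images inside $\M\bar\otimes\M$, with \cite[Proposition 7.2]{IPV10} supplying the non-degeneracy (no intertwining into $\M\otimes 1$ or into $\M\bar\otimes\L(A)$ with $A$ amenable) that you would otherwise expect to get for free from non-amenability of the $\R_j$.

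The counting scheme itself, alternating the intersection property for relative amenability (which is \cite[Proposition 2.7]{PV11}, not \cite{OP07} as you cite, and which indeed needs the commuting-square and regularity checks you mention) with Proposition \ref{prop.dichotomy.wreath.like}, is sound in spirit and is essentially how the paper argues both here and in Proposition \ref{prop.counting2}; the corner upgrade in your Claim 1 and the multiplicativity of the trace across commuting factors are fixable details. Two further inaccuracies: the reduction of the intertwining projection to a product $p_{0,1}\cdots p_{0,m}$ with $p_{0,j}\in\P_j$ is stronger than what \cite[Lemma 4.5]{CdSS17} provides (the paper only arranges the projection to lie in a single factor $\P_1$) and is not justified; and once the composition step is carried out correctly you are no longer working inside $\L(\Ga)$, so your Claims 1 and 2 would have to be re-run relative to subalgebras of the form $\M\bar\otimes\L(\Ga_{\hat k})$ inside $\M\bar\otimes\M$, exactly as in the paper. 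As written, the foundation of your proposal --- producing commuting non-amenable subfactors inside a corner of $\L(\Ga)$ from the two hypotheses --- is missing.
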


Throughout the proof, we use the following notation. If $\P\subset p\M p, \Q\subset q \M q$ are subalgebras such that $\P\prec_{\M} \Q q'$, for all projections $0\neq q'\in \Q'\cap q\M q$,  we write $\P\prec_{\M}^{s'} \Q$. 

\begin{proof}
Denote $\Gamma=\Gamma_1\times\dots\times \Gamma_n$ and $\P=\P_1\vee\dots\vee \P_{m}$.
Following the augmentation technique from \cite[Section 3]{CD-AD20}, we consider a Bernoulli action with abelian base $\Lambda\curvearrowright \D^\Lambda$ and let $\tilde \M=\D^\Lambda\rtimes \Lambda$. Let $\Delta:\tilde\M\to \tilde\M\bar\otimes \M$ be the $*$-homomorphism given by $\Delta(dv_h)=dv_h\otimes v_h$, for all $d\in \D^\Lambda, h\in \Lambda$. The assumption implies that $\P\prec^{s'}_{\tilde \M} \D^\Lambda\rtimes \Lambda_0$, and hence, by applying \cite[Lemma 2.3]{Dr19b} we get that $\Delta(\P)\prec_{\tilde \M\bar\otimes \M}^{s'} \tilde\M \bar\otimes \L(\Lambda_0)$. Next, from \cite[Lemma 2.4(2)]{Dr19b} we derive that $\Delta(\P)\prec_{\tilde \M\bar\otimes \M} \tilde\M \bar\otimes \L(\Gamma)$. 

Thus, there exist projections $r\in \P, q\in \tilde\M \bar\otimes \L(\Gamma)$, a non-zero partial isometry $v\in q(\tilde\M\bar\otimes \M)\Delta(r)$ and a $*$-homomorphism $\theta: \Delta(r\P r)\to q (\tilde\M \bar\otimes \L(\Gamma)) q$ such that $\theta(x)v=vx$, for any $x\in \Delta(rPr)$, and the support projection of $E_{\tilde\M \bar\otimes \L(\Gamma)}(vv^*)$ equals $q$. Using \cite[Lemma 4.5]{CdSS17}, we can assume that $r\in \P_1$. Denote $\N=\tilde\M \bar\otimes \L(\Gamma)$ and $\Q_i=\Delta(r\P_i r)$ for any $1\leq i\leq m$.
From \cite[Lemma 10.2]{IPV10} it follows that $\Delta(\P_i)$ is strongly non-amenable relative to $\tilde\M\otimes 1$ inside $\tilde\M\bar\otimes \M$, for any $1\leq i\leq m$. Therefore, $\Q_i$ is non-amenable relative to $\tilde\M\otimes 1$ inside $\N$ for any $1\leq i\leq m$. In particular, we derive from \cite[Proposition 2.7]{PV11} that there exists $1\leq j\leq n$ such that  $\Q_m$ is non-amenable relative to $\tilde\M\otimes \L(\Gamma_{\widehat {j}})$ inside $\N$. Without loss of generality, we may assume that $j=n$. By applying  Proposition \ref{prop.dichotomy.wreath.like}, it follows that $\bigvee_{i=1}^{m-1} Q_i\prec_{\N} \tilde\M\otimes \L(\Gamma_{\widehat {n}}\times A_{n}),$ where $A_n<\Gamma_{n}$ is an amenable subgroup. Thus,
$
\Delta(\bigvee_{i=1}^{m-1} \P_i)\prec_{\tilde\M\bar\otimes \M} \tilde\M\bar\otimes \L(\Gamma_{\widehat {n}}\times A_{n}).
$

Assume by contradiction that $m>n$. By repeating the previous argument finitely many times, we obtain that there exists an amenable subgroup $A<\Gamma$ with the property that $\Delta(\bigvee_{i=1}^{m-n} \P_i)\prec_{\tilde\M\bar\otimes \M}\tilde\M\bar\otimes \L(A)$. By applying \cite[Lemma 10.2]{IPV10} it follows that $\P_1$ is amenable, contradiction. 
\end{proof}

\begin{prop}\label{prop.counting2}
For any $1\leq i\leq n$, we consider $\Ga_i\in \mathcal W\mathcal R (A_i, B_i \ca I_i)$ where $A_i$ is abelian, $B_i$ is a  subgroup of a hyperbolic group and denote $\Ga= \Ga_1\times \cdots \times \Ga_n$. 
Let $\P_0,\P_1,..., \P_n\subseteq p \L(\Ga) p$ be commuting von Neumann subalgebras such that  $\P_n$ has no amenable direct summand and $\P_i$ is a non-amenable factor for any $1\leq i\leq n-1$.

Then $\P_0$ is completely atomic.
\end{prop}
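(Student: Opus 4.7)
I would argue by contradiction. Suppose $\P_0$ is not completely atomic; the atomic/diffuse decomposition of $\Z(\P_0)$ provides a nonzero central projection $z\in\Z(\P_0)$ with $z\P_0$ diffuse. Since $z\in\P_0\subseteq\bigcap_i\P_i'$ and each $\P_i$ ($1\leq i\leq n-1$) is a factor, the cut-down $z\P_i$ is isomorphic to $\P_i$ (the map $x\mapsto xz$ being injective on any factor once $zp\neq 0$); cutting $\P_n$ by $z\in\P_n'$ preserves the no-amenable-direct-summand property via the central support of $z$ inside $\P_n$. Replacing $p$ by $zp$, I reduce to the case where $\P_0$ itself is diffuse.

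The main strategy is to produce $n+1$ commuting non-amenable subfactors of $p\M p$, contradicting Proposition \ref{prop.counting} applied with $\Lambda=\Lambda_0=\Gamma$ (so both intertwining conditions are vacuous). A non-amenable subfactor $\Q_n\subseteq\P_n$ is extracted from the direct-integral decomposition of $\P_n$ over $\Z(\P_n)$: almost every fiber is a non-amenable factor, and one realizes such a fiber as a subalgebra of $\P_n$ after a suitable central reduction. If $\P_0$ has a non-amenable direct summand, the same extraction yields $\Q_0\subseteq\P_0$, and then $\Q_0,\P_1,\ldots,\P_{n-1},\Q_n$ are $n+1$ commuting non-amenable factors, contradicting the counting bound. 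This leaves the case where $\P_0$ is amenable and diffuse.

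To rule out the amenable-diffuse case, I would argue by induction on $n$. The base case $n=1$ is immediate from the solidity of $\L(\Gamma_1)$ established earlier in this section: the relative commutant $\P_0'\cap p\L(\Gamma_1)p$ is amenable, so $\P_1$ would be amenable, contradicting the hypothesis. For the inductive step, apply Proposition \ref{prop.dichotomy.wreath.like} to the commuting pair $(\P_0,\P_n)$ in the view $\M=\L(\Gamma_n)\,\bar\otimes\,\L(\Gamma_{\widehat n})$. In the branch $\P_0\prec_{\M}\L(\Gamma_{\widehat n})\,\bar\otimes\,\L(A_n^{I_n})$, a standard intertwining/basic-construction manipulation transfers the diffuse corner of $\P_0$ together with the commuting $\P_1,\ldots,\P_{n-1},\P_n$ into a configuration inside $\L(\Gamma_{\widehat n}\times A_n^{I_n})$; the amenable tensor factor $\L(A_n^{I_n})$ is absorbed so the inductive hypothesis (for the product of $n-1$ wreath-like groups) applies. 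In the alternative branch ($\P_n$ amenable relative to $\L(\Gamma_{\widehat n})$ inside $\M$), the no-amenable-summand hypothesis together with Proposition \ref{prop.counting} applied in the $\L(\Gamma_{\widehat n})$-slot produces the contradiction. The principal difficulty lies in the first branch: the $\prec$ relation gives only a partial-isometry conjugation, so transporting the commuting $\P_i$'s while preserving factoriality and non-amenability, and cleanly eliminating the amenable tensor factor $\L(A_n^{I_n})$ before invoking the inductive hypothesis, is the main technical obstacle.
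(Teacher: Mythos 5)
There are genuine gaps in both halves of your plan. First, the ``counting'' branch: to invoke Proposition \ref{prop.counting} you need $n+1$ \emph{commuting non-amenable factors}, and your extraction of $\Q_n\subseteq\P_n$ (and $\Q_0\subseteq\P_0$) does not produce them. A fiber of the direct-integral decomposition of $\P_n$ over $\Z(\P_n)$ is not a subalgebra of $\P_n$, and a central reduction $\P_n z$ is not a factor unless $z$ is a minimal central projection, which need not exist; it is not known (and you give no argument) that a von Neumann algebra with diffuse center and no amenable direct summand contains a non-amenable subfactor, let alone one sitting inside $\P_n$ so that commutation with the other $\P_i$'s is kept. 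The paper never needs such an extraction: it works with ``no amenable direct summand'' directly, through relative amenability and \cite[Proposition 2.7]{PV11}, so this whole detour is both unjustified and avoidable.

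Second, and more seriously, the case you isolate as the remaining one ($\P_0$ diffuse and amenable) is where the actual work lies, and your induction does not close it. (i) The ``alternative branch'' of Proposition \ref{prop.dichotomy.wreath.like}, namely $\P_n$ amenable relative to $\L(\Ga_{\widehat n})$, is not a contradiction: nothing ties $\P_n$ to the $n$-th coordinate, and $\P_n$ could simply be a subalgebra of $\L(\Ga_{\widehat n})$. The paper first applies \cite[Proposition 2.7]{PV11} to \emph{choose} a coordinate $j$ with $\P_n z$ non-amenable relative to $\L(\Ga_{\widehat j})$, and only then runs the dichotomy, which is then forced into the embedding branch --- and forced for the whole join $\bigvee_{i=0}^{n-1}\P_i$, not just $\P_0$. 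That is exactly how the paper circumvents the ``transport the commuting algebras along a partial isometry'' obstacle you flag but leave open (together with \cite[Lemma 4.5]{CdSS17} to place the projection inside $\P_{n-1}$ and the relation $\Q_i v=v r\P_i r z$ to come back). (ii) Even granting the transport, the amenable tensor factor $\L(A_n^{(I_n)})$ cannot simply be ``absorbed'': the iteration terminates with $\P_0 z\prec_\M \L(\times_i A_i^{(I_i)})$, and the decisive ingredient is \cite[Corollary 9.7]{CIOS21} --- a subalgebra of the base $\L(\times_{i\in F}A_i^{(I_i)})$ that embeds into no proper sub-base has amenable relative commutant in $\L(\Ga_F)$ --- which converts ``$\P_0$ not atomic'' into an amenable direct summand of $\P_n$, the actual contradiction. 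Your proposal has no substitute for this step. (iii) Finally, your base case $n=1$ invokes the solidity theorem for $\L(\Ga_1)$, which requires the action $B_1\ca I_1$ to have amenable stabilizers, a hypothesis Proposition \ref{prop.counting2} does not assume; so even the base of your induction uses more than is available, whereas the paper's argument via \cite[Corollary 9.7]{CIOS21} does not need it.
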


\begin{proof} Throughout the proof we denote by $\Gamma_S=\times_{i\in S}\Gamma_i$ the subproduct supported on a subset $S\subset \{1,\dots, n\}$. We also denote by $\widehat S$ the complement of $S$ inside $\{1,\dots, n\}$.
Denote $\M=\L(\Ga)$. Since $\P_n$ has no amenable direct summand, it follows that $\P_n z$ is non-amenable for any non-zero projection $z\in (\bigvee_{i=0}^n \P_i)'\cap pMp$. By \cite[Proposition 2.7]{PV11} we get that there exists $1\leq j\leq n$ such that $\P_n z$ is non-amenable relative to $\L(\Ga_{\widehat j})$ inside $\M$. Without loss of generalization, assume $j=n$. Using Proposition \ref{prop.dichotomy.wreath.like} we get that $(\bigvee_{i=0}^{n-1} \P_i)z\prec_{\M} \L(\Ga_{\widehat n})\bar\otimes \L(A_n^{(I_n)})$. Thus, there exist projections $r\in \bigvee_{i=0}^{n-1} \P_i, q\in \L(\Ga_{\widehat n})\bar\otimes \L(A_n^{(I_n)})$ with $rz\neq 0$, a non-zero partial isometry $v\in q\M r$ and a $*$-homomorphism $\theta: r(\bigvee_{i=0}^{n-1} \P_i)rz \to q(\L(\Ga_{\widehat n})\bar\otimes \L(A_n^{(I_n)}))q$ such that $\theta(x)v=vx$ and the support projection of $E_{\L(\Ga_{\widehat n})\bar\otimes \L(A_n^{(I_n)})}(vv^*)$ equals $q$. Since $\P_{n-1}$ is a II$_1$ factor, by using \cite[Lemma 4.5]{CdSS17} we can assume that $r\in \P_{n-1}$. Denote $\N=q(\L(\Ga_{\widehat n})\bar\otimes \L(A_n^{(I_n)}))q$ and $\Q_i=\theta(r\P_irz)$ for any $0\leq i\leq n-1$. Since $\Q_{n-1}$ is non-amenable, we use \cite[Proposition 2.7]{PV11} and derive that exists $1\leq k\leq n-1$ such that $\Q_{n-1}$ is non-amenable relative to $\L( \Ga_{\widehat{k,n}})\bar\otimes \L(A_k^{(I_k)}\times A_n^{(I_n)})$ inside $\N$. Without loss of generalization, we assume $k=n-1$. By applying Proposition \ref{prop.dichotomy.wreath.like} we derive that 
$
\bigvee_{i=0}^{n-2} \Q_i\prec_{\N} \L( \Ga_{\widehat{n-1,n}})\bar\otimes \L(A_{n-1}^{(I_{n-1})}\times A_n^{(I_n)}).
$
Using that $\Q_i v=v r\P_i rz$, we derive that $(\bigvee_{i=0}^{n-2} \P_i)z\prec_{\M} \L(\Ga_{\widehat {n-1,n}})\bar\otimes \L(A_{n-1}^{(I_{n-1})}\times A_n^{(I_n)})$. By using the previous argument finitely many times, we obtain $\P_0z\prec_{\M}\L(\times_{i=1}^n A_i^{(I_i)})$.

Assume by contradiction that $\P_0z\nprec_{\M} \mathbb C1$. The previous paragraph implies that there exists a non-empty subset $F\subset \{1,\dots,n\}$ such that
\begin{equation}\label{v1}
 \P_0z\prec_{\M}\L(\times_{i\in F} A_i^{(I_i)}) \text{ and }  \P_0z\nprec_{\M}\L(\times_{i\in F\setminus\{j\}} A_i^{(I_i)}) \text{ for any }j\in F.
\end{equation}
Therefore, there exist projections $p_0\in P_0, s\in \L(\times_{i\in F} A_i^{(I_i)})$ with $p_0z\neq 0$,  a non-zero partial isometry $w\in s\M p_0$, a $*$-homomorphism $\Psi: p_0P_0p_0z\to s\L(\times_{i\in F} A_i^{(I_i)})s$ such that $\Psi(x)w=wx$, for any $x\in p_0P_0p_0z$, and the support projection of $E_{\L(\times_{i\in F} A_i^{(I_i)})}(ww^*)$ equals $s$. By letting $T=\Psi(p_0P_0z)$ and $A=\times_{i\in F} A_i^{(I_i)}$, one can check that \eqref{v1} implies that 
$$
T\subset s\L(A)s \text{ and }  T\nprec_{\L(A)}\L(\times_{i\in F\setminus\{j\}} A_i^{(I_i)}) \text{ for any }j\in F.
$$
By applying \cite[Corollary 4.7 and Lemma 4.11]{CIOS21}, we have $T'\cap s \L(\Ga_{F}) s$ is amenable. Note that
the $*$-isomorphism ${\rm Ad}(w): w^*w \M w^*w\to ww^* \M ww^*$ sends $w^*w (p_0P_0p_0z'\cap p_0z\M p_0z) w^*w$ onto $ww^* (T'\cap s\M s) ww^*$. Hence,
\begin{equation*}\label{v2}
    w ( p_0P_0p_0z'\cap p_0z\M p_0z  ) w^*=ww^* ((T'\cap s \L(\Ga_{F}) s)\bar\otimes \L(\Ga_{\widehat F})s ) ww^*. 
\end{equation*}
Since $\bigvee _{i=1}^n \P_i\subset \P_0'\cap p\M p$, 
it follows that $\bigvee_{i=1}^n \P_i\prec_{\M} (T'\cap s \L(\Ga_{F}) s)\bar\otimes \L(\Ga_{\widehat F})s $. Since $\widehat F$ has at most $n-1$ elements and $T'\cap s \L(\Ga_{F}) s$ is amenable, we can apply the previous arguments finitely many times and derive that there exists an amenable subalgebra $S\subset \M $ such that $\P_n\prec_{\M} S$. This shows that $\P_n$ has an amenable direct summand, contradiction.

Finally, we conclude that $\P_0z\prec_{\M} \mathbb C1$, for all non-zero projections $z\in (\bigvee_{i=0}^n \P_i)'\cap p\M p$. Using \cite[Lemma 2.4]{DHI16}, we conclude that  $\P_0\prec^s_{\M} \mathbb C1$, which implies that $\P_0$ is completely atomic.
\end{proof}

\section{Product rigidity results for von Neumann algebras of wreath-like product groups}


The goal of this section is to prove Theorem \ref{A} and consequently derive Corollary \ref{superprod'}.
We first prove the following proposition.

\begin{prop}[\cite{CU18,CdSS15}]\label{almostprod} 
Let $\Gamma$ be an icc group, denote $\M=L(\Gamma)$ and assume that $\M=\A \bar\otimes \B$ where $\B$ is solid. 
Let $ \Omega<\Gamma$ be a subgroup with nonamenable centralizer $C_\La(\Omega)$ such that $\A\prec \L(\Omega)$.

Then there exist projections $e\in \L(\Omega)$ and $r \in \L(\Omega)'\cap \M$ with $e r\neq 0$ such that $ e(\L(\Omega)e r \vee r(\L(\Omega)'\cap \M)r e\subseteq er\M er $ is a finite index inclusion of II$_1$ factors.

\end{prop}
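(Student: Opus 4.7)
The plan is to use Popa's intertwining theorem on the hypothesis $\A \prec \L(\Omega)$ to produce a partial isometry conjugating $\A$ into a corner of $\L(\Omega)$, then exploit the tensor decomposition $\M = \A \bar\otimes \B$ (in which both $\A$ and $\B$ are factors, since $\M$ is) to realize a corner of $\M$ as a tensor product of a subalgebra of $\L(\Omega)$ with a subfactor of $\L(\Omega)'\cap\M$. The solidity of $\B$ together with the non-amenability of $C_\Lambda(\Omega)$ is then used to ensure that the commuting pair $\L(\Omega)$ and $\Q := \L(\Omega)'\cap\M$ covers this corner with finite index.

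First, apply Theorem \ref{corner} to $\A \prec_\M \L(\Omega)$ to obtain projections $p \in \A$ and $q \in \L(\Omega)$, a non-zero partial isometry $v \in q\M p$, and a unital $\ast$-homomorphism $\theta\colon p\A p \to q\L(\Omega)q$ satisfying $\theta(x)v = vx$ for all $x \in p\A p$. Write $\D = \theta(p\A p)$ and $f = vv^*$. Since $\M$ is a II$_1$ factor and $\M = \A \bar\otimes \B$, both $\A$ and $\B$ are factors, so $(p\A p)' \cap p\M p = p\B$; hence $v^*v \in p\B$. Conjugation by $v$ yields a $\ast$-isomorphism $\phi\colon p\A p \bar\otimes (v^*v)\B \to f\M f$ which sends $p\A p \otimes 1$ onto $\D f$ and $1\otimes (v^*v)\B$ onto $v\B v^*$. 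In particular $f\M f = \D f \bar\otimes v\B v^*$ and $(\D f)' \cap f\M f = v\B v^*$. Since $\Q$ commutes with $\L(\Omega) \supset \D$ and $f \in \D'\cap q\M q$, we obtain the embedding $f\Q f \subset (\D f)' \cap f\M f = v\B v^*$ into the solid factor $v\B v^*$ (which inherits solidity from $\B$).

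Next, the non-amenability of $C_\Lambda(\Omega)$ combined with the inclusion $\L(C_\Lambda(\Omega)) \subset \Q$ forces $f\Q f$ to contain a non-amenable subalgebra. Applying a dichotomy argument in the spirit of \cite{CdSS15,CU18}, one combines solidity of $v\B v^*$ (which restricts $(f\Q f)' \cap v\B v^*$ to be amenable, indeed completely atomic after compression) with the non-amenability input to produce a non-zero projection $r \in \Q$ such that, after cutting, $r\Q r$ becomes a finite-index subfactor of a corner of $v\B v^*$. Choosing a non-zero projection $e \in \D \subset \L(\Omega)$ so that $er \neq 0$ (possible via support computations in the tensor decomposition of $f\M f$), the inclusion $e\L(\Omega)er \vee r\Q re \subset er\M er$ contains the commuting join of a corner of $\D$ and a corner of $v\B v^*$, which under $\phi$ corresponds precisely to a corner of the full tensor product $f\M f$. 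The finite-index step therefore upgrades to a finite-index inclusion of II$_1$ factors (the right-hand side is a corner of the factor $\M$, and the left-hand side is the von Neumann algebra generated by two commuting subfactors whose joint compression fills out the corner).

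The main obstacle is the middle step: showing that, after appropriate compression, $f\Q f \subset v\B v^*$ has finite index. This is where both hypotheses are indispensable---solidity of $\B$ rules out intermediate subalgebras of $v\B v^*$ that would make the inclusion proper-of-infinite-index, while the non-amenability of $C_\Lambda(\Omega)$ prevents $f\Q f$ from being too small inside $v\B v^*$. Carrying this out requires the quasinormalizer and spectral-gap machinery developed in \cite{CdSS15,CU18}, which constitutes the technical heart of the proof.
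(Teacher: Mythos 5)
Your opening reductions (intertwining $\A$ into $\L(\Omega)$, computing $(p\A p)'\cap p\M p$ inside the tensor decomposition, conjugating by the partial isometry) do match the start of the paper's argument, but the step you defer as the ``technical heart'' is precisely where the proposal breaks down, and the mechanism you propose for it is not the one that works. First, you invoke only the plain intertwining theorem (Theorem \ref{corner}); the paper instead uses the refined statement of \cite[Proposition 2.4]{CKP14}, which produces the copy $\D=\phi(a\A a)\subseteq q\L(\Omega)q$ together with the extra conclusion that $\D\vee(\D'\cap q\L(\Omega)q)\subseteq q\L(\Omega)q$ has \emph{finite index}. Solidity of $\B$ and non-amenability of $C_\La(\Omega)$ are then used only to show that $(\D'\cap q\L(\Omega)q)z$ is purely atomic: it commutes with $(\L(\Omega)'\cap\M)z\supseteq \L(C_\La(\Omega))z$ inside $(\D'\cap q\M q)z\cong \B^{s_2}$, and a diffuse piece would force an amenable corner of $\L(C_\La(\Omega))$. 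Cutting to an atom converts the CKP14 finite-index property into finite index of $\D\subseteq q\L(\Omega)q$, i.e.\ the index control comes from the position of $\A$ \emph{inside} $\L(\Omega)$. Your proposal instead asks solidity plus non-amenability to show directly that a corner of $\L(\Omega)'\cap\M$ has finite index in a corner of $\B$ (your $f\Q f\subseteq v\B v^*$ step). That inference is not valid: a solid factor can contain non-amenable irreducible subfactors of infinite index (e.g.\ $\L(\mathbb F_2)\subset\L(\mathbb F_3)$), so ``non-amenable subalgebra of a solid corner'' yields no index bound, and solidity gives amenability (not atomicity, and not finite index) of relative commutants of diffuse subalgebras. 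So the dichotomy you gesture at does not exist in the form you need, and nothing in your sketch replaces the missing finite-index input from \cite{CKP14}.

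Second, even granting a finite-index inclusion at that stage, the passage to the stated conclusion is substantive and is skipped in your last paragraph: the paper uses the downward basic construction \cite[Lemma 3.1]{Jo81} to find $e\in q\L(\Omega)q$ and a subfactor $\R$ with $e\L(\Omega)e=\R e$, transports the finite index back to $\T=\phi^{-1}(\R)\subseteq a\A a$ via the intertwining relation, adjusts the partial isometry (an \cite[Proposition 12]{OP03}-type argument) so that $w^*w$ splits as $z_1\otimes z_2$, applies the local index formula to get $\T z_1\subseteq z_1\A z_1$ of finite index, and finally uses factoriality of $\M$ together with \cite[Proposition 2.1(3)]{CdSS17} to make the center of $r(\L(\Omega)'\cap\M)r$ atomic so that, after a further compression, one lands on a finite-index inclusion of II$_1$ factors with the projections $e$ and $r$ as in the statement. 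As written, your argument neither establishes the key finite-index step nor carries out this upgrading, so it does not constitute a proof of Proposition \ref{almostprod}.
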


\begin{proof} Let $\Q=\L(\Omega)$  As $\A\prec\Q$ then using \cite[Proposition 2.4]{CKP14} and its proof, one can find nonzero projections $a \in \A$, $q \in \Q$, a partial
isometry $v\in q \M a$, a subalgebra $\D \subseteq  q\Q q$, and a $\ast$-isomorphism $\phi : a\A a \ra  \D $ such
that
\begin{enumerate}
    \item $\D \vee (\D'\cap q\Q q) \subseteq  q\Q q$ has finite index, and 
    
    \item $\phi(x)v = vx$ for all  $x \in a\A a$. 
\end{enumerate}

The intertwining relation 2.\ implies that  $vv^* \in \D' \cap  q\M
q$ and $v^*v \in  (a\A a)' \cap  a\M a = a \otimes \B.$  Hence there is a projection
$b \in  \B$ satisfying $v
^*v = a \otimes b$. Choosing $u \in \mathscr U(\M
)$ such that $v = u(a \otimes b) $ then relation 2.\ entails that  
\begin{equation}\label{equalcorners1}\D vv^* = v\A v^* = u(a\A a \otimes b)u^*.\end{equation}

Using this relation and passing to the relative commutants, we obtain $vv^* 
(\D' \cap  q\M
q)vv^* = u(a \otimes b\B b)u
^*$. 
Thus one can find scalars $s_1,s_2 > 0$ satisfying 
\begin{equation}\label{commutant1}(\D' \cap  q\M
q)z = u(a \otimes b\B b)
^{s_1} u
^* \cong \B 
^{s_2}.\end{equation} 
Here, $z$ denotes the central support projection of $vv^*$
in $\D'\cap q\M q$. Now notice that 
\begin{equation}\label{commutant2} 
\D' \cap q \M
q \supseteq (q\Q q)'
\cap  q\M
q = (\Q' \cap  \M
)q \supseteq  \L(C_\La(\Omega))q,\end{equation}
where $\L(C_\La(\Omega))$ has no amenable direct summand since $C_\La(\Omega)$ is a non-amenable
group. Moreover, we also have $\D' \cap q\M
q \supseteq  \D' \cap q \Q q$. Thus $(\Q
' \cap \M
)z$ and
$(\D' \cap  q\Q q)z$ are commuting subalgebras of $(\D' 
\cap q \M
q)z$ where $(\Q' \cap \M
)z$ has
no amenable direct summand. Since $\B^{s_2}$ is solid then $(\D' \cap  q\Q q)z$ must be purely atomic. Thus, cutting by a
central projection $0\neq r
\in \D' \cap  q\Q q$,  we may assume that $\D \subseteq  q\Q q$ is a
finite index inclusion of algebras. Since $\D$ is a factor, shrinking $r$ if necessary, we can actually assume that $\D \subseteq q\Q q$ is an irreducible inclusion of finite index II$_1$ factors. Moreover, one can check
that if one replaces $v$ by the partial isometry of the polar decomposition of $r
v \neq  0$ then
all relations \eqref{equalcorners1}, \eqref{commutant1} and  \eqref{commutant2} are still satisfied. In addition, we can assume without any
loss of generality that the support projection satisfies $s(E_{\Q}(vv^*)) = q$. 

Using \cite[Lemma 3.1]{Jo81} one can find a projection $e\in q\Q
q$ and a subfactor $\R \subseteq \D \subseteq q \Q q$ such that $e \Q e =\R e$ and the index $[\D:\R]=[q \Q q: \D]$. Now notice the restriction $\phi^{-1}: \R \ra a\A a$ is an injective $\ast$-homorphism such that $\T:= \phi^{-1}(\R)\subseteq a\A a$ is finite index and \begin{equation}\label{invintertwining}
    \phi^{-1}(y)v^*=v^*y \text{ for all } y\in \R.
\end{equation}   

Let $\phi': \R e \ra \R$ be the $\ast$-homorphism given by  $\phi'(y e)=y$, for any $y\in \R$. Since $e$ has full support in $\langle \D, e\rangle =q\Q q$ we have $ev\neq 0$. Letting $w_0$ be a partial isometry such that $w_0 |v^*e |=v^*e$ the \eqref{invintertwining} gives that $\theta= \phi^{-1} \circ \phi': e \Q e \ra a\A a$ is an injective $\ast$-isomorphism satsfying $\theta(e \Q e)= \T$ and 
\begin{equation}\label{intertwiningcomp}
    \theta(y) w_0^*=w_0^* y \text{ for all } y\in e \Q e. 
\end{equation}

Notice that $w_0^*w_0\in \T'\cap (a \A a \oo \B)=(\T'\cap a \A a) \oo \B $. Since we have $\T'\cap (a\A a\oo \B )\supseteq\mathscr Z(\T'\cap a \A a) \oo \B$ and these von Neumann algebras have the same center then proceeding as in the proof of \cite[Proposition 12]{OP03} one can see that $w_0^*w_0$ is equivalent in 
$\T'\cap (a \A a \oo \B)$ with projection in $\mathscr Z(\T'\cap a \A a) \oo \B$. Thus one can assume without any loss of generality that $w_0^*w_0\in \mathscr Z(\T'\cap a \A a) \oo \B$.  As $[a\A a:\T]<\infty$ then $\T'\cap a\A a$ is finite dimensional. Thus replacing $w_0$ by $w=w_0r_0$ for a minimal projection $r_0\in \mathscr Z(\T'\cap a \A a)$ with $r_0 w_0^*|v^*e|\neq 0$ we see all previous relations are satisfied including \eqref{intertwiningcomp}. Moreover we can assume that $w^*w= z_1\otimes z_2$ for some $z_1\in \mathscr Z(\T'\cap a \A a)$ and $z_2\in B$. Hence \eqref{intertwiningcomp} implies 
\begin{equation}\label{ii1}
w^* \Q w= \theta(e \Q e) w^*w= \T z_1 \otimes z_2.   
\end{equation}  

Since $\T \subseteq a \A a$ is a finite index inclusion of II$_1$ factors, using the local index formula \cite[Lemma 2.2.1]{Jo81} we have that $\T z_1\subseteq z_1 \A z_1$ is a finite index inclusion of II$_1$ factors as well. In addition, we have

\begin{equation}\label{ii2}
    (w^* \Q w)'\cap (z_1\otimes z_2) \M (z_1\otimes z_2)= ((\T z_1)' \cap z_1 \A z_1) \oo z_2 \B z_2.  
\end{equation}
Altogether, the previous relations \eqref{ii1} and \eqref{ii2} imply that \begin{equation}\label{finiteindex4}
    \begin{split}
        \T z_1 \oo z_2 \B z_2& \subset \T z_1 \vee (\T z_1'\cap z_1 \A a)\oo z_2 \B z_2\\
        &=  w^* \Q w\vee (w^* \Q w)'\cap (z_1\otimes z_2) \M (z_1\otimes z_2)\\
        &\subseteq z_1 \A z_1\oo z_2 \B z_2  = (z_1\otimes z_2) \M (z_1\otimes z_2)
    \end{split}
\end{equation}
 
 Since $\T z_1\subseteq z_1 \A z_1$ is a finite index inclusion of II$_1$ factors then so is $\T z_1 \oo z_2 \B z_2\subseteq z_1 \A z_1\oo z_2 \B z_2$. Let $f= ww^*$ and note that $f=re$ for some projection $r\in \Q'\cap \M$. Now notice  \eqref{finiteindex4} implies that 
 \begin{equation}
     e\Q e r \vee r (\Q '\cap \M)r e  = e\Q e r \vee ((e\Q er) '\cap er\M er)\subseteq  f \M f. 
     \end{equation}  
is a finite index inclusion of von Neumann algebras. Since $\M$ is a factor, the finite index condition implies that the center $\mathscr Z(r(\Q'\cap \M )r)$ is completely atomic (see \cite[Proposition 2.1(3)]{CdSS17}). Thus, compressing $r$ more if necessary we further obtain that $e\Q e r \vee r (\Q '\cap \M)r e  \subseteq  f \M f$ is a finite index inclusion of II$_1$ factors, as desired.  \end{proof}

In this section we show that various direct products of wreath-like groups give rise to $W^*$-superrigid groups. To show this, we first establish a product rigidity result in the same spirit of \cite{CdSS15}. See also \cite{CD-AD20,Dr20} for more recent similar results.  

\begin{thm}\label{prodrigid1}
 For every $1\leq k\leq n$, let  $\Ga_k\in \mathcal W\mathcal R (A_k, B_k \ca I_k)$ be property (T) groups where $A_k$ is abelian, $B_k$ is an icc  subgroup of  a hyperbolic group and $B_k \curvearrowright I_k$ has amenable stabilizers.
Denote $\Ga= \Ga_1\times  \dots \times \Ga_n$ and assume that $t>0$ is a scalar and $\La$ is an arbitrary group satisfying $\M=\L(\Ga)^t=\L(\La)$. 

Then one can find a product decomposition $\La= \La_1\times\dots\times \La_n$, some scalars $t_1,\dots,t_n>0$ with $t_1\cdots t_n=t$ and a unitary $u\in \mathscr U(\M)$ so that $\L(\Gamma_{i})^{t_i}=u\L(\La_i) u^*$ for any $1\leq i\leq n $.

\end{thm}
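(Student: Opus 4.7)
The plan is to follow the product rigidity template of \cite{CdSS15,CD-AD20}, combining the solidity of each $\L(\Gamma_k)$ (the preceding theorem), the wreath-like dichotomy (Proposition \ref{prop.dichotomy.wreath.like}), the counting results (Propositions \ref{prop.counting} and \ref{prop.counting2}), and the finite-index machinery of Proposition \ref{almostprod}. After amplifying, write $\M = \L(\Lambda) = \M_1 \bar\otimes \cdots \bar\otimes \M_n$ with $\M_i := \L(\Gamma_i)^{t_i}$ mutually commuting, non-amenable, property (T), solid II$_1$ subfactors, and set $\widehat\M_i := \bar\otimes_{j\neq i} \M_j$ so that $\M = \M_i \bar\otimes \widehat\M_i$. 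The task reduces to locating, for each $i$, a subgroup $\Sigma_i < \Lambda$ with $\widehat\M_i \prec_\M \L(\Sigma_i)$ and with $C_\Lambda(\Sigma_i)$ non-amenable.

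The crux is producing the $\Sigma_i$. I would route this through the augmentation $\ast$-embedding $\Delta: \L(\Lambda) \hookrightarrow \widetilde\M \bar\otimes \L(\Lambda)$ arising from a Bernoulli action $\Lambda \curvearrowright \D$ (as used in Propositions \ref{prop.counting} and \ref{prop.counting2}), converting the search for subgroups of the arbitrary group $\Lambda$ into intertwining questions to which the wreath-like dichotomy applies. Since $\widehat\M_i$ is a tensor product of $n-1$ commuting non-amenable property (T) factors, iterated application of Proposition \ref{prop.dichotomy.wreath.like} together with \cite[Lemmas 2.3 and 2.4]{Dr19b} forces $\Delta(\widehat\M_i) \prec \widetilde\M \bar\otimes \L(\Sigma_i)$ for some $\Sigma_i < \Lambda$. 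Pulling back via \cite[Proposition 3.4]{CIK13} yields $\widehat\M_i \prec_\M \L(\Sigma_i)$, while Lemma \ref{control.relative.commutants} applied to $\M_i \subseteq (\widehat\M_i)' \cap \M$ ensures that $\L(C_\Lambda(\Sigma_i))$ has a non-amenable corner. Proposition \ref{prop.counting} prevents two distinct $\widehat\M_i$'s from collapsing into the same $\L(\Sigma_i)$, and Proposition \ref{prop.counting2} rules out degenerations involving atomic summands.

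Once all $\Sigma_i$ are in place, Proposition \ref{almostprod} applied with $\A = \widehat\M_i$, $\B = \M_i$ (solid), and $\Omega = \Sigma_i$ produces projections $e_i \in \L(\Sigma_i)$ and $r_i \in \L(\Sigma_i)' \cap \M$ with $e_i r_i \neq 0$ such that
\begin{equation*}
e_i \bigl(\L(\Sigma_i) e_i r_i \vee r_i(\L(\Sigma_i)' \cap \M) r_i\bigr) e_i \subseteq e_i r_i \M e_i r_i
\end{equation*}
is a finite-index inclusion of II$_1$ factors. Combining these $n$ inclusions, using Lemma \ref{QN2} to identify the resulting II$_1$ subfactors with group von Neumann algebras of subgroups of $\Lambda$, and exploiting the mutual commutation of the pieces, forces $\Lambda$ to admit mutually commuting subgroups $\Lambda_1, \dots, \Lambda_n$ generating $\Lambda$, with $\L(\Gamma_i)^{t_i}$ unitarily conjugate to $\L(\Lambda_i)$. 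Factoriality and icc considerations then upgrade the generation to a genuine direct product $\Lambda = \Lambda_1 \times \cdots \times \Lambda_n$ and fix $t_1 \cdots t_n = t$.

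The main obstacle, in my view, is the middle step: organizing the iterated wreath-like dichotomy so that the $n$ subgroups $\Sigma_1, \dots, \Sigma_n$ are produced simultaneously, are genuinely distinct, and are compatible with the tensor decomposition on the $\M$-side, while also keeping their centralizers non-amenable as required to feed Proposition \ref{almostprod}. Since $\Lambda$ has no a priori structure beyond $\L(\Lambda) = \M$, every step must be routed through the augmentation, and the counting Propositions \ref{prop.counting} and \ref{prop.counting2} must be calibrated carefully to prevent either overshooting or undershooting the target number $n$ of factors.
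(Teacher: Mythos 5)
There is a genuine gap at the step you yourself identify as the crux: the production of the subgroups $\Sigma_i<\Lambda$. You claim that iterating Proposition \ref{prop.dichotomy.wreath.like} through the Bernoulli augmentation ``forces $\Delta(\widehat\M_i)\prec\widetilde\M\bar\otimes\L(\Sigma_i)$ for some $\Sigma_i<\Lambda$,'' but the dichotomy can never output such a target. Its conclusions are intertwinings into algebras built from the \emph{known} wreath-like data of $\Gamma$ (algebras of the form $\D\rtimes A^{I}$, resp.\ relative amenability over $\D$); there is no mechanism in it that detects subgroups of the mystery group $\Lambda$. Likewise \cite[Proposition 3.4]{CIK13} only pulls back intertwinings into $\M\otimes 1$ through the embedding attached to an epimorphism of $\Gamma$; it does not convert anything into an intertwining $\widehat\M_i\prec_\M\L(\Sigma_i)$. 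The tool that actually converts von Neumann algebraic information into subgroups of $\Lambda$ is Ioana's ultrapower technique, Theorem \ref{Th:ultrapower}, applied to the comultiplication over $\Lambda$, $\Delta(v_\lambda)=v_\lambda\otimes v_\lambda$, and this is the route the paper takes: one first proves $\Delta(\L(\Ga_{\hat i}))\prec_{\M\bar\otimes\M}\M\bar\otimes\L(\Ga_{\hat i})$ (here Proposition \ref{prop.dichotomy.wreath.like} is used, with property (T) absorbing the abelian parts $\L(A_j^{(I_j)})$, \cite[Lemmas 2.4, 2.8]{DHI16} and \cite[Lemma 7.2]{IPV10} excluding the degenerate target $\M\otimes 1$, and \cite[Theorem 4.3]{Dr19b} aligning the index $j$ with $i$), and only then Theorem \ref{Th:ultrapower} together with property (T) of $\L(\Ga_i)$ yields a subgroup $\Sigma<\Lambda$ with $\L(\Ga_{\hat i})\prec_\M\L(\Sigma)$ and $\L(\Ga_i)\prec_\M\L(C_\Lambda(\Sigma))$, whence $C_\Lambda(\Sigma)$ is non-amenable. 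Without this ultrapower step your argument has no way to reach inside $\Lambda$, so the whole chain feeding Proposition \ref{almostprod} collapses. (Incidentally, Propositions \ref{prop.counting} and \ref{prop.counting2} play no role in the paper's proof of this theorem; they are needed later for the graph product analysis.)

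The second half of your plan is closer to the actual proof but is too vague where the paper is precise: after Proposition \ref{almostprod} one does not simply ``combine the $n$ inclusions via Lemma \ref{QN2}.'' The paper extracts from the finite-index inclusion that $[\Lambda:\Sigma\,{\rm vC}_\Lambda(\Sigma)]<\infty$, uses property (T) of $\Lambda$ and an exhaustion of ${\rm vC}_\Lambda(\Sigma)$ to replace $\Sigma$ by $\Sigma_0=C_\Lambda(\Omega_m)$ with $[\Lambda:\Sigma_0C_\Lambda(\Sigma_0)]<\infty$, upgrades the resulting intertwinings to two-sided strong intertwinings $\L(\Sigma_0)\prec^s_\M\L(\Ga_{\hat i})$ and $\L(C_\Lambda(\Sigma_0))\prec^s_\M\L(\Ga_i)$ using triviality of the relevant relative commutant, and only then invokes the product rigidity criterion \cite[Theorem 6.1]{DHI16} plus induction to obtain the direct product decomposition of $\Lambda$ and the unitary. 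You would need to supply this final mechanism explicitly; ``factoriality and icc considerations'' alone do not produce the splitting $\Lambda=\Lambda_1\times\cdots\times\Lambda_n$.
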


\begin{proof}Without any loss of generality we can assume that $t=1$ as the other cases do not hide any difficulties. Throughout the proof we denote by $\Gamma_S=\times_{i\in S}\Gamma_i$ the subproduct supported on a subset $S\subset \{1,\dots, n\}$. We also denote by $\widehat S$ the complement of $S$ inside $\{1,\dots, n\}$.

Let $\Delta: \M\ra \M\oo \M$ be the $\ast$-embedding given by $\Delta(v_h)=v_h\otimes v_h$ for all $h\in \La$. Let $1\leq i,j\leq n $ and observe that $\Delta(\L(\Ga_{\hat i})), \Delta(\L(\Ga_i))\subset \M \oo \L(\Ga_{\hat j})\oo \L(\Ga_j)$ are commuting subalgebras. Using Proposition \ref{prop.dichotomy.wreath.like} (see also \cite[Theorem 6.15]{CIOS21}), we have either $\Delta(\L(\Ga_{\hat i}))\prec_{\M\bar\otimes \M} \M\oo \L(\Ga_{\hat j})\oo L(A^{(I_j)}_j)$
or $\Delta(\L(\Ga_{i}))\prec_{\M\bar\otimes \M} \M\oo \L(\Ga_{\hat j})\oo \L(A^{(I_j)}_j)$. 
Since $\L(A^{(I_j)}_j)$ is abelian and  $\Delta(\L(\Ga_{\hat i}))$, $\Delta(\L(\Ga_{i}))$ have property (T), the prior intertwining relations further imply that either $\Delta(\L(\Ga_{\hat i}))\prec_{\M\bar\otimes \M} \M\oo \L(\Ga_{\hat j})$ or $\Delta(\L(\Ga_{i}))\prec_{\M\bar\otimes \M} \M\oo \L(\Ga_{\hat j})$. Moreover, using \cite[Lemma 2.4(3)]{DHI16}, we have either $\Delta(\L(\Ga_{\hat i}))\prec^s_{\M\oo \M} \M\oo \L(\Ga_{\hat j})$ or $\Delta(\L(\Ga_{i}))\prec^s_{\M\oo \M} \M\oo \L(\Ga_{\hat j})$. If the former would hold for all $1\leq j\leq n$, then by using \cite[Lemma 2.8(2)]{DHI16} we would get $\Delta(\L(\Ga_{i}))\prec_{\M\bar\otimes \M} \bigcap^n_{j=1} (\M\oo \L(\Ga_{\hat j})) =\M\otimes 1$. This contradicts \cite[Lemma 7.2]{IPV10}. Hence, for every $1\leq i\leq n$ there is $1\leq j\leq n$ so that $\Delta(\L(\Ga_{\hat i}))\prec_{\M\bar\otimes \M} \M\oo \L(\Ga_{\hat j})$. Furthermore, using \cite[Theorem 4.3]{Dr20} we actually have that $\Delta(\L(\Ga_{\hat i}))\prec_{\M\bar\otimes \M} \M\oo \L(\Ga_{\hat i})$ for all $1\leq i\leq n$.  Now, since $\L(\Ga_i)$ has property (T), Theorem \ref{Th:ultrapower} shows that there is a subgroup $\Sigma<\La$ such that 
\begin{equation}\label{aa10'}
\L(\Ga_{\hat i})\prec_\M \L(\Sigma)\text{ and }\L(\Ga_{i})\prec_\M \L(C_\La(\Sigma)).
\end{equation}

Since $\L(\Ga_i)$ is solid, \eqref{aa10'} allows to apply
Proposition \ref{almostprod} and find a  projection $0\neq p= qr$ with $q\in \L(\Sigma), r \in \L(\Sigma )'\cap \M$ such that $p(\L(\Sigma)\vee (\L(\Sigma)'\cap \M))p \subseteq p\M p$ is a finite index inclusion of II$_1$ factors. In particular, $p\L(\Sigma {\rm vC}_\La(\Sigma))p \subseteq p\M p$ is also a finite index inclusion of von Neumann algebras, and thus, $[\La: \Sigma {\rm vC}_\La(\Sigma)]<\infty$. Since $\La$ is icc property (T) then so is $\Sigma {\rm vC}_\La(\Sigma)$. Now, observe one can find an increasing sequence of groups $\cdots \leqslant \Omega_n \leqslant \Omega_{n+1}\leqslant \cdots\leqslant {\rm vC}_\La(\Sigma)$ normalized by $\Sigma$ with $\cup_n \Omega_n={\rm vC}_\La(\Sigma)$ whose centralizers form a descending sequence $\Sigma \geqslant C_\Sigma(\Omega_1)\geqslant \cdots \geqslant C_\Sigma(\Omega_n )\geqslant C_\Sigma(\Omega_{n+1}) \cdots $ of finite index subgroups. Therefore,  $\Sigma \Omega_n \nearrow \Sigma {\rm vC}_\La(\Sigma)$ and using property (T) there is $n$ so that $\Sigma \Omega_n = \Sigma {\rm vC}_\La(\Sigma)$.  Since $C_\Sigma(\Omega_n) \Omega_n \leqslant \Sigma \Omega_n$ has finite index, we conclude that $C_\Lambda(\Omega_n) \Omega_n\leq \Lambda$ has finite index as well. Denote $\Sigma_0=C_\Lambda(\Omega_n)$ and note that $C_\Lambda(\Sigma)\leq C_\Lambda(\Sigma_0)$. From \eqref{aa10'}, it follows that
\begin{equation}\label{aa15'}
\L(\Ga_{\hat i})\prec_\M \L(\Sigma_0)\text{ and }\L(\Ga_{i})\prec_\M \L(C_\La(\Sigma_0)).
\end{equation}



By passing to relative commutants in \eqref{aa15'}, we get that $\L(\Sigma_0)\prec_\M \L(\Ga_{\hat i})$ and $\L(C_{\Lambda}(\Sigma_0))\prec_\M \L(\Gamma_i)$. Since $[\La:\Sigma_0 C_\La(\Sigma_0)]<\infty$, we derive that $\L(\Sigma C_\La(\Sigma))'\cap \M=\mathbb C 1$. Therefore, using  \cite[Lemma 2.4(3)]{DHI16} we see that 
\begin{equation}\label{aa11'}
    \L(\Sigma_0)\prec^s_\M \L(\Ga_{\hat i}) \text{ and } \L(C_{\Lambda}(\Sigma_0))\prec^s_\M \L(\Gamma_i).
\end{equation}

By using \eqref{aa15'}, \eqref{aa11'} together with \cite[Theorem 6.1]{DHI16} and a standard inductive argument, we obtain the desired conclusion. \end{proof}

\begin{cor}\label{superprod}  For every $1\leq k\leq n$, let  $\Ga_k\in \mathcal W\mathcal R (A_k, B_k \ca I_k)$ be property (T) groups where $A_k$ is abelian, $B_k$ is an icc  subgroup of  a hyperbolic group, $B_k \curvearrowright I_k$ has amenable stabilizers and the set $\{i\in I \; | \; g\cdot i\neq i\}$ is infinite for any $g\in B_k\setminus \{1\}$. Denote  $\Ga= \Ga_1\times \dots \times \Ga_n$ and assume that $t>0$ is any scalar,  $\La$ is an arbitrary group and $\theta: \L(\Ga)^t\ra \L(\La)$ is any $\ast$-isomorphism.  

Then $t=1$ and one can find a character $\eta :\Gamma\to\mathbb T$, a group isomorphism $\delta:\Gamma\to\Lambda$ and a unitary $u\in \L(\La)$ such that $\theta(u_g)= \eta(g) u v_{\delta(g)} u^*$, for all $g\in \Ga$.     

\end{cor}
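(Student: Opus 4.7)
The plan is to combine the product rigidity Theorem \ref{prodrigid1} just proved with the W$^*$-superrigidity result \cite[Theorem 9.9]{CIOS21} for each individual wreath-like product factor. The extra hypothesis that $\{i\in I_k : g\cdot i\neq i\}$ is infinite for every nontrivial $g\in B_k$ is precisely what is needed to invoke the latter result, whereas the weaker assumptions in Theorem \ref{prodrigid1} only yield a direct product decomposition of $\La$ rather than full per-factor reconstruction.

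First, I would apply Theorem \ref{prodrigid1} to the given $*$-isomorphism $\theta$ to obtain a direct product decomposition $\La=\La_1\times\cdots\times\La_n$, scalars $t_1,\ldots,t_n>0$ with $t_1\cdots t_n=t$, and a unitary $u\in\L(\La)$ such that $\theta(\L(\Ga_i)^{t_i})=u\L(\La_i)u^*$ for every $1\leq i\leq n$. Restricting ${\rm ad}(u^*)\circ\theta$ to $\L(\Ga_i)^{t_i}$ produces a $*$-isomorphism $\theta_i:\L(\Ga_i)^{t_i}\to\L(\La_i)$. Feeding each $\theta_i$ into \cite[Theorem 9.9]{CIOS21}---whose hypotheses are exactly the ones assumed on each $\Ga_i$---yields $t_i=1$ together with a character $\eta_i:\Ga_i\to\mathbb T$, a group isomorphism $\delta_i:\Ga_i\to\La_i$ and a unitary $w_i\in\L(\La_i)$ satisfying $\theta_i={\rm ad}(w_i)\circ\Psi_{\eta_i,\delta_i}$. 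Since $t=\prod_i t_i=1$, the amplification is automatic.

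Finally, I would assemble these individual pieces into a single group-like $*$-isomorphism of $\L(\Ga)$. Define $\delta:\Ga\to\La$ componentwise by $\delta(g_1,\ldots,g_n)=(\delta_1(g_1),\ldots,\delta_n(g_n))$ and the character $\eta:\Ga\to\mathbb T$ by $\eta(g_1,\ldots,g_n)=\prod_i\eta_i(g_i)$; both are well-defined precisely because $\Ga=\Ga_1\times\cdots\times\Ga_n$. Since the subalgebras $\L(\La_i)\subset\L(\La)$ pairwise commute, the unitary $w:=w_1 w_2\cdots w_n$ satisfies $w v_{\delta(g)} w^*=\prod_i w_i v_{\delta_i(g_i)} w_i^*$ for every $g=(g_1,\ldots,g_n)\in\Ga$. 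Combining this identity with the factorwise expressions $\theta_i(u_{g_i})=\eta_i(g_i) w_i v_{\delta_i(g_i)} w_i^*$ and the factorization $u_g=u_{g_1}\cdots u_{g_n}$ yields $\theta(u_g)=\eta(g)(uw) v_{\delta(g)}(uw)^*$ for every $g\in\Ga$, which is the desired conclusion.

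The only substantive obstacle in this argument is Theorem \ref{prodrigid1} itself, which has already been established in the preceding section; the remaining argument is essentially a bookkeeping exercise relying on the commutativity of the $\L(\La_i)$ inside $\L(\La)$ together with a per-factor application of the wreath-like W$^*$-superrigidity theorem from \cite{CIOS21}.
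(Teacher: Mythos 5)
Your proposal is correct and follows essentially the same route as the paper, whose entire proof is the one-line observation that the corollary follows from Theorem \ref{prodrigid1} combined with the W$^*$-superrigidity theorem for wreath-like product groups in \cite{CIOS21}. Your write-up simply makes explicit the per-factor application of that theorem and the routine assembly of the characters, isomorphisms and unitaries, which is exactly the intended argument.
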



\begin{proof}
This follows directly from Theorem \ref{prodrigid1} and \cite[Theorem 9.1]{CIOS21}.
\end{proof}

\section{Reconstruction of clique subgroups under $W^*$-equivalence}

Towards establishing superrigidity results for graph products the first major step is to identify in the mystery group $\Lambda$ collections of subgroups that play the same role as the full subgroups associated to clique subgraphs in the source group $\Gamma$. This will be achieved using the commultiplication map \cite{Io10, PV11} in combination with an ultrapower method from \cite{Io11} (see also \cite{DHI16,CdSS15}) and a technique for reconstructing malnormal group structure developed in \cite{CD-AD20,CI17}. Our result is a new manifestation of a more conceptual principle called \emph{peripheral reconstruction $W^*$-method} which consists of exploiting the natural tension that occurs between ``a peripheral structure'' and a ``direct product structure'' in the group. For completeness we include all the details.

We start by recalling an ultrapower technique which is essentially contained in the proof of \cite[Theorem 3.1]{Io11} (see also \cite[Theorem 3.3]{CdSS15}) and the statement that will be used is a particular case of \cite[Theorem 4.1]{DHI16}.

\begin{thm}[\cite{Io11}]\label{Th:ultrapower}
Let $\Lambda$ be a countable icc group and denote by $\M=\L(\Lambda)$. Let $\Delta:\M\to \M\bar\otimes \M$ be the $*$-homomorphism given by $\Delta (v_\lambda)=v_\lambda\otimes v_\lambda$, for all $\lambda\in\Lambda.$ Let $\P,\Q\subset \M$ be von Neumann subalgebras such that $\Delta(\P)\prec_{\M\bar\otimes \M}\M\bar\otimes \Q$.

Then there exists a decreasing sequence of subgroups $\Sigma_k<\Lambda$ such that $\P\prec_\M \L(\Sigma_k)$, for every $k\ge 1$, and $\Q'\cap \M\prec_\M \L(\cup_{k\ge 1} C_\Lambda(\Sigma_k)).$
\end{thm}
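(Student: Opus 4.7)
The plan is to follow Ioana's ultrapower strategy from \cite[Theorem 3.1]{Io11}, in the sharpened form of \cite[Theorem 4.1]{DHI16}, of which the statement is a particular instance. First, using Theorem \ref{corner} I would rewrite the hypothesis $\Delta(\P)\prec_{\M\bar\otimes\M}\M\bar\otimes\Q$ in quantitative form: there exist $\delta>0$ and finitely many $a_1,\ldots,a_n\in\M\bar\otimes\M$ such that
\[
\sum_{i,j=1}^n \bigl\|E_{\M\bar\otimes\Q}\bigl(a_i^{*}\Delta(u)a_j\bigr)\bigr\|_2^{2}\;\geq\;\delta \qquad\text{for every } u\in\mathcal U(\P).
\]

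Next I would pass to the ultrapower $(\M\bar\otimes\M)^\omega$ along a free ultrafilter $\omega$ on $\mathbb N$. The crucial algebraic input is the rotation identity $\Delta(v_\lambda)=(v_\lambda\otimes 1)(1\otimes v_\lambda)$ for $\lambda\in\Lambda$, which transports conjugation by $\Delta(\mathcal U(\P))$ inside $\M\bar\otimes\M$ into statements about conjugation by the copy $\mathcal U(\P)\otimes 1$. Exploiting this, the ultrapower extraction produces a sequence of unitaries in the ultrapower of $\P$ and a distinguished von Neumann subalgebra of $\L(\Lambda)$ that arises as the commutant of certain ultrapower elements. This subalgebra is necessarily of the form $\L(\Sigma)$ for a subgroup $\Sigma<\Lambda$, and the preserved $\ell^2$-mass furnishes $\P\prec_\M\L(\Sigma)$.

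The decreasing chain $(\Sigma_k)_k$ is then produced by iterating the argument: at each step one feeds further ultrapower data into the extraction procedure to replace $\Sigma_k$ by a subgroup $\Sigma_{k+1}\leq\Sigma_k$ that carries strictly more intertwining information, the procedure stabilizing in the sense forced by Popa's intertwining theorem. For the second assertion I would dualize by passing to relative commutants inside $\M\bar\otimes\M$: since $(\M\bar\otimes\Q)'\cap(\M\bar\otimes\M)=1\otimes(\Q'\cap\M)$, the same rotation identity converts intertwining information about $\Delta(\P)$ into intertwining information about $\Q'\cap\M$, with the roles of $\Sigma_k$ and $C_\Lambda(\Sigma_k)$ swapped by the centralizer operation in $\Lambda$.

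The hardest step will be the \emph{simultaneous} control, i.e.\ arranging that the same chain $(\Sigma_k)_k$ witnesses both $\P\prec_\M\L(\Sigma_k)$ and $\Q'\cap\M\prec_\M\L(\bigcup_k C_\Lambda(\Sigma_k))$. This matching demands careful bookkeeping in the ultrapower so that the subgroups extracted from the two factors of the comultiplication image appear as centralizers of each other inside $\Lambda$, rather than as two independent families produced by separate arguments.
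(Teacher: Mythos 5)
The first thing to note is that the paper does not prove this statement at all: it is recalled verbatim from the literature, with the explicit remark that it is a particular case of \cite[Theorem 4.1]{DHI16} and that the argument is essentially contained in the proof of \cite[Theorem 3.1]{Io11} (see also \cite[Theorem 3.3]{CdSS15}). So there is no in-paper argument to measure yours against; simply invoking \cite[Theorem 4.1]{DHI16}, as your opening sentence does, is exactly what the paper itself does. Judged as an actual proof, however, your sketch has genuine gaps at precisely the points where the content of the theorem lies.

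Concretely: (i) the ``rotation identity'' $\Delta(v_\lambda)=(v_\lambda\otimes 1)(1\otimes v_\lambda)$ holds only for the canonical group unitaries, whereas Popa's criterion (Theorem \ref{corner}) quantifies over \emph{all} unitaries $u\in\mathscr U(\P)$, which are generally not of the form $v_\lambda$; so the claimed transport of conjugation by $\Delta(\mathscr U(\P))$ into conjugation by $\mathscr U(\P)\otimes 1$ is not available as stated, and the real argument has to work with Fourier expansions of such $u$ in $\L(\Lambda)$. (ii) The assertion that the ultrapower extraction yields a subalgebra that ``is necessarily of the form $\L(\Sigma)$ for a subgroup $\Sigma<\Lambda$'' is unsupported: relative commutants of ultrapower elements inside $\L(\Lambda)$ are almost never group von Neumann algebras of subgroups, and producing honest subgroups $\Sigma_k$ (from the supports in $\Lambda$ of the relevant sequences, together with a closure argument) is a substantial part of the work in \cite{Io11,DHI16}, not an automatic consequence. (iii) Most importantly, the simultaneous conclusion --- that one single decreasing chain $(\Sigma_k)_k$ satisfies both $\P\prec_\M\L(\Sigma_k)$ for every $k$ \emph{and} $\Q'\cap\M\prec_\M\L(\cup_{k}C_\Lambda(\Sigma_k))$ --- is deferred to ``careful bookkeeping'' and explicitly acknowledged as unresolved; but this coupling is not bookkeeping, it \emph{is} the theorem. (Your reduction $(\M\bar\otimes\Q)'\cap(\M\bar\otimes\M)=1\otimes(\Q'\cap\M)$, which uses factoriality of $\M$ coming from the icc assumption, is correct, but by itself it does not convert intertwining of $\Delta(\P)$ into intertwining of $\Q'\cap\M$.) So either cite \cite[Theorem 4.1]{DHI16} as the paper does, or supply the actual subgroup-extraction and coupling arguments; as written, the proposal asserts the conclusion at its key junctures rather than proving it.
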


We continue with the following result which contains a consequence of Theorem \ref{Th:ultrapower} and solidity type result to certain von Neumann algebras of graph product groups .
Throughout this article if $\sC\in {\rm cliq} (\sG)$ is a clique of a graph then for every $v\in \sC$ we denote by $\hat v= \sC\setminus\{v\}$.

\begin{thm}\label{commutingsubgroups1} Let $\Gamma= \mathscr G \{\Gamma_v\}$ be a graph product of groups such that $\mathscr G\in {\rm CC}_1$ and denote $\M=\L(\Gamma)$. 
Assume that for any $v\in\mathscr V$, $\Gamma_v\in \mathcal W\mathcal R (A_v, B_v \ca I_v)$ where $A_v$ is abelian, $B_v$ is an icc  subgroup of a hyperbolic group.

Let $\Lambda$ be an arbitrary group such that $\M=\L(\Lambda)$ and let  $\mathscr C \in \rm cliq (\mathscr  G)$ and $v\in \mathscr C$. Then the following hold:

\begin{enumerate}
    \item There is a subgroup $\Lambda_{\hat v} < \Lambda $ such that  $C_\Lambda(\Lambda_{\hat v})$ is non-amenble and $\L(\Gamma_{\hat v})\prec_{\M} L(\Lambda_{\hat v})$.
    
    \item If $\P, \Q, \R\subset p\L(\Gamma_{\mathscr C}) p$ are commuting von Neumann subalgebras such that $\Q$ has no amenable direct summand and $\R$ is isomorphic to a corner of $\L(\Gamma_{\hat v})$, then $\P$ is completely atomic.
\end{enumerate}

\end{thm}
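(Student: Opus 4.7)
My plan splits into the two parts of the theorem, which rely on rather different tools.

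For part (1), the strategy is to combine the comultiplication $\Delta:\M\to\M\bar\otimes\M$, $\Delta(v_\lambda)=v_\lambda\otimes v_\lambda$, with Theorem \ref{Th:ultrapower}. Since every $\Gamma_u$ has property (T) as an icc wreath-like product, both $\L(\Gamma_v)$ and $\L(\Gamma_{\hat v})=\bar\otimes_{u\in\hat v}\L(\Gamma_u)$ have property (T), and they commute inside $\L(\Gamma_\sC)\subset\M$. Viewing $\M\bar\otimes\M=\L(\Gamma)\bar\otimes\M$ as the trivial crossed product $\M\rtimes_{\rm triv}\Gamma$, the first step is to apply Theorem \ref{theorem.embed.clique} to the property (T) subalgebra $\Delta(\L(\Gamma_{\hat v}))$, obtaining a clique $\sC^*\in{\rm cliq}(\sG)$ such that $\Delta(\L(\Gamma_{\hat v}))\prec_{\M\bar\otimes\M}\M\bar\otimes\L(\Gamma_{\sC^*})$.

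The crux of part (1) is to refine this intertwining to a proper subset $\sT\subsetneq\sC^*$, because for any clique one has ${\rm lk}(\sC^*)=\emptyset$ in a ${\rm CC}_1$ graph, so $\L(\Gamma_{\sC^*})'\cap\M=\mathbb C$ and Theorem \ref{Th:ultrapower} would give no useful information. I would choose a minimal $\sT\subseteq\sC^*$ with $\Delta(\L(\Gamma_{\hat v}))\prec\M\bar\otimes\L(\Gamma_\sT)$ and, for each $u\in\sT$, view the ambient $\M\bar\otimes\L(\Gamma_\sT)$ as the crossed product $\D\rtimes\Gamma_u$ with $\D=\M\bar\otimes\L(\Gamma_{\sT\setminus\{u\}})$ and trivial $\Gamma_u$-action. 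Applying Proposition \ref{prop.dichotomy.wreath.like} to the commuting pair $\Delta(\L(\Gamma_{\hat v}))$, $\Delta(\L(\Gamma_v))$, and using property (T) of both algebras to absorb the abelian base $\L(A_u^{(I_u)})$ and to upgrade relative amenability to intertwining, the dichotomy forces either a further refinement of $\Delta(\L(\Gamma_{\hat v}))$ (contradicting minimality of $\sT$) or $\Delta(\L(\Gamma_v))\prec\M\bar\otimes\L(\Gamma_{\sT\setminus\{u\}})$. If the latter held for every $u\in\sT$, a standard intersection-of-intertwinings argument in the spirit of \cite[Lemma 2.8]{DHI16} combined with \cite[Lemma 7.2]{IPV10} would produce $\Delta(\L(\Gamma_v))\prec\M\otimes 1$, a contradiction. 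Hence $\sT\subsetneq\sC^*$ and ${\rm lk}(\sT)\supseteq\sC^*\setminus\sT\neq\emptyset$, so $\L(\Gamma_\sT)'\cap\M=\L(\Gamma_{{\rm lk}(\sT)})$ is a non-amenable II$_1$ factor. Theorem \ref{Th:ultrapower} then produces a decreasing sequence $\Sigma_k<\Lambda$ with $\L(\Gamma_{\hat v})\prec_\M\L(\Sigma_k)$ and $\L(\Gamma_\sT)'\cap\M\prec_\M\L(\bigcup_k C_\Lambda(\Sigma_k))$; the second intertwining forces the union to be non-amenable, hence some $C_\Lambda(\Sigma_k)$ is non-amenable, and we set $\Lambda_{\hat v}=\Sigma_k$ for the smallest such $k$.

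Part (2) is much more direct and uses Proposition \ref{prop.counting2}. Since $\R$ is isomorphic to a corner of $\L(\Gamma_{\hat v})=\bar\otimes_{u\in\hat v}\L(\Gamma_u)$, it is isomorphic to an amplification $\L(\Gamma_{\hat v})^t$ for some $t>0$. By the standard identification $\L(\Gamma_{\hat v})^t\cong\bar\otimes_{u\in\hat v}\L(\Gamma_u)^{t_u}$ for any $t_u>0$ with $\prod t_u=t$, the algebra $\R$ contains $|\hat v|=|\sC|-1$ mutually commuting non-amenable II$_1$ subfactors $\R_u$. Together with $\P$ and $\Q$, these produce $|\sC|+1$ commuting subalgebras of $p\L(\Gamma_\sC)p$, with $\Q$ having no amenable direct summand and each $\R_u$ a non-amenable factor. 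Since $\Gamma_\sC=\times_{u\in\sC}\Gamma_u$ is a direct product of $|\sC|$ property (T) wreath-like groups covered by the hypotheses, applying Proposition \ref{prop.counting2} with $n=|\sC|$, $\P_0=\P$, $\{\P_i\}_{i=1}^{n-1}=\{\R_u\}_{u\in\hat v}$, and $\P_n=\Q$ yields that $\P$ is completely atomic.

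The main obstacle is the refinement step in part (1): passing from the initial clique-intertwining into $\M\bar\otimes\L(\Gamma_{\sC^*})$ (with trivial relative commutant) to an intertwining into $\M\bar\otimes\L(\Gamma_\sT)$ with $\sT$ a proper subset of $\sC^*$. This hinges on a careful interplay between the dichotomy of Proposition \ref{prop.dichotomy.wreath.like}, the property (T) of both commuting algebras $\Delta(\L(\Gamma_v))$ and $\Delta(\L(\Gamma_{\hat v}))$, and the non-absorption lemma \cite[Lemma 7.2]{IPV10} that prevents $\Delta$-images from being trapped in $\M\otimes 1$.
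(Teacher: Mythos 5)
Your part (2) is exactly the paper's argument: a corner of $\L(\Gamma_{\hat v})$ is an amplification of $\bar\otimes_{u\in\hat v}\L(\Gamma_u)$ containing $|\sC|-1$ commuting non-amenable factors, and feeding these together with $\P$ and $\Q$ into Proposition \ref{prop.counting2} (with $n=|\sC|$) gives complete atomicity of $\P$. The endgame of part (1) — once one has $\Delta(\L(\Gamma_{\hat v}))\prec_{\M\bar\otimes\M}\M\bar\otimes\L(\Gamma_\sT)$ with $\sT$ strictly contained in a clique, apply Theorem \ref{Th:ultrapower} and use non-amenability of $\L(\Gamma_\sT)'\cap\M$ to extract a $\Sigma_k$ with non-amenable centralizer — also matches the paper.

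The gap is in your refinement step for part (1). Proposition \ref{prop.dichotomy.wreath.like} requires the commuting pair to be subalgebras of (a corner of) the crossed product in question, here $\D\rtimes\Gamma_u=\M\bar\otimes\L(\Gamma_\sT)$; but $\Delta(\L(\Gamma_{\hat v}))$ and $\Delta(\L(\Gamma_v))$ live in $\M\bar\otimes\M$, not in $\M\bar\otimes\L(\Gamma_\sT)$. The intertwining $\Delta(\L(\Gamma_{\hat v}))\prec\M\bar\otimes\L(\Gamma_\sT)$ only transports a corner of that single algebra into the target via a $*$-homomorphism; its commuting partner $\Delta(\L(\Gamma_v))$ does not come along, and you cannot instead view $\M\bar\otimes\M$ itself as a crossed product by $\Gamma_u$ over $\M\bar\otimes\L(\Gamma_{\sT\setminus\{u\}})$, since $\Gamma$ is an amalgam over $\Gamma_{{\rm lk}(u)}$, not a semidirect product with $\Gamma_u$. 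The minimality bookkeeping is also internally inconsistent: by minimality the first alternative of the dichotomy is excluded for \emph{every} $u\in\sT$, so your argument would force the second alternative for all $u\in\sT$ and hence $\Delta(\L(\Gamma_v))\prec\M\otimes 1$, an unconditional contradiction that never uses $\sT=\sC^*$; in particular ``hence $\sT\subsetneq\sC^*$'' is a non sequitur. The paper's device for making the dichotomy applicable is to intertwine the \emph{whole} clique algebra $\Delta(\L(\Gamma_\sC))$, using Theorem \ref{proptcliques} in $\L(\Gamma\times\Gamma)$, into a product of two clique algebras $\L(\Gamma_{\sC_1}\times\Gamma_{\sC_2})$; then the images $\R_{\hat v}=\theta(q\Delta(\L(\Gamma_{\hat v}))q)$ and $\R_v=\theta(q\Delta(\L(\Gamma_v))q)$ are genuinely commuting subalgebras of that target, the dichotomy of \cite{CIOS21} is applied to them for each vertex $t\in\sC_2$, the bad case (intertwining into $\L(\Gamma_{\sC_1})\otimes 1$ for all $t$) is excluded by \cite[Lemma 2.6]{DHI16} together with \cite[Proposition 7.2]{IPV10}, and the resulting control is transferred back to $\Delta(\L(\Gamma_{\hat v}))\prec\M\bar\otimes\L(\Gamma_{\sC_2\setminus\{t\}})$ before invoking Theorem \ref{Th:ultrapower}. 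Your proposal needs this (or an equivalent) mechanism; as written, the key dichotomy is applied outside its hypotheses.
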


\begin{proof} 1. Following \cite{IPV10,Io10} consider the $\ast$-embedding $\Delta : \M \ra \M \bar\otimes \M$ given by $\Delta(v_h)=v_h\otimes v_h$ for all $h\in \Lambda$.  Fix  $\mathscr C \in \rm cliq (\mathscr  G)$. From assumptions we have  $\Delta(\L(\Gamma_\sC))\subseteq \M \bar\otimes \M = \L(\Gamma\times \Gamma)$ is a property (T) von Neumann algebra. Thus applying Theorem \ref{proptcliques} one can find cliques $\sC_1,\sC_2\in {\rm cliq}(\sG)$ such that $\Delta(\L(\Gamma_\sC))\prec_{\M \bar\otimes \M} \L(\Gamma_{\sC_1}\times \Gamma_{\sC_2})$. This implies that one find projections $q\in \Delta(\L(\Gamma_\sC))$ $p\in \L(\Gamma_{\sC_1}\times \Gamma_{\sC_2})$, partial isometry $w\in \M\bar\otimes \M $ and a $\ast$-isomorphism onto its image $\theta: q\Delta(\L(\Gamma_\sC)) q\ra  \R:= \theta(q\Delta(\L(\Gamma_\sC)) q)\subseteq p \L(\Gamma_{\sC_1}\times \Gamma_{\sC_2}) p$ such that $\theta(x)w=wx$ for all $x\in q \Delta(\L(\Gamma_\sC)) q$. Notice that $ww^*\in \R'\cap q(\M \bar \otimes \M)q $ and $w^*w\in (\Delta(\L(\Gamma_\sC))'\cap \M\bar\otimes \M )q$. Moreover we can assume without loss of generality that the support of $E_{\L(\Gamma_{\sC_1} \times \Gamma_{\sC_2})}(ww^*)$ equals $p$.

Since $\Gamma_t, t\in \sV$, are icc we can assume without loss of generality that $q\in \Delta (\L(\Gamma_{v_0}))$ for some $v_0\in \sC$. For every $\sD \subseteq \sC$, we denote  $\R_\sD= \theta(q\Delta(\L(\Gamma_\sD))q)$. Thus, $\R= \vee_{t\in \sC}  \R_v$ where $\R_t= \theta(q\Delta(\L(\Gamma_t))q)$ are mutually commuting non-amenable II$_1$ factors. 

Now, fix $\emptyset \neq \sD \subseteq \sC$ and notice that $\R_{\sD} \vee \R_{{\rm lk}(\sD)} = \R\subset  \L(\Gamma_{\sC_1}\times \Gamma_{\sC_2})=:\tilde \N$ are commuting nonamenable factors. Since $\mathcal R_\sD$ and $\R_{{\rm lk}(\sD)}$ are commuting property (T) algebras, then \cite[Theorem 6.15]{CIOS21} implies that for every $t \in \sC_2$ either  $\R_{\sD}\prec_{\tilde \N} \L(\Gamma_{\sC_1}\times \Gamma_{\sC_2 \setminus \{t\}}\times A^{(B_t)}_t ) $
or $\R_{{\rm lk}(\sD)}\prec_{\tilde \N} \L(\Gamma_{\sC_1}\times \Gamma_{\sC_2 \setminus \{t\}}\times A^{(B_t)}_t ) $. However, since $R_\sD, \R_{{\rm lk}(\sD)}$ have property (T) and $A_t^{(B_t)}$ is amenable we further conclude that either  $\R_{\sD}\prec_{\tilde \N} \L(\Gamma_{\sC_1}\times \Gamma_{\sC_2 \setminus \{t\}} )$
or $\R_{{\rm lk}(\sD)}\prec_{\tilde \N} \L(\Gamma_{\sC_1}\times \Gamma_{\sC_2 \setminus \{t\}} )$ and using factoriality we actually have either  a) $\R_{\sD}\prec^s \L(\Gamma_{\sC_1}\times \Gamma_{\sC_2 \setminus \{t\}} )$
or b) $\R_{{\rm lk}(\sD)}\prec^s \L(\Gamma_{\sC_1}\times \Gamma_{\sC_2 \setminus \{t\}} )$. Assume by contradiction that b) holds for all $t\in \sC_2$. By \cite[Lemma 2.6]{DHI16} this would imply that $\R_{{\rm lk}(\sD)}\prec \cap_{t\in \sC_2} \L(\Gamma_{\sC_1}\times \Gamma_{\sC_2 \setminus \{t\}} )= \L(\Gamma_{\sC_1})\otimes 1$. Using \cite[Proposition 7.2]{IPV10}, this entails that $\R_{{\rm lk}(\sD)}$ is atomic, a contradiction. Hence, there is $t\in \sC_2$ such that $\R_{\sD}\prec^s \L(\Gamma_{\sC_1}\times \Gamma_{\sC_2 \setminus \{v\}} )$. Combining this with the first part we get again that $\Delta(\L(\Gamma_\sD))\prec \L(\Gamma_{\sC_1}\times \Gamma_{\sC_2 \setminus \{t\}}) )$. The conclusion of the first part follows from Theorem \ref{Th:ultrapower} by letting $\sD=\sC\setminus\{v\}$.

2. The conclusion follows from Proposition \ref{prop.counting2}. 
\end{proof}


Next, by using similar methods to \cite[Theorem 9.1]{CD-AD20},  we identify up to corners the clique subgroups in the mystery subgroup.  

\begin{thm}\label{identificationcornergroups1} Let $\Gamma= \mathscr G \{\Gamma_v\}$ be a graph product of icc groups such that $\mathscr G\in {\rm CC}_1$ and denote $\M=\L(\Gamma)$. Let $\mathscr C \in \rm cliq (\mathscr  G)$ and $v\in \mathscr C$ with ${\rm lk}( \hat{v})=\{v\}$. Assume that whenever $\P, \Q, \R\subset p\L(\Gamma_{\mathscr C}) p$ are commuting von Neumann subalgebras such that $\Q$ has no amenable direct summand and $\R$ is isomorphic to a corner of $\L(\Gamma_{\hat v})$, then $\P$ is completely atomic.

Let $\Lambda$ be an arbitrary group such that $\M=\L(\Lambda)$ and assume that there exists a subgroup $\Lambda_{\hat v}< \Lambda$ with non-amenable centralizer $C_\Lambda(\Lambda_{\hat v})$ such that $\L(\Gamma_{\hat v})\prec_{\M} \L(\Lambda_{\hat v})$.      
 
Then there exist a subgroup $\Lambda_{\hat v} C_\Lambda(\Lambda_{\hat v})  \leqslant \Sigma_\sC<\Lambda$ with  $[\Sigma: \Lambda_{\hat v} {\rm vC}_\Lambda(\Lambda_{\hat v})]<\infty$ and ${\rm QN}^{(1)}_\La(\Sigma_\sC)=\Sigma_\sC$, a non-zero projection $c\in \mathcal Z(\L(\Sigma_\sC))$ and $w_0\in \mathscr U(\M)$ with $w_0c w^*_0=n\in\L(\Gamma_\sC)$  such that $w_0 \L(\Sigma_\sC)c w^*_0= n\L (\Gamma_\sC)n$.
\end{thm}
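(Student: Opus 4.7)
The plan is to mimic the strategy of Proposition \ref{almostprod}, taking advantage of the natural tensor decomposition $\L(\Ga_\sC)=\L(\Ga_{\hat v})\bar\otimes\L(\Ga_v)$ afforded by the hypothesis $\mathrm{lk}(\hat v)=\{v\}$. Inside $\M$, an iterated application of Corollary \ref{controlintertw5} over the icc vertex groups indexed by $\hat v$ yields $\L(\Ga_{\hat v})'\cap\M=\L(\Ga_{\mathrm{lk}(\hat v)})=\L(\Ga_v)$, so the tensor decomposition is genuine inside $\M$. From the assumed intertwining $\L(\Ga_{\hat v})\prec_\M\L(\La_{\hat v})$ and \cite[Proposition 2.4]{CKP14}, I would extract projections $a\in\L(\Ga_{\hat v})$, $q\in\L(\La_{\hat v})$, a partial isometry $v\in q\M a$, a subalgebra $\D\subset q\L(\La_{\hat v})q$ and a $*$-isomorphism $\phi:a\L(\Ga_{\hat v})a\to\D$ satisfying $\phi(x)v=vx$ and making $\D\vee(\D'\cap q\L(\La_{\hat v})q)\subset q\L(\La_{\hat v})q$ of finite index.

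Proceeding as in the opening of the proof of Proposition \ref{almostprod}, one computes $v^*v=a\otimes b$ with $b\in\L(\Ga_v)$ a projection and writes $v=u(a\otimes b)$ for a unitary $u\in\M$. Conjugation by $u$ identifies $\D\cdot vv^*$ with $a\L(\Ga_{\hat v})a\otimes b$ and $(\D'\cap q\M q)\cdot vv^*$ with $a\otimes b\L(\Ga_v)b$. Setting $p=a\otimes b$, this yields three pairwise commuting subalgebras of $p\L(\Ga_\sC)p$,
\begin{equation*}
\R:=u^*\D u\cdot p,\quad\P:=u^*(\D'\cap q\L(\La_{\hat v})q)u\cdot p,\quad\Q:=u^*\L(C_\La(\La_{\hat v}))q\,u\cdot p.
\end{equation*}
By construction $\R$ is a corner of $\L(\Ga_{\hat v})$, and after cutting by a suitable central projection I may assume $\Q$ has no amenable direct summand, using the non-amenability of $C_\La(\La_{\hat v})$. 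The standing hypothesis then forces $\P$ to be completely atomic; combined with the finite-index condition on $\D\vee(\D'\cap q\L(\La_{\hat v})q)$, a further compression by a minimal central projection produces a finite-index inclusion of II$_1$ factors $\D r\subset rq\L(\La_{\hat v})qr$.

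To upgrade this to the stated corner identification inside $\L(\Ga_\sC)$, I would invoke Jones' basic-construction trick, exactly as at the end of the proof of Proposition \ref{almostprod}, to replace $v$ by a partial isometry $w_0$ identifying $w_0^*\,(q'\L(\La_{\hat v})q')\,w_0$ directly with a corner of $\L(\Ga_{\hat v})$. Since $\L(C_\La(\La_{\hat v}))$ commutes with $\L(\La_{\hat v})$, the conjugate of $\L(C_\La(\La_{\hat v}))q$ by $w_0$ lands inside the matching corner of $\L(\Ga_v)$. The virtual-centralizer finite-index argument from the proof of Theorem \ref{prodrigid1} then lets me pass from $C_\La(\La_{\hat v})$ to $\mathrm{vC}_\La(\La_{\hat v})$, so that $\L(\La_{\hat v}\mathrm{vC}_\La(\La_{\hat v}))$ maps under $\mathrm{Ad}(w_0)$ onto a finite-index corner of $\L(\Ga_\sC)$. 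Defining $\Sigma_\sC$ as the one-sided quasinormalizer of $\La_{\hat v}\mathrm{vC}_\La(\La_{\hat v})$ inside $\La$, Corollary \ref{controlquasinormalizer2}---whose geometric hypothesis $\mathrm{lk}(\hat v)=\{v\}$ is precisely ours---together with Lemma \ref{QN2} guarantees that this enlargement still embeds via $\mathrm{Ad}(w_0)$ into the same corner of $\L(\Ga_\sC)$, equals its own one-sided quasinormalizer, and has finite index over $\La_{\hat v}\mathrm{vC}_\La(\La_{\hat v})$. A central projection $c\in\mathcal{Z}(\L(\Sigma_\sC))$ matching the II$_1$ factor structure can then be selected so that $w_0\L(\Sigma_\sC)cw_0^*=n\L(\Ga_\sC)n$ with $n=w_0 c w_0^*$.

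The main obstacle I expect is the passage from the finite-index inclusion of corners produced at the end of the second paragraph to the on-the-nose corner identification asserted in the theorem, carried out simultaneously with the extraction of the required central projection $c$ and the verification that $\Sigma_\sC$ is stable under one-sided quasinormalization. This is precisely where Corollary \ref{controlquasinormalizer2} is indispensable, and it must be carefully interleaved with the virtual-centralizer manoeuvre of Theorem \ref{prodrigid1} in order to absorb the central fuzz contributed by the non-factoriality of $\D'\cap q\L(\La_{\hat v})q$ and of the centralizer $\L(C_\La(\La_{\hat v}))$.
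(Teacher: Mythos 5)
Your overall skeleton (intertwine, produce a commuting triple inside a corner of $\L(\Gamma_\sC)$, invoke the standing hypothesis, then upgrade via quasinormalizers) matches the paper in spirit, but there is a genuine gap at the step that carries the whole weight of the proof. After applying \cite[Proposition 2.4]{CKP14} to $\L(\Gamma_{\hat v})\prec_\M\L(\La_{\hat v})$ you only control the image $\D$ of a corner of $\L(\Gamma_{\hat v})$; the algebras $\D'\cap q\L(\La_{\hat v})q$ and $\L(C_\La(\La_{\hat v}))q$ live in $q\M q$, and the only mechanism you offer for moving them into $p\L(\Gamma_\sC)p$ is compression by $vv^*$ followed by conjugation by $u$. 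But $vv^*$ lies in $\D'\cap q\M q$ and need not commute with $\D'\cap q\L(\La_{\hat v})q$ or with $\L(C_\La(\La_{\hat v}))q$, so $vv^*(\cdot)vv^*$ applied to these sets is not an algebra isomorphic to a corner of them, the three objects you call $\R,\P,\Q$ need not pairwise commute inside $p\L(\Gamma_\sC)p$, and the phrase ``after cutting by a suitable central projection $\Q$ has no amenable direct summand'' has no content for such a compression. This is precisely why the paper first proves that $f\L(\La_{\hat v})f$ and $\L(C_\La(\La_{\hat v}))f$, cut by a suitable central projection, are honestly unitarily conjugated into $\L(\Gamma_\sC)$: one passes to the central support $z_2$ of $vv^*$ using factoriality of $\L(\Gamma_\sC)$ to get $\L(C_\La(\La_{\hat v}))z_2\subseteq u_2\L(\Gamma_\sC)u_2^*$, and then --- the key idea missing from your proposal --- controls the relative commutant $z_2\bigl(\L(C_\La(\La_{\hat v}))f'\cap f\M f\bigr)z_2\subseteq u_2\L(\Gamma_\sC)u_2^*$ via \cite[Theorem 1.2.1]{IPP05} applied to the amalgam decomposition $\M=\L(\Gamma_{\sV\setminus\{v\}})\ast_{\L(\Gamma_{\hat v})}\L(\Gamma_\sC)$, whose hypothesis $\L(C_\La(\La_{\hat v}))z_2\nprec_{u_2\L(\Gamma_\sC)u_2^*}u_2\L(\Gamma_{\hat v})u_2^*$ is verified by an explicit Fourier-coefficient estimate using diffuseness. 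Only after this containment does one apply \cite[Proposition 2.4]{CKP14} \emph{inside} $\L(\Gamma_\sC)$ (via Corollary \ref{controlquasinormalizer2}) and obtain a genuine commuting triple to which the standing hypothesis applies. Nothing in your sketch substitutes for this step, and without it the conclusion that a corner of $\L(\La_{\hat v})$ (rather than merely of $\D$) sits inside $\L(\Gamma_\sC)$ is never established.

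The endgame also leans on unavailable tools. The finite-index claim $[\Sigma:\La_{\hat v}{\rm vC}_\La(\La_{\hat v})]<\infty$ is obtained in the paper by a basic-construction (Jones projection) argument combined with malnormality of $\Gamma_\sC$ in $\Gamma$ and an intertwining lemma from \cite{DHI16}; the ``virtual-centralizer manoeuvre of Theorem \ref{prodrigid1}'' you invoke relies on property (T) of $\La$, which is not available here since $\La$ is an arbitrary group and the vertex groups of $\Gamma$ are only assumed icc. Moreover, $\Sigma_\sC$ has to be taken as the group generated by ${\rm QN}^{(1)}_\La\bigl({\rm QN}_\La(\La_{\hat v})\bigr)$, not as a quasinormalizer of $\La_{\hat v}{\rm vC}_\La(\La_{\hat v})$, and the exact equality $w_0\L(\Sigma_\sC)c\,w_0^*=n\L(\Gamma_\sC)n$ comes from a chain of quasinormalizer identities (Lemmas \ref{QN1} and \ref{QN2}) together with the moreover part of \cite[Lemma 2.6]{CI17}, none of which your proposal reconstructs beyond naming the relevant statements.
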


\begin{proof} Since $\L(\Gamma_{\hat v})\prec_\M  \L(\Lambda_{\hat v})$, one can find projections $a\in \L(\Gamma_{\hat v})$, $f\in \L(\Lambda_{\hat v})$, a non-zero partial isometry $v\in f\M a$ and a $\ast$-isomorphism onto its image $\phi:a \L(\Gamma_{\hat v})a \rightarrow \B:=\phi(a\L(\Gamma_{\hat v})a) \subseteq f\L(\Lambda_{\hat v})f $ such that \begin{equation}\label{inteq1}
    \phi(x)v=vx \text{ for all } x\in a\L(\Gamma_{\hat v})a.
\end{equation}
Notice that $vv^*\in \B'\cap f\M f$ and $v^*v\in a\L(\Gamma_{\hat v})a'\cap a\M a$. Since the virtual centralizer satisfies ${\rm vC}_\Gamma(\Gamma_{\hat v})= C_\Gamma(\Gamma_{\hat v}) = \Gamma_v$, then $\L(\Gamma_{\hat v})'\cap \M = \L(\Gamma_v)$. Thus, we can write  $v^*v=aa_0$ for a projection $a_0\in \L(\Gamma_v)$. Equation \eqref{inteq1}  implies that $\B vv^* = v \L(\Gamma_{\hat v})v^* = u_1 \L(\Gamma_{\hat v}) v^*v u^*_1$, where $u_1\in \M$ is a unitary extending $v$. Taking relative commutants we get $vv^*(\B' \cap f\M f)vv^*= u_1 v^*v( a\L(\Gamma_{\hat v})a'\cap a\M a )v^*v u^*_1= u_1 v^*v\L(\Gamma_{v} ) v^*v u^*_1$. Hence, $vv^*(\B \vee \B'\cap f\M f)vv^*=\B vv^*\vee vv^*(\B' \cap f\M f)vv^*= u_1 v^*v\L(\Gamma_\sC) v^*vu^*_1$. Therefore, since $\L(\Gamma_\sC)$ is a factor one can find a new unitary $u_2 \in \mathscr U(\M)$ such that 
\begin{equation}\label{k1}
(\B \vee \B'\cap f\M f) z_2 \subseteq u_2 \L(\Gamma_\sC) u^*_2,
\end{equation} 
here $z_2$ is the central support of $vv^*$ in $\B \vee \B'\cap f\M f$. In particular, we have $z_2\in \mathcal Z(\B'\cap f\M f )$ and $vv^*\leq z_2 \leq f$.

\noindent Now, observe that \eqref{k1} implies that
\begin{equation}\label{inclusions}
\L(C_\Lambda(\Lambda_{\hat v}))z_2\subseteq (f\L(\Lambda_{\hat v} )f'\cap f \M f )z_2\subseteq (\B'\cap f \M f)z_2\subseteq u_2 \L(\Gamma_{\sC})u_2^*.
\end{equation}  Next, we will show the following containment \begin{equation}\label{controlrelcom}z_2(\L(C_\Lambda(\Lambda_{\hat v}))f'\cap f\M f) z_2\subseteq u_2\L(\Gamma_\sC)u_2^*.\end{equation}

Since we have $\M= u_2\L(\Gamma_{\sV\setminus \{v\}} \ast_{\Gamma_{\hat v}} \Gamma_{\sC} ) u^*_2=u_2\L(\Gamma_{\sV\setminus \{v\}})u_2^* \ast_{u_2\L(\Gamma_{\hat v})u_2^*} u_2\L (\Gamma_{\sC} ) u^*_2$, then by \cite[Theorem 1.2.1]{IPP05} and \eqref{inclusions} to get \eqref{controlrelcom} it suffices to show that \begin{equation}\label{corenonintertwining}\L(C_\Lambda(\Lambda_{\hat v}))z_2\nprec_{u_2\L (\Gamma_{\sC} ) u^*_2} u_2\L(\Gamma_{\hat v})u_2^*.\end{equation}

Since $\Gamma_v$ is icc, it follows that   $vv^*(\B' \cap f\M f)vv^*= u_1 v^*v\L(\Gamma_{v} ) v^*v u^*_1= u_1 a_0(\L(\Gamma_{v} ) )a_0 a u^*_1$ is a factor. Since $z_2$ is the central support of $vv^*$ in $\B' \cap f\M f$, then $(\B' \cap f\M f) z_2$ is a factor as well. 

Assume that \eqref{corenonintertwining} does not hold. Thus one can find projections $t\in \L(C_\Lambda(\Lambda_{\hat v})), r\in u_2 \L(\Gamma_{\hat v})u_2^*$, a partial isometry $w\in ru_2\L(\Gamma_\sC)u_2^* tz_2$ and a $\ast$-isomorphism onto its image $\psi: t\L(C_\Lambda(\Lambda_{\hat v}))tz_2\ra  ru_2\L(\Gamma_{\hat v})u_2^* r$  such that \begin{equation}\label{intertwining2}
    \psi(x)w=wx \text{ for all } x\in t\L(C_\Lambda(\Lambda_{\hat v}))tz_2.
\end{equation}
Notice that $t\L(C_\Lambda(\Lambda_{\hat v}))tz_2\subseteq t(\B'\cap f \M f)tz_2\subseteq u_2 \L(\Gamma_{\sC})u_2^*$ and observe one can pick $t$ small enough so that $tz_2$ is subequivalent to $v^*v$ inside $(\B'\cap f \M f)z_2$. Using this one can find a unitary $u_3\in \M$ such that $t\L(C_\Lambda(\Lambda_{\hat v}))tz_2\subseteq u_3 v^*v \L(\Gamma_v)v^*v u_3^*$. Hence, using this in combination with relation \eqref{intertwining2}, we see that for all unitaries $x\in t\L(C_\Lambda(\Lambda_{\hat v}))tz_2$, we have that $\psi(x)$ is a unitary in $r u_2 \L(\Gamma_{\sC}) u_2^* r$, and therefore, 
\begin{equation}\label{inter2}
    \begin{split}
        \|E_{u_2 \L(\Gamma_{\hat v}) u_2^*}(ww^*)\|_2&=\|E_{u_2 \L(\Gamma_{\hat v}) u_2^*}(\psi(x)ww^*)\|_2= \| E_{u_2\L(\Gamma_{\hat v})u_2^*}(wxw^*)\|_2\\
        &=\| E_{u_2\L(\Gamma_{\hat v})u_2^*}(wE_{u_3 \L(\Gamma_v)u_3^*}(x)w^*)\|_2\\
        &=\| E_{\L(\Gamma_{\hat v})}(u_2^*w u_3E_{\L(\Gamma_v)}(u_3^*x u_3)u_3^*w^* u_2)\|_2.
    \end{split}
\end{equation}

Since $w\neq 0$ then \eqref{inter2} and basic approximations of $u_2^*w u_3$ and $u_3$ imply the existence of a finite subset $F\subset \Gamma$ and a constant $C>0$ such that for all unitaries $x\in t\L(C_\Lambda(\Lambda_{\hat v}))tz_2$ we have\begin{equation}\label{boundbelow}
  \sum_{g,h,k,l\in F }| E_{\L(\Gamma_{\hat v})}(u_gE_{\L(\Gamma_v)}(u_hx u_k) u_l)\|_2\geq C.  
\end{equation}  

\noindent However, we see that  for all $x\in t\L(C_\Lambda(\Lambda_{\hat v}))tz_2$ we have \begin{equation*}\begin{split}\sum_{g,h,k,l\in F }| E_{\L(\Gamma_{\hat v})}(u_gE_{\L(\Gamma_v)}(u_hx u_k) u_l)\|_2&= \sum_{g,h,k,l\in F }| P_{ g^{-1}\Gamma_{\hat v}l^{-1}\cap \Gamma_v}(u_hx u_k)\|_2
\\&= \sum_{g,h,k,l\in F, gl^{-1}\in \Gamma_v }| P_{ (g^{-1}\Gamma_{\hat v}g^{-1}\cap \Gamma_v)gl^{-1}}(u_hx u_k)\|_2\\&= \sum_{g,h,k,l\in F, gl^{-1}\in \Gamma_v }| E_{ \L(g^{-1}\Gamma_{\hat v}g^{-1}\cap \Gamma_v)}(u_hx u_{klg^{-1}})\|_2\\
&=\sum_{g,h,k,l\in F, gl^{-1}\in \Gamma_v }| \tau(x u_{klg^{-1}h})|.\end{split}\end{equation*}
Here, we denoted by $P_S:\ell^2(\Gamma)\to\ell^2(\Gamma)$ the orthogonal projection onto the $\|\cdot\|_2$-closure of the ${\rm span}\{u_g|g\in S\}$ for any subset $S\subset\Gamma$.
As $F$ is finite and $t\L(C_\Lambda(\Lambda_{\hat v}))tz_2$ is diffuse there is a sequence of unitaries $x_n \in t\L(C_\Lambda(\Lambda_{\hat v}))tz_2$ such that $\sum_{g,h,k,l\in F, gl^{-1}\in \Gamma_v }| \tau(x_n u_{klg^{-1}h})|\ra 0$ as $n\ra \infty$. This however contradicts  \eqref{boundbelow}. Hence \eqref{corenonintertwining} must hold.

Thus \eqref{controlrelcom} implies that  $z_2 (\L(C_\Lambda(\Lambda_{\hat v}))f\vee \L(C_\Lambda(\Lambda_{\hat v}))f'\cap f\M f) z_2\subseteq u_2\L(\Gamma_\sC)u_2^*$. Again since $\L(\Gamma_\sC)$ is a factor there is $u\in \mathscr U(\M)$ so that 
\begin{equation}\label{inteq2} (\L(C_\Lambda(\Lambda_{\hat v}))f\vee \L(C_\Lambda(\Lambda_{\hat v}))f'\cap f\M f) z\subseteq u\L(\Gamma_\sC)u^*,\end{equation} where $z$ is the central support of $z_2$ in $(\L(C_\Lambda(\Lambda_{\hat v}))f\vee \L(C_\Lambda(\Lambda_{\hat v}))f'\cap f\M f)$. In particular, we have $vv^*\leq z_2 \leq z \leq f$. Now since  $f\L(\Lambda_{\hat v} )f\subseteq  \L(C_\Lambda(\Lambda_{\hat v}))f'\cap f\M f$ then by \eqref{inteq2} we get $(f\L(\Lambda_{\hat v} )f\vee \L(C_\Lambda(\Lambda_{\hat v}))f)z\subseteq u\L(\Gamma_\sC)u^*$ and hence 
\begin{equation}\label{inteq3} u^*(\L(C_\Lambda(\Lambda_{\hat v}))f\vee f \L(\Lambda_{\hat v} )f) z u\subseteq \L(\Gamma_\sC).\end{equation}

In particular, \eqref{inteq3} implies that $u^*f\L(\Lambda_{\hat v})fzu\subseteq \L(\Gamma_\sC)$. 
Since $vv^*\leq z\in f\L(C_\Lambda(\Lambda_{\hat v}))f'\cap f\M f$ and $\B$ is a factor then the map $\phi': a\L(\Gamma_{\hat v})a \rightarrow u^* \B z  u \subseteq u^*f \L(\Lambda_{\hat v})fz u$ given by $\phi'(x)=u^* \phi(x)z u$ still defines a $\ast$-isomorphism that satisfies $\phi'(x) w=wx$, for any $x\in a\L(\Gamma_{\hat v})a$, where $w= u^*zv$ is a non-zero partial isometry. Hence, $\L(\Gamma_{\hat v}) \prec_{\M}  u^*f\L(\Lambda_{\hat v})fzu$. Thus, by Corollary \ref{controlquasinormalizer2} it follows that $\L(\Gamma_{\hat v})\prec_{\L(\Gamma_\sC)} u^*f\L(\Lambda_{\hat v})fzu$.

\noindent To this end using \cite[Proposition 2.4]{CKP14} and its proof we can find non-zero $p \in \mathscr P(\L(\Gamma_{\hat v}))$, $r=u^*ezu \in u^*f\L(\Lambda_{\hat v})fzu$ with $e\in \mathscr P(f\L(\Lambda_{\hat v})f)$, a von Neumann subalgebra $\C \subseteq  u^*e\L(\Lambda_{\hat v})ezu $, and a $\ast$-isomorphism $\theta : p\L(\Gamma_{\hat v})p \rightarrow \C$ such that:
\begin{enumerate}
\item [a)] the inclusion $\C \vee (\C' \cap  u^*e\L(\Lambda_{\hat v})ezu ) \subseteq  u^*e\L(\Lambda_{\hat v})ezu $  has finite index;
\item [b)]there is a non-zero partial isometry $y \in \L(\Gamma_\sC)$ such that $\theta(x) y =y x$ for all $x\in p\L(\Gamma_{\hat v})p$, where $y^*y\in p\L(\Gamma_{\hat v})p'\cap p\M p$ and $yy^*\in \C '\cap r\M r$. 
\end{enumerate}


Note that $r\in \L(\Gamma_{\mathscr C})$ and $\C$, $\C' \cap   u^* r\L(\Lambda_{\hat v})r z u$ and $u^*\L(C_\Lambda(\Lambda_{\hat v}))r z u$ are commuting von Neumann subalgebras of  $r \L(\Gamma_{\mathscr C})r$. Since $\C$ is isomorphic to a corner of $\L(\Gamma_{\hat v})$ and $u^*\L(C_\Lambda(\Lambda_{\hat v}))ezu$ has no amenable direct summand, then assumption implies that $\C' \cap  u^* e\L(\Lambda_{\hat v})ezu$ is purely atomic. 
Thus, one can find a non-zero projection $q \in \mathcal Z(\C' \cap   u^* e\L(\Lambda_{\hat v})e zu )$ such that after compressing the containment in a) by $q$ and replacing $\C$ by $\C q$, $y$ by $qy$  and  $\theta(x)$ by $\theta(x) q$ in b) we can assume in addition that  $\C \subseteq  u^*e\L(\Lambda_{\hat v})ezu $  is a finite index inclusion of non-amenable II$_1$ factors.
By \cite[Proposition 1.3]{PP86} it follows that 
$\C \subseteq  u^*e\L(\Lambda_{\hat v})ezu $ admits a finite Pimsner-Popa basis, which implies that there exist $x_1,\dots,x_m\in u^*e\L(\Lambda_{\hat v})ezu$ such that $u^*e\L(\Lambda_{\hat v})ezu=\sum_{i=1}^m x_i \C$. 
Note also that $u^*e\L(\Lambda_{\hat v})ezu \subset r\M r$ since $r=u^*ezu$. Hence,
\begin{equation}\label{jj}
 \mathcal {QN}_{ r\M r}(\C ) ''=\mathcal {QN}_{ r\M r}( u^*e\L(\Lambda_{\hat v})ezu )''.   
\end{equation}

Also, the intertwining relation in b) shows that $\C yy^*= y p\L(\Gamma_{\hat v})p y^*= l p\L(\Gamma_{\hat v})p y^*y l$ where $yy^*\in \C'\cap r \M r$ and $l\in \L(\Gamma_\sC)$ is a unitary extending $y$, meaning $y=ly^*y$. Therefore, using the quasi-normalizer formulas for group von Neumann algebras  and for compressions, Lemma \ref{QN2}  and Lemma \ref{QN1} (and Remark \ref{QN.remark}), respectively,  we deduce that
\begin{equation}\label{eqquasicorner}\begin{split}
  ly^*y \L(\Gamma_\sC) y^*yl^* & = ly^*y \L({\rm QN}_\Gamma (\Gamma_{\hat v})) y^*yl^*   \overset{L. \ref{QN2}}{=} l y^*y \mathcal{QN}_{\M}(\L(\Gamma_{\hat v}))''y^*yl^*\\
  & \overset{L. \ref{QN1}}{=}\mathcal {QN}_{l y^*y \M y^*yl}(l p\L(\Gamma_{\hat v})p y^*y l)''= \mathcal {QN}_{yy^*\M yy^*}(\C yy^*)''
  \\
  &\overset{R. \ref{QN.remark}}{=}yy^*\mathcal {QN}_{ r\M r}(\C ) ''yy^*\overset{\eqref{jj}}{=}yy^*\mathcal {QN}_{ r\M r}( u^* e\L(\Lambda_{\hat v})ezu )'' yy^*\\
  &\overset{L. \ref{QN1}}{=} yy^*  u^*z e\mathcal {QN}_{ \L(\Lambda)}(\L(\Lambda_{\hat v}) )''ez u yy^*\overset{L. \ref{QN2}}{=}yy^*  u^* z e\L ({\rm QN}_{\La}(\Lambda_{\hat v}) )e zu  yy^*.
\end{split}\end{equation}

\noindent Denote  $\Upsilon = {\rm QN}_\Lambda(\Lambda_{\hat v})$ and  $\Sigma = \langle {\rm QN}^{(1)}_\Lambda(\Upsilon))\rangle <\La$. As ${\rm QN}^{(1)}_\Gamma(\Gamma_\sC)= \Gamma_\sC$, then formula \eqref{eqquasicorner} together with the corresponding formulas for one-sided quasinormalizers, Lemma \ref{QN1} and Lemma \ref{QN2}, show that \begin{equation}\label{eqquasicorner2}yy^* u^* ze\L (\Upsilon)e zu  yy^* =yy^*  u^* z e\L (\Sigma)ez u yy^*= ly^*y \L(\Gamma_\sC) y^*yl^*.\end{equation}
 In particular, by \cite[Lemma 2.2]{CI17} we have $[\Sigma:\Upsilon]<\infty$ and one can also check that ${\rm QN}^{(1)}_\Lambda(\Sigma)=\Sigma$.

\noindent Notice the above relations also show that $yy^*= u^* d  u$ for some projection $d\in ze\L (\Sigma)ez$. Thus, relations \eqref{eqquasicorner2} entail that $ u^*  d\L (\Sigma) d u = ly^*y \L(\Gamma_\sC) y^*yl^*$ and letting $w_0:=ul \in \mathscr U(\M)$ and $t=y^*y$  we conclude that $w^*_0 d \L(\Sigma)d w_0= t \L(\Gamma_\sC) t$. Moreover, if we replace $w^*_0 \Sigma w_0$ by $\Sigma$ and we use $w^*_0 d w_0= t$ we have that $t \L(\Sigma)t = t \L(\Gamma_\sC)t$. As $\L(\Gamma_\sC)$ is a factor one can find a unitary $w_1 \in \M$  so that if $c$ denotes the central support of $t \in \L(\Sigma )$ we have that $\L(\Sigma) c \subseteq w_1\L(\Gamma_\sC)w_1^*$. This implies that there exists a projection $h \in \L(\Gamma_\sC)$ such that $t= w_1 hw_1^*$. Moreover, since $\L(\Gamma_\sC)$ is a factor there is a unitary $w_2 \in \L(\Gamma_\sC)$ so that $t = w_2 hw^*_2$. Altogether these relations show that $wt=tw$ where $w :=w_1 w_2^*$. Also we have that $\L(\Sigma)c\subseteq w \L(\Gamma_\sC)w^*$. Multiplying on both sides by $t$ we get $t \L(\Gamma_\sC)t=t\L(\Sigma)t\subseteq t w\L(\Gamma_\sC)w^* t$ and hence $tw^*t \L(\Gamma_\sC) t\subseteq t \L(\Gamma_\sC)t w^* t$. In particular, using Corollary \ref{controlquasinormalizer2} we get $w^*t =tw^*t\in \mathcal{QN}^{(1)}_{t\M t}(t\L(\Gamma_\sC)t)=t \L(\Gamma_\sC)t$ and hence $wt \in t \L(\Gamma_\sC)t$. Altogether, these relations imply that $t \L(\Sigma)t= t\L(\Gamma_\sC)t=w t \L(\Gamma_\sC)t w^*$. Since $\L(\Sigma) c\subseteq w \L(\Gamma_\sC)w^*$, we apply the moreover part in \cite[Lemma 2.6]{CI17} and derive that $\L(\Sigma) c= c \L(\Gamma_\sC)c$.

In conclusion, we showed there is a subgroup $\Lambda_{\hat v} {\rm C}_\Lambda(\Lambda_{\hat v})<{\rm QN}_\Lambda(\Lambda_{\hat v})<\Sigma<\Lambda$ such that $[\Sigma:{\rm QN}_\Lambda(\Lambda_{\hat v})]<\infty$, and ${\rm QN}^{(1)}_\Lambda(\Sigma)=\Sigma$. Moreover, there are $r\in \sP(\mathscr Z(\L(\Sigma)))$, $v_0\in \sU(\M)$, and $n\in \mathscr P(\L(\Gamma_\sC))$ so that \begin{equation}\label{eq:property.T}
v_0\L(\Sigma) rv^*_0=c\L(\Gamma_\sC)c.\end{equation}

Finally, we show that $[\Sigma: \Lambda_{\hat v} {\rm vC}_\Lambda(\Lambda_{\hat v})]<\infty$.
Let $u^*qzu=s\in u^* e \L(\Lambda_{\hat v})e z u$ be a projection and let $\C_1\subseteq \C \subseteq u^* e \L(\Lambda_{\hat v})e z u$ be a subfactor such that $u^* e \L(\Lambda_{\hat v})e z u=\langle \C, s \rangle$ is the basic construction for $\C_1\subseteq \C$. Notice that $\D_1 := \theta^{-1}(\C_1)\subseteq p\L(\Gamma_{\hat v})p$ is a finite index subfactor. Let $\Theta: \D_1 \ra \C_1s= u^*q \L(\Lambda_{\hat v})qzu$  be the isomorphism given by $\Theta(x)=\theta(x)s$ for all $x\in \D_1$. Using b) we have that $\Theta(x) sy=sy x$ for all $x\in \D_1$. Since $s$ is a Jones projection for $\C_1\subseteq \C$ one can check that $sy\neq 0$. Letting $w$ be the polar decomposition  of $sy$ we further have that $\Theta(x) w=w x$ for all $x\in \D_1$.   
By Corollary \ref{controlquasinormalizer2} we have $s\in \L(\Gamma_\sC)$, $ww^*\in u^*q\L(\Lambda_{\hat v})qzu'\cap s\M s \subset s\L(\Gamma_\sC)s$ and $w^*w \in p\D_1 p' \cap p\M p\subset  p\D_1 p' \cap pL(\Gamma_\sC) p$. Letting $w_o$ be an unitary extending $w$ and using the previous  intertwining relation we see that $u^* q\L(\Lambda_{\hat v})qzu  ww^*= w_o \D_1 w^*_o$. By taking relative commutants we obtain that $ ww^*(u^* q\L(\Lambda_{\hat v})qzu'\cap s \M s)ww^*=w_o \D_1'\cap p\L(\Gamma_\sC)p w_o^* $, and hence,  \begin{equation}\label{3.33}\begin{split} &w_o (\D_1\vee \L(\Gamma_v)p )w_o^*\subset w_o (\D_1\vee \D_1'\cap p\L(\Gamma_\sC)p )w_o^*=\\
&=ww^*(u^* q\L(\Lambda_{\hat v})qzu  \vee u^* qL(\Lambda_{\hat v})qzu'\cap s \M s)ww^* \\ &=ww^*u^* qz (\L(\Lambda_{\hat v})  \vee  \L(\Lambda_{\hat v})'\cap  \M ) qzu ww^*\subseteq ww^* \L(\Gamma_\sC)ww^*.\end{split}\end{equation} 
Since  $\Gamma_\sC$ is malnormal in $\Gamma$ the containment $w_o (\D_1\vee \L(\Gamma_v)p )w_o^*\subseteq ww^* \L(\Gamma_\sC)ww^*$ implies that $w=w_o p\in \L(\Gamma_\sC)$ and hence $ww^* \L(\Gamma_\sC)ww^*= w\L(\Gamma_\sC)w^*= w_o p\L(\Gamma_\sC)pw_o^*$. Therefore, since $\D_1\vee \L(\Gamma_v)p\subseteq p\L(\Gamma_\sC)p$ has finite index, then so does $w_o (\D_1\vee \L(\Gamma_v)p )w_o^*\subset  ww^*\L(\Gamma_\sC) ww^*$. In particular, all inclusions in \eqref{3.33} are of finite index. Hence, we have  $\L(\Gamma_\sC)\prec_\M \L(\Lambda_{\hat v})\vee\L(\Lambda_{\hat v})'\cap \M$ and since $\L(\Lambda_{\hat v})\vee \L(\Lambda_{\hat v})'\cap \M\subseteq \L(\Lambda_{\hat v} vC_\Lambda(\Lambda_{\hat v}))$ we get $\L(\Gamma_\sC)\prec_\M \L(\Lambda_{\hat v} vC_\Lambda(\Lambda_{\hat v}))$. Using \eqref{eq:property.T} this further implies that $\L(\Sigma)\prec_\M \L(\Lambda_{\hat v} {\rm vC}_\Lambda(\Lambda_{\hat v}))$. Moreover, since  $\Lambda_{\hat v}{\rm vC}_\Lambda(\Lambda_{\hat v})<\Sigma$ and ${\rm QN}^{(1)}(\Sigma)=\Sigma$ we actually have that $\L(\Sigma)\prec_{\L(\Sigma)} \L(\Lambda_{\hat v} {\rm vC}_\Lambda(\Lambda_{\hat v}))$. Using \cite[Lemma 2.5]{DHI16} this implies that $[\Sigma: \Lambda_{\hat v} {\rm vC}_\Lambda(\Lambda_{\hat v})]<\infty$, as claimed. 
\end{proof}






We continue with the following technical result which goes back to \cite[Theorem 3.2]{CI17}. The proof  goes along the same lines with the proof of \cite[Theorem 9.2]{CD-AD20} and we include all the details for completeness.

\begin{thm}\label{theorem.upgrade.inclusions}
Assume that $\Gamma= \mathscr G \{\Gamma_v\}$ is a graph product of groups such that $\mathscr G\in {\rm CC}_1$ and all vertex groups $\Gamma_v$ are icc, non-amenable. Let $\sC_1,\dots,\sC_n$ be an enumeration of its consecutive cliques and denote $\Gamma_i=\Gamma_{\sC_i}$, for any $i\in\overline{1,n}$. 

Let $\Lambda$ be an arbitrary group such that $\M=\L(\Gamma)=\L(\Lambda)$. Assume that for any $i\in \{1,\dots,n\}$ there exist $\Lambda_i<\Lambda$ with $\rm{QN_\Lambda^{(1)}}(\Lambda_i)=\Lambda_i$ and a subset $i\in J_i\subset \overline{1,n}$ satisfying:
\begin{enumerate}
    \item For any $k\in J_i$ there exists a projection $0\neq z_i^k\in \mathcal Z(\L(\Lambda_i))$ such that $\sum_{k\in J _i}z_i^k=1$;
    \item For any $k\in J_i$ there exists  $u_i^k\in \mathcal U(\M)$ such that:
        \begin{enumerate}
            \item [a.] $u_i^i \L(\Lambda_i) z_i^i (u_i^i)^*=p_i\L(\Gamma_i)p_i$, where $p_i=u_i^i z_i^i (u_i^i)^*$;
            \item [b.] $u_i^k \L(\Lambda_i) z_i^k (u_i^k)^*\subset\L(\Gamma_k)$.
        \end{enumerate}
\end{enumerate}
Then there exist a partition $T_1\sqcup \dots \sqcup T_l= \{1,...,n\}$ and a subgroup $\tilde \Lambda_i<\Lambda$ with $\rm{QN_\Lambda^{(1)}}(\tilde  \Lambda_i)=\tilde \Lambda_i$ for any $1\leq i\leq l$ such that:

\begin{enumerate}
    \item For any $k\in T_i$ there exists a projection $0\neq \tilde z_i^k\in \mathcal Z(\L(\tilde \Lambda_i))$  satisfying $\sum_{k\in T _i}\tilde z_i^k=1$;
    \item For any $k\in T_i$ there exists $\tilde u_i^k\in \mathcal U(\M)$ such that
         $\tilde u_i^k \L(\tilde \Lambda_i) \tilde z_i^k (\tilde u_i^k)^* = \tilde p_k^i\L(\Gamma_k) \tilde p_k^i$, where $\tilde p_k^i=\tilde u_i^k \tilde z_i^k (\tilde u_i^k)^*$.
        
\end{enumerate}

\end{thm}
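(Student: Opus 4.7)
My plan is to define an equivalence relation on $\{1,\ldots,n\}$ whose classes furnish the required partition $T_1,\ldots,T_l$, and then, for each class $T_r$, to choose $\tilde\Lambda_r$ among the $\Lambda_i$ with $i\in T_r$ so that every partial inclusion afforded by hypothesis 2(b) gets upgraded to a full-corner equality. The analytic tools driving the upgrade are Popa's intertwining-by-bimodules (Theorem \ref{corner}), the one-sided quasi-normalizer bridge (Lemma \ref{QN2}), and malnormality of clique subgroups in a graph product (Proposition \ref{proposition.AM10}); the overall template follows \cite[Theorem 9.2]{CD-AD20}.

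First I would introduce the symmetric relation $i\asymp j$ iff either $j\in J_i$ or $i\in J_j$, and let $\sim$ be its transitive closure. Let $T_1,\ldots,T_l$ be the $\sim$-equivalence classes; note that reflexivity is automatic from $i\in J_i$. For each class $T_r$ I would fix a representative $i_r\in T_r$ and define $\tilde\Lambda_r:=\Lambda_{i_r}$. The unitaries $\tilde u_r^k$ and projections $\tilde z_r^k$ (for $k\in T_r$) are to be constructed along a chain in $T_r$ joining $i_r$ to $k$, by iteratively applying the merging lemma below.

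The heart of the proof is the following merging step. Fix $i,j,k$ with $k\in J_i\cap J_j$ and $k\neq i$. Compressing both inclusions
\[
u_i^k\L(\Lambda_i)z_i^k(u_i^k)^*\subseteq\L(\Gamma_k),\qquad u_j^k\L(\Lambda_j)z_j^k(u_j^k)^*\subseteq\L(\Gamma_k)
\]
into the same II$_1$ factor $\L(\Gamma_k)$, and invoking hypothesis 2(a) at $k$ to identify $p_k\L(\Gamma_k)p_k$ with a full corner of $\L(\Lambda_k)$, I would obtain a $*$-homomorphism from a corner of $\L(\Lambda_i)$ into $\L(\Lambda_k)$ inside $\M$. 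Applying Theorem \ref{corner} to this map, and then exploiting $\mathrm{QN}^{(1)}_\Lambda(\Lambda_k)=\Lambda_k$ together with Lemma \ref{QN2} to confirm that the one-sided quasi-normalizer of the image inside $\M$ stays inside $\L(\Lambda_k)$, I would upgrade the embedding to a full-corner equality after cutting $\L(\Lambda_i)$ by an appropriate central projection. Malnormality of $\Gamma_k$ inside $\Gamma$ (Proposition \ref{proposition.AM10}) is what prevents the image from leaking outside $\L(\Lambda_k)$ at the group level, and thus forces orthogonality between pieces associated to distinct cliques. Iterating this along a chain $i_r=j_0,j_1,\ldots,j_s=k$ witnessing $i_r\sim k$ produces the required $\tilde z_r^k\in\mathcal{Z}(\L(\tilde\Lambda_r))$ and $\tilde u_r^k\in\sU(\M)$ realizing $\tilde u_r^k\L(\tilde\Lambda_r)\tilde z_r^k(\tilde u_r^k)^*=\tilde p_k^r\L(\Gamma_k)\tilde p_k^r$.

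The main obstacle I expect is the final bookkeeping, namely verifying that $\sum_{k\in T_r}\tilde z_r^k=1$. By construction $\sum_{k\in J_{i_r}}z_{i_r}^k=1$ and $J_{i_r}\subseteq T_r$, so the question is whether iterating the merging step accidentally loses mass or produces overlapping central projections. The transitive closure in the definition of $\sim$ is precisely what rules out ``leakage'': any central summand of $\L(\tilde\Lambda_r)$ ends up, after finitely many applications of the merging lemma, as a full corner of some $\L(\Gamma_k)$ with $k\in T_r$; the orthogonality of $\tilde z_r^k$ for different $k$ follows because the full corners sit inside the pairwise malnormal clique algebras $\L(\Gamma_k)$, and Popa's theorem combined with the quasi-normalizer equality controls the ambiguity up to unitary conjugation. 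Once this bookkeeping is settled the conclusion on the partition $T_1\sqcup\cdots\sqcup T_l$ and on the data $(\tilde u_r^k,\tilde z_r^k)$ is immediate.
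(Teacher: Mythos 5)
There is a genuine gap, and it sits exactly at the two points your outline defers. First, the orthogonality/bookkeeping step (which you yourself flag as the main obstacle) is not a formality, and the mechanism you propose for it is wrong: the clique algebras are \emph{not} pairwise malnormal-type objects, since consecutive cliques satisfy $\sC_{i,i+1}\neq\emptyset$, so $\L(\Gamma_i)\cap\L(\Gamma_{i+1})=\L(\Gamma_{\sC_{i,i+1}})$ is diffuse; malnormality of $\Gamma_k$ cannot force $z_i^a z_k^b=0$ for $a\neq b$. The paper proves this orthogonality by a different route: if $z_1^a z_k^b\neq 0$ with $a\neq b$, then the finite-index subgroup $\Lambda_k\cap\Lambda_1$ (obtained from $\L(\Lambda_1)\prec_\M\L(\Lambda_k)$ via Lemma \ref{lemma.CI17}) intertwines into $\L(\Gamma_a\cap g\Gamma_b g^{-1})$ by \cite[Lemma 2.7]{Va10a}, hence into some $\L(\Gamma_{\sC_a\setminus\{v\}})$, and then Lemma \ref{control.relative.commutants} forces the vertex group $\Gamma_v$ to be amenable, contradicting the hypothesis. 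Note this uses non-amenability of the vertex groups in an essential way, an ingredient absent from your argument. The outcome is much stronger than what your transitive closure bookkeeping aims for: one gets $J_k=J_1$ and literally $z_1^a=z_k^a$ for all $a\in J_1$, so the partition is just the (coinciding) sets $J_i$ and no chain iteration or closure is needed.

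Second, your ``merging lemma'' -- upgrading the $*$-homomorphism from a corner of $\L(\Lambda_i)$ into $\L(\Lambda_k)$ to a full-corner equality using only Theorem \ref{corner}, ${\rm QN}^{(1)}_\Lambda(\Lambda_k)=\Lambda_k$ and Lemma \ref{QN2} -- is not justified as stated; quasi-normalizer invariance keeps the image inside $\L(\Lambda_k)$ but does not by itself give two-sided finite index, which is what an equality of corners requires. In the paper this is where the real work happens: one first aligns the intertwining unitaries by showing $a^i=u_1^i z_1^i(u_k^i)^*\in\L(\Gamma_i)$, using the amalgam $\Gamma=\Gamma_{\sV\setminus\{w_i\}}\ast_{\Gamma_{\hat w_i}}\Gamma_i$ and \cite[Theorem 1.2.1]{IPP05}, with the non-degeneracy hypothesis again supplied by Lemma \ref{control.relative.commutants}; only then can assumption 2(a) at the index $1$ be combined with the conjugated inclusion for $\Lambda_k$ to give $\L(\Lambda_k\cap\Lambda_1)z_1^1\subset\L(\Lambda_k)z_1^1\subset\L(\Lambda_1)z_1^1$, whence a finite Pimsner--Popa basis and \cite[Proposition 2.6]{CdSS15} yield $[\Lambda_k:\Lambda_k\cap\Lambda_1]<\infty$, and finally $\Lambda_k={\rm QN}^{(1)}_\Lambda(\Lambda_k\cap\Lambda_1)={\rm QN}^{(1)}_\Lambda(\Lambda_1)=\Lambda_1$. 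So the correct conclusion of the ``merge'' is an actual equality of the subgroups $\Lambda_k=\Lambda_1$ (after an inner conjugation), from which conclusion 2 of the theorem follows directly from hypothesis 2(a) applied at each $k\in J_1$; your proposal never reaches either the unitary-alignment step or the reverse finite-index step, and without them the full-corner equality and the identity $\sum_{k\in T_r}\tilde z_r^k=1$ remain unproved.
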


\begin{proof} We fix an arbitrary $k\in J_1\setminus\{1\}$. Note that the assumption 2. implies that $\L(\Lambda_1)\prec_{\M} \L (\Gamma_k)$ and $\L(\Gamma_k)\prec^s_{\M} \L(\Lambda_k)$ since $\L(\Gamma_k)'\cap \M=\mathbb C1.$ Using \cite[Lemma 3.7]{Va08} we deduce that $\L(\Lambda_1)\prec_{\M} \L(\Lambda_k)$. Using \cite[Lemma 2.2]{CI17} there exists $h_k\in \Lambda$ such that $[\Lambda_1: h_k\Lambda_kh_k^{-1}\cap \Lambda_1]<\infty$. Up to replacing $\Lambda_k$ by $h_k \Lambda_k h^{-1}_k$, we may assume that $h_k=1$.

We continue by showing that for all $i\in J_1, j\in J_k$ with $i\neq j$, we have $z_1^i z_k^j=0$. By assuming the contrary, we consider some $i\in J_1,j\in J_k$ with $i\neq j$ satisfying $z_1^i z_k^j\neq 0$. Assumption 2. gives that $\L(\Lambda_k\cap \Lambda_1) z_1^i\prec_{\M}^s \L(\Gamma_i)$ and $\L(\Lambda_k\cap \Lambda_1) z_k^j\prec_{\M}^s \L(\Gamma_j)$. From \cite[Lemma 2.4]{DHI16} there exist maximal projections $a_1^i, a_k^j\in  \mathcal Z(\L(\Lambda_k\cap \Lambda_1)'\cap \M)$ such that
$\L(\Lambda_k\cap \Lambda_1) a_1^i\prec_{\M}^s \L(\Gamma_i)$ and $\L(\Lambda_k\cap \Lambda_1) a_k^j\prec_{\M}^s \L(\Gamma_j)$.
Since $z_1^i z_k^j\neq 0$, we deduce that $a_1^i a_k^j\in \mathcal Z(\L(\Lambda_k\cap \Lambda_1)'\cap \M)$ is a non-zero projection. Thus, we can apply \cite[Lemma 2.7]{Va10a} to get that $\L(\Lambda_k\cap \Lambda_1) a_1^i a_k^j\prec_{\M}^s \L(\Gamma_i\cap g \Gamma_j g^{-1})$ for some $g\in \Gamma$. Since $[\Lambda_1: \Lambda_k\cap \Lambda_1]<\infty$ we therefore deduce that 
$\L(\Gamma_1)\prec_{\M} \L(\Lambda_k\cap \Lambda_1)$ and $\L(\Lambda_k\cap \Lambda_1)\prec_{\M} \L(\Gamma_{\hat v})$ for some $v\in \sC_{i}$. By Lemma \ref{control.relative.commutants}
 we get that $\Gamma_v$ is amenable, contradiction.
 
Thus, we have $J_k=J_1$ and $z_1^i=z_k^i$ for all $i\in J_1$. Indeed, if $i\in J_1$, then there exists $l\in J_k$ such that $z_1^i z_k^l\neq 0$. The previous paragraph shows that $l=i$. This shows, in particular, that $J_1\subset J_k$. By symmetry, we also get that $J_k\subset J_1$. The previous paragraph also implies that $z_1^i z_k^{i'}= 0$ for all $i'\in J_k$ with $i'\neq i$. This implies that $z_1^i\leq z_k^i$ and the conclusion follows again by symmetry reasons. 

Next, we show that $\Lambda_k=\Lambda_1$. Assumption 2. gives that $\text{for all } i\in J_1=J_k$ we have
\begin{equation}\label{eq.a1}
u_1^i \L(\Lambda_k\cap \Lambda_1)z_1^i (u_1^i)^*\subset \L(\Gamma_i) \text{ and } u_k^i \L(\Lambda_k\cap \Lambda_1)z_k^i (u_k^i)^*\subset \L(\Gamma_i). 
\end{equation}
Fix an element $w_i\in \sC_i$ and note that
$\Gamma = \Gamma_{\sV\setminus \{w_i\}} \ast_{\Gamma_{\hat {w_i}}} \Gamma_i$, where $\Gamma=\sG\{\Gamma_v,v \in \sV\}$.
As before, Lemma \ref{control.relative.commutants} implies that  
\begin{equation}\label{eq.a2}
u_k^i \L(\Lambda_k\cap \Lambda_1)z_k^i (u_k^i)^* \nprec_{\L(\Gamma_i)} \L(\Gamma_{\hat {w_i}}).    
\end{equation}
By letting $a^i=u_1^i z_1^i (u_k^i)^*$, we note that 
\begin{equation}\label{eq.a3}
a^i u_k^i \L(\Lambda_k\cap \Lambda_1)z_k^i (u_k^i)^*= u_1^i \L(\Lambda_k\cap \Lambda_1) z_1^i (u_1^i)^* a^i    
\end{equation}
By using \eqref{eq.a1}, \eqref{eq.a2}, \eqref{eq.a3} and
by applying \cite[Theorem 1.2.1]{IPP05}, we get that $a^i\in \L(\Gamma_i)$. Assumption 2. implies that 
$$
u_1^1 \L(\Lambda_1) z_1^1 (u_1^1)^*=p_1\L(\Gamma_1)p_1 \text{ and } u_k^1 \L(\Lambda_k) z_k^1 (u_k^1)^*\subset \L(\Gamma_1).
$$
By noticing that $z_1^1=z_k^1$ and by conjugating the previous inclusion by $a^1\in \L(\Gamma_1)$, we derive that
$$
u_1^1 \L(\Lambda_1) z_1^1 (u_1^1)^*=p_1\L(\Gamma_1)p_1 \text{ and } u_1^1 \L(\Lambda_k) z_1^1 (u_1^1)^*\subset \L(\Gamma_1).
$$
Hence, $u_1^1 z_1^1 (u_1^1)^*=p_1$ and therefore, $\L(\Lambda_k\cap \Lambda_1)z_1^1\subset\L(\Lambda_k)z_1^1\subset \L(\Lambda_1)z_1^1$. Since $[\Lambda_1: \Lambda_k\cap \Lambda_1]<\infty$, we deduce that $\L(\Lambda_k\cap \Lambda_1)z_1^1\subset\L(\Lambda_k)z_1^1$ admits a finite Pimsner-Popa basis. By applying \cite[Proposition 2.6]{CdSS15} we derive that $[\Lambda_k: \Lambda_k\cap \Lambda_1]<\infty$. In combination with $[\Lambda_1: \Lambda_k\cap \Lambda_1]<\infty$, we get that
$$
\Lambda_k={\rm QN}^{(1)}_\Lambda (\Lambda_k)= {\rm QN}^{(1)}_\Lambda (\Lambda_k\cap\Lambda_1)={\rm QN}^{(1)}_\Lambda (\Lambda_1)=
\Lambda_1.
$$
Therefore, we have $u_1^1\L(\Lambda_1) z_1^1 (u_1^1)^*=p_1\L(\Gamma_1) p_1$ and $u_k^k\L(\Lambda_1) z_k^k (u_k^k)^*=p_k\L(\Gamma_k) p_k$. Since $z_1^k=z_k^k$ and all these work for all $k\in J_1$, we get the conclusion of the proof for the first element of the partition. Namely, we let $\tilde \Lambda_1=\Lambda_1$, $T_1=J_1$, $\tilde  u_1^k= u_k^k $ and $\tilde z_1^k= z_k^k$. Finally, if $T_1=\{1,\dots,n\}$, then the proof is completed. Otherwise, pick an element $t\in \{1,\dots,n\}\setminus T_1$ and repeat the above arguments starting with $J_t$.
\end{proof}

\begin{thm}\label{peripheralrec1}
   Let $\Gamma= \mathscr G \{\Gamma_v\}$ be a graph product of groups such that $\mathscr G\in {\rm CC}_1$ and  for any $v\in\mathscr V$, $\Gamma_v\in \mathcal W\mathcal R (A_v, B_v \ca I_v)$ where $A_v$ is abelian, $B_v$ is an icc  subgroup of a hyperbolic group.
   Assume that $|\sC|\neq |\sD|$ for any two distinct cliques $\sC,\sD\in {\rm cliq}(\sG)$.

Let $\Lambda$ be an arbitrary group such that $\L(\Gamma)=\L(\Lambda)$. Thus for every clique $\sC\in {\rm cliq}( \sG)$ there exist a unitary $u_\sC \in \M$  and a subgroup $\Lambda_\sC \leqslant \Lambda$ such that $u_\sC \L(\Gamma_\sC) u_\sC^*= \L(\Lambda_\sC)$.   
\end{thm}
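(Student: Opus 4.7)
The plan is to reconstruct each clique subgroup $\L(\Gamma_\sC)$ inside $\M = \L(\Gamma) = \L(\Lambda)$ by first using Theorems \ref{commutingsubgroups1} and \ref{identificationcornergroups1} to obtain a partial (corner) identification, and then invoking Theorem \ref{theorem.upgrade.inclusions} together with the asymmetry of clique sizes and the product rigidity Theorem \ref{A} to promote these corner identifications to full unitary conjugations.

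First, fix a clique $\sC \in \mathrm{cliq}(\sG)$ and an internal vertex $v \in \sC^{\mathrm{int}}$, so that the $\mathrm{CC}_1$ structure yields $\mathrm{lk}(\sC \setminus \{v\}) = \{v\}$. Theorem \ref{commutingsubgroups1}(1) produces a subgroup $\Lambda_{\hat v} < \Lambda$ with non-amenable centralizer such that $\L(\Gamma_{\hat v}) \prec_\M \L(\Lambda_{\hat v})$, and Theorem \ref{commutingsubgroups1}(2) supplies the solidity-type hypothesis needed to apply Theorem \ref{identificationcornergroups1}. The latter yields a subgroup $\Sigma_\sC < \Lambda$ with $\mathrm{QN}^{(1)}_\Lambda(\Sigma_\sC) = \Sigma_\sC$ and $[\Sigma_\sC : \Lambda_{\hat v}\mathrm{vC}_\Lambda(\Lambda_{\hat v})] < \infty$, a non-zero central projection $c_\sC \in \Z(\L(\Sigma_\sC))$, and a unitary $w_\sC \in \M$ such that $w_\sC \L(\Sigma_\sC) c_\sC w_\sC^* = n_\sC \L(\Gamma_\sC) n_\sC$, where $n_\sC := w_\sC c_\sC w_\sC^* \in \L(\Gamma_\sC)$.

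Since this only yields a corner-to-corner identification, I will upgrade it via Theorem \ref{theorem.upgrade.inclusions} applied with $\Lambda_i := \Sigma_{\sC_i}$ for $i=1,\dots,n$. Its hypothesis 2(a) is witnessed by $z_i^i := c_{\sC_i}$ and $u_i^i := w_{\sC_i}$. For the remaining conditions I must decompose $1 - c_{\sC_i}$ into central projections $\{z_i^k\}_{k \in J_i \setminus \{i\}}$ with each compressed piece embedded (up to unitary conjugation in $\M$) into $\L(\Gamma_{\sC_k})$. Since $\L(\Gamma_{\sC_i})$ has property (T) and $\L(\Sigma_{\sC_i}) c_{\sC_i}$ is unitarily conjugate to a corner of it, and since $\Sigma_{\sC_i}$ virtually contains the property (T) group $\Lambda_{\hat v_i}$, the full algebra $\L(\Sigma_{\sC_i})$ has property (T). Applying Theorem \ref{theorem.embed.clique} to each factor summand of $\L(\Sigma_{\sC_i})(1 - c_{\sC_i})$ then produces embeddings into suitable $\L(\Gamma_{\sC_k})$, yielding the required $J_i$ and central partition.

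Theorem \ref{theorem.upgrade.inclusions} then outputs a partition $T_1 \sqcup \cdots \sqcup T_l = \{1,\dots,n\}$, subgroups $\tilde \Lambda_j$ with $\mathrm{QN}^{(1)}_\Lambda(\tilde \Lambda_j) = \tilde \Lambda_j$, and for each $k \in T_j$ unitaries $\tilde u_j^k$ and central projections $\tilde z_j^k$ satisfying $\tilde u_j^k \L(\tilde \Lambda_j) \tilde z_j^k (\tilde u_j^k)^* = \tilde p_k^j \L(\Gamma_{\sC_k}) \tilde p_k^j$. If $k \ne k' \in T_j$, then corners of $\L(\Gamma_{\sC_k})$ and $\L(\Gamma_{\sC_{k'}})$ are both isomorphic to corners of the same $\L(\tilde \Lambda_j)$, so $\L(\Gamma_{\sC_k})^{s_k} \cong \L(\Gamma_{\sC_{k'}})^{s_{k'}}$ for some positive scalars, and Theorem \ref{A} applied to this isomorphism forces $|\sC_k| = |\sC_{k'}|$, contradicting the asymmetry assumption unless $k = k'$. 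Hence $|T_j|=1$ for every $j$, so $l=n$ and after reindexing $T_j = \{\pi(j)\}$ for some permutation $\pi$, giving $\tilde z_j^{\pi(j)}=1$, $\tilde p_{\pi(j)}^j=1$, and $\tilde u_j^{\pi(j)} \L(\tilde \Lambda_j)(\tilde u_j^{\pi(j)})^* = \L(\Gamma_{\sC_{\pi(j)}})$; taking $u_{\sC_{\pi(j)}} := (\tilde u_j^{\pi(j)})^*$ and $\Lambda_{\sC_{\pi(j)}} := \tilde \Lambda_j$ completes the argument. The principal obstacle is the central-partition construction in the previous paragraph, which requires the inherited property (T) of $\L(\Sigma_{\sC_i})$ together with a careful use of Theorem \ref{theorem.embed.clique} (especially its moreover clause) to fit the rigid hypothesis format of Theorem \ref{theorem.upgrade.inclusions}.
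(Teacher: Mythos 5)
Your overall architecture matches the paper's proof up to the last step: corner identification via Theorems \ref{commutingsubgroups1} and \ref{identificationcornergroups1}, verification of the hypotheses of Theorem \ref{theorem.upgrade.inclusions} using property (T) and Theorem \ref{theorem.embed.clique}, and then a size argument to force $|T_j|=1$. However, your final step contains a genuine gap. From the conclusion of Theorem \ref{theorem.upgrade.inclusions}, for $k\neq k'\in T_j$ you get $\tilde u_j^k \L(\tilde\Lambda_j)\tilde z_j^k(\tilde u_j^k)^*=\tilde p_k^j\L(\Gamma_{\sC_k})\tilde p_k^j$ and the analogous identity for $k'$, where $\tilde z_j^k$ and $\tilde z_j^{k'}$ are \emph{orthogonal central} projections of $\L(\tilde\Lambda_j)$. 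When $|T_j|\geq 2$ the algebra $\L(\tilde\Lambda_j)$ is not a factor, so its two direct summands $\L(\tilde\Lambda_j)\tilde z_j^k$ and $\L(\tilde\Lambda_j)\tilde z_j^{k'}$ need not be isomorphic or stably isomorphic to one another; your claim that therefore $\L(\Gamma_{\sC_k})^{s_k}\cong\L(\Gamma_{\sC_{k'}})^{s_{k'}}$ does not follow, and without such an isomorphism Theorem \ref{A} cannot be invoked (it also requires an identification of the \emph{full} group von Neumann algebra of some group, not of a direct summand of one). The paper closes this step differently: for $k,j'\in T_j$ with $|\sC_k|>|\sC_{j'}|$ it only uses the intertwinings $\L(\Gamma_{\sC_k})\prec_\M\L(\tilde\Lambda_j)$ and $\L(\tilde\Lambda_j)\prec_\M\L(\Gamma_{\sC_{j'}})$ and applies the counting result Proposition \ref{lemma.counting} (a relative amenability/augmentation argument, not product rigidity) to get $|\sC_k|\leq|\sC_{j'}|$, a contradiction with the asymmetry hypothesis. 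Some replacement of this counting input is indispensable in your scheme.

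Two secondary points in your middle step also need repair. First, your justification of property (T) for $\L(\Sigma_{\sC_i})$ is not valid: property (T) of the direct summand $\L(\Sigma_{\sC_i})c_{\sC_i}$ does not pass to the whole algebra, and $\Lambda_{\hat v_i}$ is not known to have property (T) (only $\L(\Gamma_{\hat v_i})\prec_\M\L(\Lambda_{\hat v_i})$ is available), nor would property (T) of a finite-index subgroup of the form $\Lambda_{\hat v}\,{\rm vC}_\Lambda(\Lambda_{\hat v})$ follow from that of one tensor factor; the paper instead deduces property (T) of the group $\Sigma_{\sC_i}$ from the corner identification via \cite[Lemma 2.13]{CI17}. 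Second, decomposing $1-c_{\sC_i}$ and applying the moreover part of Theorem \ref{theorem.embed.clique} requires checking $\L(\Sigma_{\sC_i})z\nprec_\M\L(\Gamma_{\sC_k\setminus\{w\}})$ (done in the paper via non-amenability of the vertex groups and a relative commutant argument), and, more importantly, one must show that no additional piece of $1-c_{\sC_i}$ embeds into $\L(\Gamma_{\sC_i})$ itself beyond $c_{\sC_i}$ (the paper's ``$z_i^i=z_i$'' argument using ${\rm QN}^{(1)}_\Lambda(\Sigma_{\sC_i})=\Sigma_{\sC_i}$ and \cite[Lemma 2.6]{CI17}); otherwise hypothesis 2(a) of Theorem \ref{theorem.upgrade.inclusions} (equality of corners for $k=i$) is not met. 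You flag the partition construction as delicate, but these are the concrete points where the argument must be supplied.
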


\begin{proof}
The proof is using several techniques from \cite{CI17,CD-AD20}. First, let $\sC_1,\dots,\sC_n$ be an enumeration of its consecutive cliques and denote $\Gamma_i=\Gamma_{\sC_i}$, for any $1\leq i\leq n$. 

\begin{claim}\label{cornersidentifications} For any $1\leq i\leq n$, there exist $\Lambda_i<\Lambda$ with $\rm{QN_\Lambda^{(1)}}(\Lambda_i)=\Lambda_i$ and a subset $i\in J_i\subset \{1,...,n\}$ satisfying:
\begin{enumerate}
    \item For any $k\in J_i$ there exists a projection $0\neq z_i^k\in \mathcal Z(\L(\Lambda_i))$ such that $\sum_{k\in J _i}z_i^k=1$;
    \item For any $k\in J_i$ there exists  $u_i^k\in \mathcal U(\M)$ such that:
        \begin{enumerate}
            \item [a.] $u_i^i \L(\Lambda_i) z_i^i (u_i^i)^*=p_i\L(\Gamma_i)p_i$, where $p_i=u_i^i z_i^i (u_i^i)^*$;
            \item [b.] $u_i^k \L(\Lambda_i) z_i^k (u_i^k)^*\subset\L(\Gamma_k)$.
        \end{enumerate}
\end{enumerate}

\end{claim}

\noindent {\it Proof of the Claim \ref{cornersidentifications}.} 
Fix $1\leq i\leq n$. By applying Theorems \ref{commutingsubgroups1} and \ref{identificationcornergroups1}, there exist a subgroup $\Lambda_i<\Lambda$ with ${\rm QN_\Lambda^{(1)}}(\Lambda_i)=\Lambda_i$, a projection $0\neq z_i\in \mathcal Z(\L(\Lambda_i))$ and $u_i\in \mathscr U(\M)$ with $p_i=u_iz_i (u_i)^*\in \L (\Gamma_i)$ such that 
\begin{equation}\label{eq:}
u_i\L(\Lambda_i)z_i (u_i)^*= p_i \L(\Gamma_i) p_i.    
\end{equation}
Since $\Gamma_i$ has property (T), then \eqref{eq:} together with \cite[Lemma 2.13]{CI17} imply that $\Lambda_i$ has property (T) as well. 
Using ${\rm QN_\Lambda^{(1)}}(\Lambda_i)=\Lambda_i$ and $\L(\Lambda_i)'\cap \M\subset \L({\rm vC}_{\Lambda}(\Lambda_i))$, it follows that 
\begin{equation}\label{eq.b1}
    \L(\Lambda_i)'\cap \M=\mathscr Z (\L(\Lambda_i)).
\end{equation}

Since $\L(\Gamma_j)$ is a II$_1$ factor for any $1\leq j\leq n$, by using \eqref{eq.b1} there is a maximal projection $z_j^i\in \mathscr Z(\L(\Lambda_i))$ such that $\L(\Lambda_i)z_j^i$ can be unitarily conjugated into $\L(\Gamma_j)$; hence, we take  $u_j^i\in \mathscr U(\M)$ such that $u_j^i\L(\Lambda_i)z_j^i (u_j^i)^*\subset  \L(\Gamma_j).$ 
We continue by showing that $\bigvee_{j=1}^n z_j^i=1$. By assuming the contrary, then  $z=1-\bigvee_{j=1}^n z_j^i$ is a non-zero projection of $\mathscr Z(\L(\Lambda_i))$. Since $\L(\Lambda_i)z$ has property (T), 
Theorem \ref{proptcliques} implies that there exists $j\in\overline{1,n}$ such that $\L(\Lambda_i)z\prec_{\M} \L(\Gamma_j)$. Note that $\L(\Lambda_i)z\nprec_{\M} \L(\Gamma_{\mathscr C_j\setminus\{v\}})$, for any $v\in\mathscr C_j$. Indeed, otherwise there is $v\in\mathscr C_j$ such that $\L(\Lambda_i)\prec_{\M} \L(\Gamma_{\mathscr C_j\setminus\{v\}})$. By using \cite[Lemma 3.5]{Va08} and \eqref{eq.b1}, we get $\L(\Gamma_{v})\prec_{\M} \L(\Lambda_i)'\cap \M=\mathscr Z(\L(\Lambda_i))$, which is impossible since $\Gamma_v$ is non-amenable. Therefore, the moreover part of Theorem \ref{proptcliques} gives a non-zero projection $z_0\in \mathscr Z (\L(\Lambda_i))$ and  $u_0\in\mathscr U(\M)$ such that $u_0 \L(\Lambda_i)z_0 u_0^*\subset \L(\Gamma_j)$. Recall that $u_j^i\L(\Lambda_i)z_j^i (u_j^i)^*\subset  \L(\Gamma_j).$ Since $\L(\Gamma_j)$ is a II$_1$ factor, we can perturb $u_0$ by a different unitary and assume that $u_0z_0 u_0^*$ and $u_j^iz_j^i (u_j^i)^*$ are orthogonal projections in $\L(\Gamma_j)$. By letting $u=u_0z_0+u_j^iz_j^i$, we note that  $u^*u=z_0+z_j^i$ and let $v\in \mathscr U(\M)$ such that $vu^*u=u$. Therefore, $v \L(\Lambda_i)(z_0+z_j^i) v^*\subset \L(\Gamma_j)$, which contradicts the maximality of $z_j^i$.  This shows that $\bigvee_{j=1}^n z_j^i=1$.

Next, we prove that $z_i^i=z_i$. By assuming the contrary, the maximality of $z_i^i$ implies that $c:=z_i^i-z_i$ is a non-zero projection in $\mathscr Z(\L(\Lambda_i))$. By using \eqref{eq:} and $u_i^i\L(\Lambda_i)c (u_i^i)^*\subset  \L(\Gamma_j),$ we can proceed as in the previous paragraph and assume that $p_i$ and $p:=u_i^i c (u_i^i)^*$ are orthogonal projections of $\L(\Gamma_i)$. Hence, by letting $u=u_i^i c + u_i z_i$, we derive that $uu^*=p+p_i$, $u^*u=c+z_i$ and 
\begin{equation}\label{eq::}
    u\L(\Lambda_i)(c+z_i) u^*\subset  (p+p_i)\L(\Gamma_j)(p+p_i).
\end{equation}

Since ${\rm QN_\Lambda^{(1)}}(\Lambda_i)=\Lambda_i$, we obtain from Lemma \ref{QN1} and Lemma \ref{QN2} that 
\begin{equation}\label{eq:::}
{\rm \mathscr{QN}}^{(1)}_{(p+p_i)\L(\Gamma_j)(p+p_i)}(u\L(\Lambda_i)(c+z_i) u^*)=u\L(\Lambda_i)(c+z_i) u^*.    
\end{equation}

Relations \eqref{eq:}, \eqref{eq::} and \eqref{eq:::} and the fact that $(p+p_i)\L(\Gamma_j)(p+p_i)$ is a II$_1$ factor allow to apply the moreover part of \cite[Lemma 2.6]{CI17} and deduce that $u\L(\Lambda_i)(c+z_i) u^*=  (p+p_i)\L(\Gamma_j)(p+p_i).$ This is a contradiction, since the center of $u\L(\Lambda_i)(c+z_i) u^*$ is at least two dimensional. This shows that $z_i^i=z_i$.

Finally, since $\bigvee_{j=1}^n z_j^i=1$, for any $1\leq i\leq n$ we can eventually replace $z_j^i$ by a smaller projection in $\mathscr Z(\L(\Lambda_i))$  and assume that $\{z_j^i\}_{j=1}^n$ are mutually orthogonal.
The claim follows by letting $J_i=\{1\leq k\leq n\,:\, z_k^i\neq 0 \}$. 
\hfill$\blacksquare$

Next, we can apply Theorem \ref{theorem.upgrade.inclusions} and obtain a partition there exist a partition $T_1\sqcup \dots \sqcup T_l= \{1,...,n\}$ and a subgroup $\tilde \Lambda_i<\Lambda$ with $\rm{QN_\Lambda^{(1)}}(\tilde  \Lambda_i)=\tilde \Lambda_i$ for any $1\leq i\leq l$ such that:

\begin{enumerate}
    \item For any $k\in T_i$ there exists a projection $0\neq \tilde z_i^k\in \mathcal Z(\L(\tilde \Lambda_i))$  satisfying $\sum_{k\in T _i}\tilde z_i^k=1$;
    \item For any $k\in T_i$ there exists $\tilde u_i^k\in \mathcal U(\M)$ such that
         $\tilde u_i^k \L(\tilde \Lambda_i) \tilde z_i^k (\tilde u_i^k)^* = \tilde p_k^i\L(\Gamma_k) \tilde p_k^i$, where $\tilde p_k^i=\tilde u_i^k \tilde z_i^k (\tilde u_i^k)^*$.
        
\end{enumerate}

Note that it is enough to show that $|T_i|=1$, for all $1\leq i\leq l$. Hence, we fix an arbitrary $1\leq i\leq l$ and assume by contradiction that $|T_i|>2$. Take $k,j \in T_i$ two distinct elements such that $|\sC_k|>|\sC_j|$. Note that we have $\L(\Gamma_k)\prec_{\M} \L(\tilde \Lambda_i)$ and $\L(\tilde \Lambda_i)\prec_{\M} \L(\Gamma_j)$. Proposition \ref{lemma.counting} implies that $|\sC_k|\leq|\sC_j|$, contradiction. This shows that $|T_i|=1$, for all $1\leq i\leq l$ and the conclusion follows.
\end{proof}

\section{Superrigidity results within the category of graph product groups}

In this final section we derive a strong rigidity result for factors arising from graph product groups considered in Section 5 (see Theorem \ref{superwithincat}). Building upon the results in the prior sections we show that the graph product groups considered in Theorem \ref{peripheralrec1} are completely recognizable from the category of \emph{all} von Neumann algebras associated to any nontrivial graph product group with infinite vertex groups. Notice that unlike previous strong rigidity results \cite{IPP05,CH08} in our case we do not need any other additional a priori assumptions on the vertex groups or on the underlying graph.

To prove our theorem we first establish a result which asserts that the click subgroups of the graph products from Theorem \ref{peripheralrec1} along with various other aspects of their ``position'' in the ambient group  are features which are completely  recognizable from the von Neumann algebra framework. The result relies heavily on Theorem \ref{peripheralrec1} and for the proof we only explain how it follows from this. Although, for deriving Theorem \ref{superwithincat} we do not need all the parts of the conclusion, we decided to state it here in its complete and somewhat technical form as this could be instrumental for further attempts to establish $W^*$-superrigidity of these groups.   

\begin{thm}\label{theorem.almost.Wsuperrigidity} Let $\sG\in{\rm CC}_1$, let ${\rm cliq}(\sG)=\{ \sC_1, \ldots , \sC_n\}$ be a consecutive cliques enumeration and assume that $|\sC_i|\neq |\sC_j|$ whenever $i\neq j$.  Let $\Gamma= \sG\{\Gamma_v\}$ be a graph product group such that for any $v\in\mathscr V$, $\Gamma_v\in \mathcal W\mathcal R (A_v, B_v \ca I_v)$ where $A_v$ is abelian, $B_v$ is an icc  subgroup of a hyperbolic group and  the set $\{i\in I_v \; | \; g\cdot i\neq i\}$ is infinite for any $g\in B_v\setminus \{1\}$. Denote $\M=\L(\Gamma)$. 

Let $\Lambda$ be an arbitrary group such that $\M=\L(\Lambda)$. 
Then for any $1\leq i\leq n$ one can find a unitary $w_i\in \M$ and a subgroup $\Lambda_i<\Lambda$ satisfying the following relations: \begin{enumerate}
    \item $\mathbb T w_i \Gamma_{\sC_i} w_i^* =\mathbb T \Lambda_i$ for all $1\leq i\leq n$;
    
    \item $\mathbb T w_i \Gamma_{\sC_{i,i+1}} w_i^*= \mathbb T w_{i+1} \Gamma_{\sC_{i,i+1}} w_{i+1}^*= \mathbb T \Lambda_{i,i+1}$ where $\Lambda_{i,i+1}= \Lambda_i\cap \Lambda_{i+1}$ for all $1\leq i\leq n-1$;
    \item There is an $s\in \Lambda$ such that $\mathbb T w_n \Gamma_{\sC_{n,1}} w_n^*= \mathbb T v_s w_{1} \Gamma_{\sC_{n,1}} w_{1}^* v_{s^{-1}}= \mathbb T \Lambda_{n}\cap s \Lambda_1 s^{-1}$;
\item $w_i\L(\Gamma_{\sC_i\cup \sC_{i+1}})w_i^*=w_{i+1}\L(\Gamma_{\sC_i\cup \sC_{i+1}})w_{i+1}^* = \L(\Lambda_{i,i+1} C_\Lambda(\Lambda_{i,i+1}))$ and hence $w^*_{i+1}w_i\in \L (\Gamma_{\sC_i\cup \sC_{i+1}})$, for all  $1\leq i\leq n-1$;      
\item $\vee^{n-1}_{i=1} \left(\Lambda_{i,i+1} C_\Lambda(\Lambda_{i,i+1})\right )=\Lambda$.
\end{enumerate}

\end{thm}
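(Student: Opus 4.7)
My approach is to leverage Theorem~\ref{peripheralrec1} (which identifies each clique subgroup algebra up to unitary conjugation) and Corollary~\ref{superprod'} (the W$^*$-superrigidity for clique groups, being direct products of property (T) wreath-like product groups) to produce the $w_i$'s satisfying (1), and then to exploit the normalizer structure built into the ${\rm CC}_1$ graph, combined with Theorem~\ref{cyclerel1}, to obtain the matching relations (2)--(5).

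First, I would apply Theorem~\ref{peripheralrec1} to each clique $\sC_i$ to obtain a unitary $u_i \in \M$ and a subgroup $\Lambda_i < \Lambda$ with $u_i \L(\Gamma_{\sC_i}) u_i^* = \L(\Lambda_i)$. Since $\Gamma_{\sC_i}$ is the direct product of its vertex groups, Corollary~\ref{superprod'} applies to the $*$-isomorphism ${\rm Ad}(u_i):\L(\Gamma_{\sC_i}) \to \L(\Lambda_i)$ and produces a character $\eta_i:\Gamma_{\sC_i} \to \mathbb{T}$, a group isomorphism $\delta_i:\Gamma_{\sC_i} \to \Lambda_i$, and a unitary $y_i \in \L(\Lambda_i)$ with $u_i u_g u_i^* = \eta_i(g)\, y_i v_{\delta_i(g)} y_i^*$. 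Setting $w_i := y_i^* u_i$ yields $w_i \L(\Gamma_{\sC_i}) w_i^* = \L(\Lambda_i)$ and $w_i \Gamma_{\sC_i} w_i^* \subset \mathbb{T}\Lambda_i$, which is (1).

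Next, I would address the matching conditions (2) and (4) for adjacent cliques. The key observation is that the ${\rm CC}_1$ structure forces $\sC_i \cup \sC_{i+1} = \sC_{i,i+1} \cup {\rm lk}(\sC_{i,i+1})$, so by Proposition~\ref{proposition.AM10}(2), $N_\Gamma(\Gamma_{\sC_{i,i+1}}) = \Gamma_{\sC_i \cup \sC_{i+1}}$; Lemma~\ref{QN2} then identifies the quasi-normalizer of $\L(\Gamma_{\sC_{i,i+1}})$ inside $\M$ as $\L(\Gamma_{\sC_i \cup \sC_{i+1}})$. By (1), $w_i \L(\Gamma_{\sC_{i,i+1}}) w_i^* = \L(\Omega_i)$ for $\Omega_i := \delta_i(\Gamma_{\sC_{i,i+1}}) < \Lambda_i$, and an intertwining/malnormality argument modeled on the end of the proof of Theorem~\ref{identificationcornergroups1} gives $w_i \L(\Gamma_{\sC_i \cup \sC_{i+1}}) w_i^* = \L(\Omega_i\, C_\Lambda(\Omega_i))$; the analogue for $w_{i+1}$ holds. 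To match these two subalgebras, I would apply Corollary~\ref{superprod'} once more to $\Gamma_{\sC_{i,i+1}}$ (itself a direct product of wreath-like products) and absorb the character/automorphism ambiguity into a unitary in $\mathscr U(\L(\Lambda_{i+1}))$, which preserves (1) for $w_{i+1}$ and forces $\Omega_i = \Omega_{i+1}$. Letting $\Lambda_{i,i+1} := \Lambda_i \cap \Lambda_{i+1}$ and using the general fact $\L(\Lambda_i) \cap \L(\Lambda_{i+1}) = \L(\Lambda_{i,i+1})$ then yields (2); the quasi-normalizer identification gives (4), and $w_{i+1}^* w_i \in \L(\Gamma_{\sC_i \cup \sC_{i+1}})$ follows since it normalizes the common algebra in (4).

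The main obstacle will be the wrap-around relation (3): the sequential matching $w_1 \mapsto w_2 \mapsto \cdots \mapsto w_n$ produces no a priori compatibility between $w_n$ and $w_1$ on $\L(\Gamma_{\sC_{n,1}})$, and the cyclic topology of the graph prevents a direct fix. The accumulated obstruction is precisely of the form $x_{1,2} x_{2,3} \cdots x_{n,1}$ handled by Theorem~\ref{cyclerel1}; that theorem decomposes the obstruction into clique-supported pieces, and the residual discrepancy can be absorbed into a single group translate $v_s$ for an appropriate $s \in \Lambda$. The same intersection computation as in (2) then identifies the common algebra as $\L(\Lambda_n \cap s\Lambda_1 s^{-1})$, giving (3). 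Finally, for (5), having $w_{i+1}^* w_i \in \L(\Gamma_{\sC_i \cup \sC_{i+1}})$ combined with the graph-product generation $\Gamma = \langle \Gamma_{\sC_i \cup \sC_{i+1}} : 1 \leq i \leq n-1 \rangle$ supports a telescoping argument yielding $\bigvee_{i=1}^{n-1} w_i \L(\Gamma_{\sC_i \cup \sC_{i+1}}) w_i^* = \M$, which after translation via (4) becomes $\bigvee_{i=1}^{n-1} \L(\Lambda_{i,i+1} C_\Lambda(\Lambda_{i,i+1})) = \M$, forcing $\langle \Lambda_{i,i+1} C_\Lambda(\Lambda_{i,i+1}) : 1 \leq i \leq n-1 \rangle = \Lambda$.
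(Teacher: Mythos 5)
Your outline reproduces the paper's strategy for parts (1), (4) and (5) (Theorem \ref{peripheralrec1} plus Corollary \ref{superprod} for (1); quasi-normalizer identification via Lemma \ref{QN2} and the graph product structure for (4); telescoping with $w_{i+1}^*w_i\in\L(\Gamma_{\sC_i\cup\sC_{i+1}})$ for (5)). But the heart of the theorem is the matching of consecutive unitaries in parts (2) and (3), and there your argument has a genuine gap. After (1) you have two group-like embeddings $\delta_i,\delta_{i+1}$ of $\Gamma_{\sC_{i,i+1}}$ into $\Lambda$, intertwined by the unitary $w_{i+1}w_i^*\in\mathscr U(\M)$; nothing so far says the two image subgroups $\Omega_i=\delta_i(\Gamma_{\sC_{i,i+1}})$ and $\Omega_{i+1}=\delta_{i+1}(\Gamma_{\sC_{i,i+1}})$ of $\Lambda$ are equal, or even conjugate in $\Lambda$. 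Corollary \ref{superprod'} cannot ``force $\Omega_i=\Omega_{i+1}$'': it classifies $\ast$-isomorphisms onto $\L$ of an unknown group, whereas here the problem is to control a unitary conjugation between two subalgebras generated by two \emph{given} subgroups of the same $\Lambda$. Moreover, ``absorbing the ambiguity into a unitary in $\mathscr U(\L(\Lambda_{i+1}))$'' does not preserve (1): an arbitrary unitary of $\L(\Lambda_{i+1})$ need not normalize the set $\mathbb T\Lambda_{i+1}$ of group-like elements, so only a correction by $\mathbb T v_h$ (with a corresponding conjugation of $\Lambda_{i+1}$ by $h$) is admissible. The paper resolves exactly this point by a Fourier-support/twisted-orbit argument: writing the relation $\eta(g)\,x_2x_1^*=v_{\delta_2(g)}\,x_2x_1^*\,v_{\delta_1(g^{-1})}$ and analyzing supports in $\ell^2(\Lambda)$ shows $\delta_1$ and ${\rm ad}(h^{-1})\circ\delta_2$ agree on a finite-index subgroup for some $h\in\Lambda$; after replacing $x_2$ by $v_{h^{-1}}x_2$ and $\Lambda_2$ by $h^{-1}\Lambda_2 h$, a second support analysis, this time in $\ell^2(\Gamma)$ using the icc vertex groups and the graph product structure, shows $x_1^*x_2\in\L(\Gamma_{\sC_1\triangle\sC_2})$, which commutes with $\Gamma_{\sC_{1,2}}$ and hence makes the two conjugations literally coincide there. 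Without this (or an equivalent) argument, your claim that the images match is an assertion of the statement being proved.

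The same gap propagates to (3), and your appeal to Theorem \ref{cyclerel1} does not repair it: that theorem factorizes clique-supported unitaries in $\L(\Gamma)$ whose cyclic product is $1$, and in the paper it is only used later (for Theorem \ref{superwithincat}); it does not produce, nor explain why one can produce, a \emph{group element} $s\in\Lambda$ absorbing the wrap-around discrepancy. In the paper, (3) follows from running the same Fourier-support matching on the last consecutive pair: there the correcting element of $\Lambda$ can no longer be normalized away (since $w_1$ is already fixed by the previous steps), and it is precisely this group element that appears as $s$. Finally, in (4) you still need to know ${\rm QN}_\Lambda(\Lambda_{i,i+1})=\Lambda_{i,i+1}C_\Lambda(\Lambda_{i,i+1})$; the paper gets this from relative commutants together with Ge's tensor splitting theorem and icc-ness of $\Lambda_{i,i+1}$, a step your sketch omits, though it is minor compared with the missing matching argument.
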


\begin{proof} Using Theorem \ref{peripheralrec1} and Corollary \ref{superprod} one can find  for any $1\leq i\leq n$ a unitary $x_i\in \M$ and a subgroup $\Lambda_i<\Lambda$ satisfying $\mathbb T x_i \Gamma_{\sC_i} x_i^* =\mathbb T \Lambda_i$.  The prior relations show that $\mathbb T x_1 \Gamma_{\sC_{1,2}} x_1^* =\mathbb T \Lambda'_1$ and $\mathbb T x_2 \Gamma_{\sC_{1,2}} x_2^* =\mathbb T \Lambda'_2$ for two subproduct groups $\Lambda'_1<\Lambda_1$ and $\Lambda'_2<\Lambda_2$. Thus, for every $i=1,2$ there exist a group isomorphism $\delta_i: \Gamma_{\sC_{1,2}}\ra \Lambda_i'$ and a multiplicative character $\eta_i :\sC_{1,2}\to\mathbb T$ such that 
\begin{equation}\label{ll1}
x_i u_g x_i^*=\eta_i(g)v_{\delta_i(g)}, \text{ for all } g\in  \Gamma_{\sC_{1,2}}.     
\end{equation}
Letting $\eta(g)=\overline{\eta_2(g)}\eta_1(g)$, one can see  that altogether these relations imply that $\eta(g) x_2x_1^* v_{\delta_1(g)}= \overline{\eta_2(g)} x_2 u_g x_1^*= v_{\delta_2(g)} x_2x_1^\ast$ or all $g\in \Gamma_{\sC_{1,2}} $. Hence,  $\eta(g) x_2x_1^* = v_{\delta_2(g)} x_2x_1^*v_{\delta_1(g^{-1})}$, for all $g\in \Gamma_{\sC_{1,2}} $. 

If we consider the Fourier decomposition of $x_2x_1^*$ in $\M=\L(\Lambda)$, then basic analysis together with the previous relation show that $x_2x_1^*$ is supported on the set of those $h\in \Lambda$ for which the orbit $\{ \delta_2(g) h \delta_1(g^{-1}) \,:\, g\in \Gamma_{\sC_{1,2}}\}$ is finite. This finiteness condition yields that for any such $h$ one can find a finite index subgroup $\Sigma_h\leqslant  \Gamma_{\sC_{1,2}}$ such that $\delta_2(g) h \delta_1(g^{-1})=h$ and hence $\delta_1(g) = h^{-1}\delta_2(g)h$, for all $g\in \Sigma_h$. Thus, replacing $\delta_2$ by ${\rm ad}(h^{-1})\circ \delta_2$, $x_2$ by $v_{h^{-1}} x_2$ and $\La_2$ by $h^{-1}\La_2 h$ we can assume 
that ${\delta_1}_{|_{\Sigma_h}}= {\delta_2}_{|_{\Sigma_h}}$. Moreover, using this combined with \eqref{ll1}, we deduce that $u_g x u_{g^{-1}}= \eta(g) x$ where $x=x_1^*x_2$. However, this shows that $x$ is supported on elements $h\in \Gamma$ with finite $\Sigma_h$-conjugation orbits. Using the icc condition and the graph product structure we can see that such elements belong to $\Gamma_{\sC_1\triangle \sC_2}$, and hence, $x\in \L(\Gamma_{\sC_1\triangle \sC_2})$. Therefore, it follows from \eqref{ll1} that $\eta_1(g)v_{\delta_1(g)}=x_1 u_g x_1^*= x_2 u_g x_2^*= \eta_2(g)v_{\delta_2(g)}$ for all $g\in \Gamma_{\sC_{1,2}}$. In particular, $\delta_1=\delta_2$ on $ \Gamma_{\sC_{1,2}}$; altogether this implies the conclusion for $i=1$. Continuing by induction, after conjugating the $\Lambda_i$'s by elements in $\Lambda$ we obtain the conclusion of the first three parts.

Next, we prove 4.  First notice that 2. implies \begin{equation}\label{equalintersections}w_i\L(\Gamma_{\sC_{i,i+1}})w_i^*=w_{i+1} \L(\Gamma_{\sC_{i,i+1}}) w_{i+1}^*= \L(\La_{i,i+1}).\end{equation} 
By using the graph product structure and by taking quasinormalizers in \eqref{equalintersections}, we apply Lemma \ref{QN2} to derive that
$w_i\L(\Gamma_{\sC_i\cup \sC_{i+1}})w_i^*= w_i \L({\rm QN}_\Ga (\Ga_{\mathscr C_{i,i+1}})) w_i^*=w_{i} \mathscr {QN}_{\M}(\L(\Ga_{\mathscr C_{i,i+1}}))''w_i^*=w_{i+1} \mathscr {QN}_{\M}(\L(\Ga_{\mathscr C_{i,i+1}}))''  w_{i+1}^*= w_{i+1} (\L({\rm QN}_\Ga (\Ga_{\mathscr C_{i,i+1}}))  w_{i+1}^*= w_{i+1} \L(\Gamma_{\sC_i\cup \sC_{i+1}}) w_{i+1}^* $. The same quasinormalizer formula also implies that $w_i\L(\Gamma_{\sC_i\cup \sC_{i+1}})w_i^*=w_{i+1} \L(\Gamma_{\sC_i\cup \sC_{i+1}}) w_{i+1}^*= \L({\rm QN}_\La(\La_{i,i+1})$. In particular, $\L({\rm QN}_\La(\La_{i,i+1})$ is a II$_1$ factor. Taking relative commutants in 2.\ we also have that $w_i\L(\Gamma_{\sC_{i}\triangle \mathscr C_{i+1}})w_i^*=w_{i+1} \L(\Gamma_{\sC_{i}\triangle \mathscr C_{i+1}}) w_{i+1}^*= \L(\La_{i,i+1})'\cap \L(\Lambda)$. Altogether, these relations imply that 
$\L(\La_{i,i+1})'\cap \L(\Lambda) \vee \L(\La_{i,i+1})=\L({\rm QN}_\La(\La_{i,i+1})) $. In particular, we have ${\rm QN}_\La(\La_{i,i+1})= \La_i vC_\La(\La_{i,i+1})$.

Moreover, factoriality also shows that $(\L(\La_{i,i+1})'\cap \L(\Lambda)) \bar\otimes  \L(\La_{i,i+1})=\L({\rm QN}_\La(\La_{i,i+1})) $. As $\L(\La_{i,i+1})'\cap \L(\Lambda)\subseteq \L(vC_{\Lambda}(\La_{i,i+1}))$, then Ge's tensor splitting theorem \cite[Theorem A]{Ge95} further implies that $\L(vC_{\Lambda}(\La_{i,i+1}))=\left( \L(\La_{i,i+1})'\cap \L(\Lambda) \right)\bar \otimes B$, where $B= \L(vC_{\Lambda}(\La_{i,i+1}))\cap \L(\La_{i,i+1})$. However, we can see that $B = \L(vC_{\Lambda}(\La_{i,i+1}))\cap \L(\La_{i,i+1})=\L(vC_{\Lambda}(\La_{i,i+1})\cap  \La_{i,i+1})= \mathbb C 1$, as $\La_{i,i+1}$ is icc. In conclusion, we have shown that $\L(vC_{\Lambda}(\La_{i,i+1}))=\L(\La_{i,i+1})'\cap \L(\Lambda)$ which implies that  $vC_{\Lambda}(\La_{i,i+1})=C_\La(\La_{i,i+1})$. Hence, ${\rm QN}_\La(\La_{i,i+1})= \La_{i,i+1} C_\La(\La_{i,i+1})$. Then the remaining part of 4.\ follows from \eqref{equalintersections} and Lemma \ref{QN2} because of the graph product structure of $\sG$.

Part 5.\ follows from part 4. Indeed, since  $w_2\L(\Gamma_{\sC_1\cup \sC_{2}})w_2^*= \L(\Lambda_{1,2} C_\Lambda(\Lambda_{1,2}))$ and $w_{2}\L(\Gamma_{\sC_2\cup \sC_{3}})w_{2}^* = \L(\Lambda_{2,3} C_\Lambda(\Lambda_{2,3}))$, we further get  $w_2 \L(\Ga_{\sC_1\cup \sC_2\cup \sC_{3} }) w_2^*= \L(\Lambda_{1,2} C_\Lambda(\Lambda_{1,2})\vee \Lambda_{2,3} C_\Lambda(\Lambda_{2,3}))$. Since $w_3^*w_2\in \L (\Gamma_{\sC_2\cup \sC_3})$, we deduce $w_3 \L(\Ga_{\sC_1\cup \sC_2\cup \sC_{3} }) w_3^*= \L(\Lambda_{1,2} C_\Lambda(\Lambda_{1,2})\vee \Lambda_{2,3} C_\Lambda(\Lambda_{2,3}))$. Continuing in this fashion we get the desired conclusion by induction.
\end{proof}

\begin{rem} We observe that if in item 3. one could better control where the element $s$ is located then the statement will actually imply superrigidity of these groups. For example, this is the case if s can be picked in the subgroups $\La_1$ or $\La_n$. This however seems difficult to establish at this time. We also note that the control of the elements $s$ is closely related with relating the consecutive unitaries $w_i$'s which as we have seen in the previous results is key in the reconstruction problem.
\end{rem}
From a different perspective, if one assume a priori that the mysterious subgroup $\La$ has a non-trivial graph product structure the analysis can be enhanced and a reconstruction statement can be obtained. Moreover, in this case we do not even need to use the full conclusion of Theorem \ref{theorem.almost.Wsuperrigidity}.  Before proceeding to the result we need three elementary lemmas. 

\begin{lem}\label{finiterange} Let $A,B$ countable groups and let $C\subset \mathscr U(\L(A))$ be a countable subgroup of unitaries. Assume there exists a unitary $x\in \L(A\times B)$ such that $\mathbb T xCx^*< \mathbb T (A\times B)$. Then there exists a finite index subgroup $C_0\leqslant C$ such that  $\mathbb T xC_0x^*< \mathbb T A$.

\end{lem}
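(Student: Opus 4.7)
The plan is to reduce the statement to an application of Popa's intertwining-by-bimodules (Theorem \ref{corner}) followed by Lemma \ref{lemma.CI17}. First, I extract a group-theoretic picture from the hypothesis: since $\mathbb T x C x^* < \mathbb T(A\times B)$, for every $c\in C$ there are unique $\lambda_c\in\mathbb T$ and $g_c\in A\times B$ with $x c x^* = \lambda_c u_{g_c}$. Using that $C$ is a group, a short computation shows that $\psi: C \to A\times B$, $c\mapsto g_c$, is a group homomorphism. Set $\bar D := \psi(C) \leqslant A\times B$.

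Next, I observe that for each $g\in\bar D$, writing $g=g_c$ gives $u_g = \bar\lambda_c\, x c x^* \in x\L(A) x^*$ since $c\in\L(A)$; hence $\L(\bar D)\subseteq x\L(A) x^*$. Taking $v := x^*$ (a unitary, hence a nonzero partial isometry) and $\theta(y) := x^* y x$, one verifies that $\theta$ is a $*$-homomorphism from $\L(\bar D)$ into $\L(A)$ satisfying $\theta(y)v = vy$ for every $y\in\L(\bar D)$. By Theorem \ref{corner} this produces the intertwining $\L(\bar D)\prec_{\L(A\times B)} \L(A)$.

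Finally, Lemma \ref{lemma.CI17} applied to $\bar D$ and $A=A\times\{e\}$ inside $A\times B$ yields an element $h\in A\times B$ with $[\bar D: \bar D\cap h A h^{-1}]<\infty$. Because $A$ is a direct factor of $A\times B$ it is normal, so $h A h^{-1}=A$ and hence $[\bar D:\bar D\cap A]<\infty$. Taking $C_0 := \psi^{-1}(\bar D\cap A)\leqslant C$ produces a finite-index subgroup with $x C_0 x^* \subseteq \mathbb T\cdot (A\times\{e\}) = \mathbb T A$, as required. The only step that requires care is the verification that the hypothesis packages into valid intertwining data for Theorem \ref{corner}, and the translation of ``up to conjugation by $h$'' in Lemma \ref{lemma.CI17} into an exact statement about $\bar D\cap A$ via the normality of the direct factor $A$; neither constitutes a substantial obstacle.
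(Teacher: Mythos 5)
Your argument is correct, but it takes a genuinely different route from the paper. You package the hypothesis into a homomorphism $\psi:C\to A\times B$ with $xcx^*=\lambda_c u_{\psi(c)}$, note that consequently $x^*\L(\bar D)x\subseteq \L(A)$ for $\bar D=\psi(C)$ (so the appeal to Theorem \ref{corner} is really just formal bookkeeping, since you have genuine unitary conjugation, not merely an intertwining corner), and then invoke Lemma \ref{lemma.CI17} together with the normality of the direct factor $A\times\{e\}$ to get $[\bar D:\bar D\cap A]<\infty$; pulling back along $\psi$ gives $C_0$, which is exactly $\ker\beta$ where $\beta:C\to B$ is the $B$-component of $\psi$. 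The paper instead argues elementarily: writing the Fourier expansion $x=\sum_{b\in B}x_b\otimes v_b$ and comparing coefficients in $xc=(xcx^*)x$ yields $x_bc=\mu(c)u_{\alpha(c)}x_{\beta(c)^{-1}b}$, hence $\|x_b\|_2$ is constant on left cosets of $\beta(C)$ in $B$; square-summability of the coefficients of the unitary $x$ then forces $\beta(C)$ to be finite, and $C_0=\ker\beta$ works. Both proofs produce the same subgroup and the same finiteness statement (note $[\bar D:\bar D\cap A]=|\beta(C)|$), so the difference is one of economy: the paper's computation is short and self-contained, while yours trades that for the external inputs of Popa's intertwining criterion and the Chifan--Ioana lemma, which is heavier than needed but perfectly valid and perhaps more conceptual, since it isolates the group-theoretic content ($\bar D$ virtually sits inside the normal direct factor) from the analytic one.
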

\begin{proof} From the assumption there exist group homomorphisms $\alpha: C\ra A$, $\beta: C\ra B$ and a character $\mu\in C\to\mathbb T$ such that for all $c\in C$ we have \begin{equation}\label{equaldiscrgroups}
    x c x^*= \mu(c) u_{\alpha(c)}\otimes v_{\beta(c)}.
\end{equation} 
We denoted by $\{u_a\}_{a\in A}$ and $\{v_b\}_{b\in B}$ the canonical unitaries that generate $\L(A)$ and $\L(B)$, respectively.
Letting $x= \sum_{b\in B}x_b \otimes v_b$ be the Fourier expansion, where $x_b\in \L(A),b\in B$, and using \eqref{equaldiscrgroups} we see that \begin{equation*}
    \sum_b x_b c \otimes v_b= xc= \mu(c) u_{\alpha(c)}\otimes v_{\beta(c)} x = \sum_b \mu(c) u_{\alpha(c)} x_b \otimes v_{\beta(c) b} = \sum_b \mu(c) u_{\alpha(c)} x_{\beta(c) ^{-1}b} \otimes v_{b},
\end{equation*} 
for any $c\in C$. Identifying the coefficients above we get $x_b c=  \mu(c) u_{\alpha(c)} x_{\beta(c) ^{-1}b}$ for all $b\in B,c\in C$. In particular, we have $\|x_b\|_2=  \| x_{\beta(c) ^{-1}b}\|_2$, for all $b\in B,c\in C$. Thus, if we denote the image group by $B_0 := \beta(C)<B$, we get that $\|x_b\|_2$ is constant on each left coset $B/B_0$. This implies that $B_0$ is finite and letting $C_0:= {\rm ker} (\beta )$, we get the desired conclusion. 
\end{proof}

\begin{lem}\label{lemma.elementary.groups1}
Let $\Gamma_1,\Gamma_2$ be countable group and assume $\Sigma<\Gamma_1\times\Gamma_2$ is a subgroup. 

\noindent
If there is a finite index subgroup $\Gamma_1^0<\Gamma_1$ with $\Gamma_1^0<\Sigma$, then there is a subgroup $\Gamma_2^0<\Gamma_2$ such that $\Gamma_1^0\times\Gamma_2^0<\Sigma$ has finite index. 
\end{lem}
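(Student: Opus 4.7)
The plan is to construct $\Gamma_2^0$ explicitly and bound the index by a two-step exact-sequence argument. Let $\pi_i:\Gamma_1\times\Gamma_2\to\Gamma_i$ denote the two coordinate projections, and set
\[ K:=\{g\in\Gamma_1\,:\,(g,1)\in\Sigma\}=\Sigma\cap(\Gamma_1\times\{1\}),\qquad \Gamma_2^0:=\{h\in\Gamma_2\,:\,(1,h)\in\Sigma\}. \]
By hypothesis $\Gamma_1^0\leqslant K\leqslant\Gamma_1$, so $[K:\Gamma_1^0]\leqslant[\Gamma_1:\Gamma_1^0]<\infty$. Since $K$ and $\Gamma_2^0$ commute inside $\Sigma$ (they sit in complementary coordinates), their product in $\Sigma$ is the internal direct product $K\times\Gamma_2^0\leqslant\Sigma$, and in particular $\Gamma_1^0\times\Gamma_2^0\leqslant\Sigma$.

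Next I will consider the short exact sequence
\[ 1\longrightarrow K\longrightarrow \Sigma\xrightarrow{\ \pi_2\ }\Gamma_2':=\pi_2(\Sigma)\longrightarrow 1. \]
The image $\pi_2(K\times\Gamma_2^0)$ equals $\Gamma_2^0$, and the kernel $K$ is entirely contained in $K\times\Gamma_2^0$; hence $[\Sigma:K\times\Gamma_2^0]=[\Gamma_2':\Gamma_2^0]$, and combined with $[K\times\Gamma_2^0:\Gamma_1^0\times\Gamma_2^0]=[K:\Gamma_1^0]<\infty$ this yields
\[ [\Sigma:\Gamma_1^0\times\Gamma_2^0]=[K:\Gamma_1^0]\cdot[\Gamma_2':\Gamma_2^0]. \]
Thus everything reduces to proving that $[\Gamma_2':\Gamma_2^0]<\infty$, and this is the only step that requires any real (but still elementary) argument.

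To bound $[\Gamma_2':\Gamma_2^0]$ I will construct an injection $\chi:\Gamma_2^0\backslash\Gamma_2'\hookrightarrow K\backslash\Gamma_1$. For each $g_2\in\Gamma_2'$ choose any $g_1\in\Gamma_1$ with $(g_1,g_2)\in\Sigma$ and set $\chi(g_2):=Kg_1$. If $(g_1,g_2),(g_1',g_2)\in\Sigma$, then $(g_1g_1'^{-1},1)=(g_1,g_2)(g_1',g_2)^{-1}\in\Sigma$, so $g_1g_1'^{-1}\in K$ and $\chi$ is well defined. For descent and injectivity, given $(g_1,g_2),(g_1',g_2')\in\Sigma$ the identity
\[ (g_1g_1'^{-1},1)=(g_1,g_2)(g_1',g_2')^{-1}\cdot(1,g_2g_2'^{-1})^{-1} \]
shows that $g_1g_1'^{-1}\in K$ if and only if $(1,g_2g_2'^{-1})\in\Sigma$, i.e.\ if and only if $g_2g_2'^{-1}\in\Gamma_2^0$. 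Hence $\chi(g_2)=\chi(g_2')$ is equivalent to $\Gamma_2^0 g_2=\Gamma_2^0 g_2'$, which proves both that $\chi$ descends to $\Gamma_2^0\backslash\Gamma_2'$ and that the descended map is injective.

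Consequently $[\Gamma_2':\Gamma_2^0]\leqslant[\Gamma_1:K]\leqslant[\Gamma_1:\Gamma_1^0]<\infty$, and plugging this into the displayed index formula finishes the proof. I do not anticipate any real obstacle: the only mild care needed is to be consistent about left versus right cosets in the definition of $\chi$, which is why the argument above works with right cosets $K\backslash\Gamma_1$ and $\Gamma_2^0\backslash\Gamma_2'$ where the index is still just $[\Gamma_1:K]$ and $[\Gamma_2':\Gamma_2^0]$.
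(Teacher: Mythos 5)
Your proof is correct, and it picks the same subgroup $\Gamma_2^0=\{h\in\Gamma_2 : (1,h)\in\Sigma\}$ as the paper, but it is organized around the opposite coordinate. The paper's argument is shorter: it composes the first-coordinate projection restricted to $\Sigma$ with the quotient map onto the finite coset space $\Gamma_1/\Gamma_1^0$; the preimage of the trivial coset is $\Sigma\cap(\Gamma_1^0\times\Gamma_2)$, which therefore has index at most $[\Gamma_1:\Gamma_1^0]$ in $\Sigma$, and the hypothesis $\Gamma_1^0<\Sigma$ identifies this intersection as exactly $\Gamma_1^0\times(\Sigma\cap\Gamma_2)$ (if $(g_1,g_2)\in\Sigma$ with $g_1\in\Gamma_1^0$, then $(1,g_2)=(g_1,1)^{-1}(g_1,g_2)\in\Sigma$). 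You instead work through the second-coordinate projection: you identify the preimage of $\Gamma_2^0$ under $\pi_2|_\Sigma$ as $K\times\Gamma_2^0$ with $K=\Sigma\cap(\Gamma_1\times\{1\})$, reduce to bounding $[\pi_2(\Sigma):\Gamma_2^0]$, and control that index by a Goursat-style injection of $\Gamma_2^0\backslash\pi_2(\Sigma)$ into $K\backslash\Gamma_1$; the hypothesis $\Gamma_1^0<\Sigma$ enters through $\Gamma_1^0\leq K$, which is where the finiteness ultimately comes from. Both routes are elementary and give the same quantitative bound $[\Sigma:\Gamma_1^0\times\Gamma_2^0]\leq[\Gamma_1:\Gamma_1^0]$; yours is a little longer because the coset injection and the index factorization through $K\times\Gamma_2^0$ replace the paper's one-line intersection argument, but every step you use (well-definedness, descent, injectivity, the index identities) is verified correctly, so there is no gap.
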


\begin{proof}
Let $\pi:\Sigma\to \Gamma_1/\Gamma_1^0$ be the composition between the restriction to $\Sigma$ of the projection map $\Gamma_1\times\Gamma_2 \ni (g_1,g_2)\to g_1\in\Gamma_1$ with the quotient map $\Gamma_1\to\Gamma_1/\Gamma_1^0$. Since the kernel of $\pi$ equals $\Sigma\cap (\Gamma_1^0\times\Gamma_2)$ and the image of $\pi$ is finite, it follows that $[\Sigma: \Sigma\cap (\Gamma_1^0\times\Gamma_2)]<\infty$. Since $\Gamma_1^0<\Sigma$, we get $[\Sigma: \Gamma_1^0\times(\Sigma\cap \Gamma_2)]<\infty$. 
\end{proof}

For the following lemma we introduce the following ad hoc definition. We say that a countable group $\Gamma$ is {\it virtually prime} if $\Gamma$ does not admit a finite index subgroup $\Gamma_0$ which is generated by two infinite commuting subgroups. The next lemma shows that products of virtually prime groups satisfy a unique prime factorization result.

\begin{lem}\label{lemma.UPF.groups}
Let $\Gamma=\Gamma_1\times\dots\times\Gamma_n$ be a product of countable groups that are virtually prime.

\noindent If $\Sigma_1\times\Sigma_2<\Gamma$ is a finite index product subgroup, then there exist finite index subgroups $\Sigma^0_i<\Sigma_i, 1\leq i\leq 2$,  and $\Gamma^0_j<\Gamma_j$, $1\leq j\leq n$, and
a partition $T_1\sqcup T_2=\{1,\dots,n\}$
such that $\Sigma_i^0=\times_{j\in T_i}\Gamma_j^0$, for any $1\leq i\leq 2$.

\end{lem}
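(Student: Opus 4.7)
The strategy is to use virtual primeness coordinate-by-coordinate to partition $\{1,\ldots,n\}$ into $T_1$ and $T_2$, and then to upgrade the resulting coordinate-wise information into finite-index product decompositions via a normal-core argument.

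First, for each $j\in\{1,\ldots,n\}$, let $\pi_j:\Gamma\to\Gamma_j$ denote the coordinate projection and set $\Sigma_{i,j}:=\pi_j(\Sigma_i)<\Gamma_j$ for $i=1,2$. Since $\Sigma_1$ and $\Sigma_2$ commute inside $\Gamma$, the subgroups $\Sigma_{1,j}$ and $\Sigma_{2,j}$ commute in $\Gamma_j$; moreover $\Sigma_{1,j}\Sigma_{2,j}=\pi_j(\Sigma_1\times\Sigma_2)$ is the image of a finite-index subgroup under a surjective homomorphism, hence has finite index in $\Gamma_j$. The virtual primeness of $\Gamma_j$ therefore forces at least one of $\Sigma_{1,j},\Sigma_{2,j}$ to be finite, and then the other automatically has finite index in $\Gamma_j$. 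I will set $T_1:=\{\,j:\Sigma_{2,j}\text{ is finite}\,\}$ and $T_2:=\{1,\ldots,n\}\setminus T_1$.

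Next, I will pass to subgroups of $\Sigma_1,\Sigma_2$ supported on $T_1$ and $T_2$ respectively. The composition of $\Sigma_1\hookrightarrow\Gamma$ with the projection $\Gamma\twoheadrightarrow\prod_{j\in T_2}\Gamma_j$ has image contained in the finite group $\prod_{j\in T_2}\Sigma_{1,j}$, so its kernel $\Sigma_1^\star:=\Sigma_1\cap\prod_{j\in T_1}\Gamma_j$ has finite index in $\Sigma_1$. Defining $\Sigma_2^\star$ symmetrically, the product $\Sigma_1^\star\times\Sigma_2^\star$ has finite index in $\Sigma_1\times\Sigma_2$, hence in $\Gamma=\prod_{j\in T_1}\Gamma_j\times\prod_{j\in T_2}\Gamma_j$; comparing factors, each $\Sigma_i^\star$ has finite index in $\prod_{j\in T_i}\Gamma_j$.

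Finally, I will produce the factor subgroups $\Gamma_j^0$. For each $i\in\{1,2\}$ let $N_i\subseteq\Sigma_i^\star$ be the normal core of $\Sigma_i^\star$ in $\prod_{j\in T_i}\Gamma_j$; this is a finite-index normal subgroup. For every $j\in T_i$ the composition $\Gamma_j\hookrightarrow\prod_{k\in T_i}\Gamma_k\twoheadrightarrow(\prod_{k\in T_i}\Gamma_k)/N_i$ has finite image, so its kernel $\Gamma_j^0<\Gamma_j$ has finite index, and the $j$-th coordinate copy of $\Gamma_j^0$ is contained in $N_i\subseteq\Sigma_i^\star$. Because distinct coordinate copies commute and intersect trivially, $\prod_{j\in T_i}\Gamma_j^0\subseteq\Sigma_i^\star$, and the former has finite index in $\prod_{j\in T_i}\Gamma_j$, hence in $\Sigma_i^\star$, and hence in $\Sigma_i$. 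Setting $\Sigma_i^0:=\prod_{j\in T_i}\Gamma_j^0$ will yield the desired decomposition. The only delicate point is the passage to the normal core $N_i$: a generic finite-index subgroup of a direct product need not intersect any individual factor nontrivially (think of the graph of a surjection), and it is precisely normality which allows the coordinate inclusions $\Gamma_j\hookrightarrow\prod_{k\in T_i}\Gamma_k$ to descend cleanly to a finite quotient, producing the finite-index subgroups $\Gamma_j^0$.
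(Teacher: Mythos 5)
Your proof is correct, and its first half is exactly the paper's argument: project to each $\Gamma_j$, note that $\pi_j(\Sigma_1)$ and $\pi_j(\Sigma_2)$ are commuting subgroups generating a finite-index subgroup of $\Gamma_j$, use virtual primeness to conclude one of them is finite, and pass to the finite-index subgroups $\Sigma_i^\star=\Sigma_i\cap\prod_{j\in T_i}\Gamma_j$. You diverge at the final step, and in a useful way: the paper simply sets $\Gamma_j^0=\pi_j(\Sigma_i^0)$ and asserts $\Sigma_i^0=\times_{j\in T_i}\pi_j(\Sigma_i^0)$, which as written needs further justification, since a finite-index subgroup of a direct product need not equal the product of its projections (think of a fiber product $\{(g,h):q(g)=q(h)\}$ over a common finite quotient); you instead manufacture the factors $\Gamma_j^0$ \emph{inside} $\Sigma_i^\star$ by intersecting with the coordinate subgroups and then take $\Sigma_i^0=\times_{j\in T_i}\Gamma_j^0$, which delivers the statement exactly and fills in the step the paper glosses over. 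One simplification: the normal core is unnecessary, and the ``delicate point'' you flag is not actually an issue --- if $H$ has finite index in $G$, then $K\cap H$ has finite index in $K$ for every subgroup $K\le G$ (the graph of a surjection of an infinite group has infinite index, so it is not a counterexample); hence $\Gamma_j^0:=\Gamma_j\cap\Sigma_i^\star$ already has finite index in $\Gamma_j$ and lies in $\Sigma_i^\star$, so you can skip the passage to $N_i$ altogether.
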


\begin{proof}
For any $j\in\overline{1,n}$, let $\pi_j:\Gamma\to\Gamma_j$ be the canonical projection. Since $\Gamma_j$ is virtually prime and $\pi_j(\Sigma_1)$ and $\pi_j(\Sigma_2)$ are commuting subgroups of $\Gamma_j$, it follows that either $\pi_j(\Sigma_1)$ or $\pi_j(\Sigma_2)$ is finite. Hence, there exist finite index subgroups $\Sigma_1^0<\Sigma_1$ and $ \Sigma_2^0<\Sigma_2$ and a map $\sigma:\{1,...,n\}\to\{1,2\}$ such that  $\pi_j(\Sigma_{\sigma(j)}^0)<\Gamma_j$ has finite index and $\pi_j(\Sigma^0_{\sigma(j)+1})=1$ for any $1\leq j\leq n$. Here, we use the notation that $\Sigma_{3}=\Sigma_1$. By letting $T_1=\sigma^{-1}(1), T_2=\sigma^{-1}(2)$, we derive that $\Sigma_i^0=\times_{j\in T_i} \pi_j(\Sigma^0_{i})$, for any $1\leq i\leq 2$, which ends the proof.
\end{proof}

With these preparations at hand we are now ready to state and prove our main result.

\begin{thm}\label{superwithincat} Let $\sG\in{\rm CC}_1$, let ${\rm cliq}(\sG)=\{ \sC_1, \ldots , \sC_n\}$ be a consecutive cliques enumeration and assume that $|\sC_i|\neq |\sC_j|$ whenever $i\neq j$.  Let $\Gamma= \sG\{\Gamma_v\}$, be a graph product group such that for any $v\in\mathscr V$, $\Gamma_v\in \mathcal W\mathcal R (A_v, B_v \ca I_v)$ where $A_v$ is abelian, $B_v$ is an icc  subgroup of a hyperbolic group and the set $\{i\in I_v \; | \; g\cdot i\neq i\}$ is infinite for any $g\in B_v\setminus \{1\}$.

\noindent 
Let $\theta: \L(\Gamma)\ra \L(\Lambda)$ be any $\ast$-isomorphism where
$\Lambda$ is any non-trivial graph product group whose vertex groups are infinite. 

\noindent
Then one can find a character $\eta : \Gamma\to\mathbb T$, a group isomorphism  $\delta :\Gamma \to \Lambda$, an automorphism of $\L(\Lambda)$ of the form $\phi_{a,b}$ (see the notation after equation \eqref{branchaut'} in Section 2) and a unitary $u\in \L(\Lambda)$ such that $\theta= {\rm ad}(u)\circ \phi_{a,b}\circ \Psi_{\eta, \delta }$.    

\end{thm}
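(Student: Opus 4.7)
The plan is to identify $\L(\Gamma)$ with $\L(\Lambda)=:\M$ via $\theta$ and leverage Theorem \ref{theorem.almost.Wsuperrigidity} as the main input, then upgrade its conclusions using the graph product structure of $\Lambda$ and the cycle-splitting result Theorem \ref{cyclerel1}. Concretely, I would first apply Theorem \ref{theorem.almost.Wsuperrigidity} to produce unitaries $w_1,\dots,w_n\in \M$ and subgroups $\Lambda_1,\dots,\Lambda_n<\Lambda$ satisfying the five listed properties. Unpacking the identifications $\mathbb T w_i\Gamma_{\sC_i}w_i^*=\mathbb T \Lambda_i$ in combination with Corollary \ref{superprod'} applied to the isomorphisms $\L(\Gamma_{\sC_i})\cong \L(\Lambda_i)$, I obtain group isomorphisms $\delta_i:\Gamma_{\sC_i}\to\Lambda_i$ and characters $\eta_i:\Gamma_{\sC_i}\to\mathbb T$ with $w_iu_gw_i^*=\eta_i(g)v_{\delta_i(g)}$ for all $g\in\Gamma_{\sC_i}$. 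From conditions (2) and (3) of Theorem \ref{theorem.almost.Wsuperrigidity}, comparing canonical unitaries on the overlaps forces $\delta_i|_{\Gamma_{\sC_{i,i+1}}}=\delta_{i+1}|_{\Gamma_{\sC_{i,i+1}}}$ for $1\leq i\leq n-1$ and $\delta_n|_{\Gamma_{\sC_{n,1}}}={\rm ad}(s)\circ\delta_1|_{\Gamma_{\sC_{n,1}}}$ for the element $s\in\Lambda$ supplied by the theorem.

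Next I would use the graph product assumption on $\Lambda=\sH\{\Lambda_w\}$ to promote the $\Lambda_i$ to full clique subgroups of $\sH$. Since each $\Lambda_i\cong\Gamma_{\sC_i}$ is a property (T) direct product of icc wreath-like groups, Theorem \ref{proptcliques} applied inside $\L(\Lambda)$ places (a corner of) $\L(\Lambda_i)$ into some clique subalgebra $\L(\Lambda_{\sD_i})$ of $\sH$; Corollary \ref{corollary.graph.CI17}, Lemma \ref{lemma.CI17}, and another invocation of Corollary \ref{superprod'} then show that up to a conjugation by an element of $\Lambda$ we actually have $\Lambda_i=\Lambda_{\sD_i}$. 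Using $|\sC_i|\neq |\sC_j|$ and the size-preserving effect of the product rigidity, the map $\sC_i\mapsto\sD_i$ is a bijection, and condition (5) of Theorem \ref{theorem.almost.Wsuperrigidity} together with the generation statement forces $\sG\cong\sH$. After conjugating all $w_i$ by a single unitary we may absorb the element $s$ from condition (3) into the isomorphism on $\Gamma_{\sC_1}$, producing a compatible family $\{\delta_i\}_{i=1}^n$ of isomorphisms that agree on \emph{every} pairwise intersection. The universal property of graph products then yields a group isomorphism $\delta:\Gamma\to\Lambda$ extending each $\delta_i$, and the characters $\eta_i$ collate into a character $\eta:\Gamma\to\mathbb T$ on the generators.

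With $\delta$ and $\eta$ in hand, I would measure the obstruction between $\theta$ and $\Psi_{\eta,\delta}$. Using condition (4), each $y_i:=w_{i+1}^{-1}w_i$ lies in $\L(\Gamma_{\sC_i\cup\sC_{i+1}})$ for $1\leq i\leq n-1$, and after the normalization in the previous step, the cyclic analogue $y_n:=w_1^{-1}v_{s^{-1}}w_n\in\L(\Gamma_{\sC_n\cup\sC_{1}})$ (since $w_1^{-1}v_{s^{-1}}w_n$ normalizes $\L(\Gamma_{\sC_{n,1}})$, whose normalizer inside $\M$ is exactly $\L(\Gamma_{\sC_n\cup\sC_1})$ by the $\mathrm{CC}_1$ structure). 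A direct telescoping gives $y_ny_{n-1}\cdots y_1=w_1^{-1}v_{s^{-1}}w_1$, which becomes trivial once we additionally replace $w_1$ by $uw_1$ for a suitable unitary $u\in\L(\Lambda)$ encoding the global mismatch. After this correction the cycle identity $y_ny_{n-1}\cdots y_1=1$ holds, so Theorem \ref{cyclerel1} applies and decomposes each $y_i=a_ib_ic_ib_{i+1}^*a_{i+2}^*c_{i+1}^*$ with $a_i\in\mathscr U(\L(\Gamma_{\sC_{i-1,i}}))$, $b_i\in\mathscr U(\L(\Gamma_{\sC_i^{\rm int}}))$, $c_i\in\mathscr U(\L(\Gamma_{\sC_{i,i+1}}))$. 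These local unitaries assemble (via $\delta$) into the local automorphism $\phi_{a,b}\in{\rm Loc}_{\rm c}(\L(\Lambda))$ of the form described in Section \ref{local.isom}, and tracing through the definitions shows $\theta={\rm ad}(u)\circ\phi_{a,b}\circ\Psi_{\eta,\delta}$.

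The most delicate step is the final one, in which the cycle product must be made exactly trivial so that Theorem \ref{cyclerel1} applies: one has to simultaneously absorb the element $s$ from the cyclic overlap, the discrepancy between the characters $\eta_i$ and $\eta_{i+1}$ on $\Gamma_{\sC_{i,i+1}}$, and the ambiguity in the unitaries $w_i$. Each of these can individually be pushed into a unitary supported in $\L(\Gamma_{\sC_{i,i+1}})$ or $\L(\Gamma_{\sC_i^{\rm int}})$, but showing they can be handled coherently enough for Theorem \ref{cyclerel1} to produce the structured decomposition (rather than just an abstract cocycle identity) is the heart of the argument and the main obstacle.
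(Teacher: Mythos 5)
Your overall architecture (apply Theorem \ref{theorem.almost.Wsuperrigidity}, upgrade the subgroups $\Lambda_i$ to clique subgroups of the graph product $\Lambda$, then use Theorem \ref{cyclerel1} to assemble a single unitary and produce $\theta={\rm ad}(u)\circ\phi_{a,b}\circ\Psi_{\eta,\delta}$) matches the paper's strategy, but there is a genuine gap at the central step. You claim that once Theorem \ref{proptcliques} and Lemma \ref{lemma.CI17} place $\Lambda_i$ inside a conjugate of some clique subgroup $\Lambda_{\sD_j}$ of $\sH$, "Corollary \ref{corollary.graph.CI17}, Lemma \ref{lemma.CI17}, and another invocation of Corollary \ref{superprod'} then show that up to a conjugation by an element of $\Lambda$ we actually have $\Lambda_i=\Lambda_{\sD_i}$." None of these results yields that. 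Lemma \ref{lemma.CI17} gives only that a finite index subgroup of $\Lambda_i$ lands in a conjugate of $\Lambda_{\sD_j}$ (the wrong direction of finite index); Corollary \ref{corollary.graph.CI17} applies to full subgroups of a fixed graph product, which $\Lambda_i$ is not a priori; and Corollary \ref{superprod'} identifies $\Lambda_i$ abstractly as a group but says nothing about its position inside $\Lambda_{\sD_j}$. Crucially, the vertex groups of $\Lambda$ are only assumed infinite, so no superrigidity or product rigidity is available for $\Lambda_{\sD_j}$ itself, and one must rule out the possibility that $\Lambda_i$ is a small or skew-positioned subgroup of a much larger clique. In the paper this upgrade occupies the bulk of the proof: after Claim \ref{map} (full containment, via triviality of virtual centralizers and Proposition \ref{proposition.AM10}), Claim \ref{commensurable} establishes $[h_i\Lambda_{\sD_{\sigma(i)}}h_i^{-1}:\Lambda_i]<\infty$ through a four-case analysis based on \cite[Theorem 3.1]{CdSS17} applied to the tensor decomposition of $\L(\Lambda_{\sD_j})$, two subclaims, Lemma \ref{finiterange}, Lemma \ref{lemma.elementary.groups1}, the unique prime factorization Lemma \ref{lemma.UPF.groups}, and applications of \cite{IPP05}/\cite[Theorem 5.4]{PV09} to exclude partial embeddings into adjacent cliques; only then does Claim \ref{fullidentification} convert finite index into the unitary identity $\mathbb T y_i\Gamma_{\sC_i}y_i^*=\mathbb T\Lambda_{\sD_{\sigma(i)}}$ via quasinormalizers and Corollary \ref{superprod}.

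A secondary point: you identify the final cycle-triviality normalization as the main obstacle, but in the paper that step is comparatively routine, being delegated to the argument of \cite[Theorem 7.9]{CDD22} via Theorem \ref{cyclerel1}, after which one recognizes $\Lambda$ as a graph product over a graph isometric to $\sG$ with wreath-like vertex groups and concludes by \cite[Theorem 7.10]{CDD22}. So your proposal both omits the genuinely hard commensurability argument and misplaces where the difficulty lies; as written, the proof does not go through without supplying the content of Claims \ref{map}--\ref{fullidentification}.
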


\begin{proof} Assume that $\La= \La_\sH$ and let ${\rm cliq}(\sH)=\{\sD_1,\ldots ,\sD_m\}$ be the cliques of $\sH$. To simplify the notation assume that $\L(\Gamma )=\L(\La)=\M$. Then by Theorem \ref{theorem.almost.Wsuperrigidity}, for any $1\leq i\leq n$ one can find a unitary $w_i\in \M$ and a subgroup $\Lambda_i<\Lambda$ satisfying the following relations: \begin{enumerate}
    \item $\mathbb T w_i \Gamma_{\sC_i} w_i^* =\mathbb T \Lambda_i$ for all $1\leq i\leq n$; 
    \item The virtual centralizers satisfy $vC_\La(\La_i)= 1$ for all, $1\leq i\leq n$.
    \item $\mathbb T w_i \Gamma_{\sC_{i,i+1}} w_i^*= \mathbb T w_{i+1} \Gamma_{\sC_{i,i+1}} w_{i+1}^*= \mathbb T \Lambda_{i,i+1}$ where $\Lambda_{i,i+1}= \Lambda_i\cap \Lambda_{i+1}$ for all $1\leq i\leq n-1$;
    \item There is an $s\in \Lambda$ such that $\mathbb T w_n \Gamma_{\sC_{n,1}} w_n^*= \mathbb T v_s w_{1} \Gamma_{\sC_{n,1}} w_{1}^*= \mathbb T \Lambda_{n}\cap s \Lambda_1 s^{-1}$;
\end{enumerate}

Items 1., 3. and 4. are clear. We only have to justify 2. For seeing this, note that Lemma \ref{QN2} and Theorem \ref{controlquasinormalizer1} give $\L({\rm QN}_\Lambda(\Lambda_i))=\mathscr{QN}_{\M}(\L(\Lambda_i))''= w_i \mathscr{QN}_{\M}(\L(\Gamma_{\sC_i}))''  w_i^*= w_i \L(\Gamma_{\sC_i}) w_i^*=\L(\Lambda_i)$.
Since $vC_{\Lambda}(\Lambda_i) < {\rm QN}_\Lambda(\Lambda_i)$, we deduce that $vC_{\Lambda}(\Lambda_i)<\Lambda_i$, and hence, by using that $\Lambda_i$ is icc we derive  $vC_{\Lambda}(\Lambda_i)=vC_{\Lambda_i}(\Lambda_i)=1$.

\begin{claim}\label{map}
There exist a map $\sigma:\{1,\dots,n\}\to \{1,\dots,m\}$ and elements $h_1,\dots,h_n \in \La$ so that $\La_i\leqslant h_i\La_{\sD_{\sigma(i)}} h^{-1}_i$, for any $1\leq i\leq n$.
\end{claim}

\noindent {\it Proof of Claim \ref{map}.}
Let $1\leq i\leq n$ be arbitrary.
Since $\Gamma_{\sC_i}$ has property (T), then $\La_i< \La$ has property (T) as well. By applying Theorem \ref{theorem.embed.clique} one can find $1\leq k\leq m$ such that $\L(\Lambda_i)\prec_{\M} \L(\Lambda_{\sD_k}).$
Using Lemma \ref{lemma.CI17}, there is $h\in \La$ such that $[\La_i: \La_i \cap h \La_{\sD_k} h^{-1}]<\infty$. Thus, there is a finite index normal subgroup $\La_i'\lhd \La_i$ such that $\La'_i\leqslant h\La_{\sD_k} h^{-1}$. We continue by proving that $\La_i\leqslant h\La_{\sD_k} h^{-1}$. To this end, fix $t\in \La_i$. Hence, $\La_i'\leqslant  h\La_{\sD_k} h^{-1}  \cap  th\La_{\sD_k} h^{-1}t^{-1}$ and by Proposition \ref{proposition.AM10} we have that  
\begin{equation}\label{periphcont}h\La_{\sD_k} h^{-1}  \cap  th\La_{\sD_k} h^{-1}t^{-1}= h_0 h\La_\sT h^{-1} h_0^{-1}\text{ for some }\sT\subseteq \sD_k \text{ and }h_0\in h\La_{\sD_k} h^{-1}.
\end{equation} 
Since $[\Lambda_i:\Lambda_i']<\infty$, condition 2.\ implies that $vC_{\La}(\La_i')=vC_{\La}(\La_i)=1$. 
It follows from \eqref{periphcont} that $vC_{\Lambda}(h_0 h\La_\sT h^{-1} h_0^{-1})=1$, and hence, $\sT$ is a clique. This forces that $\sT=\sD_k$, and hence, \eqref{periphcont} implies that $th\La_{\sD_k} h^{-1}t^{-1} \geqslant  h\La_{\sD_k} h^{-1}$. Using Proposition \ref{proposition.AM10} we derive that  $th\La_{\sD_k} h^{-1}t^{-1} =  h\La_{\sD_k} h^{-1}$, and hence, $h^{-1}t h\in \Lambda_{\sD_k\cup {\rm link}(\sD_k)}$. Since $\sD_k$ is a clique,  we conclude that $t\in h\La_{\sD_k} h^{-1}$. \hfill$\blacksquare$


\noindent
We continue with the following claim.
\begin{claim}\label{commensurable}
For every $1\leq i\leq n$, we have  $[h_i\La_{\sD_{\sigma(i)}} h^{-1}_i: \Lambda_i]<\infty$.
\end{claim}

\noindent {\it Proof of Claim \ref{commensurable}.} 
Fix $1\leq i\leq n$ and $v\in \sC_i^{\rm int}$ and note that ${\rm star}(v)=\sC_i$.
Let $j=\sigma(i)$. Next we briefly argue that $|\mathscr D_j|\geqslant 2$. Assume by contradiction that $\mathscr D_j=\{s\}$. As $\La$ is a nontrivial graph product, there is a nontrivial group $\Sigma$ so that  $\La=\La_s \ast \Sigma$; thus $\M = \L(\La_s\ast \Sigma )$. Since $\L(\Gamma_{\mathscr C_{i,i+1}})$ is diffuse then using Claim \ref{map} and relation 1.\ we get $x \L(\Gamma_{\mathscr C_{i,i+1}}) x^*\subseteq \L(\La_s)$ where we have denoted by $x:= u_{h_i^{-1}}w_i\in \mathscr U(\M)$. Using Lemma \ref{lemma.control.qn} and the graph product structure of $\Gamma$ this further implies that  $x \L(\Gamma_{\mathscr C_{i}\cup \mathscr C_{i+1}}) x^*= \mathscr N_{\M} (x \L(\Gamma_{\mathscr C_{i,i+1}}) x^*)''\subseteq \L(\La_s)$. In particular, we have  $ x \L(\Gamma_{\mathscr C_{i+1,i+2}}) x^*\subseteq \L(\La_s)$. Thus taking the normalizer and repeating the previous argument  we get $ x \L(\Gamma_{\mathscr C_{i+1}\cup \mathscr C_{i+2}}) x^*= \mathscr N_{\M} (x \L(\Gamma_{\mathscr C_{i+1,i+2}}) x^*)''\subseteq \L(\La_s)$. Therefore, proceeding by induction we have  $ x \L(\Gamma_{\mathscr C_{k}\cup \mathscr C_{l}}) x^*\subseteq \L(\La_s)$ for all $1\leqslant k,l\leqslant m$ satisfying $\hat k-\hat l \in \{ \hat 1,\widehat{n-1}\}$; here the classes are considered in $\mathbb Z_m$.
Altogether, these relations imply that $ \M = x_i \L(\Gamma) x^*= x \L(\vee_{k,l, \hat k-\hat l \in \{\hat 1,\widehat{n-1}\}} \Gamma_{\mathscr C_{k}\cup \mathscr C_{l}}) x^*=\bigvee_{k,l, \hat k-\hat l \in \{\hat 1,\widehat{n-1}\}}x \L(\Gamma_{\mathscr C_{k}\cup \mathscr C_{l}}) x^*\subseteq \L(\La_s) $, contradicting $\Sigma\neq 1$.

Now fix an arbitrary $w\in \sD_j$. By letting $\P_1= \L(\La_{\sD_{j} \setminus \{w\}})$ and $\P_2= \L(\La_w)$, we have $\P_1\bar\otimes \P_2=\L(\La_{\sD_j})$. Since the vertex groups of $\La$ are infinite, then by \cite[Theorem 3.1]{CdSS17} one of the following must hold:

\begin{enumerate}
    \item $\P_1\vee \P_2$ is amenable relative to $ \L(\Gamma_{{\rm link}(v) })$ inside $\M$;

    \item $\P_1\vee \P_2 \prec_\M \L(\Gamma_{\sG\setminus \{v\}})$;

    \item $\P_1\vee \P_2 \prec_\M \L(\Gamma_{{\rm star}(v)})$;

    \item $\P_i \prec_\M \L(\Gamma_{{\rm link}(v) })$, for some $1 \leq i\leq 2$;    

\end{enumerate}

First, assume 1. holds. Since $\mathbb T h_i^{-1} w_i \Gamma_{\sC_i}  w_i^*  h_i=\mathbb T  h_i^{-1} \Lambda_i h_i\subset \P_1\vee \P_2$, we derive that $h_i^{-1} w_i \L(\Gamma_{\sC_i})  w_i^*  h_i$ is amenable relative to $ \L(\Gamma_{{\rm link}(v) })$ inside $\M$. Since $\L(\Gamma_{\sC_i})$ has property (T), then  $h_i^{-1} w_i \L(\Gamma_{\sC_i})  w_i^*  h_i\prec_{\M} \L (\Gamma_{{\rm link}(v)})$. By applying Corollary \ref{corollary.graph.CI17}, it follows that $\sC_i\subset {\rm  link}(v)$, contradiction.

Now, assume 2. holds. Since $\mathbb T h_i^{-1} w_i \Gamma_{\sC_i}  w_i^*  h_i=\mathbb T  h_i^{-1} \Lambda_i h_i\subset \P_1\vee \P_2$, we derive that $\L(\Gamma_{\sC_i})\prec_\M \L (\Gamma_{\sG\setminus \{v\}})$.
As in the previous case, we apply Corollary \ref{corollary.graph.CI17} and derive that $\sC_i\subset \sG \setminus\{v\}$, contradiction.

Next, assume 3. holds. As ${\rm star}(v)=\sC_i$ and $\mathbb T w_i \Gamma_{\sC_i}  w_i^* =\mathbb T\Lambda_i $, we derive that $\L (\Lambda_{\sD_j})\prec_\M \L (h_i^{-1} \Lambda_i  h_i)$. 
This implies that there exist projections $p\in \L (\Lambda_{\sD_j}), q\in \L (h_i^{-1} \Lambda_i  h_i)$, a non-zero partial isometry $w\in q \M p$ and a $*$-homomorphism $\theta: p\L (\Lambda_{\sD_j})p\to q\L (h_i^{-1} \Lambda_i  h_i)q$ such that $\theta(x)w=wx$. Since $\La_i\leqslant h_i\La_{\sD_{j}} h^{-1}_i$, it follows that $w p\L (\Lambda_{\sD_j})p\subset  q \L (\Lambda_{\sD_j})qw$. Since $\D_j$ is a clique, it follows from Lemma \ref{lemma.control.qn} that $w\in \L (\Lambda_{\sD_j})$.
This shows that $\L (\Lambda_{\sD_j})\prec_{\L (\Lambda_{\sD_j})} \L (h_i^{-1} \Lambda_i  h_i)$, which further implies that $[\Lambda_{\sD_j}: h_i^{-1} \Lambda_i  h_i]<\infty$. This proves the claim.

Before completing Claim \ref{commensurable} by
 assuming that 4. holds, we first establish the following notation and prove the following subclaim. Since $\mathbb T w_i \Gamma_{\sC_i} w_i^*=\mathbb T \Lambda_i$, we define the subgroups $\Lambda^i_v, \Lambda^i_{\hat v}<\Lambda_i$ by  $\mathbb T w_i \Gamma_{v} w_i^*=\mathbb T \Lambda^i_{ v}$ and $\mathbb T w_i \Gamma_{{\rm link}(v)} w_i^*=\mathbb T \Lambda^i_{\hat v}$, respectively. Note that $\Lambda_i= \Lambda^i_{v}\times\Lambda^i_{\hat v}$ since $v\in \sC_i^{\rm int}$.

\noindent
{\bf Subclaim 1.} For any subset $\sD_0\subset \sD_j$  satisfying $\L(\Lambda_{\sD_0})\prec_{\M} \L (\Gamma_{{\rm link}(v)})$, the subgroup $(\Lambda_0)_{\sD_0}:= h_i \Lambda_{\sD_0} h_i^{-1}\cap \Lambda^i_{\hat v} $ satisfies  $[h_i \Lambda_{\sD_0} h_i^{-1}:(\Lambda_0)_{\sD_0}]<\infty$.

\noindent {\it Proof of Subclaim 1.}
For proving the subclaim, we consider a subset $\sD_0\subset \sD_j$ satisfying $\L(\Lambda_{\sD_0})\prec_{\M} \L (\Gamma_{{\rm link}(v)})$. By applying Lemma \ref{lemma.CI17} there is $k\in\Lambda$ such that
\begin{equation}\label{d1}
[h_i \Lambda_{\sD_0} h_i^{-1}: h_i \Lambda_{\sD_0} h_i^{-1}\cap k \Lambda^i_{\hat v} k^{-1}]<\infty.    
\end{equation}

Set $\Lambda_0= k^{-1}h_i \Lambda_{\sD_0} h_i^{-1}k\cap  \Lambda^i_{\hat v}$.
We continue by showing that $k$ normalizes $h_i \Lambda_{\sD_0}  h_i^{-1}$. Recall that $\Lambda^i_{\hat v}<\Lambda_i< h_i \Lambda_{\sD_j} h_i^{-1}$. From \eqref{d1} we get that $[h_i \Lambda_{\sD_0} h_i^{-1}: k \Lambda_0 k^{-1}]<\infty$, and consequently, there is a finite subset $S\subset \Lambda$ satisfying $(h_i \Lambda_{\sD_0} h_i^{-1})k\subset \cup_{s\in S} sk \Lambda_0 \subset \cup_{s\in S} sk h_i \Lambda_{\sD_j}h_i^{-1}$. By applying Theorem \ref{controlquasinormalizer1} we derive that $h_i^{-1}k h_i\in \Lambda_{\sD_j\cup {\rm link}(\sD_0)}$, which implies that $k$ is normalizing $h_i \Lambda_{\sD_0}  h_i^{-1}$. In combination with \eqref{d1}, this shows that $\Lambda_0=(\Lambda_0)_{\sD_0}$ is a finite index subgroup of $h_i \Lambda_{\sD_0} h_i^{-1}$. This proves the subclaim.
\hfill$\blacksquare$



Now, we can assume that 4. holds for all 
$w\in \sD_j$.
Namely, one of the following holds: a) $\P_1 \prec_\M \L(\Gamma_{{\rm link}(v) })$ or  b) $\P_2 \prec_\M \L(\Gamma_{{\rm link}(v) })$. We continue by considering the following cases.

\noindent
{\bf Case 1.} Suppose there are $w_1\neq w_2\in \sD_j$ such that a) holds. The subclaim implies that $[h_i \Lambda_{\sD_j\setminus\{w_1\}} h_i^{-1}:(\Lambda_0)_{\sD_j\setminus\{w_1\}}]<\infty$ and $[h_i \Lambda_{\sD_j\setminus\{w_2\}} h_i^{-1}:(\Lambda_0)_{\sD_j\setminus\{w_2\}}]<\infty$. 
Since $w_1\in \sD_j\setminus\{w_2\}$, it follows that $[h_i\Lambda_{w_1}h_i^{-1}: h_i\Lambda_{w_1}h_i^{-1}\cap \Lambda^i_{\hat v}]<\infty$. Consequently, the inclusion $(h_i\Lambda_{w_1}h_i^{-1}\cap \Lambda^i_{\hat v})\vee (\Lambda_0)_{\sD_j\setminus\{w_1\}}<\Lambda_i<h_i \sD_j h_i^{-1}$
is of finite index. This proves the claim.

\noindent
{\bf Case 2.} Next, we suppose that $b)$ holds for any $w\in \sD_j$. Using the subclaim we get that $(\Lambda_0)_{w}<h_i \Lambda_{w}  h_i^{-1}$ has finite index for any $w\in\sD_j$. Hence, the inclusion $\vee_{w\in\sD_j}(\Lambda_0)_{w}<\Lambda_i<\vee_{w\in\sD_j}h_i \Lambda_{w}  h_i^{-1}=h_i \Lambda_{\sD_j} h_i ^{-1}$ has finite index as well, which again proves the claim.

\noindent
{\bf Case 3.} Finally, assume that there is $w_0\in \sD_j$ such that a) holds and for any $w\in \sD_j\setminus\{w_0\}$ we have that b) holds. We note in passing that in this situation b) does not provide any additional information besides a). 
Note that Subclaim 1 provides  
\begin{equation}\label{r1}
    [h_i \Lambda_{\sD_j\setminus\{w_0\}} h_i^{-1}:(\Lambda_0)_{\sD_j\setminus\{w_0\}}]<\infty
\end{equation}

\noindent We continue by showing the following.

\noindent {\bf Subclaim 2.} The group $(\Lambda_0)_{\sD_j\setminus\{w_0\}}$ is  icc and $ vC_{h_i\La_{\mathscr D_j \setminus \{w_0\}}h_i^{-1}  }((\Lambda_0)_{\sD_j\setminus\{w_0\}})=1$. Also, there exists a subgroup $\Upsilon < h_i \La_{w_0} h_i^{-1}$ satisfying  $[\La_i: (\Lambda_0)_{\sD_j\setminus\{w_0\}}\times \Upsilon]<\infty$.

\noindent {\it Proof of Subclaim 2.}
Using  $(\Lambda_0)_{\sD_j\setminus\{w_0\}}<\Lambda^i_{\hat v}<h_i \Lambda_{\sD_j\setminus\{w_0\}} h_i^{-1}\times h_i \Lambda_{w_0} h_i^{-1}$, we apply Lemma \ref{lemma.elementary.groups1} and deduce that there is a subgroup $\Lambda'< h_i \Lambda_{w_0} h_i^{-1}\cap\Lambda_{\hat v}^i$ such that 
\begin{equation}\label{w1}
[\Lambda_{\hat v}^i: (\Lambda_0)_{\sD_j\setminus\{w_0\}}\times \Lambda']<\infty.    
\end{equation}
Since $\Lambda_{\hat v}^i$ is icc, \eqref{w1} implies that $(\Lambda_0)_{\sD_j\setminus\{w_0\}}$ is icc as well. 
Next, we argue that there is a finite index subgroup $\La'' \leqslant h_i \Lambda_{w_0} h_i^{-1}\cap \La_v^i$ such that 
\begin{equation}\label{w2}
[\Lambda^i_v: \Lambda'']<\infty
\end{equation}
Indeed, as $(\Lambda_0)_{\sD_j\setminus\{w_0\}}$ is icc, one can see from \eqref{r1} that $C_{h_i \Lambda_{\sD_j\setminus\{w_0\}} h_i^{-1}}((\Lambda_0)_{\sD_j\setminus\{w_0\}})=1$, and thus,
$\La_v^i\leqslant C_{h_i\La_{\mathscr D_j}h_i^{-1}}(\La^i_{\hat v} ) \leqslant C_{h_i \La_{\mathscr D_j}h_i^{-1}}((\Lambda_0)_{\sD_j\setminus\{w_0\}} )  \leqslant h_i \La_{w_0}h_i^{-1}$. This shows that we can actually choose $\Lambda''=\Lambda_v^i$ to satisfy \eqref{w2}.

Altogether, \eqref{w1} and \eqref{w2} imply the existence of $\Upsilon := \La'\times \La''< h_i \La_{w_0} h_i^{-1}$ such that  $[\La_i: (\Lambda_0)_{\sD_j\setminus\{w_0\}}\times \Upsilon]<\infty $. Together with condition 2., we deduce that
$1= vC_{\La}(\La_i)= vC_{\La }((\Lambda_0)_{\sD_j\setminus\{w_0\}}\times \Upsilon)\geq vC_{h_i\La_{\mathscr D_j \setminus \{w_0\}}h_i^{-1}  }((\Lambda_0)_{\sD_j\setminus\{w_0\}})\times vC_{h_i\La_{w_0}h_i^{-1} }( \Upsilon)$.
In particular, this shows that $ vC_{h_i\La_{\mathscr D_j \setminus \{w_0\}}h_i^{-1}  }((\Lambda_0)_{\sD_j\setminus\{w_0\}})=1.$
\hfill$\blacksquare$

We continue by applying Lemma \ref{lemma.UPF.groups} and find a subgraph $v\in \mathscr C^1_i\subset \mathscr C_i$, a finite index subgroup $\Upsilon_0\leqslant\Upsilon$ and finite index subgroups $\Ga'_w\leqslant \Ga_w$, for any $w\in \mathscr C^1_i$, such that if $\Ga'_{\mathscr C_i^1}:= \times_{w\in \mathscr C_i^1} \Ga'_w$, then  
\begin{equation}\label{preimage}\mathbb T w_i \Ga'_{\mathscr C_i^1} w_i^*=\mathbb T \Upsilon_0. 
\end{equation}  

\noindent By Lemma \ref{lemma.UPF.groups} we consider some finite index subgroups $\Ga'_{\mathscr C_i \setminus \mathscr C_i^1}\leqslant \Ga_{\mathscr C_i\setminus \mathscr C_i^1}$ and $(\Lambda^0_0)_{\sD_j\setminus\{w_0\}}< (\Lambda_0)_{\sD_j\setminus\{w_0\}}$ such that 
\begin{equation}\label{complementpreimage}
  \mathbb T w_i \Ga'_{\mathscr C_i \setminus\mathscr C_i^1} w_i^*=\mathbb T (\Lambda^0_0)_{\sD_j\setminus\{w_0\}}.  
\end{equation}

Since $\Ga'_{\mathscr C_i^1}\leqslant \Ga_{\mathscr C_i^1}$ has finite index and $v\in \sC_i^1\cap \sC_i^{\rm int}$, we obtain from Theorem \ref{controlquasinormalizer1} that  $\L(\Ga_{\mathscr C_i \setminus \mathscr C_i^1})= \L(\Ga'_{\mathscr C_i^1})' \cap \M$. Hence, by taking relative commutants in \eqref{preimage} and using $\Upsilon_0<\Upsilon< h_i\Lambda_{w_0}h_i^{-1}$, we deduce that $w_i \L(\Ga_{\mathscr C_i \setminus \mathscr C_i^1}) w_i^*= w_i \L(\Ga'_{\mathscr C_i^1})' w_i^*\cap \M = \L(\Upsilon_0)'\cap \M \supseteq \L(h_i\La_{{\rm link}(w_0)}h_i^{-1}) $. Since $(\Lambda_0)_{\sD_j\setminus\{w_0\}}\leqslant  h_i \Lambda_{\sD_j\setminus\{w_0\}} h_i^{-1}$, then relation \eqref{complementpreimage} entails $w_i \L(\Ga'_{\mathscr C_i \setminus \mathscr C_i^1}) w_i^*\subseteq \L(h_i \La_{\sD_j }h_i^{-1})$, and hence,  by combining this with the prior containment, it follows that $\L(h_i\Lambda_{{\rm link}(w_0)}h_i^{-1})\prec_{\M} \L(h_i \La_{\sD_j }h_i^{-1})$. Using
Corollary \ref{corollary.graph.CI17} we further deduce that ${\rm link}(w_0)\subset \mathscr D_j$. 

Next, we claim that 
\begin{equation}\label{int}\mathscr C^1_i\subseteq \sC_i^{\rm int}.\end{equation} 
Assume by contradiction that \eqref{int} does not hold. Without loss of generality,
we may assume there is $v_0\in  \mathscr C^1_i\cap \mathscr C_{i+1}$. Note that $\mathbb T w_i \Ga'_{v_0}
w_i^*= \mathbb T \Upsilon_1$ for some subgroup  $\Upsilon_1< \Upsilon_0< h_i \La_{w_o}h_i^{-1}$, and hence, $ w_i\L(\Ga'_{v_0})
w_i^*= \L(\Upsilon_1)$. By taking relative commutants and using  Corollary \ref{controlintertw5} and ${\rm link}(w_0)=\sD_j\setminus\{w_0\}$, we see that \begin{equation}\label{eqcomm}\begin{split} 
\Q:=w_i\L(\Ga_{{\rm link}(v_0) })w_i^*&= (w_i\L(\Ga'_{v_0})
w_i^*)'\cap \M = \L(\Upsilon_1)'\cap \M\\
&= \L(h_i\La_{\sD_j\setminus \{w_0\}} h_i^{-1})\bar\otimes (\L(\Upsilon_1)'\cap \L(h_i \La_{w_0}h_i^{-1}))\end{split}
\end{equation} 
Since ${\rm link}(v_0)=(\sC_i\cup \sC_{i+1})\setminus\{v_0\}$,
we note that  
$$
\Q=w_i\L(\Ga_{{\rm link}(v_0) })w_i^*=   w_i\L(\Ga_{\mathscr{C}_{i,i+1}\setminus \{v_0\} })w_i^* \bar\otimes (w_i\L(\Ga_{\mathscr C_i \setminus \mathscr{C}_{i,i+1}})w_i^* \ast w_i \L(\Ga_{\mathscr{C}_{i+1} \setminus \mathscr{C}_{i,i+1}}) w_i^*).$$ 
Since $ \L (h_i\La_{\mathscr  D_j\setminus \{w_0\}} h_i^{-1})$ is a regular property (T) von Neumann subalgebra of $\Q$, 
it follows from the main technical result of \cite{IPP05} (see also \cite[Theorem 5.4]{PV09}) that
$\L (h_i\La_{\mathscr  D_j\setminus \{w_0\}} h_i^{-1})\prec_\Q w_i\L(\Ga_{\mathscr C_{i,i+1}\setminus \{v_0\} })w_i^*$. Thus, relation \eqref{complementpreimage} and Corollary \ref{corollary.graph.CI17} entail 
\begin{equation}\label{pp1}
\mathscr{C}_{i} \setminus \mathscr{C}^{1}_{i}\subseteq \mathscr{C}_{i,i+1}\setminus \{v_0\}.    
\end{equation}
Using \eqref{complementpreimage} we have $w_i \L(\Ga'_{\mathscr{C}_{i} \setminus\mathscr{C}_{i}^{1}} ) w_i^*=\L((\Lambda_0^0)_{\sD_j\setminus\{w_0\}})$. By taking relative commutants in \eqref{eqcomm} inside $\Q$ and using Subclaim 2 and \eqref{pp1}, we further get 
\begin{equation}\label{pp11}
w_i\L(\Ga_{\mathscr{C}_{i}\cup \mathscr{C}_{i+1} \setminus(\mathscr{C}_{i}\setminus \mathscr {C}^{1}_{i} )})w_i^*=\L(\Upsilon_1)'\cap \L(h_i \La_{w_0}h_i^{-1})\subset \L(h_i \La_{w_0}h_i^{-1}).    
\end{equation}
 
In particular, we have 
\begin{equation}\label{gremb1} w_i\L(\Ga_{\mathscr C_{i+1} \setminus \mathscr C_{i,i+1} })w_i^*\subset \L(h_i \La_{w_0}h_i^{-1}).\end{equation} 
Since we also have  $\mathbb T w_{i+1}\Ga_{\mathscr C_{i+1} \setminus\mathscr C_{i,i+1} }w_{i+1}^*< \mathbb T h_{i+1}\La_{\mathscr D_{\sigma(i+1)}} h_{i+1}^{-1} $, we obtain that $w_0\in \mathscr D_{\sigma(i+1)}$. Indeed, by \cite[Lemma 2.7]{Va10a}, there is $\lambda\in \Lambda$ such that $\L(\Ga_{\mathscr C_{i+1} \setminus\mathscr C_{i,i+1} })\prec_{\M}^s \L (\lambda \Lambda_{w_0} \lambda^{-1} \cap \Lambda_{\sD_{\sigma(i+1)}} )$, ane hence, $\lambda \Lambda_{w_0} \lambda^{-1} \cap \Lambda_{\sD_{\sigma(i+1)}}$ is an infinite group. By Proposition \ref{proposition.AM10} we get that $w_0\in \sD_{\sigma(i+1)}$.
Now, using that ${\rm link}(w_0)\subset \sD_j$, we get $\sigma(i+1)=j$. In particular, we have that $\mathbb T w_{i+1}\Ga_{\mathscr C_{i+1} \setminus\mathscr C_{i,i+1} }w_{i+1}^*=\mathbb T \La_{i+1}< \mathbb T h_{i+1}\La_{\mathscr D_{j}} h_{i+1}^{-1}.$ This further implies that  \begin{equation}\label{gremb2}\mathbb T h_i h^{-1}_{i+1}w_{i+1} w_i^*( w_i\Ga_{\mathscr C_{i+1} \setminus\mathscr C_{i,i+1} } w_i^*) w_i w_{i+1}^* h_{i+1} h_i^{-1}< \mathbb T h_{i}\La_{\mathscr D_{j}} h_{i}^{-1}.
\end{equation}

Using  Theorem \ref{controlquasinormalizer1}, \eqref{gremb1} and \eqref{gremb2} further imply that  $x:=h_i h^{-1}_{i+1}w_{i+1} w_i^*\in \mathscr U(\L(h_i \La_{\mathscr D_j} h_i^{-1}))$. Thus, using Lemma \ref{finiterange}, \eqref{gremb1} and \eqref{gremb2} further entail the existence of a finite index subgroup $\Ga''_{\mathscr C_{i+1} \setminus\mathscr C_{i,i+1}}\leqslant\Ga_{\mathscr{C}_{i+1} \setminus\mathscr{C}_{i,i+1}}$ such that  $\mathbb T x( w_i\Ga''_{\mathscr C_{i+1} \setminus\mathscr C_{i,i+1} } w_i^*) x^* < \mathbb T h_{i}\La_{w_0} h_{i}^{-1}$. In particular, there exist a finite index subgroup $\Ga''_{\mathscr C_{i+1,i+2}}\leqslant\Ga_{\mathscr C_{i+1,i+2}}$ and a subgroup $\Lambda_2<h_i \Lambda_{w_0}  h_i^{-1}$  such that 
$\mathbb T x( w_i\Ga''_{\mathscr C_{i+1,i+2}} w_i^*) x^* =\mathbb T \La_2$. Taking relative commutants and using Corollary \ref{controlintertw5} we get \begin{equation}\label{equalcomm3}x\L(\Ga_{\mathscr C_{i+1}\cup \mathscr C_{i+2} \setminus \mathscr C_{i+1,i+2}})x^*= \L(h_i \La_{\mathscr D_j\setminus \{w_0\}} h_i^{-1})\bar \otimes( \L(\La_2)'\cap \L(h_i\La_{w_0}h_i^{-1})).
\end{equation} 
However, we canonically have the free product decomposition 
\begin{equation}\label{pp111}
x\L(\Ga_{\mathscr{C}_{i+1}\cup \mathscr{C}_{i+2} \setminus \mathscr {C}_{i+1,i+2}})x^*= x\L(\Ga_{\mathscr{C}_{i+1}\setminus \mathscr{C}_{i+1,i+2}}) x^*\ast x\L(\Ga_{\mathscr{C}_{i+2} \setminus \mathscr{C}_{i+1,i+2}})x^*    
\end{equation}
From \eqref{equalcomm3} we derive that $\L(h_i \La_{\mathscr D_j\setminus \{w_0\}} h_i^{-1})$ is a regular property (T) von Neumann subalgebra of $x\L(\Ga_{\mathscr{C}_{i+1}\cup \mathscr{C}_{i+2} \setminus \mathscr {C}_{i+1,i+2}})x^*$. Hence, we obtain a contradiction by applying   \cite[Theorem 5.4]{PV09} (see also \cite{IPP05}) to relation \eqref{pp111}.
Thus, relation \eqref{int} holds.

Using \eqref{int} and taking relative commutants in  \eqref{complementpreimage} we obtain that $w_i \L(\Ga_{\mathscr C_i^1})w_i^*= \L (h_i \La_{w_0}h_i^{-1})$. Combining this with \eqref{preimage} we conclude that $[h_i \La_{w_0} h_i^{-1}: \Upsilon_0]<\infty$. Thus, $[h_i\Lambda_{\sD_j} h_i^{-1}: (\Lambda_0)_{\sD_j\setminus\{w_0\}}\times\Upsilon_0 ]<\infty$. Since $ (\Lambda_0)_{\sD_j\setminus\{w_0\}}\times\Upsilon_0 <\Lambda_i< h_i\Lambda_{\sD_j} h_i^{-1}$, we conclude the proof of our claim in case 3. 
\hfill$\blacksquare$

\noindent Next we prove the following claim.

\begin{claim}\label{fullidentification}
For any $1\leq i\leq n$,
there exists a unitary $y_i \in \M$ such that $\mathbb T y_i \Gamma_{\mathscr C_i} y_i^*= \mathbb T \La_{\mathscr D_{\sigma (i)}}$.

\end{claim}

\noindent \emph{Proof of Claim \ref{fullidentification}}. Fix $1\leq i\leq n$. From Claim \ref{commensurable}  we have  $[h_i\La_{\sD_{\sigma(i)}} h^{-1}_i: \Lambda_i]<\infty$. Recall also that 
$\mathbb T w_i \Ga_{\sC_i} w_i^* =\mathbb T \La_i$, which  clearly implies that $w_i \L(\Ga_{\sC_i}) w_i^* = \L(\La_i)$. Taking quasinormalizers of these algebras and using successively part 2) in Lemma \ref{QN2} we have
\begin{equation*} \begin{split}w_i \L(\Ga_{\mathscr C_i}) w_i^*&= w_i \L({\rm QN}_\Ga(\Ga_{\mathscr C_i})) w_i^*=  w_i \mathscr Q\mathscr N (\L(\Ga_{\sC_i}))'' w_i^*\\&= \mathscr Q\mathscr N (w_i\L(\Ga_{\sC_i})w_i^*)''=  \mathscr{QN}_\M(\L(\La_i))''=   \L({\rm QN}_\La(\La_i)) \\& =\L({\rm QN}_\La(h_i \La_{\mathscr D_{\sigma(i)}}h_i^{-1}))= \L({\rm QN}_\La(h_i \La_{\mathscr D_{\sigma(i)}}h_i^{-1}))=\L(h_i \La_{\mathscr D_{\sigma(i)}}h_i^{-1})\\&= v_{h_i}\L(\La_{\mathscr D_{\sigma(i)}}) v_{h_i^{-1}}.\end{split}\end{equation*} 
Thus the claim follows from the $W^*$-superrigidity of $\Ga_{\mathscr C_i}$, see Corollary \ref{superprod}. $\hfill\blacksquare$
\vskip 0.07in
\noindent In particular, Claim \ref{fullidentification} implies that $y_i\L(\Ga_{\mathscr C_i})y_i^*= \L(\La_{\mathscr D_{\sigma(i)}})$. Then using Theorem \ref{cyclerel1} and the same argument from the proof of \cite[Theorem 7.9]{CDD22} one can find a unitary $u\in \M$ such that $u\L(\Ga_{\mathscr C_i})u^*= \L(\La_{\mathscr D_{\sigma(i)}})$ for all $i$. Thus without any loss of generality we can assume that $\L(\Ga_{\mathscr C_i})= \L(\La_{\mathscr D_{\sigma(i)}})=:\M_i$ for all $i$.  Since $\Ga_{\mathscr C_i}$ is $W^*$-superrigid we can find unitaries $x_i\in \M_i$ such that \begin{equation}\label{wsuper8}\mathbb T x_i \Gamma_{\mathscr C_i} x_i^*= \mathbb T \La_{\mathscr D_{\sigma (i)}}\text{ for all  }i.\end{equation}

These relations already show that $\La = \vee_i \La_{\mathscr D_{\sigma(i)}}$ and also $\sigma: {\rm cliq}(\mathscr G) \ra {\rm cliq}(\mathscr H)$ is a bijection. Moreover, one can see that the graph $\mathscr H\in {\rm CC}_1$. This and relations \eqref{wsuper8} show that $\La$ can be represented as a graph product $\La_{\mathscr H'}$, where the vertex groups $\La_w$ are property (T) wreath-like products as in Theorem \ref{prodrigid1} and the underlying graph $\mathscr H'$ is isometric to $\mathscr G$. Hence, the desired conclusion follows from \cite[Theorem 7.10]{CDD22}. \end{proof}


\begin{thebibliography}{99}\footnotesize

\bibitem[Ag13]{Ag13} I. Agol, \textit{The virtual Haken conjecture}, Doc. Math. \textbf{18} (2013), 1045--1087. With an appendix by I. Agol, D. Groves, J. Manning.

\bibitem[AM10]{AM10} Y. Antol\'in, A. Minasyan,  \textit{Tits alternatives for graph products}, J. Reine Angew. Math. \textbf{704} (2015), 55--83.


\bibitem[Be14]{Be14} M. Berbec, {\it W$^*$-superrigidity for wreath products with groups having positive first $\ell^2$-Betti number}, Internat. J. Math. {\bf 26} (2015), no. 1, 1550003, 27.
\bibitem[BV12]{BV12} M. Berbec, S. Vaes, \textit{$W^*$-superrigidity for group von Neumann algebras of left-right wreath products}, Proc. Lond. Math. Soc. {\bf 108} (2014), no. 5, 1116--1152.



\bibitem[BKKO14]{BKKO14} E. Breuillard, M. Kalantar, M. Kennedy, N. Ozawa, {\it $C^*$-simplicity and the unique trace property for discrete groups}, Publ. Math. Inst. Hautes \'Etudes Sci. {\bf126} (2017), 35--71.

\bibitem[BO08]{BO08} N. P. Brown, N. Ozawa, \textit{$\mathrm{C}^\ast$-algebras and finite-dimensional approximations}, Graduate Studies in Mathematics, \textbf{88}. American Mathematical Society, Providence, RI, 2008.



\bibitem[Ca16]{Ca16}   M. Caspers, \textit{Absence of Cartan subalgebras for right-angled Hecke von Neumann
algebras}, Anal. PDE \textbf{13} (2020), no. 1, 1--28.
\bibitem[CF14]{CF14} M. Caspers, P. Fima, \textit{Graph products of operator algebras}, J. Noncommut. Geom.
\textbf{11} (2017), no. 1, 367--411.

\bibitem[CDHK20]{CDHK20} I. Chifan, S. Das, C. Houdayer, K. Khan, \textit{Examples of property (T) factors with trivial fundamental group}, to appear in Amer. J. Math, arXiv:2003.11729.

\bibitem[CDD22]{CDD22} I. Chifan, M. Davis, D. Drimbe, \textit{Rigidity for von Neumann algebras of  graph product groups. I. Structure of automorphisms}, Preprint 2022, arXiv:2209.12996.



\bibitem[CdSS15]{CdSS15} I. Chifan, R. de Santiago, T. Sinclair, {\it $W^{*}$
-rigidity for the von Neumann algebras of products of hyperbolic
groups}, Geom. Funct. Anal. {\bf 26} (2016), no. 1, 136--159. 

\bibitem[CdSS17]{CdSS17} I. Chifan, R. de Santiago, W. Sucpikarnon, {\it Tensor product decompositions of II$_{1}$ factors arising from
extensions of amalgamated free product groups}, Comm. Math. Phys. {\bf 364} (2018), 1163--1194.
		


\bibitem[CD-AD20]{CD-AD20} I. Chifan, A. Diaz-Arias, D. Drimbe, {\it New examples of $W^*$ and $C^*$
-superrigid groups}, Adv. Math. \textbf{412} (2023), Paper no. 108797, 57pp.
 
  \bibitem[CD-AD21]{CD-AD21} I. Chifan, A. Diaz-Arias, D. Drimbe, \textit{$W^*$  and $C^*$-superrigidity results for coinduced groups}, J. Funct. Anal. \textbf{284} (2023), no.1, 109730.


 
 

 
 












\bibitem[CH08]{CH08} I. Chifan, C. Houdayer, {\it Bass-Serre rigidity results in von Neumann algebras}, Duke Math. J. {\bf 153} (2010),
no. 1, 23--54.



\bibitem[CI17]{CI17} I. Chifan, A. Ioana, \textit{Amalgamated free product rigidity for group von Neumann algebras}, Adv. Math. \textbf{329} (2018), 819--850.

\bibitem[CIK13]{CIK13} I. Chifan, A. Ioana, Y. Kida, \textit{$W^*$-superrigidity for arbitrary actions of central quotients of braid groups}, Math. Ann. \textbf{361} (2015), 563--582.



\bibitem[CIOS21]{CIOS21} I. Chifan, A. Ioana, D. Osin, B. Sun, {\it Wreath-like product groups and rigidity of their von
Neumann algebras}, Ann. of Math., {\bf 198} (2023), no. 3, 1261--1303.


\bibitem[CIOS23]{CIOS23} I. Chifan, A. Ioana, D. Osin, B. Sun, {\it Uncountable families of $W^*$ and $C^*$-superrigid Kazhdan groups}, Preprint 2023.
  
 
\bibitem[CKP14]{CKP14} I. Chifan, Y. Kida, S. Pant, \textit{Primeness results for von Neumann algebras associated with surface braid groups}, Int. Math. Res. Not. IMRN 2016, no. 16, 4807--4848.

\bibitem[CK-E21]{CK-E21} I. Chifan, S. Kunnawalkam-Elyavalli,  \textit{Cartan Subalgebras in von Neumann Algebras Associated with Graph Product Groups}, to appear in Groups, Geom. Dyn. arXiv:2107.04710.
		


\bibitem[Co76]{Co76} A. Connes, {\it Classification of injective factors. Cases II$_{1}$, II$_{\infty}$, III$_{\lambda}$, $\lambda\neq 1$} Ann. of Math. (2) {\bf 104} (1976), no. 1, 73--115.

\bibitem[Co82]{Co82} A. Connes,  {\it Classification des facteurs. Operator algebras and applications, Part $2$},
(Kingston, Ont., 1980), 43--109, Proc. Sympos. Pure Math., 38, Amer. Math. Soc.,
Providence, R.I., 1982.


\bibitem[CU18]{CU18} I. Chifan, B. Udrea, \textit{Some rigidity results for II$_1$ factors arising from wreath products of property (T) groups}, J. Funct. Anal. {\bf 278} (2020), no. 7, 108419, 32pp.





		


		
 
















\bibitem[DHI16]{DHI16} D. Drimbe, D. Hoff, A. Ioana, {\it Prime II$_1$ factors arising from irreducible lattices in products of rank one simple Lie groups}, J. Reine Angew. Math. {\bf 757} (2019), 197--246.  

\bibitem[DK-E21]{DK-E21} C. Ding, S. Kunnawalkam Elayavalli, Proper proximality for groups acting on
trees, Preprint 2021, arXiv:2107.02917.





\bibitem[Dr19a]{Dr19a} D. Drimbe, {\it Prime II$_1$ factors arising from actions of product groups}, J. Funct. Anal. {\bf 278} (2020), no. 5, 108366, 23.
		
\bibitem[Dr19b]{Dr19b} D. Drimbe, {\it Orbit equivalence rigidity for product actions}, Comm. Math. Phys. {\bf 379} (2020), no. 1, 41–59.
		
\bibitem[Dr20]{Dr20} D. Drimbe, {\it Product rigidity in von Neumann and C$^*$-algebras via s-malleable deformations}, Comm. Math. Phys. {\bf 388} (2021), no. 1, 329--349. 


		
\bibitem[FGS10]{FGS10} J. Fang, S. Gao, R. Smith, \textit{The Relative Weak Asymptotic Homomorphism Property for Inclusions of Finite von Neumann Algebras}, Internat. J. Math. {\bf 22} (2011), no. 7, 991--1011.




\bibitem[Ge95]{Ge95} L. Ge: {\it On maximal injective subalgebras of factors}, Adv. Math. {\bf 118} (1996), no. 1, 34-70.

\bibitem[Gr90]{Gr90}E. Green, {\it Graph Products of Groups}, PhD Thesis, The University of Leeds, 1990, \url{http://etheses.whiterose.ac.uk/236/}.

\bibitem[GM19]{GM19}  A. Genevois, A. Martin, {\it Automorphisms of graph products of groups from a geometric perspective}, Proc. Lond. Math. Soc. (3) \textbf{119} (2019), no. 6, 1745--1779

\bibitem[HW08]{HW08} F. Haglund, D. Wise, {\it Special cube complexes}, Geom. Funct. Anal. \textbf{17} (2008), no. 5, 1551--1620.

\bibitem[HH20]{HH20} C. Horbez, J. Huang, \textit{Measure equivalence classification of transvection-free right-angled Artin groups}, J. \'Ec. polytech. Math. \textbf{9} (2022), 1021--1067


\bibitem[HH21]{HH21} C. Horbez, J. Huang, \textit{Orbit equivalence rigidity of irreducible actions of right-angled Artin groups}, Preprint 2021, arXiv:2110.04141.

\bibitem[IPP05]{IPP05} A. Ioana, J. Peterson, S. Popa, \textit{Amalgamated free products of weakly rigid factors and calculation of their symmetry groups}. Acta Math. {\bf 200} (2008), no. 1, 85--153. 
 
 



\bibitem[Io10]{Io10} A. Ioana, \textit{$W^*$-superrigidity for Bernoulli actions of property (T) groups}, J. Amer. Math. Soc. \textbf{24} (2011), no. 4, 1175--1226.


\bibitem[Io11]{Io11} A. Ioana, \textit{Uniqueness of the group measure space decomposition for Popa's $\mathcal H\mathcal J$ factors}, Geom. Funct. Anal.  \textbf{22} (2012), no. 3, 699--732. 

 


\bibitem [Io12]{Io12} A. Ioana, {\it Classification and rigidity for von Neumann algebras}, European Congress of Mathematics, EMS (2013), 601-625.

\bibitem[Io12b]{Io12b} A. Ioana, {\it Cartan subalgebras of amalgamated free product II$_1$ factors}, with an appendix by Adrian Ioana and Stefaan Vaes, Ann. Sci. \'Ec. Norm. Sup\'er. (4) {\bf 48} (2015), no. 1, 71--130.

\bibitem[Io17]{Io17} A. Ioana, {\it Rigidity for von Neumann algebras}, Proceedings of the International Congress of Mathematicians-Rio de Janeiro 2018. Vol. III. Invited lectures, 1639-1672, World Sci. Publ., Hackensack, NJ, 2018.

 
\bibitem[IPV10]{IPV10} A. Ioana, S. Popa, S. Vaes, \textit{A class of superrigid group von Neumann algebras}, Ann. of Math. (2) {\bf 178} (2013), no. 1, 231--286.

\bibitem[Jo81]{Jo81} V. F. R. Jones, \textit{Index for subfactors}, Invent. Math. {\bf 72} (1983), no. 1, 1--25.


\bibitem[Jo00]{Jo00} V. F. R. Jones, \textit{Ten problems. In Mathematics: frontiers and perspectives} (ed. by V. Arnold, M. Atiyah, P. Lax and B. Mazur), 79-91, Amer. Math. Soc., Providence, RI, 2000.

\bibitem[KV15]{KV15} A. S. Krogager, S. Vaes, \textit{A class of \text{ }$\rm II_1$\text{ }factors with exactly two group measure space decompositions}, J. Math. Pures Appl. (9) \textbf{108} (2017), no. 1, 88--110.

	


\bibitem[MO13]{MO13}  A. Minasyan, D. Osin, \textit{Acylindrical hyperbolicity of groups acting on trees}, Math. Ann. $\textbf{362}$ (2015), no. 3-4, 1055--1105.


\bibitem[MvN43]{MvN43} F.J. Murray, J. von Neumann, {\it On rings of operators} IV, Ann. of Math (2) {\bf 44} (1943), 716--808.

 \bibitem[MY02]{MY02} I. Mineyev, G. Yu, {\it The Baum-Connes conjecture for hyperbolic groups}, Invent. Math. {\bf 149} (2002), no. 1, 97--122.
\bibitem[O-O98]{O-O98} H. Oyono-Oyono. \textit{La conjecture de Baum-Connes pour les groupes agissant sur
les arbres.} C. R. Acad. Sci. Paris S\'er. I Math. $\textbf{326}$ (1998), no. 7, 799--804.

\bibitem[O-O01b]{O-O01b} H. Oyono-Oyono, \textit{Baum-Connes Conjecture and extensions}, J. Reine Angew. Math. \textbf{532} (2001) 133--149.




\bibitem[OP03]{OP03} N. Ozawa and S. Popa, {\it Some prime factorization results for type II$_{1}$ factors}, Invent. Math. {\bf 156} (2004), no. 2, 223--234.

\bibitem[OP07]{OP07}  N. Ozawa, S. Popa, {\it On a class of II$_1$ factors with at most one Cartan subalgebra}, Ann. of
Math (2) {\bf172} (2010), no. 1, 713--749.

\bibitem[Po99]{Po99} S. Popa, \textit{Some properties of the symmetric enveloping algebra of a factor, with applications to amenability
			and property (T)}, Doc. Math. \textbf{4} (1999), 665-744.

\bibitem[Po01]{Po01} S. Popa, \textit{On a class of type II$_1$ factors with Betti numbers invariants}, Ann. of Math (2) \textbf{163} (2006), 809--899.

\bibitem[Po03]{Po03} S. Popa, \textit{Strong rigidity of $\textrm{II}_1$ factors arising from malleable actions of $w$-rigid groups I}, Invent. Math. \textbf{165}  (2006), no. 2, 369--408.

\bibitem[Po04]{Po04} S. Popa, \textit{Strong rigidity of $\textrm{II}_1$ factors arising from malleable actions of $w$-rigid groups II}, Invent. Math. {\bf 165} (2006), no. 2, 409--452.

\bibitem[Po06]{Po06} S. Popa, \textit{Deformation and rigidity for group actions and von Neumann algebras}, International Congress of Mathematicians. Vol. I, 445--477, Eur. Math. Soc., Z\"urich, 2007.

\bibitem[PP86]{PP86} M. Pimsner, S. Popa, {\it Entropy and index for subfactors}, Ann. Sci. \'Ec. Norm. Sup\'er {\bf 19} (1986), 57–106.


\bibitem[PV09]{PV09} S. Popa, S. Vaes, {\it Group measure space decomposition of II$_1$ factors and W$^*$–superrigidity}, Invent. Math. {\bf 182} (2010), no. 2, 371-417

\bibitem[PV11]{PV11} S. Popa, S. Vaes, {\it Unique Cartan decomposition for $\rm II_1$ factors arising from arbitrary actions of free groups}, Acta Math. \textbf{212 }(2014), no. 1, 141--198.
		
\bibitem[PV12]{PV12} S. Popa, S. Vaes, \textit{Unique Cartan decomposition for $\rm II_1$ factors arising from arbitrary actions of hyperbolic groups}, J. Reine Angew. Math. \textbf{694} (2014), 215--239.

\bibitem[SS10]{SS10} A. Sinclair, R. Smith, {\it Finite von Neumann algebras and MASAs}, London Mathematical Society Lecture Note Series, {\bf351}, Cambridge University Press, Cambridge, (2008). 
		
\bibitem[Va07]{Va07} S. Vaes, {\it Rigidity results for Bernoulli actions and their von Neumann algebras (after Sorin Popa)}, Séminaire Bourbaki, Vol. 2005/2006. Astérisque {\bf 311} (2007), no. 961, viii, 237--294.

\bibitem[Va08]{Va08} S. Vaes, {\it Explicit computations of all finite index bimodules for a family of II$_1$ factors}, Ann. Sci. \'Ec. Norm. Sup\'er (4) {\bf41} (2008), no. 5, 743--788.



\bibitem[Va10a]{Va10a} S. Vaes, {\it One-cohomology and the uniqueness of the group measure space decomposition of a II$_1$ factor}, Math. Ann. {\bf355} (2013), no. 2, 661--696.

\bibitem[Va10b]{Va10b} S. Vaes, \textit{Rigidity for von Neumann algebras and their invariants}, Proceedings of the International Congress of Mathematicians. Volume III, 1624-1650, Hindustan Book Agency, New Delhi, 2010.

\bibitem[Wi11]{Wi11} D. T. Wise, \textit{Research announcement: the structure of groups with a quasiconvex hierarchy}, Electron. Res.
Announc. Math. Sci. $\textbf{16}$ (2009), 44--55.

\end{thebibliography}
\end{document}